\def\titlerunning#1{\gdef\titrun{#1}}
\def\author#1{\gdef\autrun{\def\and{\unskip, }#1}\gdef\@author{#1}}
\def\address#1{{\def\and{\\\hspace*{18pt}}\renewcommand{\thefootnote}{}%
\footnote {#1}}%
\markboth{\autrun}{\titrun}}
\def\email#1{e-mail: #1}
\def\subjclass#1{{\renewcommand{\thefootnote}{}%
\footnote{\emph{Mathematics Subject Classification (2010):} #1}}}
\def\keywords#1{\par\medskip
\noindent\textbf{Keywords.} #1}
\numberwithin{equation}{section}
\renewcommand{\epsilon}{\varepsilon}
\def\XXint#1#2#3{{\setbox0=\hbox{$#1{#2#3}{\int}$}
     \vcenter{\hbox{$#2#3$}}\kern-.5\wd0}}
\numberwithin{subsection}{section}
\numberwithin{subsubsection}{subsection}
\newtheorem{thm}{Theorem}[section]
\newtheorem{lem}[thm]{Lemma}
\newtheorem{prop}[thm]{Proposition}
\newtheorem{defn}[thm]{Definition}
\newtheorem{rem}[thm]{Remark}
\newtheorem{Example}[thm]{Example}
\numberwithin{equation}{section}
\newtheorem*{definition*}{Definition}
\newtheorem*{theorem*}{Theorem}
\newtheorem*{Remark*}{Remark}
\numberwithin{equation}{section} 
\newcommand\restr[2]{{
  \left.\kern-\nulldelimiterspace 
  #1 
  \vphantom{\big|} 
  \right|_{#2} 
  }}
\def\WW{{\mathbb W}}
\def\EE{{\mathbb E}}
\def\RR{{\mathbb R}}
\def\LL{{\mathbb L}}
\def\PP{{\mathbb P}}
\def\cL{{\mathcal L}}
\def\cC{{\mathcal C}}
\def\cP{{\mathcal P}}
\newcommand\indep{\protect\mathpalette{\protect\independenT}{\perp}}
\def\independenT#1#2{\mathrel{\rlap{$#1#2$}\mkern2mu{#1#2}}}
\newcommand\indeprev{\protect\mathpalette{\protect\independenT}{\top}}
\def\independenT#1#2{\mathrel{\rlap{$#1#2$}\mkern2mu{#1#2}}}
\DeclareFontFamily{OMX}{MnSymbolE}{}
\DeclareSymbolFont{MnLargeSymbols}{OMX}{MnSymbolE}{m}{n}
\DeclareFontShape{OMX}{MnSymbolE}{m}{n}{
    <-6>  MnSymbolE5
   <6-7>  MnSymbolE6
   <7-8>  MnSymbolE7
   <8-9>  MnSymbolE8
   <9-10> MnSymbolE9
  <10-12> MnSymbolE10
  <12->   MnSymbolE12
}{}
\DeclareFontShape{OMX}{MnSymbolE}{b}{n}{
    <-6>  MnSymbolE-Bold5
   <6-7>  MnSymbolE-Bold6
   <7-8>  MnSymbolE-Bold7
   <8-9>  MnSymbolE-Bold8
   <9-10> MnSymbolE-Bold9
  <10-12> MnSymbolE-Bold10
  <12->   MnSymbolE-Bold12
}{}
\let\llangle\@undefined
\let\rrangle\@undefined
\DeclareMathDelimiter{\llangle}{\mathopen}%
                     {MnLargeSymbols}{'164}{MnLargeSymbols}{'164}
\DeclareMathDelimiter{\rrangle}{\mathclose}%
                     {MnLargeSymbols}{'171}{MnLargeSymbols}{'171}
\begin{document}

\baselineskip=17pt

\titlerunning{Mean field rough equations}
\title{Solving mean field rough differential equations}

\author{I. Bailleul \and R. Catellier \and F. Delarue}

\maketitle

\address{I. Bailleul: Univ. Rennes, CNRS, IRMAR - UMR 6625, F-35000 Rennes, France; \email{ismael.bailleul@univ-rennes1.fr}
\and R. Catellier: Univ. C\^ote d'Azur, CNRS, Laboratoire J.A. Dieudonn\'e, 06108 Nice, France; \email{remi.catellier@unice.fr}
\and F. Delarue: Univ. C\^ote d'Azur, CNRS, Laboratoire J.A. Dieudonn\'e, 06108 Nice, France; \email{francois.delarue@unice.fr}
}

\subjclass{Primary 60H10, 60G99}

\begin{abstract}
We provide in this work a robust solution theory for random rough differential equations of mean field type
$$
dX_t = V\big( X_t,\cL(X_t)\big)dt + \textrm{F}\bigl( X_t,\cL(X_t)\bigr) dW_t,
$$
where $W$ is a random rough path and $\cL(X_t)$ stands for the law of $X_t$, with mean field interaction in both the drift and diffusivity.  The analysis requires the introduction of a new rough path-like setting and an associated notion of controlled path. We use crucially Lions' approach to differential calculus on Wasserstein space along the way.

\keywords{Random rough differential equations, controlled paths, mean field interaction}
\end{abstract}

\section{Introduction}	

The first works on mean field stochastic dynamics and interacting diffusions/Markov processes have their roots in Kac's simplified approach to kinetic theory \cite{Kac} and McKean's work \cite{McKean} on nonlinear parabolic equations. They provide the description of evolutions $(\mu_t)_{t\geq 0}$ in the space of probability measures  under the form of a pathspace random dynamics 
\begin{equation}
\label{eq:1:1}
\begin{split}
dX_t(\omega) &= V\bigl(X_t(\omega),\mu_t\bigr)dt + \textrm{F}\bigl(X_t(\omega), \mu_t\bigr) dW_t(\omega) \quad ;  \quad \mu_t := \cL(X_t),
\end{split}
\end{equation}
(where $\cL(A)$ stands for the law of a random variable $A$) and relate it to the empirical behaviour of large systems of interacting dynamics. The main emphasis of subsequent works has been on proving propagation of chaos and other limit theorems, and giving stochastic representations of solutions to nonlinear parabolic equations under more and more general settings; see \cite{Sznitman,Tanaka,Gartner,DawsonGartner,DawsonVaillancourt,meleard,jourdain-meleard,BudhirajaDupuisFischer,BudhirajaWu} for a tiny sample. Classical stochastic calculus makes sense of equation \eqref{eq:1:1}, in a probabilistic setting $(\Omega,\mathcal{F},\PP)$, only when the process $W$ is a semi-martingale under $\PP$, for some filtration, and the integrand is predictable. However, this setting happens to be too restrictive in a number of situations, especially when the diffusivity is random. 
This prompted several authors to address equation \eqref{eq:1:1} by means of rough paths theory. Indeed, one may understand rough paths theory as a natural framework for providing probabilistic models of interacting populations, beyond the realm of It\^o calculus. Cass and Lyons \cite{CassLyons} did the first study of mean field random rough differential equations and proved the well-posed character of equation \eqref{eq:1:1}, and propagation of chaos for an associated system of interacting particles, under the assumption that   there is no mean field interaction in the diffusivity, \textit{i.e.} $\textrm{F}(x,\mu) = \textrm{F}(x)$,   
and the drift depends linearly on the mean field interaction, \textit{i.e.}
  $V(x,\mu) = \int V(x,y)\,\mu(dy)$, 
   for some function $V(\cdot,\cdot)$ on $\RR^d\times\RR^d$.   
   
The method of proof of Cass and Lyons depends crucially on both assumptions. Bailleul extended partly these results in \cite{BailleulRMI} by proving well-posedness of the mean field rough differential equation \eqref{eq:1:1} in the case where the drift depends nonlinearly on the interaction term and the diffusivity is still independent of the interaction, and by proving an existence result when the diffusivity depends on the interaction. The naive approach to showing well-posedness of equation \eqref{eq:1:1} in its general form consists in treating the measure argument as a time argument. However, this is of a rather limited scope since, in this generality, one cannot expect the time dependence in F to be better than $\frac{1}{p}$-H\"older if the rough path $W$ is itself $\frac{1}{p}$-H\"older. Clearly, such a time regularity is not sufficient to make sense of the rough integral $\int \textrm{F}(\cdots)\,dW$ in the case $p \geq 2$. This serious issue explains why, so far in the literature, the coefficient F has been assumed to be a function of  the sole variable $x$.

Including the time component as one of the components of $W$ brings back the study of equation \eqref{eq:1:1} to the study of equation 
\begin{equation}
\label{EqRDE}
\begin{split}
dX_t(\omega) &= \textrm{F} \bigl( X_t(\omega),\cL(X_t)\bigr) dW_t(\omega) \quad ; \quad 
\mu_t := \cL(X_t),
\end{split}
\end{equation}
this is the precise purpose of the present paper. Treating the drift as part of the diffusivity has the drawback that we shall impose on $V$ some regularity conditions stronger than needed. Our method accommodates the general case but we leave the reader the pleasure of optimizing the details and concentrate on the new features of our approach, working on equation \eqref{EqRDE}. The raw driver $\big(W_t(\omega)\big)_{t\geq 0}$ will be assumed to take values in some $\RR^m$ and to be $\tfrac1p$-H\"older continuous, for $p \in [2,3)$, and the one form F will be an ${\mathscr L}(\RR^m,\RR^d)$-valued function on $\RR^d\times \mathcal{P}_2(\RR^d)$, where 
${\mathscr L}(\RR^m,\RR^d)$ is the space of linear mappings from $\RR^m$ to $\RR^d$ and 
$\cP_{2}(\RR^d)$ is the so-called Wasserstein space of probability measures $\mu$ with a finite second-order moment. Inspired by Lions' approach \cite{Lions,LionsCardialiaguet,CarmonaDelarue_book_I} to differential calculus on $\cP_{2}(\RR^d)$, one of the key point in our \textcolor{black}{analysis} is to lift the function F into a function $\widehat{\textrm{F}}$ defined on the space $\RR^d \times \LL^2\big(\Omega,{\mathcal F},\PP;\RR^d\big)$, given by the formula
\begin{equation}
\label{eq:lifting}
\widehat{\textrm{F}}\bigl(x,Z\bigr) = \textrm{F}(x,\cL(Z)\bigr),
\end{equation}
for $x \in \RR^d$ and $Z \in \LL^2(\Omega,{\mathcal F},\PP;\RR^d)$. So, we may rewrite equation \eqref{EqRDE} as
\begin{equation}
\label{eq:1:2}
dX_t(\omega) = \widehat{\textrm{F}}\bigl( X_t(\omega), X_t(\cdot) \bigr) dW_{t}(\omega).
\end{equation}
We used the notation $X_t(\cdot)$ to distinguish the realization $X_t(\omega)$ of the random variable $X_t$ at point $\omega$ from the random variable itself, seen as an element of the space ${\mathbb L}^2\big(\Omega,{\mathcal F},\PP;\RR^d\big)$. So, $X_t(\cdot)$ is a random variable, and thus an infinite-dimensional object, whilst $X_t(\omega)$ is a finite-dimensional vector. We feel that this writing is sufficiently explicit to remove the hat over F.

Our main well-posedness result is stated below, in a preliminary form only. The precise statement requires additional ingredients that we introduce later on in the text. In this first formulation
\begin{itemize}
   \item the quantity $w(\cdot,\cdot) = \big(w(s,t)\big)_{0 \leq s <t}$ is a random control function that is used to quantify the regularity of the solution path on subintervals $[s,t]$ of a given finite interval $[0,T]$, using some associated notion of $p$-variation for the same $p$ as above, 
   
   \item the quantity $N([0,T],\alpha)$ is some local accumulated variation of the `rough lift' of $W$ that counts the increments of $w$ of size $\alpha$ over a bounded interval $[0,T]$ for a given $\alpha>0$;
\end{itemize}
see Section \ref{SectionRoughStructure} for the set-up. The regularity assumptions on the diffusivity F are spelled-out in 
Subsection 
\ref{SubsectionStability}
and in 
Section \ref{SectionSolving}, see \textbf{\textbf{\bf Regularity assumptions 1}} and \textbf{\textbf{\bf Regularity assumptions 2}} therein.

\begin{thm}
\label{ThmMain}
Let \emph{F} satisfy the regularity assumptions \textbf{\textbf{\bf Regularity assumptions 1}} and \textbf{\textbf{\bf Regularity assumptions 2}}. Assume there exists a positive time horizon $T$ such that the random variables $w(0,T)$ and \textcolor{black}{$\bigl(N\big((0,T),\alpha\big)\bigr)_{\alpha >0}$} have sub and super exponential tails, respectively,
\begin{itemize}
   \item $\PP \bigl( w(0,T) \geq t \bigr) \leq c_{1} \exp \bigl( - t^{\varepsilon_1} \bigr)$,
   
   \item $\PP \bigl( N\big([0,T],\alpha\big) \geq t \bigr) \leq c_{2}(\alpha) \exp \bigl( - t^{1+\varepsilon_2(\alpha)} \bigr)$, \quad $\alpha >0$,
\end{itemize}
\textcolor{black}{for some positive constants $c_{1}$ and $\epsilon_1$ and possibly $\alpha$-dependent positive constants $c_{2}(\alpha)$ and $\epsilon_2(\alpha)$}. Then for any $d$-dimensional square-integrable random variable $X_{0}$, the mean field rough differential equation 
$$
dX_t = \textrm{\emph{F}}\big( X_t, \mathcal{L}(X_t)\big)\,dW_t
$$
has a unique solution defined on the whole interval $[0,T]$.
\end{thm}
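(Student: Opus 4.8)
The plan is to recast \eqref{EqRDE} as a fixed-point problem in a complete space of \emph{mean field controlled rough paths} over $[0,T]$ --- the notion built in Section~\ref{SectionRoughStructure} --- to solve it first on a short interval and then propagate it to $[0,T]$ through a quantitative a priori estimate in which $N([0,T],\alpha)$ appears as an exponent. The first and structurally central step is a composition lemma: if $X=(X_t(\cdot))_t$ is a mean field controlled path, then so is $s\mapsto \mathrm{F}\bigl(X_s,\mathcal{L}(X_s)\bigr)$. Expanding the increment by Lions' calculus,
\[
\mathrm{F}\bigl(X_t,\mathcal{L}(X_t)\bigr) - \mathrm{F}\bigl(X_s,\mathcal{L}(X_s)\bigr) \approx \partial_x\mathrm{F}\bigl(X_s,\mathcal{L}(X_s)\bigr)\,\delta X_{st} + \mathbb{E}'\!\bigl[\partial_\mu \mathrm{F}\bigl(X_s,\mathcal{L}(X_s)\bigr)(X_s')\,\delta X_{st}'\bigr],
\]
with the prime denoting an independent copy, and substituting the controlled-path expansion of $\delta X$, one sees that the measure term is \emph{not} proportional to $\delta W_{st}(\omega)$; this is exactly why the enlarged rough structure of Section~\ref{SectionRoughStructure} is needed to host the Gubinelli-type derivative of $\mathrm{F}(X,\mathcal{L}(X))$. \textbf{Regularity assumptions~1} and~\textbf{2} then supply the quantitative bounds on the controlled-path norm of $\mathrm{F}(X,\mathcal{L}(X))$, and its local Lipschitz dependence on $X$, that the fixed point will consume.

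\emph{Local step.} Combining the composition lemma with the sewing estimate and the stability results of Subsection~\ref{SubsectionStability}, the map
\[
\mathcal{M}\colon X \longmapsto \Bigl(t \mapsto X_0 + \int_0^t \mathrm{F}\bigl(X_s,\mathcal{L}(X_s)\bigr)\,dW_s\Bigr),
\]
acting on mean field controlled paths issued from $X_0$ with Gubinelli derivative $\mathrm{F}\bigl(X_0,\mathcal{L}(X_0)\bigr)$, leaves a suitable ball invariant and contracts it once the rough lift of $W$ is small enough on the working interval. The feature that separates this from the deterministic-coefficient situation of Cass--Lyons is that $\mathcal{L}(X_t)$ couples all the realisations together, so $\mathcal{M}$ cannot be analysed $\omega$ by $\omega$: it acts on a space of $\LL^2(\Omega)$-valued controlled paths, and the invariance and contraction estimates are taken in that norm. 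This produces a unique solution on a first subinterval $[0,\tau]$, with $\tau$ fixed by $\mathrm{F}$ and by how fast the rough lift of $W$ accumulates.

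\emph{Globalization and the role of the tails.} Since $\mathrm{F}$ is bounded there is no finite-time explosion, and the only obstruction to extending the solution past $[0,\tau]$ is the size of the rough lift of $W$. Fixing $\alpha>0$ small enough --- depending on $\mathrm{F}$ alone through \textbf{Regularity assumptions~1} and~\textbf{2} --- one iterates the local estimate over a partition on which every increment of $w$ has size at most $\alpha$, with the area component controlled accordingly; there are $N([0,T],\alpha)$ pieces, each contributing a multiplicative constant of comparable size, so the controlled-path norm of the solution over $[0,T]$ comes out bounded by $C^{N([0,T],\alpha)}$ times a power of $w(0,T)$. The whole construction --- the fixed point, this a priori bound, and the uniqueness argument, run on the mean field controlled-path distance between two solutions sharing the same $X_0$ --- must be closed in $\LL^2(\Omega)$, because the term $\mathcal{L}(X_t)$ makes the equation non-local in $\omega$ and nothing can be done pathwise; this is exactly where the hypotheses enter. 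Here $w(0,T)$ occurs only polynomially, so polynomial moments of all orders suffice and a sub-exponential tail is enough, whereas $N([0,T],\alpha)$ occurs as an exponent, so one needs $\bbE\bigl[C^{N([0,T],\alpha)}\bigr]<\infty$ for every $C$ --- precisely the stated super-exponential tail. Reconciling an $N$-fold iteration of local estimates with the requirement that everything be controlled simultaneously over $\Omega$ is the crux of the proof; the preparatory difficulty, already absorbed into Section~\ref{SectionRoughStructure}, is that $\mathrm{F}(X,\mathcal{L}(X))$ is simply not a classical controlled path --- its Gubinelli derivative involves an expectation against an independent copy of the driver --- so that even writing down $\mathcal{M}$ requires the new rough structure.
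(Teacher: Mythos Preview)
Your sketch follows the paper's strategy closely: the composition/stability result for $\mathrm{F}(X,\mathcal{L}(X))$ is Proposition~\ref{prop:chaining}, the local step is Theorem~\ref{main:thm:existence:small:time} built on invariance (Proposition~\ref{thm:main:1}) and contraction (Proposition~\ref{thm:fixed:1}) estimates for $\Gamma$, and the patching to $[0,T]$ is done exactly as you describe.

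There is, however, a concrete inaccuracy in your account of how the two tail hypotheses are consumed. You claim that $w(0,T)$ enters the global estimate ``only polynomially'' while $N$ enters as an exponent, so that what is needed is $\bbE[C^{N}]<\infty$ for every constant $C$, together with all polynomial moments of $w$. In the paper the two are \emph{coupled}: the contraction bound over $[0,T]$ (equation~\eqref{eq:contraction:proof:10}) has the form
\[
\bigl(3\gamma^2\zeta(\omega)\bigr)^{2N([0,T],\omega,1/(4L))},\qquad \zeta(\omega)=1+w(0,T,\omega)^{1/p}\Bigl(1+\tfrac1{4L}\Bigr),
\]
so $w(0,T,\omega)$ sits in the \emph{base} that is raised to the random power $N$. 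Finiteness of $\bbE[C^{N}]$ for all constants $C$ and of all moments of $w$ does not by itself control $\bbE[\zeta^{2N}]$; the paper handles this via the joint splitting
\[
\PP\bigl(\zeta^{2N}\ge a\bigr)\le \PP(N\ge k)+\PP\bigl(\zeta^{2}\ge a^{1/k}\bigr)
\]
and an optimisation in $k$ (see~\eqref{eq:wibull:exp}), which is precisely where the sub-exponential tail of $w$ and the super-exponential tail of $N$ interact. A secondary point: the globalisation is two-layered in the paper --- the $\omega$-dependent greedy partition of length $N([0,T],\omega,\alpha)$ is used \emph{inside} the small-time Picard argument, while the passage from small time to $[0,T]$ is done along a \emph{deterministic} partition $(S_j)_j$ chosen so that~\eqref{eq:constraint:1:E!:small:time}--\eqref{eq:constraint:2:E!:small:time} hold on each piece; your sketch collapses these two layers into one.
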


Results of that form seem out of reach of the methods used in \cite{CassLyons,BailleulRMI}. Theorem \ref{ThmMain} applies in particular to mean field rough differential equations driven by some fractional Brownian motion with Hurst parameter greater than $\frac{1}{3}$, other Gaussian processes or some Markovian rough paths; see Section \ref{SectionRoughStructure}. Importantly, the solution is shown to depend continuously on the driving `rough path', in a quantitative sense detailed in Theorem \ref{ThmContinuity}. As an example \textcolor{black}{that fits our regularity assumptions}, one can solve the above mean field rough differential equation with 
$\textrm{F}(x,\mu) = \int f(x,y)\mu(dy)$,
for some fuction $f$ of class $C^3_b$ (meaning that $f$ is bounded and has bounded derivatives of order 1, 2 and 3), or with
$\textrm{F}(x,\mu) = g\bigl(x, \int_{\RR^d} y\mu(dy)\bigr)$,
for some function $g$ of class $C^3_b$. The Curie-Weiss model, where F is of the form $\textrm{F}(x,\mu) = \nabla U(x) + \int (x-y)\mu(dy)$, falls outside the scope of what is written here, because of the linear growth rate in $x$, but is within reach of our method.

One of the difficulties in solving equation \eqref{EqRDE} comes from the fact that it happens not to be sufficient to consider each signal $W_\bullet(\omega)$ as the first level of a rough path; one somehow needs to consider the whole family $\big(W_\bullet(\omega)\big)_{\omega\in\Omega}$ as an infinite-dimensional rough path. This leads us to defining in Section \ref{SectionRoughStructure} a rough setting where $\bigl( W_{t}(\omega),W_{t}(\cdot) \bigr)_{0 \leq t \leq T}$ is, for each $\omega$, the first level of a rough path over $\RR^m\times \LL^q\big(\Omega,{\mathcal F},\PP;\RR^m\big)$; \textcolor{black}{seemingly,} the natural choice for $q$, as dictated by the aforementioned lifting procedure of the Wasserstein space, is $q=2$; we shall actually need a larger value. Unlike the seminal works \cite{CassLyons,BailleulRMI} that set the scene in Davie's approach of rough differential equations, such as reshaped by Friz-Victoir and Bailleul respectively, we use here Gubinelli's versatile approach of controlled paths to make sense of equation \eqref{EqRDE}. Our mixed finite/infinite dimensional setting introduces an interesting twist in the notion of controlled path presented in Section \ref{SubsectionControlledTraj}. Defining the rough integral of a controlled path with respect to a rough driver is done classically in Section \ref{SubsectionIntegral} using the sewing lemma. We prove stability of a certain class of controlled paths by nonlinear mappings in Section \ref{SubsectionStability}, which is precisely the place where Lions' differential calculus on $\cP_{2}(\RR^d)$ comes in. One then has all the ingredients needed to formulate in Section \ref{SectionSolving} equation \eqref{EqRDE} as a fixed point problem in some space of controlled paths. Local well-posedness is proved, and sufficient conditions on the law of the driver are given to get well-posedness on any fixed time interval. As expected from any solution theory for rough differential equations, the solution depends continuously on all the parameters in the equation, most notably
its law depends continuously on the law of the driving rough path, as shown in Section \ref{weak}. This latter point is used in the {forthcoming} companion paper \cite{BCDParticleSystem}
 to provide a proof of propagation of chaos for an interacting particle system associated with equation \eqref{EqRDE} and quantify the convergence rate\footnote{We also refer to Section 4 of the Arxiv deposit
\cite{BCDArxiv}; \cite{BCDArxiv} encompasses the original versions of this work and of the {forthcoming} companion one \cite{BCDParticleSystem}.}. Among others, it recovers Sznitman' seminal work \cite{Sznitman} on the case where the noise is a Brownian motion.

While Lyons formulated his theory in a Banach setting from the begining \cite{Lyons98}, the theory has mainly been explored for finite dimensional drivers, with the noticeable exception of the works of Ledoux, Lyons and Qian on Banach space valued rough paths \cite{LedouxLyonsQian,LyonsQianDiffeo}, Dereich follow-up works \cite{Dereich,DereichDimitroff}, Kelly and Melbourne application to homogenization of fast/slow systems of ordinary differential equations \cite{KellyMelbourne}, and Bailleul and Riedel's work on rough flows \cite{BailleulRiedel}. One can see the present work as another illustration of the strength of the theory in its full generality. However, although the underlying rough set-up associated to $(W_{t}(\omega),W_{t}(\cdot))_{0 \leq t \leq T}$ is a mixed finite/infinite dimensional object, a solution to the mean field rough differential equation is more than a solution to a rough differential equation driven by an infinite dimensional rough path. Indeed, the mean field structure imposes an additional fixed point condition, which is to identify the finite dimensional component of the solution as the $\omega$-realization of the infinite dimensional component. This is precisely this constraint that makes the equation difficult to solve and that explains the need for a specific analysis. 
\vspace{4pt}

\noindent \textbf{\textbf{Notations.}} We gather here a number of notations that will be used throughout the text.

\textcolor{gray}{$\bullet$} We set ${\mathcal S}_{2} := \big\{ (s,t) \in [0,\infty)^2 : s \leq t\big\}$, 
and 
$
{\mathcal S}_{2}^T := \big\{(s,t) \in [0,T]^2 : s \leq t \big\}.
$

\textcolor{gray}{$\bullet$} We denote by $(\Omega,{\mathcal F},\PP)$ an atomless Polish probability space, \textcolor{black}{${\mathcal F}$ standing for the completion of the Borel $\sigma$-field under $\PP$}, and denote by $\langle \cdot \rangle$ the expectation operator, by $\langle \cdot \rangle_{r}$, for $r \in [1,+\infty]$, the $\LL^r$-norm on $(\Omega,{\mathcal F},\PP)$ and by $\llangle \cdot \rrangle$ and $\llangle \cdot \rrangle_{r}$ the expectation operator and the $\LL^r$-norm on 
$
\big(\Omega^2,{\mathcal F}^{\otimes 2},\PP^{\otimes 2}\big).$
When $r$ is finite, $\LL^r(\Omega,{\mathcal F},\PP;\RR)$ is separable as $\Omega$ is Polish.

\textcolor{gray}{$\bullet$}
As for processes $X_{\bullet}=(X_{t})_{t \in I}$, defined on a time interval $I$, we often write $X$ for $X_{\bullet}$.

\section{Probabilistic Rough Structure}
\label{SectionRoughStructure}

We define in this section a notion of rough path appropriate for our purpose. It happens to be a mixed finite/infinite dimensional object. Throughout the section, we work on a finite time horizon $[0,T]$, for a given $T>0$.

\textcolor{gray}{$\bullet$} We define the first level of our rough path structure as an $\omega$-indexed pair of paths
\begin{equation}
\label{EqLevelOne}
\bigl( W_t(\omega),W_t(\cdot) \bigr)_{0\leq t\leq T},
\end{equation}
where $\big(W_t(\cdot)\big)_{0\leq t\leq T}$ 
\textcolor{black}{is a collection of $q$-integrable $\RR^m$-valued random variables on
the space
$(\Omega,{\mathcal F},\PP)$, 
which we regard as a deterministic $\LL^q(\Omega,{\mathcal F},\PP;\RR^m)$-valued path}, for some exponent $q\geq 1$, and
$\bigl(W_{t}(\omega)\bigr)_{0 \le t \le T}$ stands for the realizations of these random variables along the outcome $\omega \in \Omega$; so the pair \eqref{EqLevelOne} takes values in $\RR^m\times \LL^q(\Omega,{\mathcal F},\PP;\RR^m)$. \textcolor{black}{As we already explained, a} natural choice would be to take $q=2$, but for technical reasons that will get clear below we shall require $q \geq 8$. 

\textcolor{gray}{$\bullet$} The second level of the rough path structure includes a two-index path $\big(\WW_{s,t}(\omega)\big)_{0 \leq s \leq t \leq T}$ with values in $\RR^{m \times m}$, obtained as the $\omega$-realizations of a collection of $q$-integrable random variables $\big(\WW_{s,t}(\cdot)\big)_{0\leq s\leq t\leq T}$ defined on $\Omega$; importantly, this second level also comprises the sections $\big(\WW_{s,t}^{\indep}(\omega,\cdot)\big)_{0\leq s\leq t\leq T}$ and $\big(\WW_{s,t}^{\indep}(\cdot,\omega)\big)_{0\leq s\leq t\leq T}$ of a collection of $\RR^{m \times m}$-valued random variables defined on the product space $ \big(\Omega^2,{\mathcal F}^{\otimes 2},\PP^{\otimes 2}\bigr)$ and considered as a deterministic $\LL^q\big(\Omega^2,{\mathcal F}^{\otimes 2},\PP^{\otimes 2};\RR^{m \times m}\big)$-valued path $\big(\WW_{s,t}^{\indep}(\cdot,\cdot)\big)_{0\leq s\leq t\leq T}$. Each ${\mathbb W}_{s,t}^{\indep}(\cdot,\cdot)$, for $(s,t) \in {\mathcal S}_{2}^T$, belonging to the space $\LL^q\bigl(\Omega^2,{\mathcal F}^{\otimes 2},\PP^{\otimes 2};\RR^{m \times m}\bigr)$, we have 
\begin{equation}
\label{eq:finite:q:moment:section}
 \bigl\langle  {\mathbb W}_{s,t}^{\indep}(\omega,\cdot)  \bigr\rangle_{q} < \infty,
\quad 
\bigl\langle {\mathbb W}_{s,t}^{\indep}(\cdot,\omega) \bigr\rangle_{q} < \infty,
\end{equation}
for $\PP$-a.e. $\omega \in \Omega$. Below, we shall assume \eqref{eq:finite:q:moment:section} to be true for every $\omega \in \Omega$. This is not such a hindrance since we can modify in a quite systematic way the definition of the rough path structure on the null event where \eqref{eq:finite:q:moment:section} fails; this is exemplified in Proposition \ref{prop:example:construction} below. Taken this assumption for granted, we can regard $\Omega \ni \omega \mapsto {\mathbb W}^{\indep}_{s,t}(\omega,\cdot)$ and $\Omega \ni \omega \mapsto {\mathbb W}^{\indep}_{s,t}(\cdot,\omega)$ as random variables with values in $\LL^q(\Omega,{\mathcal F},\PP;\RR^{m \times m})$: Since $\LL^q(\Omega,{\mathcal F},\PP;\RR^{m \times m})$ is separable, it suffices to notice from Fubini's theorem that, for any $Z \in \LL^q(\Omega,{\mathcal F},\PP;\RR^{m \times m})$, $\Omega \ni \omega \mapsto \bigl\langle {\mathbb W}_{s,t}^{\indep}(\omega,\cdot) - Z \bigr\rangle_{q}$ is measurable, and similarly for ${\mathbb W}_{s,t}^{\indep}(\cdot,\omega)$.

Hence, the entire second level has the form of an $\omega$-dependent two-index path with values in $\big(\RR^m \times \LL^q(\Omega,{\mathcal F},\PP;\RR^m)\big)^{\otimes 2}$ \textcolor{black}{and is} encoded in matrix form as
\begin{equation}
\label{eq:W:2}
\left(
\begin{array}{cc}
\WW_{s,t}(\omega) &\WW_{s,t}^{\indep}(\omega,\cdot)
\\
\WW_{s,t}^{\indep}(\cdot,\omega) &\WW_{s,t}^{\indep}(\cdot,\cdot)
\end{array}
\right)_{0 \leq s \leq t\leq T}. 
\end{equation}
Here,   

\begin{itemize}
   \item $\WW_{s,t}(\omega)$ is in $(\RR^{m})^{\otimes 2} \simeq \RR^{m \times m}$,  
   
   \item $\WW_{s,t}^{\indep}(\omega,\cdot)$ is in $\RR^m \otimes \LL^q\big(\Omega,{\mathcal F},\PP;\RR^m\big)\simeq \LL^q\big(\Omega,\textcolor{black}{\mathcal F},\PP;\RR^{m \times m}\big)$,   
   
   \item $\WW_{s,t}^{\indep}(\cdot, \omega)$ is in $\LL^q\big(\Omega,{\mathcal F},\PP;\RR^m\big) \otimes \RR^m \simeq \LL^q\big(\Omega,\textcolor{black}{\mathcal F},\PP;\RR^{m \times m}\big)$,
   
 \item $\WW_{s,t}^{\indep}(\cdot,\cdot)$ is in $\LL^q\bigl(\Omega^{\otimes 2},{\mathcal F}^{\otimes 2},\PP^{\otimes 2};
 \RR^{m \times m} \bigr)$, the realizations of which read in the form $\Omega^2 \ni (\omega,\omega') \mapsto \WW_{s,t}^{\indep}(\omega,\omega') \in \RR^{m \times m}$ and the two sections of which are precisely given by $\WW_{s,t}^{\indep}(\omega,\cdot) : \Omega \ni \omega' \mapsto \WW_{s,t}^{\indep}(\omega,\omega')$, and $\WW_{s,t}^{\indep}(\cdot,\omega) \ni \omega' \mapsto \WW_{s,t}^{\indep}(\omega',\omega)$, for $\omega \in \Omega$.  
\end{itemize}
{Below, we formulate several additional assumptions on the rough path structure, the introduction of which is rather lengthy and is, for that reason, split into three distinct subsections.}

\subsection{Algebraic conditions} As usual with rough paths, algebraic consistency requires that Chen's relation\textcolor{black}{s}
\begin{equation}
\label{eq:chen}
\begin{split}
&\WW_{r,t}(\omega) = \WW_{r,s}(\omega) + \WW_{s,t}(\omega) + W_{r,s}(\omega) \otimes W_{s,t}(\omega),   \\
&\WW_{r,t}^{\indep}(\cdot,\omega) = \WW_{r,s}^{\indep}(\cdot,\omega) + \WW_{s,t}^{\indep}(\cdot,\omega) + W_{r,s}(\cdot) \otimes W_{s,t}(\omega),   \\
&\WW_{r,t}^{\indep}(\omega,\cdot) = \WW_{r,s}^{\indep}(\omega,\cdot) + \WW_{s,t}^{\indep}(\omega,\cdot) + W_{r,s}(\omega) \otimes W_{s,t}(\cdot),   \\
&\WW_{r,t}^{\indep}(\cdot,\cdot) = \WW_{r,s}^{\indep}(\cdot,\cdot) + \WW_{s,t}^{\indep}(\cdot,\cdot) + W_{r,s}(\cdot) \otimes W_{s,t}(\cdot),
\end{split}
\end{equation}
hold for any $0\leq r \le s \le t\leq T$. We used here the very convenient notation 
$
f_{r,s} := f_{s} - f_{r}$, 
for a function $f$ from $[0,\infty)$ into a vector space. In \eqref{eq:chen} and throughout, we denote  by $X(\cdot) \otimes Y(\cdot)$, 
for any two $X$ and $Y$ in $\LL^q(\Omega,{\mathcal F},\PP;\RR^m)$, the random variable
$
\bigl(\omega,\omega') \mapsto \big(X_{i}(\omega) Y_{j}(\omega')\bigr)_{1 \leq i,j \leq m}$
defined on $\Omega^2$. It is in $\LL^q\big(\Omega^2,\textcolor{black}{\mathcal F}^{\otimes 2},\PP^{\otimes 2};\RR^{m \times m}\big)$.

\begin{rem}
The last three lines in Chen's relations \eqref{eq:chen} are somewhat redundant. 
Assume indeed that we are given a collection of random variables 
$\bigl( {\mathbb W}^{\indep}_{s,t}(\cdot,\cdot) \bigr)_{0 \leq s \leq t \leq T}$
satisfying the last line of \eqref{eq:chen}. Then, 
for all $0 \leq r \leq s \leq t \leq T$
and for  
$\PP^{\otimes 2}$-a.e. $(\omega,\omega') \in \Omega^2$,  
\begin{equation*}
{\mathbb W}^{\indep}_{r,t}(\omega,\omega') = {\mathbb W}^{\indep}_{r,s}(\omega,\omega') + {\mathbb W}^{\indep}_{s,t}(\omega,\omega') + W_{r,s}(\omega) \otimes W_{s,t}(\omega').
\end{equation*}
Clearly, for $\PP$-almost every $\omega \in \Omega$, the second and third lines in 
\eqref{eq:chen} hold true as well. This is slightly weaker than the formulation \eqref{eq:chen} as, therein, the second and
 third lines are required to hold for all $\omega \in \Omega$. As exemplified in the proof of Proposition \ref{prop:example:construction}, one may modify the definition of ${\mathbb W}^{\indep}$ {on a null event} so that the second and third lines in \eqref{eq:chen} hold true  for all $\omega$ and 
 for all $0 \leq r \leq s \leq t \leq T$.
\end{rem}

\begin{defn}
We shall denote by ${\boldsymbol W}(\omega)$ the \textbf{\textbf{rough set-up}} specified by the $\omega$-dependent collection of maps given by \eqref{EqLevelOne} and \eqref{eq:W:2}. 
\end{defn}

As for the component $\WW^{\indep}$ of ${\boldsymbol W}(\omega)$, the notation $\indep$ is used to indicate, as we shall make it clear below, that $\WW_{s,t}^{\indep}(\cdot,\cdot)$ should be thought of as the random variable
\begin{equation*}
(\omega,\omega') \mapsto \int_{s}^t \Bigl( W_{r}(\omega) - W_{s}(\omega) \Bigr) \otimes dW_{r}(\omega'). 
\end{equation*}
Since $\Omega^2 \ni (\omega,\omega') \mapsto (W_{t}(\omega))_{{0 \le t \le T}}$ and $\Omega^2 \ni (\omega,\omega') \mapsto (W_{t}(\omega'))_{{0 \le t \le T}}$ are independent under $\PP^{\otimes 2}$, we then understand $\WW_{s,t}^{\indep}$ as an iterated integral of two independent copies of the noise. While such a construction is elementary for a random $C^1$ path, the well-defined character of this integral needs to be proved for more general probability measures $\PP$.

\begin{Example}
\label{ex:brownian}
Let $W$ be an $\RR^m$-valued Brownian motion defined on $(\Omega,{\mathcal F},\PP)$. Denote by $W_t(\cdot)$ the equivalence class of $\Omega \ni \omega \mapsto W_{t}(\omega)$ in $\LL^q\big(\Omega,{\mathcal F},\PP;\RR^m\big)$, and extend $W_t$ on the product space $\big(\Omega^2,{\mathcal F}^{\otimes 2},\PP^{\otimes 2}\big)$, setting $W_t(\omega,\omega') := W_t(\omega)$. Define also on the product space the random variable $W_t'(\omega,\omega') := W_t(\omega')$. Then, $W$ and $W'$ are two independent $m$-dimensional Brownian motions under $\PP^{\otimes 2}$, and one can construct the time-indexed Stratonovich stochastic integral 
\begin{equation*}
\Omega^2 \ni (\omega,\omega') \mapsto \biggl(
\left\{\int_{s}^t (W_{r} - W_{s})  \otimes {\circ d}W_{r}' \right\} (\omega,\omega') \biggr)_{0 \leq s \leq t\leq T} \in {\mathcal C}\big({\mathcal S}_{2};\RR^{m \times m}\big).
\end{equation*} 
The stochastic integral  is uniquely defined up to an event of zero measure under $\PP^{\otimes 2}$. Up to an exceptional event (of 
$(\Omega^2,{\mathcal F}^{\otimes 2},\PP^{\otimes 2})$), we then let
\begin{equation*}
\WW^{\indep}_{s,t}(\omega,\omega') := \left(\int_{s}^t \big(W_{r} - W_{s}\big)  \otimes {\circ d}
W_{r}' \right) (\omega,\omega'), \quad 0 \leq s \leq t \leq T.
\end{equation*}
We can specify the definition of $W^{\indep}$ on the remaining exceptional event and then modify the definition of 
$W$ on a null event of $(\Omega,{\mathcal F},\PP)$ in such a way that Chen's relations \eqref{eq:chen} hold everywhere --see the end of the proof of Proposition \ref{prop:example:construction} below for a detailed proof of this fact--. The process $\bigl(\WW_{s,t}(\omega)\bigr)_{0 \leq s \leq t \leq T}$ is defined in a standard way  as a Stratonovich integral outside a set of null measure:
\begin{equation*}
\WW_{s,t}(\omega) := \left(\int_{s}^t (W_{r} - W_{s})  \otimes {\circ d}W_{r} \right) (\omega), \quad 0 \leq s \leq t\leq T.
\end{equation*}
\end{Example}

The principle underpinning the above example may be put in a more general framework which will be useful to prove continuity of the It\^o-Lyons solution map to the equation \eqref{EqRDE}. {We state it in the form of a proposition 
that provides a quite systematic way for constructing rough set-ups in practice.}
We advise the reader to come back to this proposition later on.

\begin{prop}
\label{prop:example:construction}
Let $(\Xi,{\mathcal G},{\mathbb Q})$ be a probability space, and $W^{1} := \big(W^1_{t}\big)_{0 \leq t \leq T}$ and $W^2 := \big(W^2_{t}\big)_{0 \le t \le T}$ be two independent and identically distributed $\RR^m$-valued processes defined on $\Xi$. Assume they have continuous trajectories and
$
\EE_{{\mathbb Q}}\left[\sup_{0 \leq t \leq T} \big\vert W^1_{t} \big\vert^q\right] < \infty$.

Let also $\big((W^{i,j}_{s,t})_{0 \leq s < t \leq T}\big)_{i,j=1,2}$ be four $\RR^m \otimes \RR^m \cong \RR^{m \times m}$-valued continuous paths such that 
$\EE_{{\mathbb Q}}\left[ \sup_{0 \leq s < t \leq T} \big\vert W^{i,j}_{s,t} \big\vert^q \right] < \infty$, 
for $ i,j = 1,2$, and $\big(W^1,W^{1,1}\big)$ is independent of $W^2$. Last, assume that, for a.e. $\xi \in \Xi$, the pair
\begin{equation*}
\biggl( \Bigl( \begin{array}{c}
W^1(\xi)
\\
W^2(\xi)
\end{array}
\Bigr),
\Bigl( \begin{array}{cc}
W^{1,1}(\xi) & W^{1,2}(\xi)
\\
W^{2,1}(\xi) & W^{2,2}(\xi)
\end{array}
\Bigr)
\biggr)
\end{equation*}
satisfies Chen's relation {in the sense that $W^{i,j}_{r,t}(\xi)=W^{i,j}_{r,s}(\xi)+W^{i,j}_{s,t}(\xi) + W^i_{r,s}(\xi) \otimes W^j_{s,t}(\xi)$ for any $i,j \in \{1,2\}$ and $0 \leq r \leq s \leq t \leq T$}. Set
$\Omega := \Xi \times [0,1]$
with $[0,1]$ equipped with its Borel $\sigma$-algebra ${\mathcal B}\big([0,1]\big)$, and denote by $\textrm{\rm Leb}$ the Lebesgue measure on $[0,1]$. Then we can find a triple of random variables $\big(W,{\mathbb W},{\mathbb W}^{\indep}\big)$, 
the first two components being defined on $\big(\Omega, {\mathcal F} \otimes {\mathcal B}([0,1]),{\mathbb Q} \otimes \textrm{\rm Leb}\big)$, the last component being constructed on the product space $\Omega^2$, and the whole family satisfying all the above requirements for a rough set-up, such that
\begin{equation*}
\PP \Bigl( \Bigl\{ (\xi,u) : \bigl(W,{\mathbb W}\bigr)(\xi,u) = \bigl(W^1,W^{1,1}\bigr)(\xi) \Bigr\} \Bigr) = 1, 
\end{equation*}
and, for $\PP$-a.e. $\omega=(\xi,u)$, the law of $W^{\indep}(\cdot,\omega)$ is the same as the conditional law of $W^{2,1}$ given 
$\big(W^1(\xi),W^2(\xi),W^{1,1}(\xi)\big)$. 
\end{prop}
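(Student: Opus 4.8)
The plan is to build the desired rough set-up by enriching the probability space $\Xi$ with an extra independent uniform coordinate, which serves as a source of auxiliary randomness to realize conditional laws, and then to transport the given paths $(W^1,W^{1,1})$ and the ``cross'' paths $W^{2,1}$ onto the product space $\Omega^2$.

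\smallskip

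\noindent\textbf{Step 1: the finite-dimensional level and $\WW$.} First I would simply set $W(\xi,u):=W^1(\xi)$ and $\WW(\xi,u):=W^{1,1}(\xi)$ on $\Omega=\Xi\times[0,1]$, equipped with $\PP:=\mathbb{Q}\otimes\textrm{Leb}$. By hypothesis these have finite $q$-moments (for $\sup_t|W^1_t|$ and $\sup_{s<t}|W^{1,1}_{s,t}|$), continuous trajectories, and satisfy the first line of Chen's relation for a.e.\ $\xi$; this immediately gives the stated identity $\PP(\{(W,\WW)(\xi,u)=(W^1,W^{1,1})(\xi)\})=1$. The role of the $[0,1]$-factor is not yet used here but will be essential for the cross term.

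\smallskip

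\noindent\textbf{Step 2: realizing the conditional law for $\WW^{\indep}$.} The heart of the construction is to produce, on $\Omega^2$, a path $\WW^{\indep}(\cdot,\cdot)$ whose section $\WW^{\indep}(\cdot,\omega)$ along a.e.\ fixed $\omega=(\xi,u)$ has the law of $W^{2,1}$ conditioned on $(W^1(\xi),W^2(\xi),W^{1,1}(\xi))$. Since $\big(\Omega^2,\mathcal F^{\otimes2},\PP^{\otimes2}\big)$ carries, in its second coordinate $\omega'=(\xi',u')$, an independent copy of everything plus a fresh uniform $u'$, I would invoke a measurable version of the disintegration/regular-conditional-probability theorem: because the target spaces are Polish, there is a regular conditional law $\kappa\big(\cdot\,|\,W^1,W^2,W^{1,1}\big)$ of $W^{2,1}$ given $(W^1,W^2,W^{1,1})$ on $\Xi$, and by the standard ``randomization lemma'' (a measurable map $\Phi$ with $\Phi(\text{data},U)$ having the prescribed conditional law when $U\sim\textrm{Unif}[0,1]$ is independent) one can write
\[
\WW^{\indep}_{s,t}(\omega,\omega'):=\Phi\Big(\big(W^1(\xi'),W^2(\xi'),W^{1,1}(\xi')\big);\,u'\Big)_{s,t},
\]
using the data attached to $\omega'=(\xi',u')$ and the uniform $u'$. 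Then, for $\PP$-a.e.\ $\omega$, the section $\omega'\mapsto\WW^{\indep}(\omega,\omega')$ reads off from $\xi'$ alone; but to match the statement we rather want the section along the \emph{second} variable, $\omega'\mapsto\WW^{\indep}(\omega',\omega)$, to have the conditional law given the data at $\xi$. So one sets $\WW^{\indep}_{s,t}(\omega',\omega):=\Phi\big((W^1(\xi),W^2(\xi),W^{1,1}(\xi));u'\big)_{s,t}$ with $u'$ the uniform coordinate of $\omega'$ — i.e.\ the cross path is generated using the \emph{second} block's Brownian data $W^2(\xi)$ (playing the role of $W^2$), the \emph{first} block's data $(W^1(\xi),W^{1,1}(\xi))$ as conditioning, and the auxiliary uniform of the first block. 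The independence hypothesis ``$(W^1,W^{1,1})$ independent of $W^2$'' guarantees this is consistent, and finite $q$-moments of $\WW^{\indep}$ follow from those of $W^{2,1}$. The precise bookkeeping of which coordinate supplies which datum is the fiddly part, but it is purely a matter of matching the diagram in Proposition~\ref{prop:example:construction}.

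\smallskip

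\noindent\textbf{Step 3: Chen's relations and cleaning up null sets.} With $W$ and $\WW^{\indep}$ in hand, I define the mixed two-index objects $\WW^{\indep}(\omega,\cdot),\WW^{\indep}(\cdot,\omega)$ by the sectioning procedure, and verify the four Chen relations \eqref{eq:chen}. The first holds by Step 1. The last, $\WW^{\indep}_{r,t}(\cdot,\cdot)=\WW^{\indep}_{r,s}(\cdot,\cdot)+\WW^{\indep}_{s,t}(\cdot,\cdot)+W_{r,s}(\cdot)\otimes W_{s,t}(\cdot)$, holds $\PP^{\otimes2}$-a.s.\ because $W^{2,1}$ satisfies Chen's relation against $(W^2,W^1)$ for a.e.\ $\xi$ by assumption, and $\Phi$ produces a version with that law; since the relation is an a.s.\ identity in a Polish space it is preserved by pushing forward through a map with the right conditional law. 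The middle two lines then follow on a $\PP$-full set of $\omega$ by the Remark following \eqref{eq:chen}. Finally, exactly as in the proof pattern used for Example~\ref{ex:brownian}, I would redefine $W$, $\WW$, $\WW^{\indep}$ on the (jointly null) exceptional events — e.g.\ setting them to the zero path there — so that all four Chen relations hold for \emph{every} $\omega$ and every $0\le r\le s\le t\le T$, and so that the $q$-integrability bounds \eqref{eq:finite:q:moment:section} hold for every $\omega$; this changes nothing on a full-measure set and hence preserves both asserted identities.

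\smallskip

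\noindent The step I expect to be the main obstacle is Step~2: obtaining a genuinely \emph{measurable}, path-valued realization of the conditional law of $W^{2,1}$ given $(W^1,W^2,W^{1,1})$ — i.e.\ ensuring the randomization map $\Phi$ is jointly measurable in the data and the uniform, and that the resulting $\WW^{\indep}$ has continuous trajectories with finite $q$-moments — together with the careful verification that the section \emph{along the correct variable} carries the intended conditional law rather than its mirror image. Everything else is a routine transfer and null-set repair following the template already spelled out for the Brownian example.
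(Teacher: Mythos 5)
Your overall strategy (transfer $W,\WW$ by composing with the projection to $\Xi$; realize the cross term by an auxiliary uniform coordinate together with a regular conditional probability and a measurable randomization map; clean up null sets at the end) is the same as the paper's. The paper gets the jointly measurable randomization map from a Blackwell--Dubins form of Skorokhod's representation theorem, and then argues Chen's relation by matching the law of the quadruple $(W,W',\WW,\WW^{\indeprev})$ on $\Omega^2$ with that of $(W^1,W^2,W^{1,1},W^{2,1})$ on $\Xi$.

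However, the concrete formula you write in Step~2 is structurally wrong and would break Chen's relation. Writing $\omega_1=(\xi_1,u_1)$, $\omega_2=(\xi_2,u_2)$, your definition reads
\[
\WW^{\indep}(\omega_1,\omega_2)=\Phi\bigl((W^1(\xi_2),W^2(\xi_2),W^{1,1}(\xi_2));u_1\bigr),
\]
so that $\WW^{\indep}(\omega_1,\omega_2)$ is a realization of the conditional law of $W^{2,1}$ given $(W^1,W^2,W^{1,1})$ at the values $(W^1(\xi_2),W^2(\xi_2),W^{1,1}(\xi_2))$, all coming from one and the same coordinate $\xi_2$. By the Chen hypothesis on $(\Xi,\mathcal G,\mathbb Q)$, a realization coupled with these data satisfies, almost surely,
\[
\WW^{\indep}_{r,t}(\omega_1,\omega_2)-\WW^{\indep}_{r,s}(\omega_1,\omega_2)-\WW^{\indep}_{s,t}(\omega_1,\omega_2)
 = W^2_{r,s}(\xi_2)\otimes W^1_{s,t}(\xi_2).
\]
But the rough set-up on $\Omega^2$ requires this defect to be $W_{r,s}(\omega_1)\otimes W_{s,t}(\omega_2)=W^1_{r,s}(\xi_1)\otimes W^1_{s,t}(\xi_2)$, and the two differ almost surely since $W^2(\xi_2)$ is an independent copy and has nothing to do with $W^1(\xi_1)$. (Your first trial formula $\Phi\bigl((W^1(\xi'),W^2(\xi'),W^{1,1}(\xi'));u'\bigr)$ suffers from the same mismatch.) The essential point you missed is that the conditioning data \emph{must be cross-coupled}: the slot of $W^2$ in the conditional kernel must be filled with $W^1(\xi_1)$ (the first-coordinate copy of the level-one path), while the slots of $W^1$ and $W^{1,1}$ are filled with $W^1(\xi_2)$ and $W^{1,1}(\xi_2)$ from the other coordinate. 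That way, the Chen defect $W^2_{r,s}\otimes W^1_{s,t}$ evaluated at these values becomes $W^1_{r,s}(\xi_1)\otimes W^1_{s,t}(\xi_2)$, which is exactly $W_{r,s}(\omega_1)\otimes W_{s,t}(\omega_2)$. Note that, accordingly, $W^2(\xi)$ itself never appears in the correct construction (you explicitly use $W^2(\xi)$, which is another symptom of the problem); only $W^1$ evaluated at the two independent $\xi$-coordinates, whose iid structure supplies the role of $W^2$ in law, matters. This is the one conceptual step you cannot wave away as ``purely a matter of matching the diagram'': getting it wrong is not a cosmetic mislabeling, it makes all three cross Chen relations in \eqref{eq:chen} false.
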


{The reader may worry about the fact that, in the statement, we only appeal to $W^{1,1}$ and $W^{2,1}$, and not to $W^{2,2}$ and $W^{1,2}$. The reason is that, in our construction of the rough set-up, the processes 
${\mathbb W}^{\indep}(\omega,\cdot)$, ${\mathbb W}^{\indep}(\cdot,\omega)$ and ${\mathbb W}^{\indep}(\cdot,\cdot)$
are intrinsically connected. As made clear by the proof below, the relationships that hold true between 
${\mathbb W}^{\indep}(\omega,\cdot)$, ${\mathbb W}^{\indep}(\cdot,\omega)$ and ${\mathbb W}^{\indep}(\cdot,\cdot)$
must transfer to $(W^i)_{i=1,2}$ and $(W^{i,j})_{i,j=1,2}$. In short, everything works as if the pair $(W^2,W^{2,2})$ was
a mere independent copy of $(W^1,W^{1,1})$ and the conditional law of $W^{1,2}$ given $(W^2,W^1,W^{2,2})$
was the same as the conditional law of $W^{2,1}$ given $(W^1,W^2,W^{1,1})$, in which case the only needed ingredients are 
$W^1$, $W^{1,1}$, $W^{2}$ and $W^{2,1}$. The latter is consistent with the statement. }

\begin{proof}
Recall first from \cite{BlackwellDubins} the following form of Skorokhod representation theorem. {\it There exists a function 
$\Psi : [0,1] \times {\mathcal P}\big({\mathcal C}({\mathcal S}_{2}^T;\RR^m \otimes \RR^m)\big) \rightarrow {\mathcal C}\big({\mathcal S}_{2}^T;\RR^m \otimes \RR^m\big)$ 
such that
\begin{enumerate}
   \item[\textcolor{gray}{$\bullet$}] for every probability $\mu$ on ${\mathcal C}({\mathcal S}_{2}^T)$, equipped with its Borel $\sigma$-field, 
$[0,1] \ni u \mapsto \Psi(u,\mu)$ is a random variable with $\mu$ as distribution -- $[0,1]$ being equipped with Lebesgue measure,
   
   \item[\textcolor{gray}{$\bullet$}] the map $\Psi$ is measurable. 
\end{enumerate}}

Let now $\big(q(w^1,w^2,w^{1,1},\cdot)\big)_{w^1,w^2 \in {\mathcal C}([0,T];\RR^m);w^{1,1} \in {\mathcal C}({\mathcal S}_{2}^T;\RR^m \otimes \RR^m)}$ be a regular conditional probability of $W^{2,1}$ given $(W^1,W^2,W^{1,1})$. Define on $\Omega$ the random variables
\begin{equation*}
W(\xi,u) := W^1(\xi), \quad {\mathbb W}(\xi,u) := W^{1,1}(\xi),
\end{equation*}
and, on $\Omega^2$, 
\begin{equation*}
\begin{split}
&W'\big((\xi,u),(\xi',u')\big) := W^1(\xi'), 
\\
&{\mathbb W}^{\indep}\big((\xi,u),(\xi',u')\big) := \Psi \Bigl( u',q\bigl(W^1(\xi'),W^1(\xi),W^{1,1}(\xi'),\cdot\bigr) \Bigr). 
\end{split}
\end{equation*}
Since the law of $\big(W,W',{\mathbb W}\big)$ under ${\mathbb P}^{\otimes 2}$ is the same as the law of $\big(W^1,W^2,W^{1,1}\big)$ under ${\mathbb Q}$, we deduce that the law of $\big(W,W',{\mathbb W},{\mathbb W}^{\indeprev}\big)$ under ${\mathbb P}^{\otimes 2}$, 
with 
${\mathbb W}^{\indeprev}(\omega,\omega') :=
{\mathbb W}^{\indep}(\omega',\omega)$, 
 is the same as the law of $\big(W^1,W^2,W^{1,1},W^{2,1}\big)$ under ${\mathbb Q}$. In particular, with probability 1 under $\PP^{\otimes 2}$, for all $0 \leq r \leq s \leq t \leq T$,
\begin{equation*}
{\mathbb W}_{r,t}^{\indeprev}(\omega,\omega') = {\mathbb W}_{r,s}^{\indeprev}(\omega,\omega') + {\mathbb W}_{s,t}^{\indeprev}(\omega,\omega') + W_{r,s}(\omega') \otimes W_{s,t}(\omega), 
\end{equation*}
that is
\begin{equation*}
{\mathbb W}_{r,t}^{\indep}(\omega,\omega') = {\mathbb W}_{r,s}^{\indep}(\omega,\omega') + {\mathbb W}_{s,t}^{\indep}(\omega,\omega') + W_{r,s}(\omega) \otimes W_{s,t}(\omega'). 
\end{equation*}
Call now $A \in {\mathcal F}$ the set of those $\omega$'s in $\Omega$ for which the above relation fails for $\omega'$ in a set of positive probability measure under $\PP$. Clearly, $\PP(A)=0$. Define in a similar way $A'$ by exchanging the roles of $\omega$ and $\omega'$. For $\omega \in A \cup A'$, set $W(\omega) \equiv 0$; and whenever $\omega \in A$ or $\omega' \in A'$, set ${\mathbb W}^{\indep}(\omega,\omega') \equiv 0$. If 
$\omega \not \in A$, we have, by definition of $A$, the third identity in \eqref{eq:chen} -- pay attention that we use the fact that the identity is understood as an equality between classes of random variables that are $\PP$-a.e. equal.  If $\omega \in A$, it is also true since all the terms are zero. 
The second identity in \eqref{eq:chen} is checked in the same way. As for the first one, it holds on the complementary 
$B^{\complement}$ of a null event $B$. We then replace $A$ by $A \cup B$ and $A'$ by $A' \cup B$ in the previous lines and set 
$W(\cdot) \equiv 0$ and ${\mathbb W}(\cdot) \equiv 0$ on $A \cup A' \cup B$
{and ${\mathbb W}^{\indep}(\omega,\omega') =0$ when $\omega \in A \cup B$ or $\omega' \in A' \cup B$.}
\end{proof}

\subsection{Analytical conditions}

We use in this work the notion of $p$-variation to handle the regularity of the various trajectories in hand. The choice of the $p$-variation, instead of the simplest H\"older (semi-) norm, is dictated by the arguments we use below to prove well-posedness of  \eqref{eq:1:2}. We shall indeed invoke some integrability results from \cite{CassLittererLyons}, which are explicitly based upon the notion of $p$-variation and are not proved in H\"older (semi-) norm. Several types of $p$-variations are needed to handle differently the finite and infinite dimensional components of a rough set-up ${\boldsymbol W}$. 
\textcolor{black}{Throughout, $p$ is taken} in the interval $[2,3)$. For a continuous function ${\mathbb G}$ from the simplex ${\mathcal S}_{2}^T$ into some $\RR^\ell$, we set, for any $p'\geq 1$, 
\begin{equation*}
\begin{split}
&\| {\mathbb G} \|_{[0,T],p'-\textrm{\rm var}}^{p'} := \sup_{0 = t_{0}<t_{1} \cdots < t_{n}=T } \, \sum_{i=1}^n \vert {\mathbb G}_{t_{i-1},t_{i}}\vert^{p'},
\end{split}
\end{equation*}
and define for any function $g$ from $[0,T]$ into $\RR^\ell$, 
$
\| g \|_{[0,T],p-\textrm{\rm var}}^{p} := \| {\mathbb G} \|_{[0,T],p-\textrm{\rm var}}^{p}$
where ${\mathbb G}_{s,t} := g_t - g_s$. Similarly, for a random variable ${\mathbb G}(\cdot)$ on $\Omega$ with values in ${\mathcal C}({\mathcal S}_{2}^T;\RR^{\ell})$, and $p'\geq 1$, we define its $p'$-variation in $\LL^q$ as 
\begin{equation}
\label{eq:q:p-var}
\begin{split}
&\langle {\mathbb G}(\cdot) \rangle_{q; [0,T],p'-\textrm{\rm var}}^{p'} := \sup_{0 = t_{0}<t_{1} \cdots < t_{n}=T } \, \sum_{i=1}^n \big\langle  {\mathbb G}_{t_{i-1},t_{i}}(\cdot) \big\rangle_{q}^{p'},
\end{split}
\end{equation}
and define for a random variable $G(\cdot)$ on $\Omega$, with values in ${\mathcal C}([0,T];\RR^{\ell})$,
$$
\big\langle G(\cdot) \big\rangle_{q ; [0,T],p-\textrm{\rm var}}^{p} := \big\langle {\mathbb G}(\cdot) \big\rangle_{q; [0,T],p-\textrm{\rm var}}^{p},
$$
as the $p$-variation semi-norm in $\LL^q$ of ${\mathcal S}_{2}^T \ni (s,t) \mapsto {\mathbb G}_{s,t}(\cdot) = G_{t}(\cdot) - G_{s}(\cdot)$. Last, for a random variable ${\mathbb G}(\cdot,\cdot)$ from $(\Omega^2,{\mathcal F}^{\otimes 2})$ into ${\mathcal C}({\mathcal S}_{2}^T;\RR^{\ell})$, we set 
\begin{equation}
\label{eq:q:q:p-var}
\begin{split}
&\llangle {\mathbb G}(\cdot,\cdot) \rrangle_{q ; [0,T],p/2-\textrm{\rm var}}^{p/2} := \sup_{0 = t_{0}<t_{1} \cdots < t_{n}=T } \sum_{i=1}^n \left\llangle {\mathbb G}_{t_{i-1},t_{i}}(\cdot,\cdot) \right\rrangle_{q}^{p/2}.
\end{split}
\end{equation}
Given these definitions, we require from the rough set-up ${\boldsymbol W}$ that 

\begin{itemize}
   \item For any $\omega \in \Omega$, the path $W(\omega)$ is in the space ${\mathcal C}([0,T];\RR^m)$, and the map $W : \Omega \ni \omega \mapsto W(\omega) \in {\mathcal C}([0,T];\RR^m)$ is Borel-measurable and $q$-integrable (meaning that
   the supremum of $W$ over $[0,T]$ is $q$-integrable). 
   
   \item For any $\omega \in \Omega$, the two-index path $\WW(\omega)$ is in ${\mathcal C}({\mathcal S}_{2}^T;\RR^{m \times m})$, and the map $\WW : \Omega \ni \omega \mapsto \WW(\omega) \in {\mathcal C}({\mathcal S}_{2}^T;\RR^{m \times m})$ is Borel-measurable and $q$-integrable (i.e., the supremum of ${\mathbb W}$ over ${\mathcal S}_{2}^T$ has a finite $q$-moment).   

   \item For any $(\omega,\omega') \in \Omega^2$, the two-index path $\WW^{\indep}(\omega,\omega')$ is an element of   ${\mathcal C}({\mathcal S}_{2}^T;\RR^{m \times m})$, and the map $\WW^{\indep} : \Omega^2 \ni (\omega,\omega') \mapsto \WW^{\indep}(\omega,\omega') \in {\mathcal C}({\mathcal S}_{2}^T;\RR^{m \times m})$ is Borel-measurable and $q$-integrable. In particular, for a.e. $\omega \in \Omega$, the two-index path $\WW^{\indep}(\omega,\cdot)$ belongs to ${\mathcal C}\big({\mathcal S}_{2}^T; \LL^q(\Omega,{\mathcal F},\PP; \RR^{m \times m})\big)$, and the map $\Omega \ni \omega \mapsto \WW^{\indep}(\omega,\cdot)$ is Borel-measurable and $q$-integrable, and similarly for $\WW^{\indep}(\cdot,\omega)$; as before, we assume the latter to be true for every $\omega \in \Omega$. Also, the two-index deterministic path $\WW^{\indep}(\cdot,\cdot)$ is a continuous mapping from ${\mathcal S}_{2}^T$ into $\LL^q\big(\Omega^2,{\mathcal F}^{\otimes 2},\PP^{\otimes 2};\RR^{m \times m}\big)$.
\end{itemize}

\noindent We then set, for all $0\leq s\leq t \leq T$ and $\omega\in\Omega$,
\begin{equation}
\label{eq:v}
\begin{split}
&v(s,t,\omega) := \big\| W(\omega) \big\|_{[s,t],p-\textrm{\rm var}}^p + \big\langle W(\cdot) \big\rangle_{q ; [s,t],p-\textrm{\rm var}}^p   
 + \big\| \WW(\omega) \big\|_{[s,t],p/2-\textrm{\rm var}}^{p/2} \\
&\hspace{15pt} + \big\langle \WW^{\indep}(\omega,\cdot) \big\rangle_{q ; [s,t],p/2-\textrm{\rm var}}^{p/2}    + \big\langle \WW^{\indep}(\cdot,\omega) \big\rangle_{q ; [s,t],p/2-\textrm{\rm var}}^{p/2} + \big\llangle \WW^{\indep}(\cdot,\cdot) \big\rrangle_{q ; [s,t],p/2-\textrm{\rm var}}^{p/2},
\end{split}
\end{equation}
and we assume that, for any $T>0$ and $\omega \in \Omega$, $v(0,T,\omega)$ is finite. Then, we have the super-additivity property: For any $0 \leq r \leq s \leq t \leq T$, and $\omega\in\Omega$,  
$v(r,t,\omega) \geq v(r,s,\omega) + v(s,t,\omega)$.

Observe also from \cite[Proposition 5.8]{FrizVictoirBook} that $\omega \mapsto (v(s,t,\omega))_{(s,t)\in {\mathcal S}_{2}^T}$ is a random variable with values in ${\mathcal C}({\mathcal S}_{2}^T;\RR_{+})$. Throughout the analysis, we assume 
$\langle v(0,T, \cdot) \rangle_{q} < \infty$,
for any rough set-up considered on the interval $[0,T]$. By Lebesgue's dominated convergence theorem, the function 
$
{\mathcal S}_{2}^T \ni (s,t) \mapsto \langle v(s,t,\cdot) \rangle_q
$ 
is continuous. {\it We shall actually assume that it is of bounded variation on $[0,T]$, i.e.}, 
\begin{equation*}
\langle v(\cdot) \rangle_{q;[s,t],1-\textrm{\rm var}} := \sup_{0 \leq t_{1} < \cdots < t_{n} \leq T} \sum_{i=1}^{n} \langle v(t_{i-1},t_{i},\cdot) \rangle_{q} < \infty. 
\end{equation*}
{Below, we call a control any family
of random variables $(\omega \mapsto w(s,t,\omega))_{(s,t) \in {\mathcal S}_{2}^T}$ 
 that 
is jointly continuous in $(s,t)$ and that satisfies, 
\begin{equation}
\label{eq:w:s:t:omega:ineq}
w (s,t,\omega) \geq v(s,t,\omega) + \langle v(\cdot) \rangle_{q;[s,t],1-\textrm{\rm var}},
\end{equation}
together with
\begin{equation}
\begin{split}
\label{eq:useful:inequality:wT}
&\langle w(s,t,\cdot) \rangle_{q} \leq 2 \, w(s,t,\omega),
\\
&w(r,t,\omega) \geq w(r,s,\omega) + w(s,t,\omega), \quad r \leq s \leq t.
\end{split}
\end{equation}
Of course, 
a typical choice to get 
\eqref{eq:w:s:t:omega:ineq}
and 
\eqref{eq:useful:inequality:wT} is to choose
\begin{equation}
\label{eq:w:s:t:omega}
w (s,t,\omega) := v(s,t,\omega) + \langle v(\cdot) \rangle_{q;[s,t],1-\textrm{\rm var}}.
\end{equation}
}


\begin{Example}
\label{example:3}
\textbf{\textbf{ Gaussian processes}  -- } Start from an $\RR^m$-valued tuple  $W := (W^1,\cdots,W^m)$ of independent and centred continuous Gaussian processes, defined on some finite time interval $[0,T]$, such that the two-dimensional covariance of $W$ is of finite $\rho$-variation for some $\rho \in [1,3/2)$ and there exists a constant $K$ such that, for any subinterval $[s,t]\subset [0,T]$ and any $k=1,\cdots,m$, one has\footnote{{In fact, \eqref{eq:FV:rho:variation:covariance} implies that the two-dimensional covariance of $W$ is of finite $\rho$-variation.}}
\begin{equation}
\label{eq:FV:rho:variation:covariance}
\begin{split}
\sup \, \sum_{i,j} \, \Bigl\vert {\mathbb E} \Bigl[ \bigl(W^k_{t_{i+1}} - W^{k}_{t_{i}} \bigr) \bigl( W^k_{s_{j+1}} - W^k_{s_{j}} \bigr) \Bigr] \Bigr\vert^\rho \leq K \vert t-s\vert,
\end{split}
\end{equation} 
where the supremum is taken over all dissections $(t_{i})_{i}$ and $(s_{j})_{j}$ of the interval $[s,t]$. See Definition 5.50 in \cite{FrizVictoirBook}. This setting includes the case of fractional Brownian motion, with Hurst index greater than $1/4$. Without any loss of generality, we may assume that the process $W$ is constructed on the canonical space $(\Omega,{\mathcal F},\PP)$, where $\Omega =
{\mathcal W}$, with ${\mathcal W}:=
 {\mathcal C}([0,T];{\mathbb R}^m)$, ${\mathcal F}$ is the Borel $\sigma$-field, and $W$ is the coordinate process. We then denote by $(\Omega={\mathcal W},{\mathcal H},\PP)$ the abstract Wiener space associated with $W$, 
see \cite[Appendix D]{FrizVictoirBook}, 
 where ${\mathcal H}$ is a Hilbert space, which is automatically embedded in the subspace ${\mathcal C}^{\varrho-\textrm{\rm var}}\big([0,T];\RR^m\big)$ of ${\mathcal C}\big([0,T];\RR^m\big)$ consisting of \textcolor{black}{continuous} paths of finite $\varrho$-variation. By Theorem 15.33 in \cite{FrizVictoirBook}, we know that, \textcolor{black}{for $\omega$ outside an exceptional event}, the trajectory $W(\omega)$ may be lifted into a rough path $(W(\omega),{\mathbb W}(\omega))$ with finite $p$-variation for any $p \in (2\rho,3)$, \textcolor{black}{namely $W(\omega)$ has a finite $p$-variation and 
${\mathbb W}(\omega)$ has a finite $p/2$-variation}. We lift arbitrarily \textcolor{black}{(say onto the zero path)} on the null set where the lift is not automatic. The pair $(W,{\mathbb W})$, indexed by $\omega$ is part of our rough set-up.  \textcolor{black}{In this regard}, we recall from Theorem 15.33 in \cite{FrizVictoirBook} that the random variables
\begin{equation}
\label{eq:Holder}
\Omega \ni \omega \mapsto \big\| W(\omega) \big\|_{[0,T],p-\textrm{\rm var}}, 
\quad \Omega \ni \omega \mapsto \big\| {\mathbb W}(\omega) \big\|_{[0,T],
p/2-\textrm{\rm var}},
\end{equation}
have respectively Gaussian and exponential tails, and thus have a finite $\LL^q$-moment.

One can proceed as follows to construct the other elements 
$
\big({\mathbb W}^{\indep}(\omega,\cdot)\big)_{\omega \in \Omega}$, $\big({\mathbb W}^{\indep}(\cdot,\omega)\big)_{\omega \in \Omega}$, ${\mathbb W}^{\indep}(\cdot,\cdot)$ 
of our rough set-up. We extend the space into $(\Omega^2,{\mathcal F}^{\otimes 2},\PP^{\otimes 2})$, with $\Omega$ embedded in the first component say, and denote by $(W,W')$ the canonical coordinate process on $\Omega^2$. They are independent and have independent Gaussian components under $\PP^2$. The associated abstract Wiener space is nothing but $\big(\Omega^2,{\mathcal H} \oplus {\mathcal H},\PP^{\otimes 2}\big)$. The process $(W,W')$ also satisfies Theorem 15.33 in \cite{FrizVictoirBook} for the same exponent $\rho$ as before, so, we can enhance $(W,W')$ into a Gaussian rough path, with some arbitrary extension outside the $\PP^{\otimes 2}$-exceptional event on which we cannot construct the enhancement. To ease the notations, we merely write $W(\omega)$ for $W(\omega,\omega')$ as it  is independent of $\omega$; similarly, we write $W'(\omega')$ for $W'(\omega,\omega')$. Proceeding as before, we call $\bigl({\mathbb W}^{\indep}(\omega,\omega')\bigr)_{\omega,\omega' \in \Omega}$, the upper off-diagonal $m \times m$ block in the decomposition of the second-order tensor of the rough path in the form of a $(2m) \times (2m)$-matrix with four blocks of size $m \times m$. Chen's relationship then yields, 
 for $\PP^{\otimes 2}$-a.e. $(\omega,\omega')$,
\begin{equation*}
{\mathbb  W}^{\indep}_{r,t}(\omega,\omega') = {\mathbb  W}^{\indep}_{r,s}(\omega,\omega') + {\mathbb  W}^{\indep}_{s,t}(\omega,\omega') + W_{r,s}(\omega) \otimes W_{s,t}(\omega'),
 \end{equation*}
for all $r \leq s \leq t$. As before, we know from Theorem 15.33  in \cite{FrizVictoirBook} that the $1/p$-H\"older semi-norm of ${W}(\omega)$, which we denote by $\|\textcolor{black}{W(\omega)}\big\|_{[0,T],(1/p)-\textrm{\rm H\"ol}}$, and the $2/p$-H\"older semi-norm of ${\mathbb W}^{\indep}(\omega,\omega')$, which we denote by $\big\|{\mathbb W}^{\indep}(\omega,\omega')\big\|_{[0,T],(2/p)-\textrm{\rm H\"ol}}$, have respectively Gaussian and exponential tails, when considered as random variables on the spaces $(\Omega,{\mathcal F},{\mathbb P})$ and $\big(\Omega^2,{\mathcal F}^{\otimes 2},{\mathbb P}^{\otimes 2}\big)$. In particular, for a.e. $\omega \in \Omega$, we may consider $\bigl(\WW_{\textcolor{black}{s,t}}^{\indep}(\omega,\cdot)\bigr)_{\textcolor{black}(s,t) \in {\mathcal S}_{2}^T}$ as a continuous process with values in $\LL^q$. Moreover, 
\begin{equation*}
\begin{split}
&\textcolor{black}{\bigl\langle} {\mathbb W}^{\indep}(\omega,\cdot) \textcolor{black}{ \bigr\rangle_{q; [0,T],p/2-\textrm{\rm var}}^{p/2}}
\\
&= \sup_{0=t_{0}<t_{1}<\cdots < t_{n}=T} \sum_{i=1}^n \bigl\langle {\mathbb W}^{\indep}_{t_{i-1},t_{i}}(\omega,\cdot) \bigr\rangle_{q}^{p/2}   \\
&\leq T \, \Bigl\langle \|{\mathbb W}^{\indep}(\omega,\cdot)\|_{[0,T],(2/p)-\textrm{\rm H\"ol}} \Bigr\rangle_{q}^{p/2}   \leq T \, \Bigl\langle\|{\mathbb W}^{\indep}(\omega,\cdot)\|_{[0,T],(2/p)-\textrm{\rm H\"ol}}^{p/2}  \Bigr\rangle_{q}, 
\end{split}
\end{equation*} 
which shows that the left-hand side has finite moments of any order. Arguing in the same way for $\big(\WW^{\indep}(\cdot,\omega)\big)_{\omega \in \Omega}$ and for $\WW^{\indep}$, we deduce that $v$ \textcolor{black}{in \eqref{eq:v}} is almost surely finite and $q$-integrable. Obviously, by replacing $[0,T]$ by $[s,t] \subset [0,T]$, we obtain that the $q$-moment of $v$ is Lipschitz (and thus of finite 1-variation), as required.  

All these properties (that hold true on a full event) may be extended to the full set $\Omega^2$ by arguing as in the proof 
of Proposition 
\ref{prop:example:construction}.
\end{Example}

\subsection{Local accumulation}
To use that rough set-up in our machinery,  we need a version of an integrability result of \cite{CassLittererLyons} whose proof is postponed to Appendix \ref{SectionIntegrability}. Given {a nondecreasing}\footnote{In the sense that 
$\varpi(a,b) \geq \varpi(a',b')$ if $(a',b') \subset (a,b)$.} continuous positive valued function $\varpi$ on ${\mathcal S}_{2}$, a parameter $s \geq 0$ and a threshold $\alpha >0$, we define inductively a sequence of times  
\begin{equation}
\label{eq:stopping:times}
\tau_{0}(s,\alpha) := s, \quad 
\textrm{\rm and} \quad \tau_{n+1}^{\varpi}(s,\alpha) := \inf\Bigl\{ u \geq \tau_{n}^{\varpi}(s,\alpha) \, : \, \varpi\bigl(\tau_{n}^{\varpi}(s,\alpha),u\bigr) \geq \alpha \Bigr\}, 
\end{equation}
with the understanding that $\inf \emptyset= + \infty$. For $t \geq s$, set
\begin{equation}
\label{eq:N:s:t:omega}
N_{\varpi}\bigl([s,t],\alpha\bigr) := \sup \Bigl\{ n \in {\mathbb N} \ : \ \tau_{n}^{\varpi}(s,\alpha) \leq t \Bigr\}. 
\end{equation}

Below, we call $N_{\varpi}$ the \textbf{\textbf{local accumulation of}} $\varpi$ (of size $\alpha$ if we specify the value of the threshold): {$N_{\varpi}([s,t],\alpha)$ is the largest number of disjoint open sub-intervals $(a,b)$ of  
$[s,t]$ on which $\varpi(a,b)$ is greater than or equal to $\alpha$}. When $\varpi(s,t) = w(s,t,\omega)^{1/p}$ with $w$ a {control satisfying 
\eqref{eq:w:s:t:omega:ineq}
and 
\eqref{eq:useful:inequality:wT}}
and when the framework makes it clear, we just write $N([s,t],\omega,\alpha)$ for $N_{\varpi}([s,t],\alpha)$. 
Similarly, we also write $\tau_{n}(s,\omega,\alpha)$ for $\tau_{n}^{\varpi}(s,\alpha)$ when $\varpi(s,t) = w(s,t,\omega)^{1/p}$. We will also use the notation
$\tau_{n}^{\varpi}(s,t,\alpha) := \tau_{n}^{\varpi}(s,\alpha) \wedge t$.

The proof of the following statement is given in Appendix \ref{SubsectionAppendixCLL}. Recall that a positive random variable $A$ has a Weibull tail with shape parameter $2/\varrho$ if $A^{1/\rho}$ has a Gaussian tail.

\begin{thm}
\label{thm:cass:litterer:lyons}
Let $W$ be a continuous centred Gaussian process, defined over some finite interval $[0,T]$. Assume it has independent components, and denote by 
$({\mathcal W},{\mathcal H},\PP)$ its associated Wiener space. Suppose that the covariance function is of finite two dimensional $\varrho$-variation for some $\varrho \in [1,3/2)$ and satisfies the Lipschitz estimate \eqref{eq:FV:rho:variation:covariance}. Then, for $p \in (2 \varrho,3)$ and $\alpha >0$, the process $N(\cdot, \alpha):=(N([0,T], \omega\textcolor{black}{,}\alpha))_{\omega \in \Omega}$ associated to the rough-set up built from $W$, {with $w$ being defined as in \eqref{eq:w:s:t:omega},} has a Weibull tail with shape parameter $2/\varrho$. 
\end{thm}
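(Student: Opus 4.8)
The plan is to reduce the statement to the Cass–Litterer–Lyons integrability estimate for Gaussian rough paths as it appears in \cite{CassLittererLyons} (or \cite[Section 15.5]{FrizVictoirBook}), applied not directly to $W(\omega)$ but to the enhanced $(2m)$-dimensional Gaussian process $(W,W')$ on the product space $(\Omega^2,{\mathcal F}^{\otimes 2},\PP^{\otimes 2})$, which was used in Example \ref{example:3} to build the full rough set-up. The key point is that the control $w$ defined in \eqref{eq:w:s:t:omega} is built out of finitely many $p$- and $p/2$-variation seminorms: the finite-dimensional pieces $\|W(\omega)\|_{[s,t],p\text{-}\mathrm{var}}^p$ and $\|\WW(\omega)\|_{[s,t],p/2\text{-}\mathrm{var}}^{p/2}$, the $\LL^q$-variation pieces $\langle W(\cdot)\rangle_{q;[s,t],p\text{-}\mathrm{var}}^p$, $\langle\WW^{\indep}(\omega,\cdot)\rangle_{q;[s,t],p/2\text{-}\mathrm{var}}^{p/2}$, $\langle\WW^{\indep}(\cdot,\omega)\rangle_{q;[s,t],p/2\text{-}\mathrm{var}}^{p/2}$, $\llangle\WW^{\indep}(\cdot,\cdot)\rrangle_{q;[s,t],p/2\text{-}\mathrm{var}}^{p/2}$, plus the deterministic $1$-variation term $\langle v(\cdot)\rangle_{q;[s,t],1\text{-}\mathrm{var}}$. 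So first I would bound $w(s,t,\omega)^{1/p}$ from above, up to a multiplicative constant, by a homogeneous rough-path-type control $\varpi_{\mathrm{GRP}}(s,t)$ attached to a single genuine Gaussian rough path living on an enlarged probability space — namely the canonical rough path lift of $(W,W')$ — together with a harmless deterministic Lipschitz term; this is essentially the content of the estimates already displayed in Example \ref{example:3}.

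Next I would invoke the comparison principle for local accumulations: if $\varpi \le C\,\varpi'$ pointwise on ${\mathcal S}_2^T$ (with $\varpi'$ nondecreasing), then $N_\varpi([s,t],\alpha) \le N_{\varpi'}([s,t],\alpha/C) + 1$, because each sub-interval on which $\varpi$ exceeds $\alpha$ is one on which $\varpi'$ exceeds $\alpha/C$, and disjointness is preserved. This lets me dominate $N([0,T],\omega,\alpha)$ (for the rough set-up's control $w$) by a constant plus the local accumulation $N_{\varpi_{\mathrm{GRP}}}([0,T],c\alpha)$ of the homogeneous $p$-variation control of the Gaussian rough path $(W,W')$. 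The deterministic Lipschitz term is bounded and only affects constants; the issue that it lives on $\Omega^2$ rather than $\Omega$ is handled exactly as in Example \ref{example:3}, since $W(\omega)$ and $\WW(\omega)$ depend only on the first coordinate while the sections $\WW^{\indep}(\omega,\cdot)$ etc.\ are, after conditioning, governed by the same Gaussian rough path. The covariance of $(W,W')$ is block-diagonal with each block equal to that of $W$, hence still of finite two-dimensional $\varrho$-variation satisfying \eqref{eq:FV:rho:variation:covariance} with the same $\varrho$ and a comparable constant, so the hypotheses of the Cass–Litterer–Lyons theorem apply on the product space.

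Finally I would quote the Cass–Litterer–Lyons result itself — in the precise $p$-variation form proved in Appendix \ref{SubsectionAppendixCLL}, which is exactly why the whole development is phrased in $p$-variation rather than H\"older norm — to conclude that $N_{\varpi_{\mathrm{GRP}}}([0,T],c\alpha)$ has a Weibull tail with shape parameter $2/\varrho$, i.e.\ $\PP(N \ge n) \le C_1\exp(-C_2 n^{2/\varrho})$. Since adding a deterministic constant and rescaling the threshold $\alpha$ do not change the shape parameter of a Weibull tail, this transfers to $N(\cdot,\alpha)$ for the rough set-up built from $W$, which is the assertion. The main obstacle I anticipate is not any of these steps individually but making the first reduction clean: one must check that the $\LL^q$-variation seminorms appearing in $w$ are genuinely controlled by the (pathwise, $\omega$-wise) $p$-variation norm of the single Gaussian rough path on the product space — this uses that $W^{\indep}(\omega,\cdot)$ is an off-diagonal block of a genuine rough path second level and that, by the Gaussian integrability recalled in \eqref{eq:Holder} and the displayed estimates in Example \ref{example:3}, the relevant $\LL^q$-moments are controlled by powers of the pathwise $p$-variation uniformly in the sub-interval $[s,t]$; getting the uniformity in $(s,t)$ with the right homogeneity is the delicate bookkeeping. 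Everything else is an application of results stated earlier or in \cite{CassLittererLyons,FrizVictoirBook}.
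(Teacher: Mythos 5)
Your general plan (pass to the $(2m)$-dimensional Gaussian rough path $(W,W')$ on $(\Omega^2,\PP^{\otimes 2})$, exploit the block-diagonal covariance, use a comparison principle like \eqref{eq:N1:N2} to reduce the local accumulation of the sum $v$ to that of each summand, and handle the deterministic $1$-variation term trivially) matches the paper, and the comparison principle you state is correct. The gap is in the step you yourself flag as ``delicate bookkeeping''. The terms $\langle\WW^{\indep}(\omega,\cdot)\rangle_{q;[s,t],p/2\text{-var}}^{p/2}$ and $\langle\WW^{\indep}(\cdot,\omega)\rangle_{q;[s,t],p/2\text{-var}}^{p/2}$ are $\LL^q$-averages over $\omega'$, hence functions of $\omega$ alone. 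They are \emph{not} dominated pointwise by any control $\varpi_{\mathrm{GRP}}(s,t;\omega,\omega')$ built from the $p$-variation of the single rough path ${\boldsymbol W}(\omega,\omega')$, because any such control depends on the particular $\omega'$ whereas the $\LL^q$-norm has integrated $\omega'$ out. And the crude $\omega$-only bound $\langle\WW^{\indep}_{s,t}(\omega,\cdot)\rangle_q \le \langle K(\omega,\cdot)^2\rangle_q \,|t-s|^{2/p}$, with $K(\omega,\omega')$ the H\"older rough-path norm, would give a control $\langle K(\omega,\cdot)^2\rangle_q^{p/2}(t-s)$ whose local accumulation is linear in $\langle K(\omega,\cdot)^2\rangle_q^{p/2}$; the tail of this quantity in $\omega$ is \emph{not} Weibull of shape $2/\varrho$ (a Jensen argument over $\omega'$ only yields sub-exponential tails, which is exactly how the paper proves the weaker statement about $w(0,T,\cdot)$, not the one about $N$). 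So your reduction, as written, cannot produce the shape parameter $2/\varrho$.

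What actually works — and what the paper does — is to run the Cass--Litterer--Lyons Cameron--Martin translation argument \emph{on the $\LL^q$-averaged quantity directly}, translating only the first coordinate. The deterministic translation inequality
\begin{equation*}
\talloblong {\boldsymbol W}(\omega,\omega') \talloblong_{[s,t],p\text{-var}}^p
\le c\bigl( \talloblong T_{h\oplus 0}{\boldsymbol W}(\omega,\omega') \talloblong_{[s,t],p\text{-var}}^p + \|h\|_{[s,t],\varrho\text{-var}}^p \bigr),
\end{equation*}
holds pointwise in $(\omega,\omega')$; taking the $q$-th power, integrating $\omega'$ out with respect to $\PP$, passing to H\"older norms to extract a factor $(t-s)$, and then summing over the greedy partition (using superadditivity of $\|h\|_{[\cdot,\cdot],\varrho\text{-var}}^{\varrho}$) yields a CLL-type bound of the form $N([0,T],\omega,\alpha)\,\alpha \le c\,f(\omega) + c\,\|h\|_{\mathcal H}^{\varrho}$ for every $h\in\mathcal H$, where $f(\omega)$ involves $\langle\talloblong T_{h\oplus 0}{\boldsymbol W}(\omega,\cdot)\talloblong^p_{[0,T],(1/p)\text{-H\"ol}}\rangle_q$. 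Because translating $\omega\mapsto\omega+h$ identifies with $T_{h\oplus 0}$ on a full event, this is exactly the Borell/isoperimetry set-up on $\Omega$, and the $\varrho$-power of $\|h\|_{\mathcal H}$ is what produces the Weibull shape $2/\varrho$. You cannot first ``project'' the control onto $\Omega$ and then cite CLL as a black box: the Cameron--Martin translation has to be threaded through the $\LL^q$-norm over $\omega'$.
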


{As a corollary, we deduce that the} estimate on $N$ required in Theorem \ref{ThmMain} is satisfied in the above setting. For the same value of $p$, the quantity $w(0,T)$ in \eqref{eq:w:s:t:omega} also satisfies the integrability statement of Theorem \ref{ThmMain}; the latter then applies in the above Gaussian setting. Building on the work \cite{CassOgrodnik} on Markovian rough paths one can prove a similar result as Theorem \ref{thm:cass:litterer:lyons} for Markovian rough paths.

\section{Controlled Trajectories and Rough Integral}
\label{SectionControlledTrajectories}

Following \cite{Gubinelli}, we now define a controlled path and the corresponding rough integral.
{Throughout the section, we are given a control $w$ satisfying 
\eqref{eq:w:s:t:omega:ineq}
and 
\eqref{eq:useful:inequality:wT}. 
}

\subsection{Controlled Trajectories}
\label{SubsectionControlledTraj}

We first define the notion of controlled trajectory for a given outcome $\omega\in \Omega$.

\begin{defn}
\label{definition:omega:controlled:trajectory}
An $\omega$-dependent continuous $\RR^d$-valued path $(X_{t}(\omega))_{0 \le t \le T}$ is called an \textbf{\textbf{$\omega$-controlled path}} on $[0,T]$ if its increments can be decomposed as 
\begin{equation}
\label{eq:omega:controlled:eq}
X_{s,t}(\omega) = \delta_{x} X_{s}(\omega) W_{s,t}(\omega) + \EE \bigl[ \delta_{\mu} X_{s}(\omega,\cdot) W_{s,t}(\cdot) \bigr] + R^X_{s,t}(\omega),
\end{equation}
where 
$\big(\delta_{x} X_{t}(\omega)\big)_{0 \leq t \leq T}$ belongs to the space $\cC\big([0,T];\RR^{d \times m}\big)$, 
$\big(\delta_{\mu} X_{t}(\omega,\cdot)\big)_{0 \leq t \leq T}$ to the space ${\mathcal C}\big([0,T];\LL^{4/3}(\Omega,{\mathcal F},\PP;\RR^{d \times m})\big)$,
$\big(R_{s,t}^X(\omega)\big)_{s,t \in {\mathcal S}_{2}^T}$ is in the space $\cC({\mathcal S}_{2}^T;\RR^d)$,
and 
\begin{equation*}
\begin{split}
&\vvvert X(\omega) \vvvert_{\star,[0,T],w,p} := \vert X_{0}(\omega) \vert + \big\vert \delta_{x} X_{0}(\omega) \big\vert + \big\langle  \delta_{\mu} X_{0}(\omega,\cdot) \big\rangle_{4/3} + \vvvert X(\omega) \vvvert_{[0,T],w,p} < \infty,
\end{split}
\end{equation*}
where $\vvvert X(\omega) \vvvert_{[0,T],w,p} := \| X(\omega) \|_{[0,T],w,p} + \| \delta_{x} X(\omega) \|_{[0,T],w,p} + \big\langle  \delta_{\mu} X(\omega,\cdot)  \big\rangle_{[0,T],w,p,4/3}   + \| R^X(\omega) \|_{[0,T],w,p/2}$, with
\begin{equation*}
\begin{split}
&\| X(\omega) \|_{[0,T],w,p} := \sup_{[s,t] \subset [0,T]} \frac{\big\vert   X_{{s,t}}(\omega) \big\vert}{w(s,t,\omega)^{1/p}},   \
\| \delta_{x} X(\omega) \|_{[0,T],w,p} := \sup_{[s,t] \subset [0,T]} \frac{\big\vert \delta_{x} X_{{s,t}}(\omega) \big\vert}{w(s,t,\omega)^{1/p}},   
\\
&\big\langle \delta_{\mu} X(\omega,\cdot) \big\rangle_{[0,T],w,p,4/3} := \sup_{[s,t] \subset [0,T]} \frac{\big\langle  \delta_{\mu}X_{{s,t}}(\omega,\cdot)  \big\rangle_{4/3}}{w(s,t,\omega)^{1/p}},
\\
&\| R^X(\omega) \|_{[0,T],w,p/2} := \sup_{[s,t] \subset [0,T]} \frac{\big\vert R_{{s,t}}^X(\omega) \big\vert}{w(s,t,\omega)^{2/p}}.
\end{split}
\end{equation*}
We call \textcolor{black}{$\delta_x X(\omega)$} and \textcolor{black}{$\delta_\mu X(\omega,\cdot)$} in  \eqref{eq:omega:controlled:eq} the \textbf{\textbf{derivatives of the controlled path}} \textcolor{black}{$X(\omega)$}.
\end{defn}

The value $4/3$ is somewhat arbitrary here. Our analysis could be managed  with another exponent strictly greater than 1, but this would require higher values for the exponent $q$ than that one we use in the definition of the rough set-up -- recall $q \geq 8$. It seems that the value $4/3$ is pretty convenient, as $4/3$ is the conjugate exponent of $4$. It follows \textcolor{black}{from} the fact that $\vvvert X(\omega) \vvvert_{\star,[0,T],p}$ is finite that an $\omega$-controlled path is controlled in the usual sense by the first level $\big(W_t(\omega),W_t(\cdot)\big)_{0\leq t\leq T}$ of our rough set-up, \textcolor{black}{provided the latter is considered as taking values in an infinite dimensional space}, see Section \ref{SubsectionIntegral} below.

We now define the notion of random controlled trajectory, which consists of a collection of $\omega$-controlled trajectories indexed by the elements of $\Omega$.

\begin{defn}
\label{definition:random:controlled:trajectory}
A family of $\omega$-controlled paths $(X(\omega))_{\omega \in \Omega}$ such that the maps
\begin{equation*}
\begin{split}
&\Omega \ni \omega \mapsto \big(X_{t}(\omega)\big)_{0 \leq t \leq T}
\in \cC\big([0,T];\RR^d\big), \ \Omega \ni \omega \mapsto \big(\delta_{x} X_{t}(\omega)\big)_{0 \leq t \leq T}
\in \cC\big([0,T];\RR^{d \times m}\big)   \\
&\Omega \ni \omega \mapsto \big(\delta_\mu X_{t}(\omega)\big)_{0 \leq t \leq T}
\in \cC\big([0,T];\LL^{4/3}(\Omega,{\mathcal F},\PP;\RR^{d \times m})\big), 
\\
&\Omega \ni \omega \mapsto \big(R_{s,t}^X(\omega)\big)_{\textcolor{black}{(s,t)} \in {\mathcal S}_{2}^T} \in 
\cC\big({\mathcal S}_{2}^T;\RR^d\big),
\end{split}
\end{equation*}
are measurable and \textcolor{black}{satisfy} 
\begin{equation}
\label{eq:Lq:vvvert}
\big\langle X_{0}(\cdot) \big\rangle_{2} + \bigl\langle \vvvert X(\cdot) \vvvert_{[0,T],w,p}\bigr\rangle_{8} < \infty
\end{equation}
is called a \textbf{\textbf{random controlled path}} on $[0,T]$.
\end{defn}

Note from \eqref{eq:useful:inequality:wT} the following elementary fact, whose proof is left to the reader.

\begin{lem}
\label{lem:1:1}
Let $\bigl((X_{t}(\omega))\bigr)_{0 \leq t \leq T})_{\omega \in \Omega}$ be a random controlled path on a time interval $[0,T]$.  Then, for any $0 \leq s<t\leq T$, we have 
\begin{equation*}
\begin{split}
\big\langle X_{s,t}(\cdot) \big\rangle_{2} &\leq \Big\langle \vvvert X(\cdot) \vvvert_{[0,T],w,p}^2 \, w(s,t,\cdot)^{2/p} \Big\rangle^{1/2}   \\
&\leq \big\langle \vvvert X(\cdot) \vvvert_{[0,T],w,p} \big\rangle_{4} \, \big\langle w(s,t,\cdot) \big\rangle_{4}^{1/p} \leq 2 \, \big\langle \vvvert X(\cdot)\vvvert_{[0,T],w,p} \big\rangle_{4} \, w(s,t,\omega)^{1/p}. 
\end{split}
\end{equation*}
Similarly,
\begin{equation*}
\begin{split}
\big\langle X_{s,t}(\cdot)\big\rangle_{4} &\leq \big\langle \vvvert X(\cdot) \vvvert_{[0,T],w,p} \big\rangle_{8} \, \big\langle w(s,t,\cdot) \big\rangle_{8}^{1/p} \leq 2 \,\big\langle \vvvert X(\cdot) \vvvert_{[0,T],w,p} \big\rangle_{8} \, w(s,t,\omega)^{1/p}. 
\end{split}
\end{equation*}
\end{lem}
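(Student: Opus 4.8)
The plan is to verify the two chains of inequalities directly from the definitions, using only the H\"older-type bound built into $\vvvert X(\omega)\vvvert_{[0,T],w,p}$ and the control inequalities \eqref{eq:useful:inequality:wT}. First I would observe that, by the very definition of $\| X(\omega)\|_{[0,T],w,p}$ as a supremum of $|X_{s,t}(\omega)|/w(s,t,\omega)^{1/p}$, one has the pointwise-in-$\omega$ bound
\[
\big\vert X_{s,t}(\omega)\big\vert \;\le\; \vvvert X(\omega)\vvvert_{[0,T],w,p}\; w(s,t,\omega)^{1/p}
\]
for every $(s,t)\in{\mathcal S}_2^T$ and every $\omega$. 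Squaring and taking expectations gives the first displayed inequality,
$\langle X_{s,t}(\cdot)\rangle_2 \le \big\langle \vvvert X(\cdot)\vvvert_{[0,T],w,p}^2\, w(s,t,\cdot)^{2/p}\big\rangle^{1/2}$.

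Next I would apply the Cauchy--Schwarz inequality to the product inside the expectation, splitting the two factors into $\LL^4$-norms: with exponents $4$ and $4$ (conjugate to themselves in the sense $\tfrac14+\tfrac14=\tfrac12$),
\[
\Big\langle \vvvert X(\cdot)\vvvert_{[0,T],w,p}^2\, w(s,t,\cdot)^{2/p}\Big\rangle^{1/2}
\;\le\;\big\langle \vvvert X(\cdot)\vvvert_{[0,T],w,p}\big\rangle_4\;\big\langle w(s,t,\cdot)\big\rangle_4^{1/p},
\]
where I have used that $\langle (\vvvert X\vvvert^2)^2\rangle^{1/4}=\langle \vvvert X\vvvert\rangle_4$ and $\langle (w^{2/p})^2\rangle^{1/4}=\langle w\rangle_4^{1/p}$. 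Then the first line of \eqref{eq:useful:inequality:wT}, namely $\langle w(s,t,\cdot)\rangle_q\le 2\,w(s,t,\omega)$ applied with $q=4$ (which is admissible since $q\ge 8\ge 4$ and $\LL^q\hookrightarrow\LL^4$ on a probability space, so in particular $\langle w(s,t,\cdot)\rangle_4\le \langle w(s,t,\cdot)\rangle_q\le 2w(s,t,\omega)$), yields $\langle w(s,t,\cdot)\rangle_4^{1/p}\le 2^{1/p} w(s,t,\omega)^{1/p}\le 2\,w(s,t,\omega)^{1/p}$, completing the first chain. The second chain is identical except that one starts from $\langle X_{s,t}(\cdot)\rangle_4 = \langle |X_{s,t}(\cdot)|^4\rangle^{1/4}$, uses the same pointwise bound, and applies Cauchy--Schwarz (or H\"older with exponents $2,2$) to split into $\langle \vvvert X(\cdot)\vvvert_{[0,T],w,p}\rangle_8\,\langle w(s,t,\cdot)\rangle_8^{1/p}$, then invokes \eqref{eq:useful:inequality:wT} with $q=8$.

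There is no real obstacle here; the only points requiring a word of care are (i) that the finiteness of the right-hand sides is exactly what is guaranteed by \eqref{eq:Lq:vvvert} in the definition of a random controlled path (so the statement is not vacuous), and (ii) the applicability of the bound $\langle w(s,t,\cdot)\rangle_4\le 2 w(s,t,\omega)$, which follows from the hypothesis $\langle w(s,t,\cdot)\rangle_q\le 2w(s,t,\omega)$ together with the monotonicity of $\LL^r$-norms on a probability space, valid since $q\ge 8$. Everything else is a routine application of Cauchy--Schwarz, which is why the authors leave it to the reader.
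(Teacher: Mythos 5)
Your approach is essentially the intended one; the paper explicitly leaves the proof to the reader, and the natural route is exactly the one you take: the pointwise bound $|X_{s,t}(\omega)|\le\vvvert X(\omega)\vvvert_{[0,T],w,p}\,w(s,t,\omega)^{1/p}$, then H\"older/Cauchy--Schwarz, then \eqref{eq:useful:inequality:wT} together with monotonicity of $\LL^r$-norms on a probability space. All the claimed inequalities do follow.

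One small imprecision in your write-up: the identity $\big\langle (w^{2/p})^2\big\rangle^{1/4}=\langle w\rangle_4^{1/p}$ is not an equality in general. Writing $u:=w^{1/p}$, the left side is $\|u\|_4$ while the right side is $\|u\|_{4p}$, and since $p\ge 2>1$ these agree only when $w(s,t,\cdot)$ is a.s.\ constant. What is actually true, and all that you need, is the inequality $\big\langle w^{4/p}\big\rangle^{1/4}=\|u\|_4\le\|u\|_{4p}=\langle w\rangle_4^{1/p}$, again by monotonicity of $\LL^r$-norms. The same remark applies to the $\LL^8$ step in the second chain. With that correction, the argument is complete and matches the claimed bounds, including the final factor $2$ which you correctly obtain from $\langle w(s,t,\cdot)\rangle_4^{1/p}\le\langle w(s,t,\cdot)\rangle_q^{1/p}\le\big(2w(s,t,\omega)\big)^{1/p}\le 2\,w(s,t,\omega)^{1/p}$.
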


A straightforward consequence of Lemma \ref{lem:1:1} is that a random controlled trajectory induces a continuous path from $[0,T]$ to $L^2(\Omega,{\mathcal F},\PP;\RR^d)$.

\subsection{Rough Integral}
\label{SubsectionIntegral}

Set $U:=\RR^m \times \LL^q(\Omega,{\mathcal F},\PP;\RR^m)$ and note that $U\otimes U$ can be canonically identified with
\begin{equation*}
\begin{split}
\big(\RR^m \otimes \RR^m\big) \oplus \Big(\RR^m \otimes \LL^q(\Omega,{\mathcal F},\PP;\RR^m)\Big)   &\oplus \Big(\LL^q(\Omega,{\mathcal F},\PP;\RR^m) \otimes \RR^m\Big) 
\\
&\oplus \Big(\LL^q(\Omega,{\mathcal F},\PP;\RR^m)^{\otimes 2}\Big).
\end{split}
\end{equation*}
We take as a starting point of our analysis the fact that ${\boldsymbol W}(\omega)$ may be considered as a rough path with values in $U \oplus U^{ \otimes 2}$, for any given $\omega$. Indeed the first level ${\boldsymbol W}^{(1)}(\omega) := \big(W_{t}(\omega),W_{t}(\cdot)\big)_{t \geq 0}$ of ${\boldsymbol W}(\omega)$ is a continuous path with values in $U$ and its second level
\begin{equation*}
{\boldsymbol W}^{(2)}(\omega)
:= \left( \begin{array}{cc} \WW_{0,t}(\omega) &\WW_{0,t}^{\indep}(\omega,\cdot)   \\
\WW_{0,t}^{\indep}(\cdot,\omega) &\WW_{0,t}^{\indep}(\cdot,\cdot) \end{array} \right)_{t \geq 0} 
\end{equation*}
is a continuous path with values in $U \otimes U$, {with} ${\mathbb W}_{0,t}(\omega)$ seen as an element of ${\mathbb R}^m \otimes {\mathbb R}^m$, ${\mathbb W}_{0,t}^{\indep}(\omega,\cdot)$ as an element of ${\mathbb R}^m \otimes \LL^q(\Omega,{\mathcal F},\PP;\RR^m)$, ${\mathbb W}_{0,t}^{\indep}(\cdot,\omega)$ as an element of $\LL^q(\Omega,\mathcal F,\PP;\RR^m) \otimes {\mathbb R}^m $, and ${\mathbb W}_{0,t}^{\indep}(\cdot,\cdot)$ as an element of $\LL^q(\Omega,\mathcal F,\PP;\RR^m) \otimes \LL^q(\Omega,\mathcal F,\PP;\RR^m)$. Condition \eqref{eq:chen} then reads as Chen's relation for ${\boldsymbol W}(\omega)$.

We can then use sewing lemma \cite{FeyeldelaPradelle}, in the form given in \cite{CoutinLejay,CoutinLejay2}, to construct the rough integral of an $\omega$-controlled path and a Banach-valued rough set-up.

\begin{thm}
\label{thm:integral}
\textcolor{black}{There exists a universal constant $c_0$ and}, for any $\omega \in \Omega$, there exists a continuous linear map 
\begin{equation*}
\bigl(X_{t}(\omega)\bigr)_{0 \leq t \leq T} \mapsto \biggl( \int_{s}^t X_{s,u}(\omega) \otimes d {\boldsymbol W}_{u}(\omega) \biggr)_{\textcolor{black}{(s,t)} \in {\mathcal S}_{2}^T}
\end{equation*}
from the space of $\omega$-controlled trajectories equipped with the norm $\vvvert \cdot \vvvert_{\star,[0,T],p}$, onto the space of continuous functions from ${\mathcal S}_{2}^T$ into $\RR^d \otimes \RR^m$ with finite norm $\| \cdot \|_{[0,T],w,p/2}$, with $w$ {in the latter norm} being evaluated along the realization $\omega$, that satisfies for any $0\leq r\leq s\leq t\leq T$ the identity
{\color{black}
\begin{equation*}
\begin{split}
&\int_{r}^t X_{r,u}(\omega) \otimes d {\boldsymbol W}_{u}(\omega) 
\\
&\hspace{15pt}= \int_{r}^s X_{r,u}(\omega) \otimes d {\boldsymbol W}_{u}(\omega) + \int_{s}^t X_{s,u}(\omega) \otimes d {\boldsymbol W}_{u}(\omega) + X_{r,s}(\omega)
\otimes W_{s,t}(\omega),
\end{split}
\end{equation*}
together with the estimate}
\begin{equation}
\label{eq:remainder:integral}
\begin{split}
&\biggl\vert \int_{s}^t X_{s,u}(\omega) \otimes d {\boldsymbol W}_{u}(\omega) - \Big\{\delta_{x} X_{s}(\omega) {\mathbb W}_{s,t}(\omega) + {\mathbb E} \bigl[ \delta_{\mu} X_{s}(\omega,\cdot) {\mathbb W}_{s,t}^{\indep}(\cdot,\omega) \bigr] \Big\}\biggr\vert   \\
&\leq c_0\, \vvvert X(\omega) \vvvert_{\textcolor{black}{[0,T],w,p}}\, w(s,t,\omega)^{3/p}. 
\end{split}
\end{equation}
\end{thm}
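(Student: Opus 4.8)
The plan is to invoke the sewing lemma in its Banach-space form, applied to the two-parameter function obtained from the local expansion of the rough integral, with everything taking values in $\RR^d\otimes\RR^m$ (a finite-dimensional space, but the expansion itself couples the finite component $W(\omega)$ with the $\LL^q$-valued component $W(\cdot)$ via the pairings $\langle\cdot\rangle$ and $\mathbb E[\cdot]$). Concretely, for $0\le s\le u\le t\le T$ one sets
$$
\Xi_{s,t}(\omega):=X_s(\omega)\otimes W_{s,t}(\omega)+\delta_x X_s(\omega)\,\WW_{s,t}(\omega)+\mathbb E\bigl[\delta_\mu X_s(\omega,\cdot)\,\WW_{s,t}^{\indep}(\cdot,\omega)\bigr],
$$
which is the natural ``compensated Riemann sum'' germ: the first term accounts for the rough-path increment against the base point value, and the last two terms are the corrections dictated by the controlled-path structure \eqref{eq:omega:controlled:eq}, noting that it is $\WW^{\indep}(\cdot,\omega)$ (integrating the independent copy against $dW(\omega)$) that pairs with $\delta_\mu X$. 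First I would check that $\Xi$ is continuous in $(s,t)$ with the right vanishing on the diagonal, which is immediate from the assumed continuity and $p$-variation bounds on the rough set-up and the controlled path.

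The heart of the argument is to estimate the ``sewing defect'' $\delta\Xi_{s,u,t}(\omega):=\Xi_{s,t}(\omega)-\Xi_{s,u}(\omega)-\Xi_{u,t}(\omega)$ and show it is $o$ of $w(s,t,\omega)^{1}$ in the appropriate sense, i.e. controlled by $w(s,t,\omega)^{3/p}$ with $3/p>1$ since $p<3$. Here one expands each of the three pieces of $\Xi$ using, respectively: the first line of Chen's relations \eqref{eq:chen} for $W\otimes W$ versus $\WW$; the controlled-path decomposition of $X_{s,u}(\omega)$ to rewrite $X_u(\omega)-X_s(\omega)$; and the second and fourth lines of \eqref{eq:chen} together with the controlled decomposition of $\delta_x X$ and $\delta_\mu X$. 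After cancellation of the algebraically matching terms (this is exactly the usual rough-integral bookkeeping, just carried out with the mixed finite/$\LL^q$ tensor structure), the surviving terms are products of a remainder — $R^X_{s,u}(\omega)$, or $\delta_x X_{s,u}(\omega)$, or $\langle\delta_\mu X_{s,u}(\omega,\cdot)\rangle_{4/3}$ — against a second-level increment. The key point where the exponent $q\ge 8$ and the choice $4/3$ enter is the estimate
$$
\bigl|\mathbb E\bigl[\delta_\mu X_{s,u}(\omega,\cdot)\,\WW_{u,t}^{\indep}(\cdot,\omega)\bigr]\bigr|\le \bigl\langle \delta_\mu X_{s,u}(\omega,\cdot)\bigr\rangle_{4/3}\,\bigl\langle \WW_{u,t}^{\indep}(\cdot,\omega)\bigr\rangle_{4},
$$
by Hölder with conjugate exponents $4/3$ and $4$, and $4\le q$ so the $\LL^4$-norm is dominated by the $\LL^q$-norm that appears in $v$ hence in $w$. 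Summing the $p$-variation and $p/2$-variation factors via super-additivity of $w$ (the property \eqref{eq:useful:inequality:wT}) gives $|\delta\Xi_{s,u,t}(\omega)|\lesssim \vvvert X(\omega)\vvvert_{[0,T],w,p}\,w(s,t,\omega)^{3/p}$.

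The sewing lemma then produces a unique additive function $(s,t)\mapsto\int_s^t X_{s,u}(\omega)\otimes d{\boldsymbol W}_u(\omega)$ with $\int_s^t\Xi - \Xi_{s,t} = O(w(s,t,\omega)^{3/p})$; rewriting $\Xi_{s,t}$ as $X_s(\omega)\otimes W_{s,t}(\omega)$ plus the bracketed term in \eqref{eq:remainder:integral} yields exactly the estimate \eqref{eq:remainder:integral}, with $c_0$ the universal sewing constant, and the additivity identity is the Chlater-type relation stated (the extra term $X_{r,s}(\omega)\otimes W_{s,t}(\omega)$ is the standard discrepancy between $\Xi$ based at $r$ and $\Xi$ based at $s$ on $[s,t]$). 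Linearity and continuity of the map $X(\omega)\mapsto\int X(\omega)\,d{\boldsymbol W}(\omega)$ in the norm $\vvvert\cdot\vvvert_{\star,[0,T],p}$ follow because $\Xi$ depends linearly and boundedly on the data of the controlled path and the sewing map is linear and bounded; the target-space bound $\|\int X(\omega)\,d{\boldsymbol W}(\omega)\|_{[0,T],w,p/2}<\infty$ comes from \eqref{eq:remainder:integral} plus the trivial bound on the bracketed ``principal part'' by $w(s,t,\omega)^{2/p}$. The main obstacle I anticipate is purely organizational rather than conceptual: keeping the four-block tensor bookkeeping straight when expanding $\delta\Xi$, in particular making sure every occurrence of a second-level object is the correct one among $\WW(\omega)$, $\WW^{\indep}(\omega,\cdot)$, $\WW^{\indep}(\cdot,\omega)$, $\WW^{\indep}(\cdot,\cdot)$, and checking that each leftover cross-term genuinely carries a ``remainder'' factor so that the $3/p$ homogeneity is attained; once that accounting is done, the analytic estimates are routine Hölder and super-additivity arguments.
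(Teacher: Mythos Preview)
Your approach is correct and essentially complete: the germ $\Xi_{s,t}(\omega)$ you write down is the right one, the Chen relations \eqref{eq:chen} together with the controlled decomposition \eqref{eq:omega:controlled:eq} produce exactly the three remainder terms $-R^X_{s,u}(\omega)\otimes W_{u,t}(\omega)$, $-\delta_xX_{s,u}(\omega)\,\WW_{u,t}(\omega)$ and $-\mathbb E[\delta_\mu X_{s,u}(\omega,\cdot)\,\WW^{\indep}_{u,t}(\cdot,\omega)]$ in $\delta\Xi_{s,u,t}$, and your H\"older argument with exponents $4/3$ and $4\le q$ gives the $w(s,t,\omega)^{3/p}$ bound.

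The paper, however, takes a different and more structural route. Rather than running the sewing lemma directly in the finite-dimensional space $\RR^d\otimes\RR^m$, it observes that ${\boldsymbol W}(\omega)$ is a genuine Banach-space-valued rough path over $U=\RR^m\times\LL^q(\Omega,\mathcal F,\PP;\RR^m)$, and that an $\omega$-controlled path in the sense of Definition~\ref{definition:omega:controlled:trajectory} is a controlled path in Gubinelli's sense with respect to this $U$-valued rough path. It then invokes Coutin and Lejay's Banach-space sewing/integration result to obtain an $\RR^d\otimes U$-valued integral $I_s^t X_{s,u}(\omega)\otimes d{\boldsymbol W}_u(\omega)$, and finally projects onto the first factor $\RR^d\otimes\RR^m$ via the canonical projection $\RR^d\otimes U\to\RR^d\otimes\RR^m$. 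Your direct computation is more elementary and self-contained (no external Banach-space result is needed, and no infinite-dimensional intermediate object is constructed), while the paper's route is shorter to write and makes transparent that the whole construction is an instance of the standard rough-path integral in the abstract Banach setting; the projection step is what isolates the specific $\RR^d\otimes\RR^m$-valued quantity \eqref{eq:remainder:integral}.
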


Here, $\delta_{x} X_{s}(\omega) \, {\mathbb W}_{s,t}(\omega)$ is the product of two $d \times m$ and $m \times m$ matrices, so it gives back a $d \times m$ matrix, with components 
$
\bigl( \delta_{x} X_{s}(\omega) {\mathbb W}_{s,t}(\omega) \bigr)_{i,j} = \sum_{k=1}^m \bigl( \delta_{x} X_{s}^{i}(\omega) \bigr)_{k} \bigl( {\mathbb W}_{s,t}(\omega) \bigr)_{k,j}$, 
for $i \in \{1,\cdots,d\}$ and $j \in \{1,\cdots,m\}$. We stress that the notation 
$
{\mathbb E}\big[\delta_{\mu} X_{s}(\omega,\cdot) {\mathbb W}_{s,t}^{\indep}(\cdot,\omega)\big],
$
which reads as the expectation of a matrix of size $d \times m$, can be also interpreted as a contraction product between an element of $\RR^d \otimes \LL^{\textcolor{black}{4/3}}(\Omega,{\mathcal F},\PP;\RR^m)$ and an element of $\LL^q(\Omega,{\mathcal F},\PP;\RR^m) \otimes \RR^m$. This remark  is  important for the proof below.

\begin{proof}
The proof is a consequence of Proposition 2 in Coutin and Lejay's work \cite{CoutinLejay}, except for one main fact. In order to use Coutin and Lejay's result, we consider ${\boldsymbol W}(\omega)$ as a rough path with values in $U \oplus U^{\otimes 2}$ and $\big(X(\omega),\delta_{x} X(\omega),\delta_{\mu} X(\omega),R^X(\omega)\big)$ as a controlled path; this was explained above. When doing so, the resulting integral is constructed as a process with values in $\RR^d \otimes U$, whilst the integral given by the statement of Theorem \ref{thm:integral} takes values in $\RR^d$. We denote the $\RR^d \otimes U$-valued integral by $(I_{s}^t X_{s,u}(\omega) \otimes d{\boldsymbol W}_{u}(\omega))_{(s,t) \in {\mathcal S}_{2}^T}$. We use a simple projection to pass from the infinite dimensional-valued quantity $I_{s}^t X_{s,u}(\omega) \otimes d{\boldsymbol W}_{u}(\omega)$ to the finite dimensional-valued quantity $\int_{s}^t X_{s,u}(\omega) \otimes d{\boldsymbol W}_{u}(\omega)$. Indeed, we may use the canonical projection from  $\RR^d \otimes U \cong \big(\RR^d \otimes \RR^m\big) \oplus \big(\RR^d \otimes \LL^q(\Omega, \mathcal F,\PP;\RR^m)\big)$ onto $\RR^d \otimes \textcolor{black}{\RR^m}$ to project $I_{s}^t X_{s,u}(\omega) \otimes d{\boldsymbol W}_{u}(\omega)$ onto $\int_{s}^t X_{s,u}(\omega) \otimes d{\boldsymbol W}_{u}(\omega)$.  
\end{proof}

As usual, we define an additive process setting
\begin{equation*}
\int_{s}^t X_{u}(\omega) \otimes d {\boldsymbol W}_{u}(\omega) := \int_{s}^t X_{s,u}(\omega) \otimes d {\boldsymbol W}_{u}(\omega) + X_{s}(\omega) \otimes \textcolor{black}{W_{s,t}}(\omega), 
\end{equation*}
for $0\leq t\leq T$. We can thus consider the integral process $\big(\int_{0}^t X_{s}(\omega) \otimes d{\boldsymbol W}_{s}(\omega)\big)_{0 \leq t \leq T}$ as an $\omega$-controlled trajectory with values in $\RR^{d \times m}$, with $x$-derivative a linear map from $\RR^{m}$ into $\RR^{d \times m}$, and entries
\begin{equation*}
\biggl( \delta_{x} \biggl[ \int_{0}^{\cdot} X_{s}(\omega) \otimes d{\boldsymbol W}_{s}(\omega) \biggr]_{t} \biggr)_{(i,j),k}= \textcolor{black}{\bigl( X_{t}(\omega)
\bigr)_{i} \delta_{j,k}},
\end{equation*}
for $i \in \{1,\cdots,d\}$ and $j,k \in \{1,\cdots,m\}$, where $\delta_{j,k}$ stands for the usual Kronecker symbol, and
with null $\mu$-derivative, namely
\begin{equation}
\label{EqZeroDerivativeMu}
\delta_{\mu} \biggl[ \int_{0}^{\cdot}  X_{s}(\omega) \otimes d{\boldsymbol W}_{s}(\omega) \biggr]_{t} = 0.
\end{equation}
This property is fundamental. The remainder $R^{\int X \otimes d {\boldsymbol W}}$
can
be estimated by combining
{Definition 
\ref{definition:omega:controlled:trajectory}
and}
 \eqref{eq:remainder:integral} together with the inequality
\begin{equation*}
\begin{split}
\Bigl\vert  \delta_{x} X_{s}(\omega) {\mathbb W}_{s,t}(\omega) &+ {\mathbb E} \bigl[ \delta_{\mu} X_{s}(\omega,\cdot) {\mathbb W}_{s,t}^{\indep}(\cdot,\omega) \bigr] \Bigr\vert   \\
&\leq\left\{ \sup_{r \in [0,T]} \vert \delta_{x} X_{r}(\omega,\cdot) |   + \sup_{r \in [0,T]} \langle \delta_{\mu} X_{r}(\omega) \rangle_{4/3} \right\} 
\, w(s,t,\omega)^{2/p}   \\
&\leq \vvvert X(\omega) \vvvert_{\star,[0,T],w,p} \, \Bigl( 1 + w(0,T,\omega)^{1/p} \Bigr) \, w(s,t,\omega)^{2/p}, 
\end{split}
\end{equation*}
so that, with the notation of Definition \ref{definition:omega:controlled:trajectory},
\begin{equation}
\label{eq:vvvert:integral}
\biggl\vvvert \int_{0}^{\cdot} X_{s}(\omega) \otimes d{\boldsymbol W}_{s}(\omega) \biggr\vvvert_{[0,T],w,p} < \infty.
\end{equation}
When $X(\omega)$ is given as the $\omega$-realization of a random \textcolor{black}{controlled path} $(X(\omega'))_{\omega' \in \Omega}$, the integral may be defined for any $\omega' \in \Omega$. For the integral $\int_0^{\cdot} X_s(\omega)\otimes d{\boldsymbol W}_s(\omega)$ to define a random controlled path, its $\vvvert \cdot\vvvert_{[0,T],w,p}$-semi-norm needs to have finite $8$-th moment, {see \eqref{eq:Lq:vvvert} (we give later on more precise estimates to guarantee that this may be indeed the case)}. 
{In this respect, it is worth noticing that the measurability properties of the integral with respect to 
$\omega$ can be checked by approximating the integral with compensated Riemann sums, see once again 
\eqref{eq:remainder:integral}. This gives measurability of $\Omega \ni \omega \mapsto \int_0^{t} X_s(\omega)\otimes d{\boldsymbol W}_s(\omega)$ for any given time $t \in [0,T]$. 
Measurability of the functional $\Omega \ni \omega \mapsto \int_{0}^{\cdot} X_{s}(\omega) \otimes d{\boldsymbol W}_{s}(\omega) 
\in {\mathcal C}([0,T];\RR^d \otimes \RR^m)$ then follows from the continuity of the paths.}
When the trajectory $X(\omega)$ takes values in $\RR^{d} \otimes \RR^m$ rather than $\RR^d$, the integral 
$
\int_{0}^t X_{s}(\omega) \otimes d {\boldsymbol W}_{s}(\omega) \in \RR^{d} \otimes \RR^m \otimes \RR^m$ may be identified with a tuple
\begin{equation*}
\left( \biggl(
\int_{0}^t X_{s}(\omega)\otimes d {\boldsymbol W}_{s}(\omega) 
\biggr)_{i,j,k}
\right)_{(i,j,k) \in \{1,\cdots,d\} \times
\{1,\cdots,m\} \times
\{1,\cdots,m\}  }. 
\end{equation*}
We then set for $i \in \{1,\cdots,d\}$
\begin{equation*}
\biggl( \int_{0}^t X_{s}(\omega) d {\boldsymbol W}_{s}(\omega) \biggr)_{i} := \sum_{j=1}^m \biggl( \int_{0}^t X_{s}(\omega)\otimes d {\boldsymbol W}_{s}(\omega) \biggr)_{i,j,j},
\end{equation*}
and consider $\int_{0}^t X_{s}(\omega) d {\boldsymbol W}_{s}(\omega)$ as an element of $\RR^d$.

\subsection{Stability of Controlled Paths under Nonlinear Maps}
\label{SubsectionStability}

We show in this section that controlled paths are stable under some nonlinear, sufficiently regular, maps and start by recalling the reader about the regularity notion used when working with functions defined on Wasserstein space. \textcolor{black}{We refer the reader to 
Lions' lectures \cite{Lions}, to the lecture notes \cite{LionsCardialiaguet} of Cardaliaguet 
or to Carmona and Delarue's monograph \cite[Chapter 5]{CarmonaDelarue_book_I}
for basics on the subject.}

\textcolor{gray}{$\bullet$} Recall that $(\Omega,\mathcal{F},{\mathbb P})$ stands for an atomless probability space, with $\Omega$ a Polish space and $\mathcal{F}$ its Borel $\sigma$-algebra. Fix a finite dimensional space $E=\RR^k$ and denote by $L^2:\textcolor{black}{=\LL^2(\Omega,{\mathcal F},\PP;E)}$ the space of $E$-valued random variables on $\Omega$ with finite second moment. We equip the space $\mathcal{P}_2(E) := \big\{\mathcal{L}(Z)\,;\,Z\in L^2\big\}$ with the 2-Wasserstein distance 
$$
d_2(\mu_1,\mu_2) := \inf \Big\{\|Z_1-Z_2\|_2\,;\, \mathcal{L}(Z_1) = \mu_1,\, \mathcal{L}(Z_2)=\mu_2\Big\}.
$$
An $\RR^k$-valued function $u$ defined on $\mathcal{P}_2(E)$ is canonically extended into $L^2$ by setting, for any $Z\in L^2$,
$$
U(Z) := u\big(\mathcal{L}(Z)\big).
$$

\textcolor{gray}{$\bullet$} The function $u$ is then said to be differentiable at $\mu\in\mathcal{P}_2(E)$ if its canonical lift is Fr\'echet differentiable at some point $Z$ such that $\mathcal{L}(Z)=\mu$; \textcolor{black}{we} denote by $\nabla_ZU\in (L^2)^k$ the gradient of $U$ at $Z$. The function $U$ is then differentiable at any other point $Z'\in L^2$ such that $\mathcal{L}(Z')=\mu$, and the laws of $\nabla_ZU$ and $\nabla_{Z'}U$ are equal, for any such $Z'$. 
\smallskip  
   
 \textcolor{gray}{$\bullet$} The function $u$ is said to be of class $C^1$ on some open set $O$ of $\mathcal{P}_2(E)$ if its canonical lift is of class $C^1$ in some open set of $L^2$ projecting onto $O$. It is then of class $C^1$ in the whole fiber in $L^2$ above $O$. If $u$ is of class $C^1$ on ${\mathcal P}_{2}(E)$, then $\nabla_ZU$ is $\sigma(Z)$-measurable and given by an $\mathcal{L}(Z)$-dependent function $Du$ from $E$ to 	\textcolor{black}{$E^k$} such that 
   \begin{equation}
   \label{EqDefnDu}
   \nabla_ZU = (Du)(Z);
   \end{equation}
   we have in particular $Du
   \in L^2_\mu(E;\textcolor{black}{E^k})
   \textcolor{black}{:= 
\LL^2(E,{\mathcal B}(E),\mu;E^k)}$  
   \textcolor{black}{, where ${\mathcal B}(E)$ is the Borel $\sigma$-field on $E$. In order to emphasize the fact that $Du$ depends upon 
   ${\mathcal L}(Z)$, we shall write $Du({\mathcal L}(Z))(\cdot)$ instead of $Du(\cdot)$.
 Sometimes, we shall put an index $\mu$ and write $D_{\mu} u ({\mathcal L}(Z))(\cdot)$ in order to emphasize the fact that the derivative is taken with respect to the measure argument; this will be especially useful for functionals $u$ depending on additional variables.    
   Importantly, this representation is independent of the choice of the probability space $(\Omega,{\mathcal F},{\mathbb P})$; in fact, it can be easily transported from one probability space to another. ({Simpler proofs of the structural equation \eqref{EqDefnDu} can be found in \cite{AlfonsiJourdain,WuZhang}}.)  }
\vskip 1pt

\textcolor{gray}{$\bullet$} As an example, {take $u$ 
of the form $u(\mu) = \int_{\RR^d} f(y) d\mu(y)$ for a continuously differentiable function $f : \RR^d \rightarrow \RR$ such that 
$\nabla f$ is at most of linear growth. The lift $Z \mapsto U(Z) = \EE[f(Z)]$ has differential $(d_{Z} U)(H) = 
{\mathbb E}[ \nabla f(Z) H]$ and gradient $\nabla f(Z)$. Hence, $DU(\mu)(z) = f'(z)$. 
Another example (to which we come back below) is $u(\mu) = f\big(\int_{\RR^d} \vert x \vert^2 \mu(dx)\big)$, for a continuously differentiable function 
$f : \RR \rightarrow \RR$. {The lift} ${Z \mapsto} U({Z}) = f\big(\EE[\vert {Z} \vert^2]\big)$ has differential $(d_{{Z}}U)(H) = 2 f'\big(\EE[\vert {Z}\vert^2\big)\,\EE[{Z}H]$ and gradient $2 f'\big(\EE[\vert{Z}\vert^2]\big)\,{Z}$, so ${Du(\mu)}(z) = 2 f'\big(\int_{\RR^d} \vert x\vert ^2 \mu(dx)\big)z$ here. We refer to \cite{LionsCardialiaguet} and  \cite[Chapter 5]{CarmonaDelarue_book_I} for further examples.}
\smallskip

\textcolor{gray}{$\bullet$} Back to controlled paths. Let F stand here for a map from $\RR^d\times \LL^2(\Omega,{\mathcal F},\PP;\RR^d)$ into the space ${\mathscr L}(\RR^m,\RR^d) \cong \RR^d \otimes \RR^m$ of linear mappings from $\RR^m$ to $\RR^d$. Intuitively, F should be thought of as the lift of the coefficient driving equation \eqref{EqRDE}, or, with the same notation as in \eqref{eq:lifting}, as $\widehat {\textrm{\rm F}}$ itself, with the slight abuse of notation that it requires to identify F and $\widehat{ \textrm{\rm F}}$. Our goal now is to expand the image of a controlled trajectory by F.    
\vspace{4pt}

\noindent \textbf{\textbf{Regularity assumptions  1 -- }} \textit{Assume that \emph{F} is continuously differentiable in the joint variable $(x,Z)$, that 
$\partial_{x} F$ is also continuously differentiable in $(x,Z)$ and that there is some positive finite constant $\Lambda$ such that
\begin{equation}
\label{EqRegularityF}
\begin{split}
\sup_{x\in\RR^d, \,\mu\in\mathcal{P}_2(\RR^d)}\,
\bigl|\textrm{\emph{F}}(x,\mu)
\bigr\vert
\vee
\bigl|\partial_{x} \textrm{\emph{F}}(x,\mu)
\bigr\vert
\vee
\bigl|\partial_{x}^2 \textrm{\emph{F}}(x,\mu)
\bigr\vert
 \leq \textcolor{black}{\Lambda},   \\
\sup_{x\in\RR^d, \,\mathcal{L}(Z)\in\mathcal{P}_2(\RR^d)}\,\big\|\nabla_{Z}\textrm{\emph{F}}(x,Z)\big\|_2 \vee \big\|\partial_x \nabla_{Z}\textrm{\emph{F}}(x,Z)\big\|_2 \leq 
\textcolor{black}{\Lambda},
\end{split}
\end{equation}
and 
\begin{equation*}
\begin{split}
\nabla_{\textcolor{black}{Z}}\textrm{\emph{F}}(x,\cdot) : 
\LL^2(\Omega,{\mathcal F},\PP;\RR^d) &\rightarrow  \LL^2(\Omega,{\mathcal F},\PP; {\mathscr L}(\RR^d,\RR^d \otimes \RR^m))   \\
Z &\mapsto 
\nabla_{\textcolor{black}{Z}}\textrm{\emph{F}}(x,Z) = D_{\mu} F(x,{\mathcal L}(Z))(Z)
\end{split}
\end{equation*}
is \textcolor{black}{a $\Lambda$}-Lipschitz function of $\textcolor{black}{Z} \in \LL^2(\Omega,{\mathcal F},\PP;\RR^d)$, uniformly in $x\in\RR^d$.} 
\vspace{4pt}

Importantly, the $\LL^{2}$-Lipschitz bound required in the second line of \eqref{EqRegularityF} may be formulated as a Lipschitz bound on $\cP_{2}(\RR^d)$ equipped with $d_{2}$. Moreover, notice that the space $ \LL^2\big(\Omega,{\mathcal F},\PP; {\mathscr L}(\RR^d,\RR^d \otimes \RR^m)\big)$ can be identified with $ \LL^2(\Omega,{\mathcal F},\PP;\RR^d)^{d \times m}$; also, $\partial_{x} \textrm{\rm F}(x,Z)$ and $\nabla_{Z} \textrm{\rm F}(x,Z)$ will be considered as random variables with values in ${\mathscr L}(\RR^d,\RR^d \otimes \RR^m) \cong \RR^d \otimes \RR^{m} \otimes \RR^d$. As an example, the functions
$
\textrm{F}(x,\mu) = \int_{\RR^d} f(x,y)\mu(dy)
$
for some function $f$ of class $C^2_b$, and 
$
\textrm{F}(x,\mu) = g\left(x, \int_{\RR^d} y\mu(dy)\right)
$
for some function $g$ of class $C^2_b$, both satisfy \textbf{\textbf{Regularity assumptions  1}}. {A counter-example is the function $\textrm{F}(x,\mu) = \int_{\RR^d} \vert z \vert^2 d\mu(z)$}.

We expand below the path $\bigl( \textrm{F}(X_t(\omega),Y_t(\cdot))\big)_{0\leq t\leq T}$, which we write $\textrm{F}(X(\omega),Y(\cdot))$, where $X(\omega)$ is an $\omega$-controlled path and $Y(\cdot)$ is an $\RR^d$-valued random controlled path, both of them being defined on some finite interval $[0,T]$. Identity \eqref{EqZeroDerivativeMu} tells us that a fixed point formulation of  \eqref{EqRDE} will only involve pairs $(X(\omega),Y(\cdot))$ such that 
\begin{equation}
\label{EqZeroConditionMu}
\delta_\mu X(\omega) \equiv 0,\qquad \delta_\mu Y(\cdot) \equiv 0,
\end{equation}\textcolor{black}{which prompts us to restrict ourselves to the case when $X(\omega)$ and $Y$ have null $\mu$-derivatives in the expansion
\eqref{eq:omega:controlled:eq}.}

\begin{prop}
\label{prop:chaining}
Let $X(\omega)$ be an $\omega$-controlled path and $Y(\cdot)$ be an $\RR^d$-valued random controlled path. Assume that condition \eqref{EqZeroConditionMu} hold together with the \emph{$\omega$-independent} bound
\begin{equation*}
M := \sup_{0 \le t \le T} \Big( \big\vert \delta_{x} X_{t}(\omega) \big\vert \vee \big\langle \delta_{x} Y_{t}(\cdot)\big\rangle_{\infty}\Big) <\infty.
\end{equation*}
Then, $\textrm{\emph{F}}\big(X(\omega),Y(\cdot)\big)$ is an $\omega$-controlled path with 
\begin{equation*}
\delta_{x} \Bigl( \textrm{\emph{F}}\bigl(X(\omega),Y(\cdot)\bigr) \Bigr)_{t} = \partial_x\textrm{\emph{F}}\bigl(X_{t}(\omega),Y_{t}(\cdot)\bigr) \, \delta_{x} X_{t}(\omega),
\end{equation*}
which is understood as $\bigl( {\sum_{\ell=1}^d}\partial_{x_{\ell}}
\textrm{\emph{F}}^{i,j}\bigl(X_{t}(\omega),Y_{t}(\cdot)\bigr)
\bigl(\delta_{x} X_{t}^{\ell}(\omega) \bigr)_{k} \bigr)_{i,j,k}$, with $i \in \{1,\cdots,d\}$
and $j,k \in \{1,\cdots,m\}$, and (with a similar interpretation for the product)
\begin{equation*}
\begin{split}
\delta_{\mu} \Bigl(\textrm{\emph{F}}\bigl(X(\omega),Y(\cdot)\bigr) \Bigr)_{t} &= \nabla_{Z}\textrm{\emph{F}}\bigl(X_{t}(\omega),Y_{t}(\cdot)\bigr) \delta_{x} Y_{t}(\cdot)   
=  D_{\mu} \textrm{\rm F}\bigl(X_{t}(\omega),\cL({Y_{t}})\bigr)\bigl({Y_{t}}(\cdot)\bigr) \delta_{x} Y_{t}(\cdot),
\end{split}
\end{equation*}
and one can find a constant $C_{\textcolor{black}{\Lambda},M}$, depending only on \textcolor{black}{$\Lambda$} and $M$, such that 
\begin{equation*}
\big\vvvert \textrm{\emph{F}}\bigl(X(\omega),Y(\cdot)\bigr) \big\vvvert_{\star,[0,T],w,p} \leq C_{\textcolor{black}{\Lambda},M} \, \Bigl( 1+ \vvvert X(\omega) \vvvert_{[0,T],w,p}^2 + \big\langle \vvvert Y(\cdot) \vvvert_{[0,T],w,p} \big\rangle_{8}^2 \Bigr). 
\end{equation*}
\end{prop}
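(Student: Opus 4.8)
The plan is to expand $\textrm{F}(X_t(\omega),Y_t(\cdot))$ by writing its increment as a Taylor expansion in both the finite-dimensional variable $x$ and the measure variable, and then to check that the resulting decomposition matches the form \eqref{eq:omega:controlled:eq} required of an $\omega$-controlled path, with the announced derivatives, plus a remainder of the correct order. First I would fix $0\le s\le t\le T$ and write
\[
\textrm{F}\bigl(X_t(\omega),Y_t(\cdot)\bigr)-\textrm{F}\bigl(X_s(\omega),Y_s(\cdot)\bigr)
= \Bigl[\textrm{F}\bigl(X_t(\omega),Y_t(\cdot)\bigr)-\textrm{F}\bigl(X_s(\omega),Y_t(\cdot)\bigr)\Bigr]
+\Bigl[\textrm{F}\bigl(X_s(\omega),Y_t(\cdot)\bigr)-\textrm{F}\bigl(X_s(\omega),Y_s(\cdot)\bigr)\Bigr],
\]
treating the two contributions separately. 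For the first bracket I would use a first-order Taylor expansion in $x$ with the $C^2_b$ bound on $\partial_x\textrm{F}$ from \eqref{EqRegularityF}, producing the main term $\partial_x\textrm{F}(X_s(\omega),Y_s(\cdot))\,X_{s,t}(\omega)$ — into which I substitute the controlled-path expansion of $X_{s,t}(\omega)$ (with $\delta_\mu X\equiv0$ by \eqref{EqZeroConditionMu}), i.e. $X_{s,t}(\omega)=\delta_x X_s(\omega)W_{s,t}(\omega)+R^X_{s,t}(\omega)$ — plus controlled remainders: the quadratic Taylor error, bounded by $\|X(\omega)\|_{[0,T],w,p}^2\,w(s,t,\omega)^{2/p}$, the term $\partial_x\textrm{F}\cdot R^X_{s,t}(\omega)$, of order $w(s,t,\omega)^{2/p}$, and the variation of $\partial_x\textrm{F}$ between $s$ and $t$ acting against $\delta_x X_s W_{s,t}$, again of order $w^{2/p}$ since $\partial_x\textrm{F}$ is Lipschitz in $(x,Z)$ and both $X$ and $Y$ have increments of order $w^{1/p}$ (using Lemma \ref{lem:1:1} for the $\LL^2$-control of $Y_{s,t}$).

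For the second bracket — the increment in the measure variable — I would expand along a linear (or just Lipschitz) interpolation between the laws $\cL(Y_s)$ and $\cL(Y_t)$, or more concretely write, using the $C^1$ structure on $\cP_2$ recalled before \textbf{Regularity assumptions 1},
\[
\textrm{F}\bigl(X_s(\omega),Y_t(\cdot)\bigr)-\textrm{F}\bigl(X_s(\omega),Y_s(\cdot)\bigr)
= \EE\Bigl[\nabla_Z\textrm{F}\bigl(X_s(\omega),Y_s(\cdot)\bigr)\,Y_{s,t}(\cdot)\Bigr] + (\text{remainder}),
\]
with the leading term then further expanded by inserting the controlled-path expansion $Y_{s,t}(\cdot)=\delta_x Y_s(\cdot)W_{s,t}(\cdot)+R^Y_{s,t}(\cdot)$, which produces exactly $\EE[\,\delta_\mu(\textrm{F}(X(\omega),Y(\cdot)))_s\,W_{s,t}(\cdot)\,]$ with $\delta_\mu(\textrm{F}(\cdots))_s=\nabla_Z\textrm{F}(X_s(\omega),Y_s(\cdot))\,\delta_x Y_s(\cdot)$, plus a term $\EE[\nabla_Z\textrm{F}\cdot R^Y_{s,t}(\cdot)]$ bounded via Cauchy–Schwarz in $\LL^2$ (hence the $\langle\cdot\rangle_8$ moments in the final estimate, coming from Lemma \ref{lem:1:1}'s $\LL^4$-bound squared). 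The Taylor remainder in the measure variable is controlled by the $\LL^2$-Lipschitz continuity of $Z\mapsto\nabla_Z\textrm{F}(x,Z)$ required in \eqref{EqRegularityF}: it is bounded by a constant times $\langle Y_{s,t}(\cdot)\rangle_2^2\lesssim \langle\vvvert Y(\cdot)\vvvert_{[0,T],w,p}\rangle_4^2\,w(s,t,\omega)^{2/p}$. Collecting all the remainder contributions gives $\|R^{\textrm{F}(X(\omega),Y(\cdot))}\|_{[0,T],w,p/2}<\infty$ with the stated quadratic dependence on $\vvvert X(\omega)\vvvert$ and $\langle\vvvert Y(\cdot)\vvvert\rangle_8$; one also needs to check the $w^{1/p}$-variation bounds on $\textrm{F}(X(\omega),Y(\cdot))$ itself and on its two derivative processes $\delta_x(\textrm{F})$ and $\delta_\mu(\textrm{F})$, which follow the same way from Lipschitz continuity of $\textrm{F}$, $\partial_x\textrm{F}$ and $\nabla_Z\textrm{F}$ together with the $\omega$-independent bound $M$ on $\delta_x X$, $\delta_x Y$, and the boundedness of $\textrm{F}$ and its derivatives.

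The main obstacle I expect is the bookkeeping for the measure-variable expansion: one must justify the first-order Taylor formula on Wasserstein space with a remainder quantified in terms of the $\LL^2$-distance of the underlying random variables — this is where the precise formulation of the Lipschitz hypothesis on $\nabla_Z\textrm{F}$ in \eqref{EqRegularityF} is used, via an interpolation argument $r\mapsto\textrm{F}(x,Y_s+r\,Y_{s,t})$ and the fundamental theorem of calculus in $L^2$ — and then to carefully track which moments ($\LL^{4/3}$ for the $\mu$-derivative process as in Definition \ref{definition:omega:controlled:trajectory}, $\LL^2$ and $\LL^4$ for the increments of $Y$ via Lemma \ref{lem:1:1}) appear so that everything closes with the exponent $q\ge 8$ of the rough set-up and gives the clean $\langle\vvvert Y(\cdot)\vvvert_{[0,T],w,p}\rangle_8^2$ on the right-hand side. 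The finite-dimensional part is a routine second-order Taylor expansion and should present no difficulty; the interaction between the two expansions (the cross term where $\partial_x\textrm{F}$ varies because of the change in the measure argument, and vice versa) is handled by the joint continuous differentiability of $\textrm{F}$ in $(x,Z)$ and contributes only to the $w^{2/p}$ remainder.
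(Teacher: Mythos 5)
Your proposal follows essentially the same approach as the paper: the same two-bracket decomposition (incrementing the $x$ argument first, then the measure argument), first-order Taylor expansions with Lipschitz remainders in each variable via the $L^2$-lift interpolation $\lambda\mapsto Y_s(\cdot)+\lambda Y_{s,t}(\cdot)$ (the paper's $Y^{(\lambda)}_{s;(s,t)}$), and moment bookkeeping via Lemma~\ref{lem:1:1} and H\"older's inequality with conjugate exponents $3$ and $3/2$. The one slight imprecision is in attributing the $\langle\cdot\rangle_8$ moment to the Cauchy--Schwarz bound on $\EE[\nabla_Z\textrm{F}\,R^Y_{s,t}]$; in the paper's accounting it enters principally through $\langle\delta_x Y_{s,t}(\cdot)\rangle_4\le\langle\vvvert Y(\cdot)\vvvert_{[0,T],w,p}\rangle_8\,\langle w(s,t,\cdot)\rangle_8^{1/p}$ in the variation bound for $\delta_\mu(\textrm{F})$, but this does not change the argument.
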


\begin{proof}
For $0 \leq s < t$, expand $\textrm{F}(X(\omega),Y(\cdot))_{s,t}$ into 
\begin{equation}
\label{eq:decomposition:000}
\begin{split}
&\textrm{F}(X(\omega),Y(\cdot))_{s,t} = \textrm{F}\bigl(X_{t}(\omega),Y_{t}(\cdot)\bigr) - \textrm{F}\bigl(X_{s}(\omega),Y_{s}(\cdot)\bigr)   
\\
&= 
\Bigl\{ \textrm{F}
\bigl(X_{t}(\omega),Y_{t}(\cdot) \bigr) 
- 
\textrm{F}
\bigl(X_{s}(\omega),Y_{t}(\cdot) \bigr)
\Bigr\}   \textcolor{black}{+ 
\Bigl\{\textrm{F}
\bigl(X_{s}(\omega),Y_{t}(\cdot) \bigr)
- 
\textrm{F}
\bigl(X_{s}(\omega),Y_{s}(\cdot) \bigr)
\Bigr\}}
\\
&=: \Bigl\{ \textbf{(1)} +  \textbf{(2)} +  \textbf{(3)} \Bigr\} + \Bigl\{  \textbf{(4)} +  \textbf{(5)} \Bigr\},
\end{split}
\end{equation}
where
\begin{equation*}
\begin{split}
& \textbf{(1)} := \partial_x \textrm{F}\bigl(X_{s}(\omega),Y_{s}(\cdot)\bigr) \Bigl\{ \delta_{x} X_{s}(\omega) W_{s,t}(\omega) + R_{s,t}^X(\omega) \Bigr\},   
\\
& \textbf{(2)} := \int_{0}^1 \textcolor{black}{\Bigl[} \partial_x \textrm{F}\Bigl( 
\textcolor{black}{
X_{s;(s,t)}^{(\lambda)}(\omega)},Y_{t}(\cdot) \Bigr) - \partial_x \textrm{F}\Bigl( \textcolor{black}{
X_{s;(s,t)}^{(\lambda)}(\omega)},Y_{s}(\cdot) \Bigr) \textcolor{black}{\Bigr]} X_{s,t}(\omega)\,d\lambda,   \\
& \textbf{(3)} := \int_{0}^1 
\textcolor{black}{\Bigl[} \partial_x \textrm{F}\Bigl( 
\textcolor{black}{
X_{s;(s,t)}^{(\lambda)}(\omega)},Y_{s}(\cdot) \Bigr) - \partial_x \textrm{F}\bigl( X_{s}(\omega),Y_{s}(\cdot) \bigr) \textcolor{black}{\Bigr]} X_{s,t}(\omega)\,d\lambda,   \\
& \textbf{(4)} := \textcolor{black}{\Bigl\langle} \nabla_{Z}\textrm{F}\bigl(X_{s}(\omega), \textcolor{black}{Y_{s}(\cdot)} \bigr) Y_{s,t}(\cdot) \textcolor{black}{\Bigr\rangle }
= {\Bigl\langle \nabla_{Z} \textrm{F}\bigl(X_{s}(\omega),Y_{s}(\cdot)\bigr) \Bigl\{ \delta_{x}  Y_{s}(\cdot) W_{s,t}(\cdot)
+ R_{s,t}^Y(\cdot) \Bigr\} \Bigr\rangle},   \\
& \textbf{(5)} := \int_{0}^1 \textcolor{black}{\Bigl\langle}
\textcolor{black}{\Bigl(}
 \nabla_{Z}\textrm{F}\bigl( X_{s}(\omega),Y_{s;(s,t)}^{(\lambda)}(\cdot)\bigr) - \nabla_{Z}\textrm{F}\bigl( X_{s}(\omega), \textcolor{black}{Y_{s}(\cdot)} \bigr)
\textcolor{black}{\Bigr)} Y_{s,t}(\cdot) \textcolor{black}{ \Bigr\rangle} \,d\lambda;
\end{split}
\end{equation*} 
we used here the fact that $X(\omega)$ and $Y(\cdot)$ have null $\mu$-derivative \textcolor{black}{and where we let
\begin{equation}
\label{eq:compactified:notation:interpolation}
X_{s;(s,t)}^{(\lambda)}(\omega) 
= X_{s}(\omega) + \lambda X_{s,t}(\omega), 
\quad 
Y_{s;(s,t)}^{(\lambda)}(\cdot) 
= Y_{s}(\cdot) + \lambda Y_{s,t}(\cdot).
\end{equation}}We read on 
\textcolor{black}{\eqref{eq:decomposition:000}}
the formulas for the $x$ and $\mu$-derivatives of $\textrm{F}(X(\omega),Y(\cdot))$. The remainder $R^{\textrm{F}(X,Y)}_{s,t}$ in the controlled decomposition of the path $\textrm{F}(X(\omega),Y(\cdot))$ is 
\begin{equation}
\label{eq:chaining:remainder}
\begin{split}
\textcolor{black}{\partial_x\textrm{F}\bigl(X_{s}(\omega),Y_{s}(\cdot)\bigr)  R_{s,t}^X(\omega) }+ \textcolor{black}{\Bigl\langle} \nabla_{Z}\textrm{F}\bigl(X_{s}(\omega),
\textcolor{black}{Y_{s}(\cdot)}
\bigr) R_{s,t}^Y(\cdot) \textcolor{black}{\Bigr\rangle} + \textbf{(2)} + \textbf{(3)}+ \textbf{(5)}. 
\end{split}
\end{equation}

We now compute $\bigl\vvvert \textrm{F}\bigl(X(\omega),Y(\cdot)\bigr) \bigr\vvvert_{\star,[0,T],w,p}$.

\begin{itemize}
   \item We have first from the assumptions on F that the initial conditions for the quantities 
   $
   \textrm{F}\big(X(\omega),\textcolor{black}{Y(\cdot)}\big)$,
   $\delta_x 
   \textcolor{black}{\bigl(}
   \textrm{F}\bigl(X(\omega),Y(\cdot)\bigr)
   \textcolor{black}{\bigr)}$,
   $\delta_\mu 
   \textcolor{black}{\bigl(}
   \textrm{F}\bigl(X(\omega),Y(\cdot)\bigr)\textcolor{black}{\bigr)},
   $
   are all bounded above by {$\Lambda(1+M)$, the bound for 
   $\delta_\mu 
   \textcolor{black}{\bigl(}
   \textrm{F}\bigl(X(\omega),Y(\cdot)\bigr)\textcolor{black}{\bigr)}
   $ being understood in $\LL^{4/3}(\Omega,{\mathcal F},\PP;\RR^d \otimes \RR^m \otimes \RR^m)$.}
     
   \item \textbf{\textbf{Variation of}} $\textrm{F}(X(\omega),Y(\cdot))$\textbf{\textbf{.}} Using the Lipschitz property of F and Lemma \ref{lem:1:1}, we have
\begin{equation*}
\begin{split}
\Big\vert \bigl[ \textrm{F}\bigl(X(\omega),Y(\cdot)\bigr) \bigr]_{s,t} \Big\vert &= \Big\vert \bigl[ \textrm{F}\bigl(X(\omega),Y(\cdot)\bigr) \bigr]_{t} - \bigl[ \textrm{F}\bigl(X(\omega),Y(\cdot)\bigr) \bigr]_{s} \Big\vert   
\\
&\leq \textcolor{black}{\Lambda}\, \Bigl( \big\vert X_{s,t}(\omega) \big\vert + \big\langle Y_{s,t}(\cdot)\big\rangle_{2}\Bigr)  
\\ 
&\leq 2 \textcolor{black}{\Lambda} \, \Bigl( \vvvert X(\omega) \vvvert_{[0,T],w,p} + \bigl\langle \vvvert Y(\cdot) \vvvert_{[0,T],w,p} \bigr\rangle_{4} \Bigr) \, w(s,t,\omega)^{1/p}.
\end{split}
\end{equation*}

   \item \textbf{\textbf{Variation of}} $\delta_{x}\big(\textrm{F}(X(\omega),Y(\cdot))\big)$ \textbf{\textbf{and}} $\delta_{\mu} \bigl( \textrm{F}(X(\omega),Y(\cdot))\bigr)$\textbf{\textbf{.}} The Lipschitz properties of $\partial_x\textrm{F}$ and $\nabla_Z\textrm{F}(x,\cdot)$ also give
\begin{equation*}
\begin{split}
&\Bigl\vert \delta_{x} \bigl[ \textrm{F}\bigl(X(\omega),Y(\cdot)\bigr) \bigr]_{s,t} \Bigr\vert 
\\
&\leq 2 \textcolor{black}{\Lambda} M\,\Bigl( \vvvert X(\omega) \vvvert_{[0,T],w,p} + \bigl\langle \vvvert Y(\cdot) \vvvert_{[0,T],w,p} \bigr\rangle_{4} \Bigr) \, w(s,t,\omega)^{1/p}   \\
&\hspace{15pt} + \textcolor{black}{\Lambda} \, \vvvert X(\omega) \, \vvvert_{[0,T],w,p} \, w(s,t,\omega)^{1/p},
\end{split}
\end{equation*}
and, applying H\"older inequality with exponents $3/2$ and $3$,
\begin{equation*}
\begin{split}
\Big\langle &\delta_{\mu} \bigl[ \textrm{F}\bigl(X(\omega),Y(\cdot)\bigr) \bigr]_{s,t} \Big\rangle_{4/3}   
\\
&\leq {\bigl\langle \delta_{x} Y_{t}(\cdot) \bigr\rangle_{\infty}
\Bigl\langle \bigl[ D_{\mu} \textrm{F}\bigl(X(\omega),Y(\cdot)\bigr) \bigr]_{s,t}
\Bigr\rangle_{2} + 
\Bigl\langle 
 D_{\mu} \textrm{F}\bigl(X_{s}(\omega),Y_{s}(\cdot)\bigr) 
\Bigr\rangle_{2}
\bigl\langle \delta_{x} Y_{s,t}(\cdot) \bigr\rangle_{4}}
\\
&\leq 2 \textcolor{black}{\Lambda} \, \big\langle \delta_{x} Y_{t}(\cdot) \big\rangle_{\infty} \, \Bigl( \vvvert X(\omega) \vvvert_{[0,T],w,p} + \langle \vvvert Y(\cdot) \vvvert_{[0,T],w,p} \rangle_{4} \Bigr) \, w(s,t,\omega)^{1/p}   
\\
&\hspace{15pt} + {\Lambda} \, \langle \delta_{x} Y_{s,t}(\cdot)\rangle_{4}   \\
&\leq 2 \textcolor{black}{\Lambda M} \Bigl( \vvvert X(\omega) \vvvert_{[0,T],w,p} + \langle \vvvert Y(\cdot) \vvvert_{[0,T],w,p} \rangle_{4} \Bigr) \, w(s,t,\omega)^{1/p}   \\
&\hspace{15pt} + 2 \textcolor{black}{\Lambda} \, \big\langle \vvvert Y(\cdot) \vvvert_{[0,T],w,p} \big\rangle_{8} \, w(s,t,\omega)^{1/p}.
\end{split}
\end{equation*}

   \item \textbf{Remainder \eqref{eq:chaining:remainder}.} The first two terms in \eqref{eq:chaining:remainder} are less than 
\begin{equation*}
\begin{split}
&\textcolor{black}{\Lambda} \, \vvvert X \vvvert_{[0,T],w,p} \, w(s,t,\omega)^{2/p} + \textcolor{black}{\Lambda} \, \big\langle R_{s,t}^Y(\cdot) \big\rangle_{2}   \\
&\leq \textcolor{black}{\Lambda} \vvvert X \vvvert_{[0,T],w,p} \,w(s,t,\omega)^{2/p} + \textcolor{black}{\Lambda} \, \big\langle \vvvert Y(\cdot) \vvvert_{[0,T],w,p} \,w(s,t,\cdot)^{2/p} \big\rangle_{2}   \\
&\leq \textcolor{black}{\Lambda} \vvvert X \vvvert_{[0,T],w,p} \,w(s,t,\omega)^{2/p} + \textcolor{black}{\Lambda} \, \big\langle \vvvert Y(\cdot) \vvvert_{[0,T],w,p} \big\rangle_{4} \,\big\langle \, w(s,t,\cdot) \big\rangle_{4}^{2/p}   \\
&\leq  \textcolor{black}{\Lambda}\, \vvvert X \vvvert_{[0,T],w,p} \,w(s,t,\omega)^{2/p} + 2\textcolor{black}{\Lambda}\, \big\langle \vvvert Y(\cdot) \vvvert_{[0,T],w,p} \big\rangle_{4} \, w(s,t,\omega)^{2/p},
\end{split}
\end{equation*}
from Lemma \ref{lem:1:1} {and the fact that $p \in [2,3)$}. We also have
\begin{equation*}
\begin{split}
&\big\vert \textbf{(2)} \big\vert \leq \textcolor{black}{\Lambda} \, \big\vert X_{s,t}(\omega) \big\vert\, \big\langle Y_{s,t}(\cdot)\big\rangle_{2}   
\leq 2\textcolor{black}{\Lambda} \, \big\vvvert X(\omega) \big\vvvert_{[0,T],w,p} \, \big\langle \vvvert Y(\cdot) \vvvert_{[0,T],w,p} \big\rangle_{4} \, w(s,t,\omega)^{2/p},
\\
&\big\vert  \textbf{(3)} \big\vert \leq  \textcolor{black}{\Lambda}\, \big\vert X_{s,t}(\omega) \big\vert^2 \leq \textcolor{black}{\Lambda}\, \big\vvvert X(\omega) \big\vvvert_{[0,T],w,p}^2 \,  w(s,t,\omega)^{2/p}.
\end{split}
\end{equation*}
Last, since $\textcolor{black}{\nabla_{Z}} \textrm{F}$ is a Lipchitz function of its second argument,
\begin{equation*}
\textbf{(5)} \leq \textcolor{black}{\Lambda} \, \big\langle Y_{s,t}(\cdot)\big\rangle_{2}^2 \leq 4 \textcolor{black}{\Lambda} \, \big\langle \vvvert Y(\cdot) \vvvert_{[0,T],w,p}\big\rangle_{4}^{\textcolor{black}{2}} \, w(s,t,\omega)^{2/p}.
\end{equation*}
\end{itemize}
{Collecting the various terms, we complete the proof.}
\end{proof}

\section{Solving the Equation}
\label{SectionSolving}

We now have all the tools to formulate the  equation \eqref{eq:1:2} (or 
\eqref{EqRDE}) as a fixed point problem and solve it by Picard iteration. 
Our definition of the fixed point is given in the form of a two-step procedure: The first step is to write a \textit{frozen} version of the equation, in which the mean field component is seen as an exogenous collection of $\omega$-controlled trajectories; the second step is to regard the family of exogenous controlled trajectories as an input and to map it to the collection of controlled trajectories solving the frozen version of the equation. In this way, we define a solution as a collection of $\omega$-controlled trajectories. In order to proceed, recall the generic notation $\big(X(\omega) ; \delta_xX(\omega) ; \partial_\mu X(\omega,\cdot)\big)$ for an $\omega$-controlled path and its derivatives; we sometimes abuse notations and talk of $X(\omega)$ as an $\omega$-controlled path.

\begin{defn}
\label{DefnGamma}
Let $W$ together with its enhancement ${\boldsymbol W}$ satisfy the assumption of Section \emph{\ref{SectionRoughStructure}} on a finite interval $[0,T]$, and let $Y(\cdot)$ stand for some $\RR^d$-valued random controlled path on $[0,T]$,  with the property that $\delta_{\mu} Y(\cdot) \equiv 0$ and  $\sup_{0 \leq t \leq T} \, \langle    \delta_{x} Y_{t}(\cdot) \rangle_{\infty} <\infty$.
For a given $\omega \in \Omega$, let $X(\omega)$ be an $\RR^d$-valued $\omega$-controlled path on $[0,T]$, with the properties that $\delta_{\mu} X(\omega) \equiv 0$ and $\sup_{0 \leq t \leq T} \vert \delta_{x} X_{t}(\omega)| < \infty$. We associate to $\omega$ and $X(\omega)$ an $\omega$-controlled path by setting
\begin{equation*}
\begin{split}
&\Gamma\big(\omega, X(\omega) ,  Y(\cdot)\big) 
\\ 
&\hspace{15pt} := \biggl( X_{0}(\omega) + \int_{0}^{t} \textrm{\emph{F}}\bigl(X_{s}(\omega),Y_{s}(\cdot)\bigr) d{\boldsymbol W}_{s}(\omega) \,;\, \textrm{\emph{F}}\bigl(X_{t}(\omega),Y_{t}({\cdot})\bigr)\,;\, 0 \biggr)_{0 \le t \le T}.
\end{split}
\end{equation*}   
A \textbf{\textbf{solution to the mean field rough differential equation}}
$
dX_t = \textrm{\emph{F}}\big(X_t, \mathcal{L}(X_t)\big)\, d{\boldsymbol W}_t,
$
on the time interval $[0,T]$, with given initial condition $X_{0}(\cdot) \in L^2(\Omega,{\mathcal F},{\mathbb P};\RR^d)$ is a random controlled path $X(\cdot)$ starting from $X_0(\cdot)$ and satisfying the same prescription as $Y(\cdot)$, such that for ${\mathbb P}$-a.e. $\omega$ the path $X(\omega)$ and $\Gamma\big(\omega , X(\omega) , X(\cdot)\big)$ coincide.
\end{defn}

We should more properly replace $X(\omega)$ in $\Gamma\big(\omega , X(\omega) , Y(\cdot)\big)$ by $\big(X(\omega) \, ; \delta_xX(\omega) \, ; 0\big)$ and $Y(\cdot)$ by $\big(Y(\cdot) \, ; \delta_xY(\cdot) \, ; 0\big)$, but we stick to the above lighter notation. Observe also that our formulation bypasses any requirement on the properties of the map $\Gamma$ itself. To make it clear, we should be indeed tempted to check that, for a random controlled path $X(\cdot)$, the collection $\big(\Gamma(\omega , X(\omega) , Y(\cdot))\big)_{\omega \in \Omega}$, for $Y(\cdot)$ as in the statement, is also a random controlled path. Somehow, our definition of a solution avoids this question; however, we need to check this fact in the end; below, we refer to it as the \textit{stability properties of $\Gamma$}, see Section \ref{SubsectionStability}.

What remains of the above definition when ${\boldsymbol W}$ is the It\^o or Stratonovich enhancement of a Brownian motion? The key point to connect the above notion of solution with the standard notion of solution to mean field stochastic differential equation is to observe that the rough integral therein should be, if a solution exists, the limit of the compensated Riemann sums 
\begin{equation*}
\begin{split}
&\sum_{j=0}^{n-1} \biggl( \textrm{\rm F} \bigl(X_{t_{j}}(\omega),X_{t_{j}}(\cdot) \bigr) W_{t_{j},t_{j+1}}(\omega)   
+ \partial_x \textrm{\rm F}\bigl(X_{t_{j}}(\omega),X_{t_{j}}(\cdot)\bigr) \textrm{\rm F} \bigl(X_{t_{j}}(\omega),X_{t_{j}}(\cdot) \bigr)
{\mathbb W}_{t_{j},t_{j+1}}(\omega)    \\
&\hspace{15pt}
+ \Bigl\langle D_{\mu} \textrm{\rm F}\bigl(X_{t_{j}}(\omega),X_{t_{j}}(\cdot)\bigr)\bigl(X_{t_{j}}(\cdot)\bigr) \textrm{\rm F} \bigl(X_{t_{j}}(\omega),X_{t_{j}}(\cdot) \bigr) {\mathbb W}_{t_{j},t_{j+1}}^{\indep}(\cdot,\omega) \Bigr\rangle \biggr),
\end{split}
\end{equation*}
as the step of the dissection $0=t_{0}<\dots<t_{n}=t$ tends to $0$. When the solution is constructed by a contraction argument, such as done below, the process $(X_{t}(\cdot))_{0 \leq t \leq T}$ is adapted with respect to the completion of the filtration $({\mathcal F}_{t})_{0 \le t \le T}$ generated by the initial condition $X_{0}(\cdot)$ and the Brownian motion $W(\cdot)$. Returning if necessary to Example \ref{example:3}, we then 
check that 
${\mathbb E}
\bigl[ {\mathbb W}^{\indep}_{t_{j},t_{j+1}}(\cdot,\omega) \, \vert\, {\mathcal F}_{t_{j}}
\bigr] = 0$, 
whatever the interpretation of the rough integral, It\^o or Stratonovich. Pay attention that the conditional expectation is taken with respect to  ``$\cdot$'', while  $\omega$ is kept frozen. This implies that, for any $j \in \{0,\cdots,n-1\}$, we have
\begin{equation*}
\Bigl\langle D_{\mu} \textrm{\rm F}\bigl(X_{t_{j}}(\omega),X_{t_{j}}(\cdot)\bigr)\bigl(X_{t_{j}}(\cdot)\bigr) \textrm{\rm F} \bigl(X_{t_{j}}(\omega),X_{t_{j}}(\cdot) \bigr) {\mathbb W}_{t_{j},t_{j+1}}^{\indep}(\cdot,\omega) \Bigr\rangle = 0.
\end{equation*}
This proves that the solution to the rough mean field equation coincides with the solution that is obtained when \eqref{EqRDE} is interpreted in the standard McKean-Vlasov sense ({the stochastic integral in the McKean-Vlasov equation being usually understood in the It\^o sense and the iterated integral ${\mathbb W}$ being defined accordingly}).

We formulate here the regularity assumptions on $\textrm{F}(x,\mu)$ needed \textcolor{black}{to show that $\Gamma$ satisfies the required stability properties and to run Picard's iteration for proving} the well-posed character of   \eqref{eq:1:2} (or \eqref{EqRDE}) in small time, or in some given time interval. Recall from \eqref{EqDefnDu} the definition of $D_{\mu}\textrm{F}(x,\cdot)(\cdot)$ as a function from $\textcolor{black}{\cP_{2}(\RR^d) \times} \RR^d$ to ${\mathscr L}(\RR^{d},\RR^d \otimes \RR^m) \cong \RR^{d} \otimes \RR^m \otimes \RR^d$ such that $D_{\mu}\textrm{F}(x,\textcolor{black}{{\mathcal L}(Z)})(Z) = \nabla_Z\textrm{F}(x,Z)$, \textcolor{black}{where we} emphasize the dependence of $D_{\mu}\textrm{F}(x,\cdot)$ on $\mu=\mathcal{L}(Z)$ by writing $D_{\mu}\textrm{F}(x,\mu)(\cdot)$. On top of \textbf{\textbf{Regularity assumptions 1}}, we assume
\vspace{4pt}

\noindent \textbf{\textbf{Regularity assumptions  2 -- }} 

\textcolor{gray}{$\bullet$} \textit{The function $\partial_{x} \textrm{\rm F}$ is differentiable in $(x,\mu)$ in the same sense as {\textrm{\rm F} itself}.} 

\textcolor{gray}{$\bullet$} \textit{For each $(x,\mu) \in \RR^d \times \cP_{2}(\RR^d)$, there exists a version of $D_{\mu}\textrm{\emph{F}}(x,\mu)(\cdot) \in L^2_{\mu}(\RR^d;\RR^d \otimes \RR^m)$ such that the map
$
(x,\mu,z) \mapsto D_{\mu}\textrm{\emph{F}}(x,\mu)(z)
$
from $\RR^d\times\mathcal{P}_2(\RR^d)\times\RR^d$ to $\textcolor{black}{\RR^{d} \otimes \RR^m \otimes \RR^d}$ is of class $C^1$, the derivative in the direction $\mu$ being understood as before.}

\textcolor{gray}{$\bullet$}
\textit{The function 
$\big(x,Z\big) \mapsto \partial_{x}^2 \textrm{\emph{F}}\bigl(x,\cL(Z)\bigr)$ from $\RR^d \times \LL^2(\Omega,{\mathcal F},\PP;\RR^d)$ to $\RR^{d} \otimes \RR^{m} \otimes \RR^d \otimes \RR^d
\cong {\mathscr L}(\RR^d \otimes \RR^d,\RR^d \otimes \RR^m)$ is bounded by $\Lambda$ and $\Lambda$-Lipschitz continuous. }

\textcolor{gray}{$\bullet$} \textit{The three functions
$(x,Z) \mapsto \partial_x D_{\mu}\textrm{\emph{F}}\bigl(x,\cL(Z)\bigr)(Z(\cdot))$, 
$(x,Z) \mapsto D_{\mu} \partial_x \textrm{\emph{F}}\bigl(x,\cL(Z)\bigr)(Z(\cdot))$,  
and $(x,Z) \mapsto \partial_z D_{\mu}\textrm{\emph{F}}\bigl(x,\cL(Z)\bigr)(Z(\cdot))$ from $\RR^d \times \LL^2(\Omega,{\mathcal F},\PP;\RR^d)$ to $\LL^2\bigl(\Omega,{\mathcal F},\PP;\RR^{d} \otimes \RR^{m} \otimes \RR^d \otimes \RR^d\bigr)$, are bounded by $\Lambda$ and $\Lambda$-Lipschitz continuous. (By Schwarz' theorem, the transpose of $\partial_{x} D_{\mu} \textrm{\rm F}^{i,j}$ is in fact equal to $D_{\mu} \partial_{x} \textrm{\rm F}^{i,j}$, for any 
$i \in \{1,\cdots,d\}$ and $j \in \{1,\cdots,m\}$.) 
}

\textcolor{gray}{$\bullet$} \textit{For each $\mu \in {\mathcal P}_{2}(\RR^d)$, we denote by 
$
D^2_{\mu}\textrm{\emph{F}}(x,\mu)(z,\cdot)
$
the derivative of $D_{\mu}\textrm{\emph{F}}(x,\mu)(z)$ with respect to $\mu$ -- which is indeed given by a function. For $z' \in \RR^d$, 
$D_{\mu}^2\textrm{\emph{F}}(x,\mu)(z,z')$
is an element of $\RR^d \otimes \RR^m \otimes \RR^d \otimes \RR^d$.}  

 \textit{Denote by $\big(\widetilde{\Omega},\widetilde{\mathcal F},\widetilde{\PP}\big)$ a copy of $(\Omega,{\mathcal F},\PP)$, and given a random variable $Z$ on $(\Omega,{\mathcal F},\PP)$, write $\widetilde{Z}$ for its copy on $(\widetilde\Omega,\widetilde{\mathcal F},\widetilde\PP)$. We assume that 
$
(x,Z) \mapsto D^2_{\mu} \textrm{\emph{F}}\bigl(x,\cL(Z)\bigr)\big(Z(\cdot),\widetilde{Z}(\cdot)\big),
$ from $\RR^d \times \LL^2(\Omega,{\mathcal F},\PP;\RR^d)$ to $\LL^2\bigl(\Omega \times \widetilde{\Omega},{\mathcal F} \otimes \widetilde{\mathcal F},\PP \otimes \widetilde{\mathbb P};\RR^{d} \otimes \RR^{m} \otimes \RR^d \otimes \RR^d\bigr)$, is bounded by $\Lambda$ and $\Lambda$-Lipschitz continuous.}  
\vspace{4pt}

The two functions
$
\textrm{F}(x,\mu) = \int f(x,y)\mu(dy)
$
for some fuction $f$ of class $C^3_b$, and
$
\textrm{F}(x,\mu) = g\left(x, \int y\mu(dy)\right)
$
for some function $g$ of class $C^3_b$, both satisfy \textbf{\textbf{Regularity assumptions 2}}. We refer to \cite[Chapter 5]{CarmonaDelarue_book_I} and \cite[Chapter 5]{CarmonaDelarue_book_II} for other examples of functions that satisfy the above assumptions and for sufficient conditions under which these assumptions are satisfied. We feel free to abuse notations and write $Z(\cdot)$ for $\cL(Z)$ in the argument of the functions $\partial_x D_{\mu}\textrm{F}$, $\partial_z D_{\mu}\textrm{F}$ and $D^2_{\mu}\textrm{F}$. We prove in Section \ref{SubsectionStability} that the map $\Gamma$ sends some large ball of its state space into itself for a small enough $T$. The contractive character of $\Gamma$ is proved in Section \ref{SubsectionContractivity}, and Section \ref{SubsectionWellPosedness} is dedicated to proving the well-posed character of \eqref{eq:1:2}.

\subsection{Stability of Balls by $\Gamma$}
\label{SubsectionStability}

Recall $\Lambda$ was introduced in \textbf{\textbf{Regularity assumptions  1}} and
\textbf{\textbf{2}} as a bound on F and some of its derivatives. 
Recall {also} from \eqref{eq:N:s:t:omega} the definition of 
$
N\big([0,T],\omega;\alpha\big).
$
We also use  below the notations $\vvvert\cdot\vvvert_{[a,b], w, p}$ and $\vvvert\cdot\vvvert_{\star, [a,b], w, p}$, for some interval $[a,b]$, to denote the same quantity as in Definition \ref{definition:random:controlled:trajectory} but for paths defined on $[a,b]$ rather than on  $[0,T]$ (the initial condition is then taken at time $a$).

\begin{prop}
\label{thm:main:1}
Let \emph{F} satisfy \textcolor{black}{\textbf{\textbf{Regularity assumptions 1}} {and $w$ be a control 
satisfying \eqref{eq:w:s:t:omega:ineq}
and 
\eqref{eq:useful:inequality:wT}}}. Consider an $\omega$-controlled path $X(\omega)$ together with a random controlled path $Y(\cdot)$, \textcolor{red}{both of them} satisfying {\eqref{EqZeroConditionMu}} together with 
\begin{equation}
\label{eq:invariant:0000}
\sup_{0 \leq t \leq T}
\Bigl(
\bigl|\delta_{x} X_{t}(\omega)\bigr|  \vee \big\langle \delta_{x} Y_{t}(\cdot) \big\rangle_{\infty} \Bigr) \leq {\Lambda}.
\end{equation}

\textcolor{gray}{$\bullet$} Assume that there exists a positive constant $L$ such that we have
\begin{equation}
\label{eq:invariant:1}
 \bigl\langle \vvvert Y(\cdot) \vvvert_{[0,T],w,p}\bigr\rangle_{8}^2 \leq L, 
\end{equation}
and 
\begin{equation}
\label{eq:invariant:2}
\big\vvvert X(\omega) \big\vvvert_{[t_{i},t_{i+1}],w,p}^2 \leq {L}, 
\end{equation}
for all $0\leq i \leq N$, with $N := N(\textcolor{black}{[0,T]},\omega,1/(4L))$, and for the sequence of times  $\big(t_{i}:=\tau_{i}(0,T,\omega,1/(4L))\big)_{i=0,\cdots,N+1}$ given by \eqref{eq:stopping:times}
{with 
$\varpi(s,t) = w(s,t,\omega)^{1/p}$}. 

Then:

\textcolor{gray}{$\bullet$} There exists a constant 
$c>1$, {only depending on $\Lambda$,} such that 
 \eqref{eq:invariant:1}
 and
\eqref{eq:invariant:2} remain true if we replace $L$ by $L'$, provided that $L' \geq c L$ 
and the partition 
$(t_{i})_{i=0,\cdots,N+1}$ is recomputed accordingly ({since $L$ enters the definition of the partition}).
Also, 
{we can find a constant $L_{0}'$, only depending on $L$, such that 
for the same constant $c$ and for $L' \geq L_{0}'$, the path $\Gamma\big(\omega , X(\omega), Y(\cdot)\big)$ satisfies for each $\omega$ the size 
   estimate \eqref{eq:invariant:2}, $L$ being replaced by $c$ in the right-hand side and 
   the partition  
   $(t_{i})_{i=0,\cdots,N+1}$ in the left-hand side being defined with respect to $L'$ instead of $L$.}
 
\textcolor{gray}{$\bullet$} Moreover, there exist {two constants $L_{0}$
and $C$}, 
 {only depending on $\Lambda$}, 
 such that, if $L$ in 
 \eqref{eq:invariant:1}
 and
\eqref{eq:invariant:2}
is greater than $L_{0}$, the following estimates hold for each $\omega$:
\begin{equation}
\label{eq:invariant:3}
\begin{split}
\big\vvvert \Gamma\big(\omega, X(\omega), Y(\cdot)\big)\big\vvvert_{[0,T],w,p}^2 &\leq {C} \, \Big\{ 1+ N\Bigl([0,T], \omega,1/(4L)\Bigr)^{2(1-1/p)} \Bigr\},   \\
\big\vvvert \Gamma\bigl(\omega , X(\omega), Y(\cdot)\bigr) \big\vvvert_{\star,[0,T],w,p}^2 &\leq {C \big\vert X_{0}(\omega) \big\vert^2}
+ {C} \,\left\{ 1+ N\Bigl([0,T], \omega,1/(4L)\Bigr)^{2(1-1/p)} \right\};
\end{split}
\end{equation}   
   
\textcolor{gray}{$\bullet$} Lastly, if $X(\omega)$ is the \textcolor{black}{$\omega$}-realization of a random controlled path $X(\cdot)=\bigl(X(\omega')\bigr)_{\omega' \in \Omega'}$ such that the estimate $\big\vvvert X(\omega') \big\vvvert_{[t_{i},t_{i+1}],w,p}^2 \leq {L}$ holds for all $\omega'$, for the $\omega'$-dependent partition $\bigl(t_{i}:=
  \tau_{i}(0,T,\omega',1/(4L))\bigr)_{i=0,\cdots,N+1}$ 
  of $[0,T]$, with
{$L$ in 
 \eqref{eq:invariant:1} satisfying $L \geq L_{0}$}
 and with
   $N:=N([0,T],\omega',1/(4L))$, 
  and if $T$ is small enough to have
$$
\textcolor{red}{\Big\langle N\bigl(\textcolor{black}{[0,T]},\cdot,1/(4L)\bigr) \Big\rangle_{8}} \leq {1};
$$   
then 
\begin{equation*}
\begin{split}
&\bigl\langle \vvvert \Gamma(\cdot , X(\cdot), Y) \vvvert_{[0,T],w,p}\bigr\rangle_{8}^2 {\leq 2C \leq L}, 
\\ 
\text{and} \qquad &\Bigl\langle \big\vvvert \Gamma(\cdot,  X(\cdot),  Y) \big\vvvert_{\star,[0,T],w,p} \Bigr\rangle_{\textcolor{black}{2}}^2 \leq C \Bigl(2+ \bigl\langle X_{0}(\cdot) \bigr\rangle_{\textcolor{black}{2}}^2 \Bigr).
\end{split}
\end{equation*}
\end{prop}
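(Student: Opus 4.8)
The plan is to organize everything around the partition $(t_i)$ attached to the threshold $\alpha=1/(4L)$ (resp. $1/(4L')$), on which $w(t_i,t_{i+1},\omega)^{1/p}\le\alpha$ is arbitrarily small once $L$ is large. The mechanism that makes the proposition work is that \emph{increments} of a controlled path are controlled on such a cell by a constant depending only on $\Lambda$, \emph{not} on the ball size: from $X_{s,t}(\omega)=\delta_xX_s(\omega)W_{s,t}(\omega)+R^X_{s,t}(\omega)$ (recall $\delta_\mu X\equiv0$), $\|W(\omega)\|_{[0,T],w,p}\le1$, $\sup_t|\delta_xX_t(\omega)|\le\Lambda$, and $|R^X_{s,t}(\omega)|\le\vvvert X(\omega)\vvvert_{[t_i,t_{i+1}],w,p}\,w(s,t,\omega)^{2/p}$ with $w(s,t,\omega)^{1/p}\le1/(4L')$, one gets $|X_{s,t}(\omega)|\le(\Lambda+1)w(s,t,\omega)^{1/p}$ as soon as $L'$ is large; the analogue $\langle Y_{s,t}(\cdot)\rangle_2\le(\Lambda+1)w(s,t,\omega)^{1/p}$ for the random controlled path $Y$ follows from its decomposition together with $\langle\delta_xY_s(\cdot)\rangle_\infty\le\Lambda$, $\langle W_{s,t}(\cdot)\rangle_q\le w^{1/p}$, Lemma~\ref{lem:1:1} and \eqref{eq:invariant:1}. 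The monotonicity claim falls out of the same circle of ideas: a full cell of the $1/(4L')$-partition has $w$-mass $(1/(4L'))^p<(1/(4L))^p$, so by super-additivity of $w$ it meets at most two cells of the $1/(4L)$-partition; splitting increments at the (at most one) interior breakpoint and using $a+b\le2^{1-1/p}(a^p+b^p)^{1/p}$ on the $w^{1/p}$-weights (and the analogous bound on the $w^{2/p}$-weight of $R^X$) yields $\vvvert X(\omega)\vvvert_{[t_i',t_{i+1}'],w,p}^2\le cL$ for a numerical constant $c$, so \eqref{eq:invariant:2} persists with $L$ replaced by any $L'\ge cL$, while \eqref{eq:invariant:1} is untouched.

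The heart of the proof is the local-in-time estimate $\vvvert\Gamma(\omega,X(\omega),Y(\cdot))\vvvert_{[t_i,t_{i+1}],w,p}^2\le c(\Lambda)$. Set $Z:=\textrm{F}(X(\omega),Y(\cdot))$; Proposition~\ref{prop:chaining} applies since $M\le\Lambda$ by \eqref{eq:invariant:0000}, giving $\delta_xZ_t=\partial_x\textrm{F}\,\delta_xX_t$ and $\delta_\mu Z_t=\nabla_Z\textrm{F}\,\delta_xY_t$, hence $|\delta_xZ_t|\le\Lambda^2$ and $\langle\delta_\mu Z_t(\cdot)\rangle_{4/3}\le\Lambda^2$ by the bounds in \eqref{EqRegularityF}. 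Feeding the increment bounds of the first paragraph into the Lipschitz hypotheses on $\textrm{F},\partial_x\textrm{F},\nabla_Z\textrm{F}$ — and into the terms \textbf{(2)},\textbf{(3)},\textbf{(5)} of the proof of Proposition~\ref{prop:chaining} — one obtains on $[t_i,t_{i+1}]$ the bounds $|Z_{s,t}|,|\delta_xZ_{s,t}|,\langle\delta_\mu Z_{s,t}(\cdot)\rangle_{4/3}\le c(\Lambda)\,w(s,t,\omega)^{1/p}$ and $|R^Z_{s,t}|\le c(\Lambda)\,w(s,t,\omega)^{2/p}$, every stray factor $\sqrt L$ or $\sqrt{L'}$ being absorbed by a power of $w$ no larger than $1/(4L')$. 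Since $\Gamma_{s,t}(\omega)=Z_s(\omega)\otimes W_{s,t}(\omega)+\int_s^tZ_{s,u}(\omega)\otimes d{\boldsymbol W}_u(\omega)$ (after the index contraction of Section~\ref{SubsectionIntegral}) and, by Theorem~\ref{thm:integral}, $\int_s^tZ_{s,u}\otimes d{\boldsymbol W}_u=\delta_xZ_s\WW_{s,t}+\EE[\delta_\mu Z_s\WW_{s,t}^{\indep}(\cdot,\omega)]+O\big(\vvvert Z(\omega)\vvvert_{[t_i,t_{i+1}],w,p}\,w^{3/p}\big)$, and since $|Z_s||W_{s,t}|\le\Lambda w^{1/p}$, $|\delta_xZ_s\WW_{s,t}|,|\EE[\delta_\mu Z_s\WW_{s,t}^{\indep}(\cdot,\omega)]|\le\Lambda^2w^{2/p}$ (using $|\WW_{s,t}(\omega)|\le w^{2/p}$ and $\langle\WW_{s,t}^{\indep}(\cdot,\omega)\rangle_q\le w^{2/p}$ with $q\ge8$), while $\vvvert Z(\omega)\vvvert_{[t_i,t_{i+1}],w,p}\le c(\Lambda)\sqrt{L'}$ makes the sewing error $\le c_0c(\Lambda)\sqrt{L'}\,w^{3/p}\le\tfrac{1}{\sqrt{L'}}w^{2/p}$ up to a constant, we read off, using $\delta_x\Gamma_t=Z_t$, $\delta_\mu\Gamma\equiv0$ (by \eqref{EqZeroDerivativeMu}) and $R^\Gamma_{s,t}=\int_s^tZ_{s,u}\otimes d{\boldsymbol W}_u$, that each of the four pieces of $\vvvert\Gamma(\omega)\vvvert_{[t_i,t_{i+1}],w,p}$ is bounded by a constant depending only on $\Lambda$, provided $L'\ge L_0'$ with $L_0'$ depending on $L$ and $\Lambda$. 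This is precisely the ``Also'' assertion; taking $L\ge L_0(\Lambda)$ itself large, the same bound holds directly on the $1/(4L)$-partition.

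Estimate \eqref{eq:invariant:3} is then obtained by concatenation. An arbitrary $[s,t]\subset[0,T]$ meets at most $N+2$ partition cells, where $N=N([0,T],\omega,1/(4L))$; additivity of the path- and $\delta_x$-increments together with $a+b\le2^{1-1/p}(a^p+b^p)^{1/p}$ gives $\|\Gamma(\omega)\|_{[0,T],w,p},\|\delta_x\Gamma(\omega)\|_{[0,T],w,p}\le\sqrt c\,(N+2)^{1-1/p}$; Chen's relation $R^\Gamma_{r,t}=R^\Gamma_{r,s}+R^\Gamma_{s,t}+Z_{r,s}\otimes W_{s,t}$ together with $\|W(\omega)\|_{[0,T],w,p}\le1$ gives $\|R^\Gamma(\omega)\|_{[0,T],w,p/2}\le(\sqrt c+c)(N+2)^{1-1/p}$; and $\delta_\mu\Gamma\equiv0$, while the $\star$-part at time $0$ adds only $|X_0(\omega)|+|\textrm{F}(X_0,Y_0)|\le|X_0(\omega)|+\Lambda$. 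Squaring yields \eqref{eq:invariant:3} with $C=C(\Lambda)$ once $L\ge L_0$, after bounding $(N+2)^{2(1-1/p)}$ by a multiple of $1+N^{2(1-1/p)}$.

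Finally, for the last display one takes $\LL^8$- and $\LL^2$-moments in \eqref{eq:invariant:3}: from $\vvvert\Gamma\vvvert_{[0,T],w,p}\le C^{1/2}(1+N^{2(1-1/p)})^{1/2}$ one gets $\langle\vvvert\Gamma(\cdot,X(\cdot),Y)\vvvert_{[0,T],w,p}\rangle_8^2\le C\langle1+N^{2(1-1/p)}\rangle_4\le2C$, since by Jensen's inequality $\langle N^{8(1-1/p)}\rangle_1\le\langle N^8\rangle_1^{1-1/p}\le1$ under the hypothesis $\langle N([0,T],\cdot,1/(4L))\rangle_8\le1$; this is $\le L$ as soon as $L_0\ge2C$. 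The $\vvvert\cdot\vvvert_\star$-bound is obtained in the same way, keeping the $|X_0(\cdot)|$-term and taking $\langle\cdot\rangle_2$. That $\omega\mapsto\Gamma(\omega,X(\omega),Y)$ is a genuine random controlled path — so that the last display makes sense — follows from measurability of $\textrm{F}(X(\cdot),Y(\cdot))$ and of the rough integral, as recorded after Theorem~\ref{thm:integral}, together with \eqref{eq:Lq:vvvert}, which is exactly what we have just verified. The one genuinely delicate point is the local estimate for $\Gamma$: one has to rerun the proofs of Proposition~\ref{prop:chaining} and Theorem~\ref{thm:integral} on a short interval while keeping explicit track of every place where $L$ and $L'$ enter, and check that each occurrence carries a power $w^{2/p}$ or $w^{3/p}$ that the scale $1/(4L')$ absorbs, the uniform bound $\sup_t(|\delta_xX_t(\omega)|\vee\langle\delta_xY_t(\cdot)\rangle_\infty)\le\Lambda$ being what keeps the remaining ``main'' terms bounded by constants.
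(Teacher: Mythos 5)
Your proposal follows the same three--stage strategy as the paper's proof: (i) first argue that the hypotheses \eqref{eq:invariant:1}--\eqref{eq:invariant:2} persist when $L$ is increased; (ii) establish the local estimate $\vvvert\Gamma\vvvert_{[t_i,t_{i+1}],w,p}\le c(\Lambda)$ on each cell of the $1/(4L)$--partition; (iii) concatenate along the cells and take moments. Your treatment of step (ii) is a pleasant variant of the paper's: where the paper invokes Proposition \ref{prop:chaining} (giving the quadratic bound $\vvvert\textrm{F}(X,Y)\vvvert\le C_{\Lambda,M}(1+\vvvert X\vvvert^2+\langle\vvvert Y\vvvert\rangle_8^2)$) and then cancels the $L$--factor against $w(t_i,t_{i+1},\omega)^{1/p}\le 1/(4L)$ via the Coutin--Lejay estimate \eqref{eq:starstarstar}, you observe directly that the increments $|X_{s,t}(\omega)|$ and $\langle Y_{s,t}(\cdot)\rangle_2$ on a small cell are bounded by $(\Lambda+1)w(s,t,\omega)^{1/p}$ \emph{without} any $\sqrt L$ factor, because the $\sqrt L$ in the remainder $R^X_{s,t}$ is killed by the extra power of $w^{1/p}\le1/(4L')$. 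Both routes are valid and lead to the same cell estimate; yours makes the small--scale gain visible at the level of the increments themselves. (Your intermediate claim $|\delta_xZ_{s,t}|\le c(\Lambda)w^{1/p}$ is actually off by a factor $\sqrt{L'}$, since $\delta_x Z_{s,t}$ contains $\partial_x\textrm{F}(X_s,Y_s)\,\delta_xX_{s,t}$ and $|\delta_xX_{s,t}|\le\sqrt{L'}\,w^{1/p}$; but you then correctly use $\vvvert Z\vvvert_{[t_i,t_{i+1}],w,p}\le c(\Lambda)\sqrt{L'}$ in the sewing error, so this slip is harmless.)

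There is, however, a genuine gap in step (iii), in your estimate of the remainder. You assert that Chen's relation $R^\Gamma_{r,t}=R^\Gamma_{r,s}+R^\Gamma_{s,t}+Z_{r,s}\otimes W_{s,t}$ yields $\|R^\Gamma(\omega)\|_{[0,T],w,p/2}\le(\sqrt c+c)(N+2)^{1-1/p}$. But the cross terms accumulate \emph{two} factors of $(N+1)^{(p-1)/p}$: iterating Chen over the $N+1$ cells intersecting $[s,t]$, the decomposition $R^\Gamma_{s,t}=\sum_j R^\Gamma_{t_j',t_{j+1}'}+\sum_j\big(\textrm{F}_{t_j'}-\textrm{F}_s\big)W_{t_j',t_{j+1}'}$ contains $|\textrm{F}_{t_j'}-\textrm{F}_s|\le\|\delta_x\Gamma\|_{[0,T],w,p}\,w(s,t_j')^{1/p}\le\gamma(N+1)^{(p-1)/p}w(s,t)^{1/p}$, and the H\"older step $\sum_j w(t_j',t_{j+1}')^{1/p}\le(N+1)^{(p-1)/p}w(s,t)^{1/p}$ then gives a second factor. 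The exponent is therefore $2(1-1/p)$, as in the paper's own display below \eqref{eq:R:Gamma}. Your exponent $1-1/p$ is what lets you ``square'' to reach \eqref{eq:invariant:3}, so it is doing real work. Getting the exponent right and then bounding $\langle N^{2(1-1/p)}\rangle_8$ (as opposed to $\langle N^{(1-1/p)}\rangle_8$) from $\langle N\rangle_8\le1$ requires a bit more care than you give, since for $p>2$ the power $16(1-1/p)$ of $N$ one meets exceeds $8$ and is not immediately controlled by Jensen from the eighth moment alone; you should revisit this step and its interaction with the superexponential tail assumption.
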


{Following 
the discussion after 
\eqref{eq:vvvert:integral},} the measurability properties of the map $\omega\mapsto \Gamma\big(\omega,X(\omega),Y(\cdot)\big)$ implicitly required above can be checked by approximating the integral in the definition of $\Gamma\big(\omega, X(\omega), Y(\cdot)\big)$, using \eqref{eq:remainder:integral}.
{We also notice that the constraint $L \geq L_{0}$ required in the second and third bullet points may be easily circumvented. Indeed,
the first claim in the statement guarantees that, for $L$ satisfying  
 \eqref{eq:invariant:1}
 and
\eqref{eq:invariant:2}, 
$L' \geq cL$ also satisfy 
 \eqref{eq:invariant:1}
 and
\eqref{eq:invariant:2}, see footnote\footnote{{While the reader may find it obvious, she/he must be aware of the fact that, in \eqref{eq:invariant:2}, $t_{i}$ and $t_{i+1}$ themselves depend on $L$, which forces to recompute the subdivision when 
$L$ is changed.}}. In particular, we can always apply the second and third bullet points with $L' \geq c L_{0}$ instead of $L$ itself, which is a good point since $L'$ is here a free parameter while the value of $L$ is prescribed by the statement. }

\begin{proof}
We first explain the reason 
why 
\eqref{eq:invariant:2}
remains true for possibly larger values of $L$ provided that the right-hand side is multiplied by a universal multiplicative constant. 
Take $L'>L$ and call $(t_{j}')_{j=0,\cdots,N'+1}$ the corresponding dissection. 
It is clear that any interval 
$[t_{j}',t_{j+1}']$ must be included in an interval of the form $[t_{i},t_{i+2} \wedge T]$. 
If $[t_{j}',t_{j+1}'] \subset [t_{i},t_{i+1}]$, the proof is done. If 
$t_{i+1} \in (t_{j}',t_{j+1}')$, 
it is an easy exercise\footnote{\label{footnote:6} The proof is as follows. 
By the super-addivitiy of $w$, 
see 
\eqref{eq:useful:inequality:wT}, and the inequality 
$a^{1/p} + b^{1/p} \leq 2^{1-1/p} (a+b)^{1/p}$,
the terms $\| X(\omega) \|_{[t_{j}',t_{j+1}'],w,p}$, $\| \delta_{x} X(\omega) \|_{[t_{j}',t_{j+1}'],w,p}$ 
and $\langle \delta_{\mu} X(\omega,\cdot) \rangle_{[t_{j}',t_{j+1}'],w,p,4/3}$ 
are easily handled. So, the only difficulty is to handle 
$\| R^X \|_{[t_{j}',t_{j+1}'],w,p}$. By 
\eqref{eq:omega:controlled:eq}, 
we have, for any $0 \leq r \leq s \leq t \leq T$,
$R^X_{r,t}(\omega) =
R^X_{r,s}(\omega) + R^X_{s,t}(\omega) 
+ \delta _{x} X_{r,s}(\omega)
W_{s,t}(\omega) + \EE \bigl[ \delta_{\mu} X_{r,s}(\omega,\cdot) W_{s,t}(\cdot) \bigr]$, 
which suffices for our purpose.} to check that 
$\vvvert \, \cdot \, \vvvert_{[t_{j}',t_{j+1}'],w,p}
\leq \gamma 
\vvvert \, \cdot \, \vvvert_{[t_{j}',t_{i+1}],w,p}
+ 
\gamma 
\vvvert \, \cdot \, \vvvert_{[t_{i+1},t_{i+2} \wedge T],w,p}$,
for some universal constant $\gamma$. 
{This yields 
$\vvvert \, \cdot \, \vvvert_{[t_{j}',t_{j+1}'],w,p}
\leq 2 \gamma L^{1/2}$, which is indeed less than 
$(L')^{1/2}$ if $L' \geq 2^2 \gamma^2 L$.}

{Given this preliminary remark, the proof proceeds in three steps.}
\smallskip

\textcolor{gray}{$\bullet$} For $\omega \in \Omega$, consider a subdivision $(t_{i})_{0 \leq i \leq N+1}$ of $[0,T]$ such that 
$w(t_{i},t_{i+1},\omega) \leq 1$
for all $i \in \{0,\cdots,N\}$, for some integer $N \geq 0$. Then, following {\cite[Proposition 4]{CoutinLejay2}} (rearranging the terms therein), we know that\footnote{\label{footnote:7} 
In fact, the inequality may be checked directly. 
Identity \eqref{eq:remainder:integral}
together with Proposition 
\ref{prop:chaining}
and 
\textbf{Regularity assumptions  1} 
say that the remainder $R^{\int \textrm{F}}$ in the $\omega$-controlled expansion of 
$\int_{t_{i}}^{\cdot} \textrm{F}\bigl(X_{r}(\omega),Y_{r}(\cdot)\bigr) d {\boldsymbol W}_{r}(\omega)$
satisfies
\begin{equation*}
\begin{split}
\bigl\| R^{\int \textrm{F}} \bigr\|_{[t_{i},t_{i+1}],w,p/2} 
&\leq 2 \sup_{s \in [t_{i},t_{i+1}]} 
\Bigl( 
\bigl\vert \delta_{x} \bigl[ \textrm{\rm F}\bigl(X_{s}(\omega),Y_{s}(\cdot)\bigr) \bigr] \bigr\vert  
+ \bigl\langle 
 \delta_{\mu} \bigl[ \textrm{\rm F}\bigl(X_{s}(\omega),Y_{s}(\cdot)\bigr) \bigr]  
\bigr\rangle_{4/3} 
\Bigr)
\\
&\hspace{15pt} +
\gamma  
\vvvert \textrm{F}(X(\omega),Y(\cdot)) \vvvert_{[t_{i},t_{i+1}],w,p} w(t_{i},t_{i+1},\omega)^{1/p}
\\
&\leq \gamma + \gamma  
\vvvert \textrm{F}(X(\omega),Y(\cdot)) \vvvert_{[t_{i},t_{i+1}],w,p} w(t_{i},t_{i+1},\omega)^{1/p},
\end{split}
\end{equation*}
for a constant $\gamma$ that may depend on $\Lambda$. 
 This permits to handle $R^{\int \textrm{F}}$.
As the Gubinelli derivative of $\int_{t_{i}}^{\cdot} \textrm{F}\bigl(X_{r}(\omega),Y_{r}(\cdot)\bigr) d {\boldsymbol W}_{r}(\omega)$ is exactly given by $\textrm{F}(X_{\cdot}(\omega),Y_{\cdot}(\cdot))$ itself, we 
get from \eqref{eq:omega:controlled:eq} with $X=\textrm{F}$  that 
\begin{equation*}
\begin{split}
\bigl\| \textrm{F}(X(\omega),Y(\cdot)) \bigr\|_{[t_{i},t_{i+1}],w,p}  &\leq 
2 \sup_{s \in [t_{i},t_{i+1}]} 
\Bigl( 
\bigl\vert \delta_{x} \bigl[ \textrm{\rm F}\bigl(X_{s}(\omega),Y_{s}(\cdot)\bigr) \bigr] \bigr\vert  
+ \bigl\langle 
 \delta_{\mu} \bigl[ \textrm{\rm F}\bigl(X_{s}(\omega),Y_{s}(\cdot)\bigr) \bigr]   
\bigr\rangle_{4/3} 
\Bigr)
\\
&\hspace{15pt}+ \| R^{\textrm{F}} \|_{[t_{i},t_{i+1}],w,p/2} w(t_{i},t_{i+1},\omega)^{1/p},
\end{split}
\end{equation*}
where $R^{\textrm{F}}$ is the remainder in the expansion of $\textrm{F}$. We conclude as for $R^{\int \textrm{F}}$. 
In order to control the variation of $\int_{t_{i}}^{\cdot} \textrm{F}\bigl(X_{r}(\omega),Y_{r}(\cdot)\bigr) d {\boldsymbol W}_{r}(\omega)$ itself, it suffices to invoke \eqref{eq:omega:controlled:eq} again,  
but with $X=\int \textrm{\rm F}$, which yields 
\begin{equation*}
\begin{split}
&\biggl\| \int_{t_{i}}^{\cdot} \textrm{F}\bigl(X_{r}(\omega),Y_{r}(\cdot)\bigr) d {\boldsymbol W}_{r}(\omega) \biggr\|_{[t_{i},t_{i+1}],w,p}
\\
&\hspace{15pt} \leq 
\sup_{s \in [t_{i},t_{i+1}]} 
\bigl\vert   \textrm{\rm F}\bigl(X_{s}(\omega),Y_{s}(\cdot)\bigr)   \bigr\vert  
+ \| R^{\int \textrm{F}} \|_{[t_{i},t_{i+1}],w,p/2} w(t_{i},t_{i+1},\omega)^{1/p}.
\end{split}
\end{equation*}
The conclusion is the same.
}
\begin{equation*}
\begin{split}
\biggl\vvvert \int_{t_{i}}^{\cdot} \textrm{F}\bigl(X_{r}(\omega),Y_{r}(\cdot)\bigr) &d {\boldsymbol W}_{r}(\omega) \biggr\vvvert_{[t_{i},t_{i+1}],w,p}   \\
&\leq \gamma + \gamma w(t_{i},t_{i+1},\omega)^{1/p} \Bigl\vvvert \textrm{F}\bigl(X(\omega),Y(\cdot)\bigr) \Big\vvvert_{[t_{i},t_{i+1}],w,p}, 
\end{split}
\end{equation*}
for a universal constant $\gamma$ {that may depend on $\Lambda$}. By \textcolor{black}{Proposition \ref{prop:chaining}}
\textcolor{black}{and 
\eqref{eq:invariant:0000}}, we deduce that (for a new value of {$C_{\Lambda,\Lambda}$})
\begin{equation}
\label{eq:starstarstar}
\begin{split}
&\biggl\vvvert \int_{t_{i}}^{\cdot}  \textrm{F} \bigl(X_{r}(\omega),Y_{r}(\cdot)\bigr) d {\boldsymbol W}_{r}(\omega)\biggr\vvvert_{[t_{i},t_{i+1}],w,p}   \\
&\hspace{15pt}\leq \gamma + {C_{\Lambda,\Lambda}} \, \gamma \, w(t_{i},t_{i+1},\omega)^{1/p} \Bigl(1+ \vvvert X{(\omega)} \vvvert_{[t_{i},t_{i+1}],w,p}^2 + \bigl\langle 
\vvvert Y(\cdot) \vvvert_{[0,T],w,p} \bigr\rangle_{8}^2 \Bigr).
\end{split}
\end{equation}
{By the first conclusion in the statement (see also the discussion after the statement itself), we can assume that 
$L$ differs from the value prescribed in the statement and is as large as needed. So, for the time being, we take $L \geq 1$ and we assume that} $\,w(t_{i},t_{i+1},\omega)^{1/p} \leq 1/(4L) \leq 1$ and
\begin{equation}
\label{eq:proof:stab}
\bigl\langle \vvvert Y(\cdot) \vvvert_{[0,T],w,p} \bigr\rangle_{8}^2 \leq L, 
\end{equation}
and
\begin{equation}
\label{eq:proof:stab:2}
\big\vvvert X(\omega) \big\vvvert_{[t_{i},t_{i+1}],w,p}^2 \leq {L}, 
\end{equation}
{but we are free to increase the value of $L$ if needed.}
Then, {by 
\eqref{eq:starstarstar},}
\begin{equation*}
\biggl\vvvert\int_{t_{i}}^{\cdot}  \textrm{F} \bigl(X_{r}(\omega),Y_{r}(\cdot)\bigr) d {\boldsymbol W}_{r}(\omega) \biggr\vvvert_{[t_{i},t_{i+1}],w,p} \leq (1+ {C_{\Lambda,\Lambda}}) \gamma. 
\end{equation*}
Hence, changing $\gamma$ into $(1+ {C_{\Lambda,\Lambda}}) \gamma$, 
\begin{equation}
\label{eq:proof:stab:3}
\begin{split}
&\biggl\vvvert \int_{t_{i}}^{\cdot} \textrm{F} \bigl(X_{r}(\omega),Y_{r}(\cdot)\bigr) d {\boldsymbol W}_{r}(\omega) \biggr\vvvert_{[t_{i},t_{i+1}],w,p}^2 \leq  \gamma^2 < 
{{L}}, 
\end{split}
\end{equation}
if $L> { \gamma^2}$, in which case $\Gamma\big(\omega, X(\omega), Y(\cdot)\big)$ satisfies \eqref{eq:invariant:2}.  
{This completes the proof of the first bullet point in the conclusion of the statement.} 

\textcolor{gray}{$\bullet$} We now use a concatenation argument to get an estimate on the whole interval $[0,T]$. For all $s<t$ in $[0,T]$, we have
\begin{align}
&\Bigl| \bigl[\Gamma\bigl(\omega ,  X(\omega) , Y(\cdot)\bigr)\bigr]_{s,t} \Bigr|  
\\
&\leq \sum_{j=0}^N \Bigl| \bigl[\Gamma\bigl(\omega ,  X(\omega) ,  Y(\cdot)\bigr)\bigr]_{t_{j}',t_{j+1}'} \Bigr|  
\nonumber
 \\
&\leq \gamma \,\sum_{j =0}^N w\bigl(t_{j}',t_{j+1}',\omega\bigr)^{1/p}  \nonumber
\\
&  \leq \gamma \,\left(\sum_{j =0}^N w(t_{j}',t_{j+1}',\omega) \right)^{1/p} \, \bigl(N + 1\bigr)^{(p-1)/p}   
\leq \gamma \, w(s,t,\omega)^{1/p}\bigl(N+ 1\bigr)^{(p-1)/p},
\nonumber
\end{align}
where 
\textcolor{black}{we let}
$t_{i}' = \max(s,\min(t,t_{i}))$
\textcolor{black}{and where used the super-additivity of $w$ in the last line}. In the same way,
\begin{equation}
\label{eq:deltax:N:p-1}
\begin{split}
\Big| \delta_{x}\bigl[ \Gamma\bigl(\omega ,  X(\omega) ,  Y(\cdot)\bigr)\bigr]_{s,t} \Big| & \leq  \gamma\, w(s,t,\omega)^{1/p}\,\big(N+ 1\big)^{(p-1)/p}. 
\end{split}
\end{equation}
Setting, abusively, {$
\textrm{F}(\omega,\cdot) := \big(\textrm{F}_{r}(\omega,\cdot)\big)_{0\leq r \leq T} := \big(\textrm{F}(X_{r}(\omega),Y_{r}(\cdot))\big)_{0 \leq r \leq T},$ 
we have
\begin{flalign}
R^{\Gamma}_{s,t}(\omega)  \nonumber 
&= \int_{s}^t \textrm{F}_{r}(\omega,\cdot)d {\boldsymbol W}_{r}(\omega) - \textrm{F}_{s}(\omega,\cdot) W_{s,t}(\omega)  \nonumber 
\\
&= \sum_{j=0}^N \biggl( \int_{t_{j}'}^{t_{j+1}'} \textrm{F}_{r}(\omega,\cdot) d {\boldsymbol W}_{r}(\omega)
 - \textrm{F}_{s}(\omega,\cdot) W_{t_{j}',t_{j+1}'} \biggr)   
\label{eq:R:Gamma}
\\
&= \sum_{j=0}^N \Bigl\{R_{t_{j}',t_{j+1}'}^{\Gamma}(\omega) + \bigl(\textrm{F}_{t_{j}'}(\omega,\cdot)-\textrm{F}_{s}(\omega,\cdot) \bigr) W_{t_{j}',t_{j+1}'}(\omega) \Bigr\}.\nonumber
\end{flalign}
{The most difficult term in \eqref{eq:R:Gamma} is $\sum_{j=0}^N \big(\textrm{F}_{t_{j}'}(\omega,\cdot)-\textrm{F}_{s}(\omega,\cdot)\big) \, W_{t_{j}',t_{j+1}'}(\omega)$. We notice that  $\textrm{F}_{t_{j}'}(\omega,\cdot)-\textrm{F}_{s}(\omega,\cdot)=
\delta_{x}[\Gamma(\omega,X(\omega),Y(\cdot))]_{s,t_{j'}}$ , for $j=0,\cdots,N,$ can be bounded by 
$\gamma (N+1)^{(p-1)/p} w(s,t_{j}',\omega)^{1/p}$, see 
\eqref{eq:deltax:N:p-1}.}
%
We deduce that the sum $\sum_{j=0}^N \big(\textrm{F}_{t_{j}'}(\omega,\cdot)-\textrm{F}_{s}(\omega,\cdot)\big) \, W_{t_{j}',t_{j+1}'}(\omega)$ is bounded by
\begin{equation*}
\begin{split}
&{\gamma (N+1)^{(p-1)/p} \,  w(s,t,\omega)^{1/p}} \,   \sum_{j=0}^N  w(t_{j}',t_{j+1}',\omega)^{1/p}  \leq  {\gamma} (N+1)^{2(p-1)/p} \, w(s,t,\omega)^{2/p}. 
\end{split}
\end{equation*}
To proceed with the other term in \eqref{eq:R:Gamma}, we note that the remainder term  $R_{t_{j}',t_{j+1}'}^{\Gamma}(\omega)$ can be also estimated by means of \eqref{eq:proof:stab:3}. We have 
$\vert R_{t_{j}',t_{j+1}'}^{\Gamma}(\omega) \vert \leq {\gamma} w(t_{j}',t_{j+1}',\omega)^{2/p}$. Since $2(p-1)/p=2-1/p \geq 1$, we deduce that there exists a constant $C_{\gamma}$ depending only on $\gamma$ such that }
\begin{equation*}
\big\vert R^\Gamma_{s,t}(\omega) \big\vert \leq C_{\gamma} \,(N+1)^{2(p-1)/p} \, w(s,t,\omega)^{2/p}.
\end{equation*}
Changing the value of $C_{\gamma}$ from line to line, we end up with 
\begin{equation*}
\begin{split}
\Bigl\vvvert \Gamma\bigl(\omega , X(\omega),  Y(\cdot)\bigr) \Bigr\vvvert_{[0,T],w,p}^2 &\leq C_{\gamma}\,
(N+1)^{2(p-1)/p} 
\\
&\leq C_\gamma \, \bigl( 1 + N^{2(p-1)/p} \bigr). 
\end{split}
\end{equation*}
which proves the bound \eqref{eq:invariant:3} by choosing  
$
(t_{i})_{i=0,\cdots,N+1} = \big(\tau_{i}(0,T,\omega,1/(4L))\big)_{i=0,\cdots,N+1}
$, 
as defined in \eqref{eq:stopping:times}, and $N = N\big(\textcolor{black}{[0,T]},\omega,1/(4L)\big)$.  
{Recall that the above is true for $L > \gamma^2$.}  

\textcolor{gray}{$\bullet$} Assume now that $X(\omega)$ is the $\omega$-realization of a random controlled path $X(\cdot)=(X(\omega'))_{\omega' \in \Omega'}$ satisfying  \eqref{eq:invariant:2} for any $\omega'$, for \textcolor{black}{the} $\omega'$-dependent partition $(t_{i})_{\textcolor{black}{i=0,\cdots,N+1}}$. Then, {taking the fourth moment} with respect to $\omega$ the conclusion of the second point we get
\begin{equation*}
\begin{split}
&\Bigl\langle  \Bigl\vvvert \Gamma\bigl(\cdot ,  X(\cdot),  Y\bigr) \Bigr\vvvert_{[0,T],w,p} \Bigr\rangle_{8}^2 \leq  C_{\gamma}\,
\Bigl( 1 +\Big\langle N\bigl(\textcolor{black}{[0,T]},\cdot,1/(4L)\bigr) \Big\rangle_{8}^{2(p-1)/p} \Bigr).
\end{split}
\end{equation*}
We get the conclusion of the statement if one assumes that 
$
\textcolor{red}{\big\langle N\big(\textcolor{black}{[0,T]},\cdot,1/(4L)\big) \big\rangle_{8}} \leq 1,
$ 
by choosing $L$ such that {$2\,C_{\gamma} \leq L$}.
\end{proof}

Remark that if $\big\langle N\bigl([0,1],\cdot,1/(4L)\bigr)\big\rangle_{8}$ is finite, then we can choose $T \leq 1$ small enough such that $\textcolor{red}{\big\langle N\big([0,T],\cdot,1/(4L)\big)\big\rangle_{8}}  \leq {1}$. (Since $N\bigl([0,t],\omega,1/(4L)\bigr)$ converges to $0$ as $t \searrow 0$, for any $\omega \in \Omega$, the result follows from dominated convergence.)

\subsection{Contractive Property of ${\bf \Gamma}$}
\label{SubsectionContractivity}

\begin{prop}
\label{thm:fixed:1}
Let \emph{F} satisfy \textbf{\textbf{Regularity assumptions 1}}  and \textbf{\textbf{Regularity assumptions 2}} and $w$ be a control 
satisfying \eqref{eq:w:s:t:omega:ineq}
and 
\eqref{eq:useful:inequality:wT}. Consider two $\omega$-controlled paths $X(\omega)$ and $X'(\omega)$, defined on a time interval $[0,T]$, together with two random controlled paths $Y(\cdot)$ and $Y'(\cdot)$, 
\textcolor{red}{all of them} satisfying
{\eqref{EqZeroConditionMu} together with}
\begin{equation}
\label{eq:fixed:00}
\bigl|\delta_{x} X(\omega) \bigr| \vee \bigl| \delta_{x} X'(\omega)\bigr| \vee \big\langle \delta_{x} Y(\cdot)\big\rangle_{\infty} \vee  \big\langle \delta_{x} Y'(\cdot) \big\rangle_{\infty} \leq {\Lambda},
\end{equation}
together with the size estimates
\begin{equation}
\label{eq:fixed:1}
\begin{split}
&
\bigl\langle \vvvert Y(\cdot) \vvvert_{[0,T],w,p} \bigr\rangle_{8}^2 \leq L_{0},   \\
&
\bigl\langle \vvvert Y'(\cdot) \vvvert_{[0,T],w,p} \bigr\rangle_{8}^2 \leq L_{0}, 
\end{split}
\end{equation}
and 
\begin{equation}
\label{eq:fixed:2}
\big\vvvert X(\omega) \big\vvvert_{[t_{i}^0,t_{i+1}^0],w,p}^2 \leq {L_{0}}, \qquad \big\vvvert X'(\omega) \big\vvvert_{[t_{i}^0,t_{i+1}^0],w,p}^2 \leq {L_{0}},  
\end{equation}
for $i \in \{0,\cdots,N^{0}\}$, for {some $L_0 \geq 1$}, with $N^0=N\bigl(\textcolor{black}{[0,T]},\omega,1/(4L_{0})\bigr)$ given by \eqref{eq:N:s:t:omega}, and for the sequence $\bigl(t_{i}^0=\tau_{i}(0,T,\omega,1/(4L_{0}))\bigr)_{i=0,\cdots,N^0+1}$ given by \eqref{eq:stopping:times}. 

Then, we can find a constant $\gamma$, only depending on $L_{0}$ {and $\Lambda$}, such that, for any partition $(t_{i})_{\textcolor{black}{i=0,\cdots,N}}$ refining\footnote{{This means that $(t_{i})_{i=0,\cdots,N}$ is included in $(t_{i}^0)_{i=0,\cdots,N^0}$}} $(t_i^0)_{\textcolor{black}{i=0,\cdots,N^0}}$ and satisfying $w(t_{i},t_{i+1},\omega)^{1/p} \leq 1/(4L)$ for some \textcolor{black}{$L\geq L_0$}, we have
\begin{equation*}
\begin{split}
&\biggl\vvvert \int_{t_{i}}^{\cdot} \textrm{F}\bigl(X_{r}(\omega),Y_{r}(\cdot)\bigr) d {\boldsymbol W}_{r}(\omega) - \int_{t_{i}}^{\cdot} \textrm{F}\bigl(X_{r}'(\omega),Y_{r}'(\cdot)\bigr) d {\boldsymbol W}_{r}(\omega) \biggr\vvvert_{[t_{i},t_{i+1}],w,p}   \\
&\leq \gamma\, w(0,t_{i},\omega)^{1/p}\,\big(1+\textcolor{black}{\frac1{4L}}\big)\,\Big(\big\vvvert \Delta X(\omega) \big\vvvert_{[0,t_{i}],w,p} + \big\langle \vvvert \Delta Y(\cdot) \vvvert_{[0,T],w,p} \big\rangle_{8} \Big)   \\
&\hspace{30pt} + \frac{\gamma}{4L}  \Big(\big\vvvert \Delta X(\omega)\big\vvvert_{[t_{i},t_{i+1}],w,p} + \big\langle \vvvert \Delta Y(\cdot) \vvvert_{[0,T],w,p}\big\rangle_{8} \Big),
\end{split}
\end{equation*}\textcolor{black}{where
$
\Delta X_{t}(\omega) := X_{t}(\omega) - X_{t}'(\omega)$, $\Delta Y_{t}(\cdot) := Y_{t}(\cdot) - Y_{t}'(\cdot), \quad t \in [0,T].$
}
\end{prop}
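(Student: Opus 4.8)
The plan is to mimic the proof of Proposition \ref{thm:main:1}, working now with the \emph{difference} of the two controlled paths obtained by applying $\textrm{F}$ and then integrating, and to keep careful track of which contributions carry the small weight $w(t_i,t_{i+1},\omega)^{1/p}\le 1/(4L)$ produced by the thinness of $[t_i,t_{i+1}]$, and which carry only the accumulated weight $w(0,t_i,\omega)^{1/p}$ coming from the values of the paths at the left endpoint $t_i$. Since the rough integral of Theorem \ref{thm:integral} is linear in the controlled path, the difference $\int_{t_i}^{\cdot}\textrm{F}(X_r(\omega),Y_r(\cdot))d{\boldsymbol W}_r(\omega)-\int_{t_i}^{\cdot}\textrm{F}(X_r'(\omega),Y_r'(\cdot))d{\boldsymbol W}_r(\omega)$ is the rough integral over $[t_i,t_{i+1}]$ of the controlled path $\Delta\textrm{F}:=\textrm{F}(X(\omega),Y(\cdot))-\textrm{F}(X'(\omega),Y'(\cdot))$, with Gubinelli derivative $\delta_x[\textrm{F}(X(\omega),Y(\cdot))]-\delta_x[\textrm{F}(X'(\omega),Y'(\cdot))]$, $\mu$-derivative $\delta_\mu[\textrm{F}(X(\omega),Y(\cdot))]-\delta_\mu[\textrm{F}(X'(\omega),Y'(\cdot))]$, and remainder the difference of the two remainders furnished by Proposition \ref{prop:chaining}. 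Arguing exactly as in the proof of Proposition \ref{thm:main:1} (via \eqref{eq:remainder:integral} and the controlled expansion \eqref{eq:omega:controlled:eq} applied once with $X=\Delta\textrm{F}$ and once with $X=\int\Delta\textrm{F}\,d{\boldsymbol W}$), the whole estimate reduces to bounding $\vvvert\Delta\textrm{F}\vvvert_{\star,[t_i,t_{i+1}],w,p}$, where the initial-in-$t_i$ part of that norm enters only through $\sup_{s\in[t_i,t_{i+1}]}\big(|\Delta\textrm{F}_s(\omega,\cdot)|+|\delta_x[\Delta\textrm{F}]_s|+\langle\delta_\mu[\Delta\textrm{F}]_s\rangle_{4/3}\big)$, and the $\mu$-derivative of $\int\Delta\textrm{F}\,d{\boldsymbol W}$ is $0$ by \eqref{EqZeroDerivativeMu}, consistently with \eqref{EqZeroConditionMu}.

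The heart of the matter is a difference version of Proposition \ref{prop:chaining}. Expanding $[\Delta\textrm{F}]_{s,t}$ by Taylor's formula and splitting each occurrence of $\textrm{F}$, $\partial_x\textrm{F}$, $D_\mu\textrm{F}$ through $\textrm{F}(X_t(\omega),Y_t(\cdot))-\textrm{F}(X_t'(\omega),Y_t'(\cdot))=\big(\textrm{F}(X_t(\omega),Y_t(\cdot))-\textrm{F}(X_t'(\omega),Y_t(\cdot))\big)+\big(\textrm{F}(X_t'(\omega),Y_t(\cdot))-\textrm{F}(X_t'(\omega),Y_t'(\cdot))\big)$ and its analogues, every resulting summand is a product of: (i) at least one factor of the form $\Delta X$ or $\Delta Y$ (an increment, or a value at a time, of $\Delta X(\omega)$ or $\Delta Y(\cdot)$); (ii) first- and second-order derivatives of $\textrm{F}$, which are bounded by $\Lambda$ and $\Lambda$-Lipschitz by \textbf{Regularity assumptions 1} and \textbf{Regularity assumptions 2} -- here the copy space $(\widetilde{\Omega},\widetilde{\mathcal F},\widetilde{\PP})$ and the bound on $(x,Z)\mapsto D^2_\mu\textrm{F}(x,\cL(Z))(Z(\cdot),\widetilde Z(\cdot))$ are used to control the second-order term in the measure direction; and (iii) increments of $X(\omega)$, $X'(\omega)$, $Y(\cdot)$, $Y'(\cdot)$ or of their Gubinelli derivatives. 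Factors of type (iii) that are increments over a sub-interval of $[t_i,t_{i+1}]$ are bounded, via Lemma \ref{lem:1:1} and the size estimates \eqref{eq:fixed:2} (which persist on each $[t_i,t_{i+1}]$ since the partition refines $(t^0_i)$), by $L_0^{1/2}w(s,t,\omega)^{1/p}\le L_0^{1/2}w(t_i,t_{i+1},\omega)^{1/p}$; whereas factors of type (i) that are values at $t_i$, such as $\Delta X_{t_i}(\omega)$ or $\langle|\Delta Y_{t_i}(\cdot)|\rangle_2$, are bounded by $\vvvert\Delta X(\omega)\vvvert_{[0,t_i],w,p}\,w(0,t_i,\omega)^{1/p}$, respectively $\langle\vvvert\Delta Y(\cdot)\vvvert_{[0,T],w,p}\rangle_8\,w(0,t_i,\omega)^{1/p}$ (using that, along Picard's iteration, all iterates share the same initial datum, so $\Delta X_0(\omega)=0$ and $\Delta Y_0(\cdot)=0$). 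The resulting bookkeeping shows that $\vvvert\Delta\textrm{F}\vvvert_{\star,[t_i,t_{i+1}],w,p}$ is bounded by a constant $C_{L_0,\Lambda}$ times $w(0,t_i,\omega)^{1/p}\big(\vvvert\Delta X(\omega)\vvvert_{[0,t_i],w,p}+\langle\vvvert\Delta Y(\cdot)\vvvert_{[0,T],w,p}\rangle_8\big)$ plus $\big(1+w(t_i,t_{i+1},\omega)^{1/p}\big)\big(\vvvert\Delta X(\omega)\vvvert_{[t_i,t_{i+1}],w,p}+\langle\vvvert\Delta Y(\cdot)\vvvert_{[0,T],w,p}\rangle_8\big)w(t_i,t_{i+1},\omega)^{1/p}$.

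Plugging this into the first-step reduction finishes the proof: the remainder of the integral difference picks up, through \eqref{eq:remainder:integral}, an extra factor $w(t_i,t_{i+1},\omega)^{3/p}$; the variation seminorm of the integral difference, through \eqref{eq:omega:controlled:eq}, carries $\sup_s|\Delta\textrm{F}_s(\omega,\cdot)|$ with weight $1$ and $\vvvert\Delta\textrm{F}\vvvert_{[t_i,t_{i+1}],w,p}$ with weight $w(t_i,t_{i+1},\omega)^{1/p}$. Replacing every $w(t_i,t_{i+1},\omega)^{1/p}$ by its bound $1/(4L)$ converts each ``local'' seminorm on $[t_i,t_{i+1}]$ into a term of order $\gamma/(4L)$, while the ``past'' contributions keep their weight $w(0,t_i,\omega)^{1/p}$; since $L\ge L_0$ the factor $1/(4L)$ only helps, and $\gamma$ depends only on $L_0$ and $\Lambda$, which is exactly the claimed estimate.

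The main obstacle is the second step: the Taylor expansion of $[\Delta\textrm{F}]_{s,t}$ produces many terms, and for each one must decide which single factor supplies the small weight $w(t_i,t_{i+1},\omega)^{1/p}$ and which supply only the accumulated weight $w(0,t_i,\omega)^{1/p}$, while ensuring that every term is genuinely proportional to $\Delta X$ or $\Delta Y$ and that the local part always retains at least one factor $w(t_i,t_{i+1},\omega)^{1/p}\le 1/(4L)$; the second-order terms in the measure direction, for which the full strength of \textbf{Regularity assumptions 2} (in particular the bound on $D^2_\mu\textrm{F}$ over the product space) is needed, require the most care.
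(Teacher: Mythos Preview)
Your approach is essentially identical to the paper's: reduce via the integral estimate (the paper's \eqref{eq:contraction:proof:0}) to bounding separately the initial values of $\Delta\textrm{F}$, $\delta_x[\Delta\textrm{F}]$, $\delta_\mu[\Delta\textrm{F}]$ at $t_i$ (weight $1$) and the seminorm $\vvvert\Delta\textrm{F}\vvvert_{[t_i,t_{i+1}],w,p}$ (weight $w(t_i,t_{i+1},\omega)^{1/p}$), then carry out a difference-version of Proposition~\ref{prop:chaining} term by term, using \textbf{Regularity assumptions 2} precisely for the variation of $\delta_\mu[\Delta\textrm{F}]$ and for the $\textbf{(5)}-\textbf{(5')}$ part of the remainder. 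One slip to fix: the intermediate bound you state for $\vvvert\Delta\textrm{F}\vvvert_{\star,[t_i,t_{i+1}],w,p}$ is too strong---the seminorm $\vvvert\Delta\textrm{F}\vvvert_{[t_i,t_{i+1}],w,p}$ is only bounded by $C\big(\vvvert\Delta X(\omega)\vvvert_{[t_i,t_{i+1}],w,p}+\langle\vvvert\Delta Y(\cdot)\vvvert_{[0,T],w,p}\rangle_8+w(0,t_i,\omega)^{1/p}(\cdots)\big)$, \emph{without} the extra factor $w(t_i,t_{i+1},\omega)^{1/p}$ on the local part; that small factor only enters after multiplying by $w(t_i,t_{i+1},\omega)^{1/p}$ in the integral estimate, exactly as you correctly describe in your third paragraph.
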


\begin{proof}
We get the conclusion after four  steps. Following the statement, we are given a subdivision $(t_{i})_{i=0,\cdots,N+1}$ of $[0,T]$ such that $w(t_{i},t_{i+1},\omega)^{1/p} \leq 1/(4L)$, for a frozen $\omega \in \Omega$
{and for 
 $L\geq L_0$}. We  assume that $(t_{i})_{i=0,\cdots,N+1}$ refines the subdivision $\bigl(t_{i}^0=\tau_{i}(0,T,\omega,1/(4L_{0}))\bigr)_{i=0,\cdots,N^0+1}$, where $N^0(\omega)=N\big([0,T],\omega,1/(4L_{0})\big)$. Like in the first step of the proof of \textcolor{black}{Proposition} \ref{thm:main:1} ({see in particular footnote${}^{\ref{footnote:7}}$}), we start from the estimate
\begin{equation*}
\begin{split}
&\biggl\vvvert \int_{t_{i}}^{\cdot} \textrm{F}\bigl(X_{r}(\omega),Y_{r}(\cdot)\bigr) d {\boldsymbol W}_{r}(\omega) - \int_{t_{i}}^{\cdot} \textrm{F}\bigl(X_{r}'(\omega),\textcolor{black}{Y_{r}'}(\cdot)\bigr) d {\boldsymbol W}_{r}(\omega) \biggr\vvvert_{[t_{i},t_{i+1}],w,p}   
\\
&\leq \gamma \, { \Big(
\sup_{s \in [t_{i},t_{i+1}]} \bigl\vert  F(s,X_{s}(\omega),Y_{s}(\cdot)\bigr) 
- F(s,X_{s}'(\omega),Y_{s}'(\cdot)\bigr) \bigr\vert} 
\\
&\hspace{30pt}+ {  \sup_{s \in [t_{i},t_{i+1}]} \bigl\vert \delta_{x} \bigl[F(s,X_{s}(\omega),Y_{s}(\cdot) \bigr) 
- F(s,X_{s}'(\omega),Y_{s}'(\cdot) \bigr) \bigr] \bigr\vert }
\\
&\hspace{30pt} 
{ + \sup_{s \in [t_{i},t_{i+1}]} \Bigl\langle \delta_{\mu} \bigl[F(s,X_{s}(\omega),Y_{s}(\cdot) \bigr) 
- F(s,X_{s}'(\omega),Y_{s}'(\cdot) \bigr)  \bigr]
\Bigr\rangle_{4/3}
\Bigr)}
\\ 
&\hspace{15pt} + \gamma  \, w(t_{i},t_{i+1},\omega)^{1/p} \, \bigl\vvvert \textrm{F}\textcolor{black}{\bigl(}X(\omega),Y(\cdot)\textcolor{black}{\bigr)} - \textrm{F}\textcolor{black}{(}X'(\omega),Y'(\cdot) \textcolor{black}{\bigr)}\bigr\vvvert_{[t_{i},t_{i+1}],w,p}, 
\end{split}
\end{equation*}
for a universal constant $\gamma \geq 1$. {Modifying the constant $\gamma$ if necessary, we may easily change 
$s$ into $t_i$ in the first three lines of the right-hand side. We obtain
\begin{equation}
\label{eq:contraction:proof:0}
\begin{split}
&\biggl\vvvert \int_{t_{i}}^{\cdot} \textrm{F}\bigl(X_{r}(\omega),Y_{r}(\cdot)\bigr) d {\boldsymbol W}_{r}(\omega) - \int_{t_{i}}^{\cdot} \textrm{F}\bigl(X_{r}'(\omega),\textcolor{black}{Y_{r}'}(\cdot)\bigr) d {\boldsymbol W}_{r}(\omega) \biggr\vvvert_{[t_{i},t_{i+1}],w,p}   
\\
&\leq \gamma \, {  \Big(
 \bigl\vert  F\bigl(X_{t_{i}}(\omega),Y_{t_{i}}(\cdot)\bigr) 
- F\bigl(X_{t_{i}}'(\omega),Y_{t_{i}}'(\cdot)\bigr)  \bigr\vert} 
\\
&\hspace{30pt}+     \bigl\vert \delta_{x} \bigl[F(X_{t_{i}}(\omega),Y_{t_{i}}(\cdot) \bigr) 
- F\bigl(X_{t_{i}}'(\omega),Y_{t_{i}}'(\cdot) \bigr)\bigr] \bigr\vert 
\\
&\hspace{30pt} 
 +  \Bigl\langle \delta_{\mu} \bigl[F\bigl(X_{t_{i}}(\omega),Y_{t_{i}}(\cdot) \bigr) 
- F\bigl(X_{t_{i}}'(\omega),Y_{t_{i}}'(\cdot) \bigr)   \bigr]
\Bigr\rangle_{4/3}
\Bigr)
\\ 
&\hspace{15pt} + \gamma  \, w(t_{i},t_{i+1},\omega)^{1/p} \, \bigl\vvvert \textrm{F}\textcolor{black}{\bigl(}X(\omega),Y(\cdot)\textcolor{black}{\bigr)} - \textrm{F}\textcolor{black}{\bigl(}X'(\omega),Y'(\cdot) \textcolor{black}{\bigr)}\bigr\vvvert_{,[t_{i},t_{i+1}],w,p}, 
\end{split}
\end{equation}
}

The first point is to bound the quantity $\textcolor{black}{\bigl\vvvert} \textrm{F} \textcolor{black}{\bigl(}X(\omega),Y(\cdot)\textcolor{black}{\bigr)} - \textrm{F}\textcolor{black}{\bigl(}X'(\omega),Y'(\cdot) \textcolor{black}{\bigr)} \textcolor{black}{\bigr \vvvert}_{\textcolor{black}{\star},[t_{i},t_{i+1}],w,p}$, which contains all the terms that appear in the above inequality.   \vskip 4pt

\textbf{\textbf{Step 1.}} We first analyse the term 
\begin{equation*}
\begin{split}
\Delta \textrm{F} (\omega,\cdot ) &:= \textrm{F}\textcolor{black}{\bigl(}X(\omega),Y(\cdot)\textcolor{black}{\bigr)}- \textrm{F}\textcolor{black}{\bigl(}X'(\omega),Y'(\cdot)\textcolor{black}{\bigr)}  
\\
&:= \Bigl(\textrm{F}\textcolor{black}{\bigl(}X_{t}(\omega),Y_{t}(\cdot)\textcolor{black}{\bigr)}- \textrm{F}\textcolor{black}{\bigl(}X_{t}'(\omega),Y_{t}'(\cdot)\textcolor{black}{\bigr)} \Bigr)_{0 \le t \le T}.
\end{split}
\end{equation*} 

\textcolor{gray}{$\bullet$} \textbf{Initial condition of $\Delta \textrm{F}(\omega,\cdot)$ -- } As $\big\vert [\Delta \textrm{F}(\omega,\cdot)]_{t_{i}} \big\vert \leq  {\Lambda} \big( \vert \Delta X_{t_{i}}(\omega) \vert + \langle \vert \Delta Y_{t_{i}}(\cdot) \vert \rangle_{2}\big)$, we have, from Lemma \ref{lem:1:1} \textcolor{black}{and from the two identities $\Delta X_{0}(\omega) =0$ and  $\Delta Y_{0}(\cdot) = 0$},
\begin{equation*}
\big\vert [\Delta \textrm{F}(\omega,\cdot)]_{t_{i}} \big\vert \leq 2 {\Lambda} \, w(0,t_{i},\omega)^{1/p} \, \Bigl( \vvvert \Delta X(\omega) \vvvert_{[0,t_{i}],w,p} + \bigl\langle \vvvert \Delta Y(\cdot) \vvvert_{[0,t_{i}],w,p} \bigr\rangle_{4} \Bigr).
\end{equation*}

\vskip 4pt

\textcolor{gray}{$\bullet$} \textbf{Variation of $\Delta \textrm{F}(\omega,\cdot)$.} Using the notations  \eqref{eq:compactified:notation:interpolation} together with similar ones for the processes tagged with a \textit{prime}, we have
\begin{equation*}
\begin{split}
&\big [\Delta \textrm{F}(\omega,\cdot)\big]_{s,t} 
\\
&= \int_{0}^1 \Big\{ \partial_{x} \textrm{F}\Bigl( \textcolor{black}{X_{s;(s,t)}^{(\lambda)}}(\omega),\textcolor{black}{Y_{s;(s,t)}^{(\lambda)}}(\cdot) \Bigr) X_{s,t}(\omega)   
-  \partial_{x} \textrm{F}\Bigl( \textcolor{black}{X_{s;(s,t)}^{(\lambda)\prime}}(\omega),
\textcolor{black}{Y_{s;(s,t)}^{(\lambda)\prime}(\cdot) } \Bigr) X_{s,t}'(\omega) \Big\} d\lambda   \\
&\hspace{3pt} + \int_{0}^1 {\mathbb E}\Big\{  \nabla_{Z}\textrm{F}\Bigl(
X_{s;(s,t)}^{(\lambda)}(\omega) ,
Y_{s;(s,t)}^{(\lambda)}(\cdot) \Bigr) \, Y_{s,t}(\cdot)   -   
\nabla_{Z} \textrm{F}\Bigl(
\textcolor{black}{X_{s;(s,t)}^{(\lambda)\prime}}(\omega) ,
\textcolor{black}{Y_{s;(s,t)}^{(\lambda)\prime}}(\cdot)
\Bigr) \, Y_{s,t}'(\cdot) \Big\} d\lambda.
\end{split}
\end{equation*} 
We now use the following three facts. First,   $X_{0}(\omega)=X_{0}'(\omega)$ and $Y_{0}(\cdot) = Y_{0}'(\cdot)$; second, from \textbf{\textbf{Regularity assumptions 1}}, for any $x \in \RR^d$ and $Z \in \LL^2(\Omega,{\mathcal F},\PP;\RR^d)$,  $\vert \partial_{x} \textrm{F}(x,Z) \vert$ and $\bigl\langle \nabla_{Z} \textrm{F}(x,Z) \rangle_{2}$ are bounded by {$\Lambda$}; last, $(x,Z) \mapsto \partial_{x} \textrm{F}(x,Z)$ and $(x,Z) \mapsto \nabla_{Z} \textrm{F}(x,Z)$ are {$\Lambda$}-Lipschitz continuous. Hence, {allowing $\gamma$ to depend on $\Lambda$ and to increase from line to line}, we get, for $s,t$ in the interval $[t_{i},t_{i+1}]$, 
\begin{equation*}
\begin{split}
\bigl\vert  [\Delta \textrm{F}(\omega,\cdot)]_{s,t} \bigr\vert &\leq  {\Lambda} \, \Bigl( \big\vert \Delta X_{s,t}(\omega) \big\vert + \big\langle 
 \Delta Y_{s,t}(\cdot)  \big\rangle_{2} \Bigr)   
 \\
&\hspace{15pt} + {\Lambda} \Bigl( \vert X_{s,t}(\omega) \vert + \big\langle Y_{s,t}(\cdot) \big\rangle_{2} \Bigr)    
\\
&\hspace{30pt} \times \Big\{ \vert \Delta X_{s}(\omega) \vert + \langle  \Delta Y_{s}(\cdot)  \rangle_{2} + \vert \Delta X_{s,t}(\omega) \vert
+ \big\langle \Delta Y_{s,t}(\cdot) \big\rangle_{2} \Big\}   \\
&\leq \textcolor{black}{\textbf{(}{\boldsymbol a}\textbf{)} +
\textbf{(}{\boldsymbol b}\textbf{)}},
\end{split}
\end{equation*}
{\color{black} where
$\textbf{(}{\boldsymbol a}\textbf{)} := \gamma \,w(s,t,\omega)^{1/p} \, \Bigl( \vvvert \Delta X(\omega) \vvvert_{[t_{i},t_{i+1}],w,p} + \big\langle \vvvert \Delta \textcolor{black}{Y}(\cdot) \vvvert_{[t_{i},t_{i+1}],w,p} \big\rangle_{4} \Bigr)$,
and $\textbf{(}{\boldsymbol b}\textbf{)} = \textbf{(}{\boldsymbol b}_{\textbf{1}}\textbf{)} \times 
\textbf{(}{\boldsymbol b}_{\textbf{2}}\textbf{)}$ with
\begin{equation*}
\begin{split}
\textbf{(}{\boldsymbol b}_{\textbf{1}}\textbf{)} &:= 
\gamma  \, w(s,t,\omega)^{1/p} \, \Bigl( \vvvert X(\omega) \vvvert_{[t_{i},t_{i+1}],w,p} + \bigl\langle \vvvert Y(\cdot) \vvvert_{[t_{i},t_{i+1}],w,p} \bigr\rangle_{4} \Bigr)
\\
\textbf{(}{\boldsymbol b}_{\textbf{2}}\textbf{)} &:= w(0,t_{i},\omega)^{1/p} \, \Bigl( \vvvert \Delta X(\omega) \vvvert_{[0,t_{i}],w,p} + \big\langle \vvvert \Delta Y(\cdot) \vvvert_{[0,t_{i}],w,p} \big\rangle_{4} \Bigr)   \\
&\quad + w(t_{i},t_{i+1},\omega)^{1/p} \, \Bigl( \vvvert \Delta X(\omega) \vvvert_{[t_{i},t_{i+1}],w,p} + \big\langle  \vvvert \Delta Y(\cdot) \vvvert_{[t_{i},t_{i+1}],w,p} \big\rangle_{4} \Bigr).
\end{split}
\end{equation*}
}It follows that we have
\begin{equation*}
\begin{split}
\bigl\| \Delta \textrm{F}(\omega,\cdot)\|_{[t_{i},t_{i+1}],w,p} &\leq \gamma \Bigl( \vvvert \Delta X(\omega) \vvvert_{[t_{i},t_{i+1}],w,p} + \bigl\langle
\vvvert \Delta Y(\cdot) \vvvert_{[t_{i},t_{i+1}],w,p}  \bigr\rangle_{4} \Bigr)   \\
&\hspace{15pt} + \gamma \Bigl( \vvvert X(\omega) \vvvert_{[t_{i},t_{i+1}],w,p} + 
\bigl\langle \vvvert Y(\cdot) \vvvert_{[t_{i},t_{i+1}],w,p} \bigr\rangle_{4} \Bigr)
\times 
\textcolor{black}{\textbf{(}{\boldsymbol b}_{\textbf{2}}\textbf{)}}.
\end{split}
\end{equation*}
Allowing the constant $\gamma$ to depend on {$L_{0}$ and $\Lambda$}, and using \eqref{eq:fixed:1} and \eqref{eq:fixed:2} {together with the bound $w(t_{i},t_{i+1},\omega)^{1/p} \leq 1/(4L)$}, we get
\begin{equation*}
\begin{split}
\bigl\|\Delta \textrm{F}(\omega,\cdot)\|_{[t_{i},t_{i+1}],w,p} &\leq \gamma \Bigl( \vvvert \Delta X(\omega) \vvvert_{[t_{i},t_{i+1}],w,p} + \bigl\langle
\vvvert \Delta \textcolor{black}{Y}(\cdot) \vvvert_{[t_{i},t_{i+1}],w,p} \bigr\rangle_{4} \Bigr)   
\\
&\ \ + \gamma\, w(0,t_{i},\omega)^{1/p}\, \Bigl( \vvvert \Delta X(\omega) \vvvert_{[0,t_{i}],w,p} + \big\langle  \vvvert \Delta Y(\cdot) \vvvert_{[0,t_{i}],w,p} \big\rangle_{4} \Bigr).
\end{split}
\end{equation*}
\vskip 4pt

\textbf{\textbf{Step 2 -- }} \textit{We now handle the \textcolor{black}{\textit{Gubinelli} derivative} $\delta_{x} \textcolor{black}{[} \Delta \textrm{\rm F}(\omega,\cdot) \textcolor{black}{]}$.} We start from 
\begin{equation}
\label{eq:step:2:ci}
\begin{split}
\textcolor{black}{\delta_{x}[\Delta \textrm{F}(\omega,\cdot)]_{t}} &= \bigl[ 
\textcolor{black}{
\partial_{x}\textrm{F} \bigl(X_{t}(\omega),Y_{t}(\cdot) \bigr) 
- 
\partial_{x}\textrm{F} \bigl(X_{t}'(\omega),Y_{t}'(\cdot) \bigr) }
\bigr] \,\delta_{x} X_{t}(\omega)
\\
&\hspace{15pt} +  \textcolor{black}{\partial_{x}\textrm{F}\bigl(X_{t}'(\omega),Y_{t}'(\cdot)\bigr)} \,\Delta \delta_{x} X_{t}(\omega).
\end{split}
\end{equation}

\textcolor{gray}{$\bullet$} \textbf{Initial condition of $\delta_{x} \big[\Delta \textrm{F}(\omega,\cdot) \big]$.} 
By \textbf{\textbf{Regularity assumptions 1}}, \eqref{eq:fixed:00} {and the fact that 
$\Delta \delta_{x} X_{t}=\delta_{x} \Delta X_{t}$},  
\begin{equation*}
\begin{split}
\bigl\vert \delta_{x}\bigl[\Delta \textrm{F}(\omega,\cdot)\bigr]_{t_{i}} \bigr\vert &\leq  
{\Lambda}
\Bigl(
\big\vert \delta_{x} \Delta X_{t_{i}}(\omega) \big\vert + \big\vert \Delta X_{t_{i}}(\omega) \big\vert + \big\langle \Delta Y_{t_{i}}(\cdot)\big\rangle_{2}
\Bigr)
   \\
&\leq \gamma \, w(0,t_{i},\omega)^{1/p}\, \Bigl( \vvvert \Delta X(\omega) \vvvert_{[0,t_{i}],w,p} + \bigl\langle \vvvert \Delta Y(\textcolor{black}{\cdot})\vvvert_{[0,t_{i}],w,p} \bigr\rangle_{4} \Bigr).
\end{split}
\end{equation*}
\vskip 4pt

\textcolor{gray}{$\bullet$} \textbf{Variation of $\partial_{x} \big[ \Delta \textrm{F}(\omega,\cdot) \big]$.} Similarly, 
{using formula 
\eqref{eq:step:2:ci},} we get 
\begin{equation}
\label{eq:contraction:proof:1}
\begin{split}
\Bigl\vert 
 \delta_{x} \bigl[\Delta \textrm{F}(\omega,\cdot) \bigr]_{s,t} \Bigr\vert &\leq 
 {\Lambda}
 \bigl\vert [\delta_{x} X(\omega)]_{s,t} \bigr\vert \,\Bigl( \vert \Delta X_{s}(\omega) \vert + \bigl\langle \Delta Y_s(\cdot)  \bigr\rangle_{2} \Bigr)   \\
&\hspace{5pt}+ {\Lambda} \Bigl\vert \bigl[\textcolor{black}{ \partial_{x} \textrm{F} \bigl(X(\omega),Y(\cdot) \bigr) - \partial_{x} \textrm{F} \bigl(X'(\omega),Y'(\cdot) \bigr) } \bigr]_{s,t} \Bigr\vert   
\\
&\hspace{5pt} + {\Lambda} \Bigl\vert \bigl[\Delta \delta_{x} X(\omega)\bigr]_{s,t} \Bigr\vert + {\Lambda} \bigl\vert \Delta \delta_{x} X_{s}(\omega) \bigr\vert \, \Bigl\vert  \bigl[\partial_{x} \textrm{F}\textcolor{black}{\bigl(X'(\omega),Y'(\cdot)\bigr)} \bigr]_{s,t} \Bigr\vert. 
\end{split}
\end{equation}
The second term in the right-hand side is handled as $[\Delta \textrm{F}(\omega,\cdot)]_{s,t}$ in the first step, with $s,t$ in $[t_{i},t_{i+1}]$. {By the aforementioned identity} $\Delta \delta_{x} X(\omega) = \delta_{x} \Delta X(\omega)$, the third term is less than ${\Lambda} w(s,t,\omega)^{1/p}\, \vvvert \Delta X(\omega) \vvvert_{[t_{i},t_{i+1}],w,p}$. The term $\bigl\vert \Delta \delta_{\textcolor{black}{x}} X_{s}(\omega) \bigr\vert \bigl\vert [\partial_{x} \textrm{F}(\textcolor{black}{X'(\omega),Y'(\cdot)})]_{s,t} \bigr\vert$ is less than 
\begin{align}
&\gamma \,w(s,t,\omega)^{1/p} \, \Bigl( w(0,t_{i},\omega)^{1/p} \vvvert  \Delta X(\omega) \vvvert_{[0,t_{i}],w,p} + w(t_{i},t_{i+1},\omega)^{1/p} \vvvert \Delta X(\omega) \vvvert_{[t_{i},t_{i+1}],w,p} \Bigr)   
\nonumber
\\
&\hspace{15pt} \times \Bigl( \vvvert \textcolor{black}{X'}(\omega) \vvvert_{[t_{i},t_{i+1}],w,p} + \big\langle \vvvert 
\textcolor{black}{Y'}(\cdot) \vvvert_{[t_{i},t_{i+1}],w,p} \big\rangle_{4} \Bigr)   
\label{eq:step:2:variation}
\\
&\leq \gamma \, w(s,t,\omega)^{1/p} \, \Bigl( w(0,t_{i},\omega)^{1/p} \vvvert \Delta X(\omega) \vvvert_{[0,t_{i}],w,p} + \vvvert \Delta X(\omega) \vvvert_{[t_{i},t_{i+1}],w,p} \Bigr), \nonumber
\end{align}
where we used again \eqref{eq:fixed:1} and \eqref{eq:fixed:2}. \textcolor{black}{Now, the first term in \eqref{eq:contraction:proof:1} 
is less than}
{\color{black}
 \begin{equation*}
\begin{split} 
&\gamma\, w(s,t,\omega)^{1/p}\, \vvvert X \vvvert_{[t_{i},t_{i+1}],w,p} 
\Big\{ w(0,t_{i},\omega)^{1/p}\Bigl( \vvvert \Delta X(\omega) \vvvert_{[0,t_{i}],w,p} + \bigl\langle \vvvert \Delta Y(\cdot) \vvvert_{[0,t_{i}],w,p} \bigr\rangle_{4}\Bigr)   \\
&\hspace{30pt}+ w(t_{i},t_{i+1},\omega)^{1/p}\, \Bigl(\vvvert \Delta X(\omega) \vvvert_{[t_{i},t_{i+1}],w,p} + \bigl\langle \vvvert \Delta Y(\cdot) \vvvert_{[t_{i},t_{i+1}],w,p} \bigr\rangle_{4} \Bigr) \Big\}.  
\end{split}
\end{equation*}   
}Hence, by 
\eqref{eq:fixed:2} and the fact that {$w(t_{i},t_{i+1},\omega)^{1/p} \leq 1/(4L)$},
\begin{equation*}
\begin{split} 
&\bigl\vert [\delta_{x} X(\omega)]_{s,t} \bigr\vert \Bigl( \vert \Delta X_{s}(\omega) \vert + \langle \vert \Delta Y_s(\cdot) \vert \rangle_{2} \Bigr)   \\
&\hspace{0pt} \leq \gamma \,w(s,t,\omega)^{1/p} \, \Big\{ w(0,t_{i},\omega)^{1/p} \,\Bigl( \vvvert \Delta X(\omega) \vvvert_{[0,t_{i}],w,p} +\big\langle \vvvert \Delta Y(\cdot) \vvvert_{[0,t_{i}],w,p} \big\rangle_{4} \Bigr)   \\
&\hspace{15pt}+ \Bigl( \vvvert \Delta X(\omega) \vvvert_{[t_{i},t_{i+1}],w,p} + \big\langle \vvvert \Delta Y(\cdot) \vvvert_{[t_{i},t_{i+1}],w,p} \big\rangle_{4} \Bigr) \Big\}.  
\end{split}
\end{equation*}
So, the final bound for $\bigl\| \delta_{x} \textcolor{black}{\bigl[} \Delta \textrm{F}(\omega,\cdot) \textcolor{black}{\bigr]} \bigr\|_{[t_{i},t_{i+1}],w,p}$ is 
\begin{equation*}
\begin{split}
&\gamma\, \Bigl( \vvvert \Delta X(\omega) \vvvert_{[t_{i},t_{i+1}],w,p} + \big\langle \vvvert \Delta Y(\cdot) \vvvert_{[t_{i},t_{i+1}],w,p}\big\rangle_{4} \Bigr)   \\
&\hspace{15pt} + \gamma  \, w(0,t_{i},\omega)^{1/p} \, \Bigl( \vvvert \Delta X(\omega) \vvvert_{[0,t_{i}],w,p} + \big\langle \vvvert \Delta Y(\cdot) \vvvert_{[0,t_{i}],w,p} \big\rangle_{4}\Bigr),
\end{split}
\end{equation*}
which yields the same bound as in the first step.

\textbf{\textbf{Step 3 -- }} We now handle the other Gubinelli derivative $\delta_{\mu} \big[\Delta \textrm{F}(\omega,\cdot)
\big]$, for which we have
\begin{equation*}
\begin{split}
\delta_{\mu}\big[\Delta \textrm{F}(\omega,\cdot)\big]_{t} &= \Bigl[ \nabla_{Z}\textrm{F} \bigl(X_{t}(\omega),
Y_{t}(\cdot)\bigr) - \nabla_{Z}\textrm{F} \bigl(X_{t}'(\omega),Y_{t}'(\cdot)\bigr) \Bigr] \,\delta_{x} Y_{t}(\cdot)   \\
&\hspace{15pt} +  \textcolor{black}{\nabla_{Z}\textrm{F}\bigl(X_{t}'(\omega),Y_{t}'(\cdot)\bigr)} \,\Delta \delta_{x} Y_{t}(\cdot).
\end{split}
\end{equation*}

\textcolor{gray}{$\bullet$} \textbf{Initial condition of $\delta_{\mu} \big[\Delta \textrm{F}(\omega,\cdot)\big]$.} Proceeding as before,
\begin{equation*}
\begin{split}
\textcolor{black}{\Big\langle}  \delta_{\mu} [\Delta \textrm{F}(\omega,\cdot)]_{t_{i}} 
\textcolor{black}{
\Big\rangle_{4/3}} &\leq {\Lambda} \Bigl( \big\vert \Delta X_{t_{i}}(\omega) \big\vert + 
\big\langle \Delta Y_{t_{i}}(\cdot) \big\rangle_{4} + \big\langle \delta_{x} \Delta Y_{t_{i}}(\cdot) \big\rangle_{4} \Bigr)
   \\
&\leq \textcolor{black}{\gamma} \, w(0,t_{i},\omega)^{1/p} \, \Bigl( \vvvert \Delta X\textcolor{black}{(\omega)} \vvvert_{[0,t_{i}],w,p} + \big\langle \vvvert \Delta 
\textcolor{black}{Y(\cdot)} \vvvert_{[0,t_{i}],w,p} \big\rangle_{8}\Bigr), 
\end{split}
\end{equation*}
where we used the H\"older inequality
with exponents $3$ and $3/2$:
\begin{equation*}
\begin{split}
&{\mathbb E} \Bigl[ \bigl\vert \Delta \delta_{x} Y_{t}(\cdot)\bigr\vert^{4/3} \bigl\vert   
\textcolor{black}{\nabla_{Z}}\textrm{F}
\bigl(\textcolor{black}{X_{t}'(\omega)},Y_{t}'(\cdot)\bigr)  \bigr\vert^{4/3}
\Bigr]^{3/4} 
\\
&\leq {\mathbb E} \Bigl[ \bigl\vert \Delta \delta_{x} Y_{t}(\cdot)\bigr\vert^{4} \Bigr]^{1/4} {\mathbb E} \Bigl[ \bigl\vert
\textcolor{black}{\nabla_{Z}}\textrm{F}
\bigl(\textcolor{black}{X_{t}'(\omega)},Y_{t}'(\cdot)\bigr)
\bigr\vert^{2} \Bigr]^{1/2}.
\end{split}
\end{equation*}
\vskip 4pt
\textcolor{gray}{$\bullet$} \textbf{Variation of $\partial_{\mu} \textcolor{black}{[} \Delta \textrm{F}(\omega,\cdot) \textcolor{black}{]}$.} Following 
\eqref{eq:contraction:proof:1}
and using again H\"older inequality with exponents $3$ and $3/2$, 
\begin{align}
\textcolor{black}{\Bigl\langle \bigl[ \delta_{\mu} [\Delta \textrm{F}(\omega,\cdot)] \bigr]_{s,t} \Bigr\rangle_{4/3}} 
&\leq {\Lambda} \bigl\langle [\delta_{x} Y(\cdot)]_{s,t} \bigr\rangle_{4} \Bigl( \vert \Delta X_{s}(\omega) \vert + \big\langle \Delta Y_s(\cdot)\big\rangle_{2} \Bigr) 
\nonumber
\\
&\hspace{5pt}+ {\Lambda}
\textcolor{black}{\Bigl\langle 
\bigl[  \nabla_{Z} \textrm{F}\bigl(X(\omega),Y(\cdot)
\bigr) 
- 
\nabla_{Z} \textrm{F}\bigl(X'(\omega),Y'(\cdot)
\bigr) 
\bigr]_{s,t}\Bigr\rangle_{4/3}}   \label{EqEstimate}
\\
&\hspace{5pt} + {\Lambda} \bigl\langle [\Delta \delta_{x} Y(\cdot)]_{s,t}\bigr\rangle_{4}  
+ {\Lambda} \bigl\langle\Delta \delta_{x} Y_{s}(\cdot)\bigr\rangle_{4} \textcolor{black}{\Bigl\langle \bigl[\nabla_{Z} \textrm{F}\bigl(X'(\omega),Y'(\cdot)\bigr) \bigr]_{s,t}\Bigr\rangle_{2}}. 
\nonumber
\end{align}
As for the fourth term,
we get, 
{following \eqref{eq:step:2:variation}}, 
\begin{equation*}
\begin{split}
&\bigl\langle \Delta \delta_{x} Y_{s}(\cdot)\bigr\rangle_{4}   \textcolor{black}{\Bigl\langle \bigl[\nabla_{Z} \textrm{F}\bigl(X(\omega),Y(\cdot)\bigr) \bigr]_{s,t}\Bigr\rangle_{2}}   
\\
&\leq \gamma \, w(s,t,\omega)^{1/p} \, \Bigl( w(0,t_{i},\omega)^{1/p}\, \big\langle \vvvert \Delta Y(\cdot) \vvvert_{[0,t_{i}],w,p}
\big\rangle_{8} + \big\langle \vvvert \Delta Y(\cdot) \vvvert_{[t_{i},t_{i+1}],w,p} \big\rangle_{8} \Bigr).
\end{split}
\end{equation*}
Recalling that $\Delta \delta_{x} Y(\cdot) = \delta_{x} \Delta Y(\cdot)$, the third term in \eqref{EqEstimate} is less than $2 {\Lambda}w(s,t,\omega)^{1/p} \times \,\big\langle \vvvert \Delta Y(\cdot) \vvvert_{[t_{i},t_{i+1}],w,p} \big\rangle_{8}$. To handle the first term in \eqref{EqEstimate}, we 
proceed as in the second step:
\begin{equation*}
\begin{split}
&\bigl\langle [\delta_{x} Y(\cdot)]_{s,t} \bigr\rangle_{4} \, \Bigl( \vert \Delta X_{s}(\omega) \vert + 
\bigl\langle \textcolor{black}{ \Delta Y_s(\cdot) } \bigr\rangle_{2} \Bigr)   \\
&\leq \gamma \, w(s,t,\omega)^{1/p} \, \Big\{ w(0,t_{i},\omega)^{1/p} \textcolor{black}{\Bigl(} \vvvert \Delta X(\omega) \vvvert_{[0,t_{i}],w,p} + \big\langle \vvvert \Delta Y(\cdot) \vvvert_{[0,t_{i}],w,p}\big\rangle_{4} \textcolor{black}{\Bigr)}   \\
&\hspace{110pt} + \textcolor{black}{\Bigl(} \vvvert \Delta X(\omega) \vvvert_{[t_{i},t_{i+1}],w,p} + \big\langle \vvvert \Delta Y(\cdot) \vvvert_{[t_{i},t_{i+1}],w,p} \big\rangle_{8}\textcolor{black}{\Bigr)} \Big\}. 
\end{split}
\end{equation*}
As for the second term in \eqref{EqEstimate},  we write 
\textcolor{black}{$\bigl[  \nabla_{Z} \textrm{F}\bigl(X(\omega),Y(\cdot)
\bigr) 
- 
\nabla_{Z} \textrm{F}\bigl(X'(\omega),Y'(\cdot)\bigr) 
\bigr]_{s,t}$ in} the form
\textcolor{black}{$\bigl[ 
D_{\mu}\textrm{F}\bigl(X(\omega),Y(\cdot)\bigr)\bigl(Y(\cdot)\bigr) 
- 
D_{\mu} \textrm{F}\bigl(X'(\omega),Y'(\cdot)
\bigr)\bigl(Y'(\cdot)\bigr) 
\bigr]_{s,t}$}
and then expand it as
\begin{equation}
\label{eq:variation:step:3}
\begin{split}
&\int_{0}^1 \Big\{ \partial_{x} D_{\mu}\textrm{F}\textcolor{black}{\Bigl(}X_{s\textcolor{black}{;(s,t)}}^{\textcolor{black}{(\lambda)}}(\omega) ,
Y_{s\textcolor{black}{;(s,t)}}^{\textcolor{black}{(\lambda)}}(\cdot)
 \textcolor{black}{\Bigr)}\textcolor{black}{\Bigl(} Y_{s\textcolor{black}{;(s,t)}}^{\textcolor{black}{(\lambda)}}(\cdot)
 \textcolor{black}{\Bigr)} X_{s,t}(\omega)  
 \\
 &\qquad-  \partial_{x} D_{\mu}\textrm{F}\textcolor{black}{\Bigl(}X_{s\textcolor{black}{;(s,t)}}^{\textcolor{black}{(\lambda) \prime}}(\omega) ,
Y_{s\textcolor{black}{;(s,t)}}^{\textcolor{black}{(\lambda) \prime}}(\cdot)
 \textcolor{black}{\Bigr)}\textcolor{black}{\Bigl(} Y_{s\textcolor{black}{;(s,t)}}^{\textcolor{black}{(\lambda) \prime}}(\cdot)
 \textcolor{black}{\Bigr)} X_{s,t}^{\prime}(\omega)
 \Big\} d\lambda   
\\
&+ \int_{0}^1 \Big\{ \partial_{\textcolor{black}{z}} D_{\mu}\textrm{F}
\textcolor{black}{\Bigl(}X_{s\textcolor{black}{;(s,t)}}^{\textcolor{black}{(\lambda)}}(\omega) ,
Y_{s\textcolor{black}{;(s,t)}}^{\textcolor{black}{(\lambda)}}(\cdot)
 \textcolor{black}{\Bigr)}\textcolor{black}{\Bigl(} Y_{s\textcolor{black}{;(s,t)}}^{\textcolor{black}{(\lambda)}}(\cdot)
 \textcolor{black}{\Bigr)}
Y_{s,t}(\cdot)   \\
&\qquad-  \partial_{\textcolor{black}{z}} D_{\mu}\textrm{F}\textcolor{black}{\Bigl(}X_{s\textcolor{black}{;(s,t)}}^{\textcolor{black}{(\lambda) \prime}}(\omega) ,
Y_{s\textcolor{black}{;(s,t)}}^{\textcolor{black}{(\lambda) \prime}}(\cdot)
 \textcolor{black}{\Bigr)}\textcolor{black}{\Bigl(} Y_{s\textcolor{black}{;(s,t)}}^{\textcolor{black}{(\lambda) \prime}}(\cdot)
 \textcolor{black}{\Bigr)} Y_{s,t}'(\cdot) \Big\} d\lambda
 \\
&+ \int_{0}^1 \tilde{\EE}\Big\{D^2_{\mu}\textrm{F}
\textcolor{black}{\Bigl(}X_{s\textcolor{black}{;(s,t)}}^{\textcolor{black}{(\lambda)}}(\omega) ,
Y_{s\textcolor{black}{;(s,t)}}^{\textcolor{black}{(\lambda)}}(\cdot)
 \textcolor{black}{\Bigr)}\textcolor{black}{\Bigl(} Y_{s\textcolor{black}{;(s,t)}}^{\textcolor{black}{(\lambda)}}(\cdot), 
\tilde{Y}_{s\textcolor{black}{;(s,t)}}^{\textcolor{black}{(\lambda)} }
 \textcolor{black}{\Bigr)}
 \tilde Y_{s,t}(\cdot)   \\
&\qquad- 
\tilde{\EE}\Big\{D^2_{\mu}\textrm{F}
\textcolor{black}{\Bigl(}X_{s\textcolor{black}{;(s,t)}}^{\textcolor{black}{(\lambda)\prime}}(\omega) ,
Y_{s\textcolor{black}{;(s,t)}}^{\textcolor{black}{(\lambda)\prime}}(\cdot)
 \textcolor{black}{\Bigr)}\textcolor{black}{\Bigl(} Y_{s\textcolor{black}{;(s,t)}}^{\textcolor{black}{(\lambda)\prime}}(\cdot), 
\tilde{Y}_{s\textcolor{black}{;(s,t)}}^{\textcolor{black}{(\lambda)\prime} }
 \textcolor{black}{\Bigr)}
 \tilde Y_{s,t}'(\cdot) \Big\} d\lambda,
\end{split}
\end{equation}   
where the symbol $\sim$ is used to denote independent copies of the various random variables \textcolor{black}{and where, as before, we used the notation 
\eqref{eq:compactified:notation:interpolation}, with an obvious analogue
for the processes tagged with a \textit{prime}
or
a \textit{tilde}.}
By using H\"older inequality with exponents 3 and 3/2, we get
\begin{equation*}
\begin{split}
&\textcolor{black}{\Bigl\langle 
\bigl[  \nabla_{Z} \textrm{F}\bigl(X(\omega),Y(\cdot)\bigr) 
- 
\nabla_{Z} \textrm{F}\bigl(X'(\omega),Y'(\cdot) 
\bigr) 
\bigr]_{s,t}\Bigr\rangle_{4/3}}   
\le \gamma \Big\{ \big\vert \Delta X_{s,t}(\omega) \big\vert +  \big\langle \Delta Y_{s,t}(\cdot)\big\rangle_{4}   \\
&\quad + \vert X_{s,t}(\omega) \vert \Bigl(\vert \Delta X_{s}(\omega) \vert + \big\langle \Delta Y_{s}(\cdot)\big\rangle_{2} + \big\vert \Delta X_{s,t}(\omega) \big\vert + \big\langle   \Delta Y_{s,t}(\cdot)  \big\rangle_{2} \Bigr)   
\\
&\quad+ \big\langle Y_{s,t}(\cdot) \big\rangle_{4} \, \Bigl(\big\vert \Delta \textcolor{black}{X_{s}(\omega)}\big\vert + \big\langle \Delta Y_{s}(\cdot)\bigr\rangle_{2} + \big\vert \Delta X_{s,t}(\omega) \big\vert + \big\langle \Delta Y_{s,t}(\cdot)\big\rangle_{2} \Bigr) \Big\},
\end{split}
\end{equation*}
where, to get the first line, we used the {boundedness and continuity assumptions of} {the functions} $\partial_{x} D_{\mu}\textrm{F}$, $\partial_z D_{\mu}\textrm{F}$ and ${D^2_{\mu}  \textrm{F}}$. {Up to the exponent $4$ appearing on the first and last lines of the right-hand side, we end up with the same bound as in the analysis of $[ \Delta F(\omega,\cdot)]_{s,t}$ in the first step}, namely
\begin{equation*}
\begin{split}
\textcolor{black}{\bigl\langle \delta_{\mu} [\Delta} \textcolor{black}{\textrm{F}(\omega,\cdot)] \bigr\rangle_{[t_{i},t_{i+1}],w,p,4/3}} 
&\leq \gamma \,\Bigl( \vvvert \Delta X(\omega) \vvvert_{[t_{i},t_{i+1}],w,p} + \big\langle \vvvert \Delta Y(\cdot) \vvvert_{[t_{i},t_{i+1}],w,p}\big\rangle_{8}\Bigr)   \\
&\hspace{-20pt}+ \gamma \, w(0,t_{i},\omega)^{1/p}\, \Bigl( \vvvert \Delta X(\omega) \vvvert_{[0,t_{i}],w,p} + \big\langle \vvvert \Delta Y(\cdot) \vvvert_{[0,t_{i}],w,p} \big\rangle_{8} \Bigr).
\end{split}
\end{equation*}

\textbf{\textbf{Step 4 -- }} We use \eqref{eq:chaining:remainder} to write the remainder term 
\textcolor{black}{$R^{\Delta \textrm{F}}$ in} the form
\begin{equation*}
\begin{split}
R^{\Delta \textrm{F}}_{s,t} &= \Big( \partial_{x} \textrm{F}\bigl(X_{s}(\omega),Y_{s}(\cdot)\bigr) - \partial_{x} \textrm{F}\bigl(X_{s}'(\omega),Y_{s}'(\cdot)\bigr)\Big) R_{s,t}^{X}(\omega)   \\
&\hspace{15pt}+ \partial_{x} \textrm{F}\bigl(X_{s}'(\omega),Y_{s}'(\cdot)\bigr) \Bigl(R_{s,t}^X(\omega) - R_{s,t}^{X'}(\omega)\Bigr)   \\ 
&\hspace{15pt}+ \EE \Bigl[ \Bigl( \nabla_{Z}\textrm{F}\bigl(X_{s}(\omega),Y_{s}(\cdot)\bigr) - \nabla_{Z}\textrm{F}\bigl(X_{s}'(\omega),Y_{s}'(\cdot)\bigr)\Bigr) R_{s,t}^{Y}(\cdot) \Bigr]   \\
&\hspace{15pt}+ \EE \Bigl[\nabla_{Z} \textrm{F}\bigl(X_{s}'(\omega),Y_{s}'(\cdot)\bigr)\Bigl( R_{s,t}^Y(\cdot) -  R_{s,t}^{Y'}(\cdot) \Bigr)\Bigr]   \\  
&\hspace{15pt}+ \textbf{(2)}- \textbf{(2')} + \textbf{(3)}-\textbf{(3')}+ \textbf{(5)} - \textbf{(5')},
\end{split}
\end{equation*}
with
\begin{equation*}
\begin{split}
\textbf{(2)} &:= \int_{0}^1 \Big\{ \partial_{x} \textrm{F}\Bigl( \textcolor{black}{X_{s;(s,t)}^{(\lambda)}}(\omega),Y_{t}(\cdot) \Bigr) - \partial_{x} \textrm{F}\Bigl( \textcolor{black}{X_{s;(s,t)}^{(\lambda)}}(\omega),Y_{s}(\cdot) \Bigr) \Big\} X_{s,t}(\omega)\,d\lambda,   
\\
\textbf{(3)} &:= \int_{0}^1 \Big\{ \partial_{x} \textrm{F}\Bigl( \textcolor{black}{X_{s;(s,t)}^{(\lambda)}}(\omega),Y_{s}(\cdot) \Bigr) - \partial_{x} \textrm{F}\bigl( X_{s}(\omega),Y_{s}(\cdot) \bigr) \Big\} X_{s,t}(\omega)\,d\lambda,    
\\
\textbf{(5)} &:= \int_{0}^1 \Big\langle 
\Bigl\{
\nabla_{\textcolor{black}{Z}} \textrm{F}\bigl( X_{s}(\omega) ,Y_{s;(s,t)}^{(\lambda)}(\cdot)\bigr) - \nabla_{Z} \textrm{F}\bigl( X_{s}(\omega) ,Y_{s}(\cdot)\bigr) \Bigr\} Y_{s,t}(\cdot) \Big\rangle \,d\lambda,   
\end{split}
\end{equation*}   
and similarly for \textbf{(2')}, \textbf{(3')} and \textbf{(5')}, putting a \textit{prime} on all the occurrences of $X$ and $Y$. 

We start with the first four lines in $R^{\Delta \textrm{F}}$. Doing as before, the first line is less than 
\begin{equation*}
\begin{split}
&\Bigl\vert \Big[ \partial_{x} \textrm{F}\bigl(X_{s}(\omega),Y_{s}(\cdot)\bigr) - \partial_{x} \textrm{F}\bigl(X_{s}'(\omega),Y_{s}'(\cdot)\bigr)\Big]  R_{s,t}^{X}(\omega) \Bigr\vert   \\
&\leq \gamma \, w(s,t,\omega)^{2/p} \, \Big\{w(0,t_{i})^{1/p} \, \Bigl( \vvvert \Delta X(\omega) \vvvert_{[0,t_{i}],w,p} + \big\langle \vvvert \Delta Y(\cdot) \vvvert_{[0,t_{i}],w,p} \big\rangle_{8} \Bigr)   \\
&\hspace{110pt}+ \Bigl( \vvvert \Delta X(\omega) \vvvert_{[t_{i},t_{i+1}],w,p} + \big\langle \vvvert \Delta Y(\cdot) \vvvert_{[t_{i},t_{i+1}],w,p} \big\rangle_{8} \Bigr) \Big\}.
\end{split}
\end{equation*}
We also have
\begin{equation*}
\Bigl\vert \partial_{x} \textrm{F}\bigl(X_{s}'(\omega),Y_{s}'(\cdot)\bigr) \Bigl(R_{s,t}^X(\omega) - R_{s,t}^{X'}(\omega) \Bigr) \Bigr\vert \leq {\Lambda} w(s,t,\omega)^{2/p} \, \vvvert \Delta X(\omega) \vvvert_{[t_{i},t_{i+1}],w,p}.
\end{equation*}   
\textcolor{black}{Similarly,}
\begin{equation*}
\begin{split}
& \Bigl\vert \EE\Bigl[ \Bigl( \nabla_{Z}\textrm{F}\bigl(X_{s}(\omega),Y_{s}(\cdot)\bigr) - \nabla_{Z}\textrm{F}\bigl(X_{s}'(\omega),Y_{s}'(\cdot)\bigr)\Bigr) R_{s,t}^{Y}(\cdot) \Bigr] \Bigr\vert   \\
 &\leq \gamma\, w(s,t,\omega)^{2/p} \, \Big\{w(0,t_{i})^{1/p} \, \Bigl( \vvvert \Delta X(\omega) \vvvert_{[0,t_{i}],w,p} + \big\langle \vvvert \Delta Y(\cdot) \vvvert_{[0,t_{i}],w,p} \big\rangle_{8} \Bigr)   \\
&\hspace{110pt}+ \Bigl( \vvvert \Delta X(\omega) \vvvert_{[t_{i},t_{i+1}],w,p} + \big\langle \vvvert \Delta Y(\cdot) \vvvert_{[t_{i},t_{i+1}],w,p}
\big\rangle_{8} \Bigr) \Big\},
\\
&\Bigl\vert \EE \Bigl[ \nabla_{Z} \textrm{F}\bigl(X_{s}(\omega),Y_{s}(\cdot)\bigr) \Bigl(R_{s,t}^Y(\cdot) -  R_{s,t}^{Y'}(\cdot)\Bigr) \Bigr] \Bigr\vert \leq \textcolor{black}{2} {\Lambda} w(s,t,\omega)^{2/p} \,\big\langle \vvvert \Delta Y(\cdot) \vvvert_{[t_{i},t_{i+1}],w,p} \big\rangle_{8}.
\end{split}
\end{equation*}
Now, $\bigl\vert \textbf{(2)} - \textbf{(2')} \bigr\vert$ is bounded above by
\begin{equation*}
\begin{split}
&\gamma \,w(s,t,\omega)^{2/p} \, \big\vvvert  \Delta X (\omega)\big\vvvert_{[t_{i},t_{i+1}],w,p}   
\\
&\hspace{15pt}+ \gamma \,w(s,t,\omega)^{1/p} \, \int_{0}^1 \int_{0}^1 \Bigl\vert \Big\langle 
\nabla_{\textcolor{black}{Z}}\partial_x{\textrm{F}}\Bigl( \textcolor{black}{X_{s;(s,t)}^{(\lambda)}}(\omega) ,Y_{s;(s,t)}^{({\lambda'})}(\cdot)\Bigr)  Y_{s,t}(\cdot)\Big\rangle   
\\
&\hspace{130pt} - \Bigl\langle \nabla_{\textcolor{black}{Z}}  \partial_x{\textrm{F}}\Bigl( \textcolor{black}{X_{s;(s,t)}^{(\lambda) \prime}(\omega) }, Y_{s;(s,t)}^{({\lambda'})\prime}(\cdot) \Bigr) Y_{s,t}'(\cdot) \Bigr\rangle \Bigr\vert d\lambda d\lambda',
\end{split}
\end{equation*}
so $\bigl\vert \textbf{(2)} - \textbf{(2')} \bigr\vert$ is bounded above by
\begin{equation*}
\begin{split}
&\gamma \, 
w(s,t,\omega)^{2/p} \, 
\Big\{
\big\vvvert \Delta X(\omega)\big\vvvert_{[t_{i},t_{i+1}],w,p} + \big\langle \vvvert \Delta Y(\cdot)\vvvert_{[t_{i},t_{i+1}],w,p} \big\rangle_{8}
  \\
&\hspace{130pt}+ w(0,t_{i},\omega)^{1/p} \, \Bigl( \vvvert \Delta X(\omega)\vvvert_{[0,t_{i}],w,p} + \big\langle \vvvert \Delta Y(\cdot)\vvvert_{[0,t_{i}],w,p} \big\rangle_{8}\Bigr) \Big\}.
\end{split}
\end{equation*}
The difference $\textbf{(3)}-\textbf{(3')}$ can be handled in the same way. We end up with the term $\textbf{(5)}-\textbf{(5')}$. As $Y_{s,t}$ and $Y_{s,t}'$ may be estimated in $\LL^4$, it suffices to control 
\begin{equation*}
\begin{split}
\textbf{(5a)} &:= \nabla_{\textcolor{black}{Z}}\textrm{F}\bigl( X_{s}(\omega),
Y_{s;(s,t)}^{(\lambda)}(\cdot)
 \bigr) - \nabla_{Z}\textrm{F}\bigl( X_{s}(\omega) ,Y_{s}(\cdot)\bigr),
\\ 
\textbf{(5a)}-\textbf{(5a')} &:=\Bigl( \nabla_{Z} \textrm{F}\bigl( X_{s}(\omega), Y_{s;(s,t)}^{(\lambda)}(\cdot)\bigr) -
\nabla_{Z} \textrm{F}\bigl( X_{s}(\omega), Y_{s}(\cdot)\bigr) \Bigr)   \\
&\hspace{15pt}- \Bigl( \nabla_{Z}\textrm{F}\bigl( X_{s}'(\omega), Y_{s;(s,t)}^{(\lambda)\prime}(\cdot)\bigr) - \nabla_{Z}\textrm{F}\bigl( X_{s}'(\omega), Y_{s}'(\cdot)\bigr)\Bigr),
\end{split}
\end{equation*}
in $\LL^{4/3}$. We have first
$\bigl\langle \textbf{(5a)} \bigr\rangle_{\LL^{4/3}} \leq \bigl\langle \textbf{(5a)} \bigr\rangle_{\LL^{2}} \leq \gamma \, w(s,t,\omega)^{1/p}$.
In order to estimate \textbf{(5a)-(5a')}, we rewrite \textbf{(5a)} \textcolor{black}{in} the form
\begin{equation*}
\begin{split}
\textbf{(5a)} &= D_{\mu}\textrm{F}\Bigl( X_{s}(\omega), \textcolor{black}{Y_{s;(s,t)}^{(\lambda)}(\cdot)}\Bigr)\Bigl( \textcolor{black}{Y_{s;(s,t)}^{(\lambda)}}(\cdot) \Bigr) - D_{\mu}\textrm{F}\Bigl( X_{s}(\omega) ,Y_{s}(\cdot)\Bigr)\bigl( Y_{s}(\cdot)\bigr)   \\
&= \lambda \int_{0}^1 \partial_z D_{\mu}\textrm{F}\Bigl( X_{s}(\omega), \textcolor{black}{Y_{s;(s,t)}^{(\lambda \lambda')}(\cdot)}  \Bigr)\Bigl(
\textcolor{black}{
Y_{s;(s,t)}^{(\lambda \lambda')}(\cdot)} \Bigr) Y_{s,t}(\cdot) d\lambda'    \\
&\qquad+ \lambda \int_{0}^1 \widetilde{\mathbb E}\Bigl[ D_{\mu}^2\textrm{F}\Bigl( X_{s}(\omega), 
\textcolor{black}{Y_{s;(s,t)}^{(\lambda \lambda ')}(\cdot)}\Bigr) \Bigl(\textcolor{black}{Y_{s;(s,t)}^{(\lambda \lambda ')}(\cdot)}
, \textcolor{black}{\widetilde  Y_{s;(s,t)}^{(\lambda \lambda ')}(\cdot)}\Bigr) \widetilde Y_{s,t}(\cdot) \Bigr] d\lambda'.
\end{split}
\end{equation*}   
Then, using H\"older inequality with exponents $3$ and $3/2$ as in {\eqref{eq:variation:step:3}}, we obtain that $\bigl\langle\textbf{(5a)-(5a')}\bigr\rangle_{\LL^{4/3}}$ is bounded above by
\begin{equation*}
\begin{split}
&\gamma \, w(s,t,\omega)^{1/p} \,  \Big\{  \big\vvvert \Delta X(\omega)\big\vvvert_{[t_{i},t_{i+1}],w,p} + \big\langle \vvvert \Delta Y(\cdot)\vvvert_{[t_{i},t_{i+1}],w,p} \big\rangle_{8}
\\
&\hspace{130pt}
+ w(0,t_{i},\omega)^{1/p} \, \Bigl( \vvvert \Delta X(\omega)\vvvert_{[0,t_{i}],w,p} + \big\langle \vvvert \Delta Y(\cdot)\vvvert_{[0,t_{i}],w,p} \big\rangle_{8}\Bigr)   
 \Big\}.
\end{split}
\end{equation*}
and end up with the bound
\begin{equation*}
\begin{split}
\Big\| R^{\Delta \textrm{F}}(\omega) \Big\|_{[t_{i},t_{i+1}],w,p/2} 
&\leq \gamma \, \Big\{ w(0,t_{i},\omega)^{1/p} \, \Bigl( \vvvert \Delta X(\omega)\vvvert_{[0,t_{i}],w,p} + \big\langle \vvvert \Delta Y(\cdot)\vvvert_{[0,t_{i}],w,p} \big\rangle_{8}\Bigr)   \\
&\hspace{60pt} + \big\vvvert \Delta X(\omega)\big\vvvert_{[t_{i},t_{i+1}],w,p} + \big\langle \vvvert \Delta Y(\cdot)\vvvert_{[t_{i},t_{i+1}],w,p} \big\rangle_{8}\Big\}.
\end{split}
\end{equation*}   \vskip 4pt

\textbf{\textbf{Conclusion.}} Plugging the conclusion of the previous steps ({including the analysis of the various initial conditions}) into equation \eqref{eq:contraction:proof:0}, we get
\begin{equation}
\label{eq:conclusion:proposition:15}
\begin{split}
&\biggl\vvvert \int_{t_{i}}^{\cdot} \textrm{F}\bigl(X_{r}(\omega),Y_{r}(\cdot)\bigr) d {\boldsymbol W}_{r}(\omega) - \int_{t_{i}}^{\cdot} \textrm{F}\bigl(X_{r}'(\omega),Y_{r}'(\cdot)\bigr) d {\boldsymbol W}_{r}(\omega) \biggr\vvvert_{[t_{i},t_{i+1}],w,p}   \\
&\leq \gamma \, \Bigl({  \big\vert \Delta X_{t_{i}}(\omega)\big\vert +
\big\vert \delta_{x} \Delta  X_{t_{i}}(\omega)\big\vert
+ 
 \bigl\langle \Delta Y_{t_{i}}(\cdot)  \bigr\rangle_{4}
 +
 \bigl\langle  \delta_{x} \Delta Y_{t_{i}}(\cdot)  \bigr\rangle_{4}}
 \\ 
&\quad+ \gamma\, w(t_{i},t_{i+1},\omega)^{1/p} \,\bigl\vvvert \textrm{F}(X(\omega),Y(\cdot)) - \textrm{F}(X'(\omega),Y'(\cdot)) \bigr\vvvert_{[t_{i},t_{i+1}],w,p} \Bigr)     
 \\
&\leq \gamma \, w(0,t_{i},\omega)^{1/p}\, \Bigl(\vvvert \Delta X(\omega) \vvvert_{[0,t_{i}],w,p} + \big\langle \vvvert \Delta Y(\cdot)\vvvert_{[0,t_{i}],w,p} \big\rangle_{8} \Bigr)   \\
&\quad+ \gamma \, w(t_{i},t_{i+1},\omega)^{1/p} \,\Big\{\Bigl( \vvvert \Delta X(\omega) \vvvert_{[t_{i},t_{i+1}],w,p} + \big\langle
\vvvert \Delta Y(\cdot) \vvvert_{[t_{i},t_{i+1}],w,p}\big\rangle_{8}\Bigr)   \\
&\hspace{60pt} + w(0,t_{i},\omega)^{1/p} \, \Bigl(\vvvert \Delta X(\omega) \vvvert_{[0,t_{i}],w,p} + \big\langle \vvvert \Delta Y(\cdot) \vvvert_{[0,t_{i}],w,p} \big\rangle_{8}\Bigr) \Big\}.
\end{split}
\end{equation}
{Recalling} that $w(t_{i},t_{i+1},\omega)^{1/p}\leq 1/(4 L)$, we finally get
\begin{equation*}
\begin{split}
&\biggl\vvvert \int_{t_{i}}^{\cdot} \textrm{F}\bigl(X_{r}(\omega),Y_{r}(\cdot)\bigr) d {\boldsymbol W}_{r}(\omega) - \int_{t_{i}}^{\cdot} \textrm{F}\bigl(X_{r}'(\omega),Y_{r}'(\cdot)\bigr) d {\boldsymbol W}_{r}(\omega) \biggr\vvvert_{[t_{i},t_{i+1}],w,p}
\\
&\leq \gamma \, w(0,t_{i},\omega)^{1/p} \, \left( 1 + \frac{1}{4L} \right) \, \Bigl( \big\vvvert \Delta X(\omega) \big\vvvert_{[0,t_{i}],w,p} + \big\langle \vvvert \Delta Y(\cdot) \vvvert_{[0,T],w,p} \big\rangle_{8} \Bigr)   \\
&\quad+ \frac{\gamma}{4L} \,\Big\{ \vvvert \Delta X(\omega) \vvvert_{[t_{i},t_{i+1}],w,p} + \big\langle \vvvert \Delta Y(\cdot) \vvvert_{[0,T],w,p}  \big\rangle_{8} \Big\}.
\end{split}
\end{equation*}
This completes the proof.  \end{proof}

\subsection{Well-posedness}
\label{SubsectionWellPosedness}

We first prove a well-posedness result in small time from which Theorem \ref{ThmMain} follows. Recall 
from Definition \ref{DefnGamma} the fact that the map $\Gamma$ depends on $X_0(\omega)$.

\begin{thm}
\label{main:thm:existence:small:time}
Let \emph{F} satisfy \textbf{\textbf{Regularity assumptions 1}}  and \textbf{\textbf{Regularity assumptions 2}}
{and $w$ be a control 
satisfying \eqref{eq:w:s:t:omega:ineq}
and 
\eqref{eq:useful:inequality:wT}. Assume there exists a positive time horizon $T$ such that the random variables $w(0,T,\textcolor{black}{\cdot})$ and 
$\bigl(N\big([0,T],\textcolor{black}{\cdot},\alpha\big)\bigr)_{\alpha >0}$ have sub and super exponential tails respectively, 
{namely}
\begin{equation}
\label{EqTailAssumptions}
\begin{split}
\PP \bigl( w(0,T,\cdot) \geq t \bigr) &\leq c_{1} \exp \bigl( - t^{\varepsilon_1} \bigr),   
\quad 
\PP \bigl( N([0,T],\cdot,\alpha) \geq t \bigr) \leq c_{2}(\alpha) \exp \bigl( - t^{1+ \varepsilon_2(\alpha)} \bigr),
\end{split}
\end{equation}
for some positive \textcolor{black}{constants $c_{1}$} and $\epsilon_1$, and possibly $\alpha$-dependent positive constants $c_{2}(\alpha)$ and $\epsilon_2(\alpha)$. Then, there exists 
four positive reals $\gamma$,  $L_0$, $L$ and $\eta$, only depending on {$\Lambda$} and $T$, with the following property. For $0\leq S\leq T$ such that
\begin{equation}
\label{eq:constraint:1:E!:small:time}
\textcolor{red}{\Bigl\langle N\bigl([0,S],\cdot,1/(4L_{0})\bigr)  \Bigr\rangle_{8}}  \leq {1},
\end{equation}
and 
\begin{equation}
\label{eq:constraint:2:E!:small:time}
\Bigl\langle \Bigl[ \gamma \Bigl( 1 + w(0,T,\cdot)^{1/p} \Bigr)\Bigr]^{N([0,S],\cdot,1/(4L))} \Bigr\rangle_{32} \textcolor{black}{\leq \eta},
\end{equation}
and for any $d$-dimensional random square-integrable variable $X_{0}$, there exists a random controlled path $X(\cdot)=(X(\omega))_{\omega \in \Omega}$ defined on the time interval $[0,S]$ satisfying  
$\big\langle \delta_{x} X(\cdot) \big\rangle_{\infty} \leq {\Lambda}$, 
and 
$\bigl\langle \vvvert X(\cdot) \vvvert_{[0,S],w,p} \bigr\rangle_{8} < \infty$ (the bound for the latter only depending on
{$\Lambda$}
and the parameters in \eqref{EqTailAssumptions}),
such that, for every $\omega \in \Omega$, the paths $X(\omega)$ and $\Gamma(\omega , X(\omega) , X(\cdot))$ coincide on $[0,S]$. Any other random controlled path $X'(\cdot)$ with $X'_0=X_0$ almost surely, and such that the paths $X'(\omega)$ and $\Gamma\big(\omega,X'(\omega),X'(\cdot)\big)$ coincide almost surely, satisfies 
$$
\PP\Big(\vvvert X(\cdot) - X'(\cdot) \vvvert_{\star,[0,S],w,p} = 0 \Big) = 1.
$$}
\end{thm}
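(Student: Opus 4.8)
## Plan of proof

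The plan is to solve the fixed point equation $X(\omega) = \Gamma(\omega, X(\omega), X(\cdot))$ by a Picard iteration in the space of random controlled paths, localising the iteration on a subdivision whose mesh is controlled by $w$. First I would set up the iteration: starting from a trivial random controlled path $X^{(0)}(\cdot)$ (say the constant path equal to $X_0$ with Gubinelli derivative $\mathrm{F}(X_0, \mathcal{L}(X_0))$ and zero $\mu$-derivative, or the one-step Euler guess), define $X^{(n+1)}(\omega) := \Gamma(\omega, X^{(n)}(\omega), X^{(n)}(\cdot))$. The stability statement Proposition \ref{thm:main:1} guarantees, under the smallness condition \eqref{eq:constraint:1:E!:small:time} on $\langle N([0,S],\cdot,1/(4L_0))\rangle_8$ and after choosing $L$ large enough (only depending on $\Lambda$), that the ball defined by \eqref{eq:invariant:1}--\eqref{eq:invariant:2} with the size parameter $L$ is mapped into itself, both $\omega$-by-$\omega$ and in the averaged $\LL^8$-sense; it also controls $\langle \delta_x X^{(n)}(\cdot)\rangle_\infty \le \Lambda$ and $\langle \vvvert X^{(n)}(\cdot)\vvvert_{[0,S],w,p}\rangle_8$ uniformly in $n$ by a constant depending only on $\Lambda$ and the tail parameters in \eqref{EqTailAssumptions}. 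So the sequence $(X^{(n)})_n$ stays in a fixed ball.

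Next I would prove contraction. Fix the subdivision $(t^0_i)_{i=0,\dots,N^0+1}$ associated with threshold $1/(4L_0)$ and a finer one $(t_i)_{i=0,\dots,N+1}$ associated with threshold $1/(4L)$, refining it. Proposition \ref{thm:fixed:1}, applied with $X = X^{(n)}(\omega)$, $X' = X^{(n-1)}(\omega)$, $Y = X^{(n)}(\cdot)$, $Y' = X^{(n-1)}(\cdot)$, gives on each sub-interval $[t_i,t_{i+1}]$
\begin{equation*}
\begin{split}
\vvvert \Delta X^{(n+1)}(\omega)\vvvert_{[t_i,t_{i+1}],w,p}
&\le \gamma\, w(0,t_i,\omega)^{1/p}\bigl(1+\tfrac1{4L}\bigr)\Bigl(\vvvert \Delta X^{(n)}(\omega)\vvvert_{[0,t_i],w,p} + \bigl\langle \vvvert \Delta X^{(n)}(\cdot)\vvvert_{[0,S],w,p}\bigr\rangle_8\Bigr)\\
&\quad + \tfrac{\gamma}{4L}\Bigl(\vvvert \Delta X^{(n)}(\omega)\vvvert_{[t_i,t_{i+1}],w,p} + \bigl\langle \vvvert \Delta X^{(n)}(\cdot)\vvvert_{[0,S],w,p}\bigr\rangle_8\Bigr),
\end{split}
\end{equation*}
where $\Delta X^{(n)} = X^{(n)} - X^{(n-1)}$. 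Iterating this over $i = 0,1,\dots,N$ and using the super-additivity of $w$ together with $w(t_i,t_{i+1},\omega)^{1/p}\le 1/(4L)$, one gets a bound of the form
\begin{equation*}
\vvvert \Delta X^{(n+1)}(\omega)\vvvert_{[0,t_{i}],w,p} \le \bigl(\gamma(1+w(0,T,\omega)^{1/p})\bigr)^{i}\,\bigl(\tfrac1{4L}\bigr)^{?}\,\bigl\langle \vvvert \Delta X^{(n)}(\cdot)\vvvert_{[0,S],w,p}\bigr\rangle_8 + \text{(lower order)},
\end{equation*}
i.e. a discrete Gronwall-type recursion in $i$ producing a factor $\bigl(\gamma(1+w(0,T,\omega)^{1/p})\bigr)^{N([0,S],\omega,1/(4L))}$ after summing all $N+1$ steps. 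Taking the $\LL^{8}$ norm in $\omega$ and invoking \eqref{eq:constraint:2:E!:small:time} (with Cauchy--Schwarz to split the random prefactor from the $\LL^8$-norm of the increment, which is why the exponent $32$ appears there) yields $\langle \vvvert \Delta X^{(n+1)}(\cdot)\vvvert_{\star,[0,S],w,p}\rangle_8 \le \tfrac12 \langle \vvvert \Delta X^{(n)}(\cdot)\vvvert_{\star,[0,S],w,p}\rangle_8$, after absorbing the self-referential term $\langle \vvvert \Delta X^{(n)}(\cdot)\vvvert\rangle_8$ on the right into the left. Hence $(X^{(n)})_n$ is Cauchy in the complete metric space of random controlled paths with fixed size bounds, and converges to a limit $X(\cdot)$ which satisfies $X(\omega) = \Gamma(\omega, X(\omega), X(\cdot))$ for $\mathbb{P}$-a.e. $\omega$; the $\omega$-by-$\omega$ stability estimates of Proposition \ref{thm:main:1} plus the measurability discussion after \eqref{eq:vvvert:integral} then upgrade "a.e. $\omega$'' to "every $\omega$'' after modifying $X$ on a null set.

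For uniqueness, given another solution $X'(\cdot)$ with $X'_0 = X_0$ a.s., I would first note that a.s. $X'(\omega) = \Gamma(\omega, X'(\omega), X'(\cdot))$ forces, via Proposition \ref{thm:main:1} applied to $X'$ itself, the a priori bounds \eqref{eq:invariant:1}--\eqref{eq:invariant:2} (with the size parameter possibly enlarged by the universal constant $c$ of the first bullet of that proposition, which is harmless), so $X'$ lies in the same ball where the contraction estimate is valid; then the contraction argument applied to the pair $(X, X')$ gives $\langle \vvvert X(\cdot) - X'(\cdot)\vvvert_{\star,[0,S],w,p}\rangle_8 \le \tfrac12 \langle \vvvert X(\cdot) - X'(\cdot)\vvvert_{\star,[0,S],w,p}\rangle_8$, whence the norm is zero and $\mathbb{P}(\vvvert X(\cdot)-X'(\cdot)\vvvert_{\star,[0,S],w,p}=0)=1$. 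Finally, Theorem \ref{ThmMain} follows by concatenation: the tail assumptions \eqref{EqTailAssumptions} make $t\mapsto \langle N([0,t],\cdot,1/(4L_0))\rangle_8$ and the quantity in \eqref{eq:constraint:2:E!:small:time} continuous and vanishing at $t=0$, so $[0,T]$ can be cut into finitely many sub-intervals on each of which the small-time theorem applies with the terminal value of the previous piece as initial condition, and the pieces are glued using the super-additivity of $w$ and of $N$.

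The main obstacle I expect is the self-referential / mean-field coupling in the contraction step: unlike a classical RDE, the estimate on $\vvvert \Delta X^{(n+1)}(\omega)\vvvert$ involves not only the $\omega$-realization $\vvvert \Delta X^{(n)}(\omega)\vvvert$ but also the full $\LL^8$-average $\langle \vvvert \Delta X^{(n)}(\cdot)\vvvert\rangle_8$, which is itself the quantity one wants to contract. Closing the loop requires carefully taking the $\LL^8$-norm in $\omega$ of the pointwise Gronwall iteration, keeping the random blow-up factor $(\gamma(1+w(0,T,\omega)^{1/p}))^{N([0,S],\omega,1/(4L))}$ under an expectation and using Cauchy--Schwarz (hence the $\LL^{32}$ integrability demanded in \eqref{eq:constraint:2:E!:small:time}) to decouple it from $\langle \vvvert \Delta X^{(n)}(\cdot)\vvvert\rangle_8$, and then choosing $\eta$ (equivalently $S$) small enough that the resulting constant is $\le 1/2$. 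The bookkeeping of which interval — $[0,t_i]$ versus $[t_i,t_{i+1}]$ — carries which power of $1/(4L)$ and which power of the random factor, so that the geometric sum over $i$ telescopes correctly, is the delicate computational heart of the argument.
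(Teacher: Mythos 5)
Your proposal is correct and follows essentially the same route as the paper: a Picard iteration, $\omega$-by-$\omega$ stability of a ball under $\Gamma$ via Proposition \ref{thm:main:1}, the interval-by-interval contraction estimate of Proposition \ref{thm:fixed:1} aggregated by a discrete Gronwall over the subdivision $(t_i)$ producing the random blow-up factor $\bigl(\gamma(1+w(0,T,\omega)^{1/p})\bigr)^{N([0,S],\omega,1/(4L))}$, an $\LL^8$-norm passage controlled by the $\LL^{32}$ smallness of that factor, a Cauchy argument, and uniqueness by rerunning the contraction with $\Delta X = X - X'$. The only small discrepancies are cosmetic: the paper's $\LL^8$-level recursion is a weighted sum over all previous increments rather than a one-step contraction (so one needs the generating-function-type summation of the paper's Step~3, not a literal $\le\tfrac12$ factor), and the ``every $\omega$'' conclusion comes for free from the pointwise-in-$\omega$ Cauchy estimate rather than from a modification on a null set.
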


\begin{proof}
We construct a fixed point of  $\Gamma$, see Definition \ref{DefnGamma}, as the limit of the Picard sequence
\begin{equation}
\label{eq:Picard:sequence}
\begin{split}
&\bigl(X^{n+1}(\omega)
 ; \delta_{x} X^{n+1}(\omega) ; 0\bigr) 
 \\
&\hspace{15pt} := \Gamma\Bigl(\omega,\bigl(X^{n}(\omega) ; \delta_{x} X^{n}(\omega)
 ; 0\bigr),\bigl(X^n(\omega') ; \delta_{x} X^{n}(\omega') ; 0\bigr)_{\omega' \in \Omega} \Bigr), 
 \end{split}
\end{equation}
started from 
$\bigl(X^{0}(\omega);\partial_{x} X^{0}(\omega); 0\bigr) \equiv 
\bigl(X_{0}(\omega) ; 0 ; 0\bigr)$, 
for each $\omega\in\Omega$.
{By induction, for any $n \geq 0$, the pair $(X(\omega),Y(\cdot))=(X^n(\omega),X^n(\cdot))$
satisfies 
\eqref{eq:invariant:0000}
in the statement of 
Proposition 
\ref{thm:main:1}. 
Moreover,
by the first bullet point in the conclusion of 
Proposition \ref{thm:main:1}, 
$X(\omega)=X^n(\omega)$ satisfies 
\eqref{eq:invariant:2} for any $n \geq 1$, provided that $L$ therein is taken large enough (independently on $n$). By 
\eqref{eq:invariant:3} and from the tail estimates \eqref{EqTailAssumptions}}, we deduce that, for any $n \geq 0$, 
$\vvvert X^n(\cdot) \vvvert_{[0,T],w,p}$ has finite moments of any order: 
According to Definition \ref{definition:random:controlled:trajectory}, each $X^n(\cdot)=(X^n(\omega))_{\omega \in \Omega}$, $n \geq 1$, is a random controlled trajectory.

\textbf{\textbf{Step 1.}} Instead of working with $S$ such that \textcolor{red}{$\big\langle N([0,S]\cdot,1/(4L_{0}))  \big\rangle_{8} \leq 1$}, we directly assume that {\textcolor{red}{$\big\langle N(\textcolor{black}{[0,T]},\cdot,1/(4L_{0}))  \big\rangle_{8}  \leq 1$}},  with $L_{0}$ as in  Proposition \ref{thm:main:1}. 
{Recalling that we may take $L_{0}$ large enough so that 
\eqref{eq:invariant:2} holds true {with $L=L_{0}$ and $X=X^n$} for any $n \geq 0$, we} deduce that, for any $n \geq 1$, both $X^n$ and $X^{n-1}$ satisfy  \eqref{eq:fixed:1} and \eqref{eq:fixed:2}:
{\eqref{eq:fixed:1} follows from the third item in the conclusion of 
Proposition \ref{thm:main:1}, whilst 
\eqref{eq:fixed:2} follows from the first item}. Hence,  by Proposition \ref{thm:fixed:1}, $\bigl\vvvert \Delta X^{n}(\omega) \bigr\vvvert_{[t_{i},t_{i+1}],w,p}$, with 
$\Delta X^{n}:=X^{n+1} - X^n$
 is bounded above by 
\begin{equation*}
\begin{split}
&\gamma \, w(0,t_{i},\omega)^{1/p} \Bigl( 1 + \frac{1}{4L} \Bigr) \Big\{ \bigl\vvvert \Delta X^{n-1}(\omega) \bigr\vvvert_{[0,t_{i}],w,p}   + \Bigl\langle \vvvert\Delta X^{n-1}(\cdot) \vvvert_{[0,T],w,p} \Bigr\rangle_{8} \Big\}   \\
&+ \frac{\gamma}{4L} \, \Big\{ \bigl\vvvert \Delta X^{n-1}(\omega) \bigr\vvvert_{[t_{i},t_{i+1}],w,p} + \Bigl\langle \vvvert \Delta X^{n-1}(\cdot) \vvvert_{[0,T],w,p}\Bigr\rangle_{8} \Big\},
\end{split}
\end{equation*}
for any $n \geq 1$, {where 
$\gamma$ depends on $L_{0}$ and $\Lambda$, 
$L$ is greater than $L_{0}$, 
and the sequence $(t_{i})_{\textcolor{black}{i=0,\cdots,N}}$ is as in the statement of Proposition \ref{thm:fixed:1}.
The precise value of $L$ will be fixed later on; the key fact is that it may be taken as large as needed.} We start with the case $i=0$. The above bound yields, for all $n \geq 1$,\begin{equation*}
\begin{split}
&\bigl\vvvert \Delta X^{n}(\omega) \bigr\vvvert_{[0,t_{1}],w,p}  \leq \frac{3\gamma}{4L} \, \Big\{ \bigl\vvvert \Delta X^{n-1}(\omega) \bigr\vvvert_{[0,t_{1}],w,p} + \Bigl\langle \vvvert \Delta X^{n-1}(\cdot) \vvvert_{[0,T],w,p} \Bigr\rangle_{8} \Big\}.
\end{split}
\end{equation*}   
So, {recalling that $\Delta X^0(\omega)=X^1(\omega)$,} we have, for any $n \geq 1$, 
\begin{equation}
\label{eq:contraction:t1}
\begin{split}
&\bigl\vvvert \Delta X^{n}(\omega) \bigr\vvvert_{[0,t_{1}],w,p}
\\ 
&\hspace{15pt} \leq \Bigl( \frac{3 \gamma}{4 L} \Bigr)^{n} \bigl\vvvert X^{1}(\omega) \bigr\vvvert_{[0,t_{1}],w,p}   
 + \sum_{k=1}^{n}\Bigl( \frac{3 \gamma}{4 L}\Bigr)^{n+1-k} \Bigl\langle \vvvert \Delta X^{k-1}(\cdot) \vvvert_{[0,T],w,p}\Bigr\rangle_{8}.
\end{split}
\end{equation}
We proceed with a similar computation when $i \geq 1$. By induction, we have, for $n \geq 1$,
\begin{equation*}
\begin{split}
&\bigl\vvvert \Delta X^{n}(\omega)
\bigr\vvvert_{[t_{i},t_{i+1}],w,p}
\leq 
\Bigl( \frac{\gamma}{4 L}\Bigr)^{n} \bigl\vvvert X^{1}(\omega) \bigr\vvvert_{[t_{i},t_{i+1}],w,p}   \\
&\quad+ \sum_{k=1}^n \Bigl( \frac{\gamma}{4L}\Bigr)^{n+1-k} \Bigl[\gamma w(0,t_{i},\omega)^{1/p} \, \Bigl( 1 + \frac{1}{4L} \Bigr) \, \bigl\vvvert \Delta X^{k-1}(\omega) \bigr\vvvert_{[0,t_{i}],w,p} \Bigr]   \\
&\quad +\sum_{k=1}^n  \Bigl( \frac{\gamma}{4L} \Bigr)^{n+1-k} \Bigl[ \gamma\, \Big\{ \frac1{4L} + w(0,t_{i},\omega)^{1/p} \bigl( 1 + \frac{1}{4L} \bigr) \Big\}   \Bigl\langle \vvvert\Delta X^{k-1}(\cdot) \vvvert_{[0,T],w,p} \Bigr\rangle_{8}  \Bigr].
\end{split}
\end{equation*}
Following 
{footnote${}^{\ref{footnote:6}}$}, we get, for a new value of $\gamma$, 
\begin{equation*}
\begin{split}
&\bigl\vvvert \Delta X^{n}(\omega) \bigr\vvvert_{[0,t_{i+1}],w,p}   
 \leq \gamma \bigl\vvvert \Delta X^{n}(\omega) \bigr\vvvert_{[0,t_{i}],w,p} + \gamma \bigl\vvvert \Delta X^{n}(\omega) \bigr\vvvert_{[t_{i},t_{i+1}],w,p},
\end{split}
\end{equation*}
so
\begin{equation*}
\begin{split}
&\bigl\vvvert \Delta X^{n}(\omega) \bigr\vvvert_{[0,t_{i+1}],w,p}   \leq \gamma \, \bigl\vvvert\Delta X^{n}(\omega) \bigr\vvvert_{[0,t_{i}],w,p} + \gamma \, \Bigl( \frac{\gamma}{4 L} \Bigr)^{n} \bigl\vvvert X^{1}(\omega) \bigr\vvvert_{[t_{i},t_{i+1}],w,p}   \\
&\hspace{15pt}+ \gamma \sum_{k=1}^n 	\Bigl( \frac{\gamma}{4L} \Bigr)^{n+1-k} \Bigl[\gamma  \, w(0,t_{i},\omega)^{1/p}\, \Bigl( 1 + \frac{1}{4L}\Bigr) \, \bigl\vvvert \Delta X^{k-1}(\omega) \bigr\vvvert_{[0,t_{i}],w,p} \Bigr]   \\
&\hspace{15pt}+ \gamma \sum_{k=1}^n  \Bigl( \frac{\gamma}{4L} \Bigr)^{n+1-k} \Bigl[ \gamma \, \Big\{ \frac1{4L} + w(0,t_{i},\omega)^{1/p} \bigl( 1 + \frac{1}{4L} \bigr)  \Big\}  
 \Bigl\langle  \vvvert \Delta X^{k-1}(\cdot) \vvvert_{[0,T],w,p} \Bigr\rangle_{8}  \Bigr],
\end{split}
\end{equation*}
which we can rewrite as
\begin{equation*}
\begin{split}
&\bigl\vvvert \Delta X^{n}(\omega) \bigr\vvvert_{[0,t_{i+1}],w,p}  
\leq \gamma^2 \zeta(\textcolor{black}{\omega}) \biggl\{\sum_{k=1}^{n+1}\Bigl( \frac{\gamma}{4L} \Bigr)^{n+1-k} \, \bigl\vvvert \Delta X^{k-1}(\omega) \bigr\vvvert_{[0,t_{i}],w,p}  
\\
&\quad 
+ \Bigl( \frac{\gamma}{4 L} \Bigr)^{n} \bigl\vvvert X^{1}(\omega) \bigr\vvvert_{[t_{i},t_{i+1}],w,p} 
 + \sum_{k=1}^n \Bigl( \frac{\gamma}{4L} \Bigr)^{n+1-k} \, \Bigl\langle \vvvert \Delta X^{k-1}(\cdot) \vvvert_{[0,T],w,p}\Bigr\rangle_{8} \biggr\},
\end{split}
\end{equation*}
\textcolor{black}{provided we choose $\gamma \geq 1$, and}
with 
$\zeta(\omega) := 1 + w(0,T,\omega)^{1/p} \, \Bigl( 1 + \frac{1}{4L} \Bigr)$.

\textbf{\textbf{Step 2.}} Combine the above estimate together with \eqref{eq:contraction:t1} to get
\begin{equation*}
\begin{split}
&\bigl\vvvert \Delta X^{n}(\omega)
\bigr\vvvert_{[0,t_{2}],w,p}
\leq 
\gamma^2 \zeta(\omega) \sum_{k=1}^{n+1}
\Bigl( \frac{\gamma}{4L} \Bigr)^{n+1-k} \, 
\Bigl( \frac{3 \gamma}{4 L}\Bigr)^{k-1} \, 
\bigl\vvvert X^{1}(\omega)
\bigr\vvvert_{[0,t_{1}],w,p}
\\
&\hspace{5pt}+ \gamma^2 \zeta(\omega) \sum_{k=1}^{n} \Bigl( \frac{ \gamma}{4 L}\Bigr)^{{n}-k}  \sum_{i=1}^k \Bigl(\frac{3 \gamma}{4L} \Bigr)^{k+1-i} \Big\langle \vvvert \Delta X^{i-1}(\cdot) \bigr\vvvert_{[0,T],w,p}\Big\rangle_{8}   \\
&\hspace{5pt}+ \gamma^2 \zeta(\omega) \sum_{k=1}^n  \Bigl( \frac{\gamma}{4L} \Bigr)^{n+1-k} \Bigl\langle \vvvert \Delta X^{k-1}(\cdot) \bigr\vvvert_{[0,T],w,p} \Big\rangle_{8} + \gamma^2 \zeta(\omega) \Bigl( \frac{\gamma}{4 L} \Bigr)^{n}\, \bigl\vvvert X^{1}(\omega)\bigr\vvvert_{{[t_{1},t_{2}]},w,p}   .
\end{split}
\end{equation*}
Hence we have
\begin{equation*}
\begin{split}
&\bigl\vvvert \Delta X^{n}(\omega) \bigr\vvvert_{[0,t_{2}],w,p} \leq \gamma^2 \zeta(\omega) \, \Bigl( \frac{3 \gamma}{4 L} \Bigr)^{n} \Bigl( 1  +  \sum_{k=1}^{n+1} \bigl( \frac{1}{3} \bigr)^{n+1-k} \Bigr)\,\bigl\vvvert X^{1}(\omega) \bigr\vvvert_{[0,t_{2}],w,p}   \\
&\quad+ \gamma^2 \zeta(\omega)   
\Bigl(\frac{\gamma}{4L} \Bigr)^{{n}}
\sum_{i=1}^{n} \Bigl( \frac{3\gamma}{4L} \Bigr)^{1- i} \Bigl\langle 
\vvvert \Delta X^{i-1}(\cdot) \vvvert_{[0,T],w,p} \Bigr\rangle_{8} \sum_{k=i}^n 3^k   \\
&\quad+ \gamma^2 \zeta(\omega) \sum_{k=1}^n  \Bigl( \frac{\gamma}{4L} \Bigr)^{n+1-k} \Bigl\langle \vvvert 
\Delta X^{k-1}(\cdot) \Bigr\vvvert_{[0,T],w,p} \Big\rangle_{8}.
\end{split}
\end{equation*}
Therefore, using the bound $\sum_{k=i}^n 3^k \leq 3^{n+1}/2$, we deduce
\begin{equation*}
\begin{split}
\bigl\vvvert \Delta X^{n}(\omega) \bigr\vvvert_{[0,t_{2}],w,p}   
&\leq  3\gamma^2 \zeta(\omega) \Bigl( \frac{3 \gamma}{4 L} \Bigr)^{n} \, \bigl\vvvert X^{1}(\omega) \bigr\vvvert_{[0,t_{2}],w,p}  
 \\
&\quad + 3 \gamma^2 \zeta(\omega)  \sum_{i=1}^{n} \Bigl( \frac{3\gamma}{4L} \Bigr)^{n+1-i} \Bigl\langle \vvvert \Delta X^{i-1}(\cdot) \vvvert_{[0,T],w,p} \Bigr\rangle_{8}.
\end{split}
\end{equation*}
We here {assume that $L$ is chosen} big enough to have $3 \gamma<4L$. The above inequality may be summed up into
\begin{equation*}
\begin{split}
\bigl\vvvert \Delta X^{n}(\omega) \bigr\vvvert_{[0,t_{2}],w,p}
&\leq  c_{2}(\omega) \Bigl( \frac{3 \gamma}{4 L} \Bigr)^{n} \, \bigl\vvvert X^{1}(\omega) \bigr\vvvert_{[0,t_{2}],w,p}   \\
&\quad + c_{2}(\omega)  \sum_{i=1}^{n} \Bigl( \frac{3\gamma}{4L} \Bigr)^{n+1-i} \, \Bigl\langle \vvvert \Delta X^{i-1}(\cdot) \vvvert_{[0,T],w,p}
\Bigr\rangle_{8},
\end{split}
\end{equation*}
where $c_{2}(\omega) \textcolor{black}{:}= 3 \gamma^2 \zeta(\omega)$. 
Set \textcolor{black}{now}
$c_{i}(\omega) := \bigl(3 \gamma^2 \zeta(\omega)\bigr)^{\textcolor{black}{i-1}}$. 

Comparing the previous estimate of $\bigl\vvvert \Delta X^{n}(\omega) \bigr\vvvert_{[0,t_{2}],w,p}$ with \eqref{eq:contraction:t1} and iterating over the time index $t_{i}$ \textcolor{black}{from the conclusion of the first step}, we obtain, as long as $t_{i} \leq T$, 
\begin{equation*}
\begin{split}
\bigl\vvvert \Delta X^{n}(\omega) \bigr\vvvert_{[0,t_{i}],w,p}
&\leq  c_{i}(\omega) \, \Bigl( \frac{3 \gamma}{4 L} \Bigr)^{n} \, \bigl\vvvert X^{1}(\omega)\bigr\vvvert_{[0,t_{i}],w,p}   \\
&\quad + c_{i}(\omega)  \sum_{k=1}^{n} \Bigl( \frac{3\gamma}{4L} \Bigr)^{n+1-k} \, \Bigl\langle  \vvvert \Delta X^{{k-1}}(\cdot) \vvvert_{[0,T],w,p}
\Bigr\rangle_{8}.
\end{split}
\end{equation*}

\textbf{\textbf{Step 3.}} 
Noting 
that we can take  $N$ in 
Theorem \ref{thm:fixed:1}
less than ${N}\bigl([0,T],\omega,1/(4L_{0})\bigr) + 
{N}\bigl([0,T],\omega,1/(4L)\bigr) \leq 2 
N \bigl([0,T],\omega,1/(4L)\bigr)$, see definition \eqref{eq:N:s:t:omega}, we deduce that
\begin{equation}
\label{eq:contraction:proof:10}
\begin{split}
&\bigl\vvvert \Delta X^{n}(\omega) \bigr\vvvert_{[0,T],w,p}  \leq  \Bigl(3 \gamma^2 \zeta(\omega)\Bigr)^{\textcolor{black}{2N(\omega,1/(4L))}} \,\Bigl( \frac{3 \gamma}{4 L} \Bigr)^{n}\, \bigl\vvvert X^{1}(\omega)\bigr\vvvert_{[0,T],w,p}   \\
&\quad + \Bigl(3 \gamma^2 \zeta(\omega)\Bigr)^{\textcolor{black}{2N(\omega,1/(4L))}}  \sum_{k=1}^{n} \Bigl( \frac{3\gamma}{4L} \Bigr)^{n+1-k}
\Bigl\langle \vvvert \Delta X^{k-1}(\cdot) \vvvert_{[0,T],w,p} \Bigr\rangle_{8},
\end{split}
\end{equation}
where we let $N\big(\omega,1/(4L)\big) := N\big(\textcolor{black}{[0,T],}\omega,1/(4L)\big)$. It follows from the assumed tail behaviour of $N\big(\cdot,1/(4L)\big)$ and $w(0,T,\cdot)$ that we have, for $a > 1$ and any integer $k$ the upper bound
\begin{equation}
\label{eq:wibull:exp}
\begin{split}
\PP \Bigl( \bigl\{ \omega \in \Omega : \zeta^{2N(\omega,1/(4L))}(\omega) \geq a \bigr\} \Bigr) &\leq \PP \bigl( N(\cdot,\textcolor{black}{1/(4L)})\geq k\bigr) + \PP \bigl( \zeta^2 \geq a^{1/k} \bigr)   \\
&\leq \textcolor{black}{c} \exp \bigl( - k^{1+\varepsilon_2} \bigr) + c \exp \left( - {a^{\varepsilon_1 p/(4k)}} \right),
\end{split}
\end{equation}
for a constant $c \geq 1$ depending on $L$ and with $\varepsilon_{2}=\varepsilon_{2}(1/(4L))$.
{In order to derive the last term right above, we used Markov inequality together with 
the fact that ${\mathbb E}[ \exp ( \zeta^{\varepsilon_{1} p/2})]$
is bounded by a constant depending on $c_{1}$, $\epsilon_{1}$ and $L$.}
 For $k= (\ln a)^{1/(1+\textcolor{black}{\varepsilon_{2}}/2)}$,  
\begin{equation*}
\forall \ell \in {\mathbb N} \setminus \{0\}, \quad \PP \Bigl( \Bigl\{ \omega \in \Omega : \zeta^{2N(\omega,1/(4L))}(\omega) \geq a \Bigr\} \Bigr) \leq \textcolor{black}{C_{\ell}} a^{-\ell}, 
\end{equation*}
\textcolor{black}{for a constant $C_{\ell}$ depending on $\ell$}, from which we deduce that 
$\bigl\langle  \bigl( 3 \gamma^2 \zeta \bigr)^{2N(\cdot,1/(4L))} \bigr\rangle_{16} <\infty$.

Set \textcolor{black}{now} 
$
A := ( 3 \gamma^2 \zeta)^{\textcolor{black}{2N(\cdot,1/(4L)}}.$ 
Importantly, $A$ depends on the time horizon $T$ through {$\zeta$} and $N(\cdot,1/(4L))$ \textcolor{black}{(and this on $L$ as well)}. \textcolor{black}{In order to emphasize the dependance upon the time argument, we expand the notation and} write 
$
A_{T} := ( 3 {\gamma^2} \zeta_{T})^{2N([0,T],\textcolor{black}{\cdot},1/(4L))}$.

Clearly, 
$
A_{S} \leq ( 3 {\gamma^2} \zeta_{T})^{2N([0,S],\textcolor{black}{\cdot},1/(4L))}$,  
since ${\gamma}$ and $\zeta_{T}$ are greater than $1$. Since the term $N\big([0,S],\textcolor{black}{\cdot},1/(4L)\big)$ tends to $0$ with $S$, we have
$\lim_{S \searrow 0} \, \bigl\langle \big(3 {\gamma^2} \zeta_{T}\big)^{2N([0,S],\textcolor{black}{\cdot},1/(4L))} \bigr\rangle_{16} = 1$,
so
$\lim_{S \searrow 0} \bigl\langle A_{S} \bigr\rangle_{16} = 1$.
Hence, taking the $\LL^{8}$ norm in \eqref{eq:contraction:proof:10} with $T$ replaced by $S$, 
\begin{equation*}
\begin{split}
\Bigl\langle \vvvert \Delta X^{n}(\cdot) \vvvert_{[0,S],w,p}  \Bigr\rangle_{8}   
&\leq \bigl(1+\delta(S)\bigr) \Bigl( \frac{3\gamma}{4L} \Bigr)^{n}
\Bigl\langle \big\vvvert X^{1}(\cdot) \big\vvvert_{[0,S],w,p} \Bigr\rangle_{{16}}   \\
&\quad+ \bigl(1+\delta(S)\bigr)  \sum_{i=1}^{n} \Bigl( \frac{3\gamma}{4L}\Bigr)^{n+1-i} \Bigl\langle \big\vvvert \Delta X^{i-1}(\cdot) \big\vvvert_{[0,S],w,p} \Bigr\rangle_{8}   \\
&= \bigl(1+\delta(S)\bigr) \Bigl( \frac{3\gamma}{4L} \Bigr)^{n} \Bigl\langle \big\vvvert X^{1}(\cdot) \big\vvvert_{[0,S],w,p} \Bigr\rangle_{{16}}   \\
&\quad+ \bigl(1+\delta(S)\bigr)  \sum_{i=0}^{n-1} \Bigl( \frac{3\gamma}{4L}\Bigr)^{n-i} \Bigl\langle \big\vvvert \Delta X^{i}(\cdot) \big\vvvert_{[0,S],w,p} \Bigr\rangle_{8},
\end{split}
\end{equation*}
where {$\delta(S)>0$} tends to $0$ with $S$. 
So, we have
 \begin{equation*}
\begin{split}
&\sum_{k=0}^{n} \Bigl( \frac{3\gamma}{4L} \Bigr)^{(n-k)/2} \Bigl\langle \big\vvvert \Delta X^{k}(\cdot) \big\vvvert_{[0,S],w,p} \Bigr\rangle_{8}   \\
&\leq \bigl( 1 + \delta(S) \bigr)  \sum_{k=0}^{n} \Bigl( \frac{3\gamma}{4L}\Bigr)^{(n-k)/2} \Bigl( \frac{3\gamma}{4L}\Bigr)^{k} \Bigl\langle \big\vvvert X^{1}(\cdot) \big\vvvert_{[0,S],w,p} \Bigr\rangle_{{16}}   \\
&\quad+ \bigl(1+\delta(S)\bigr)  \sum_{i=0}^{n-1} \Bigl( \frac{3\gamma}{4L}\Bigr)^{(n-i)/2} \Bigl\langle \big\vvvert \Delta X^{\textcolor{black}{i}}(\cdot) \big\vvvert_{[0,S],w,p}
\Bigr\rangle_{8} \sum_{k=i+1}^{n}  \Bigl(\frac{3\gamma}{4L} \Bigr)^{(k-i)/2}   \\
&\le \bigl( 1 + \delta(S) \bigr)  \Bigl( \frac{3\gamma}{4L}\Bigr)^{n/2} \sum_{k=0}^{n}  \Bigl(\frac{3\gamma}{4L}\Bigr)^{k/2}
 \Bigl\langle \big\vvvert X^{1}(\cdot) \big\vvvert_{[0,S],w,p} \Bigr\rangle_{{16}}   \\
&\quad+ \frac{1+\delta(S) }{1-\sqrt{3\gamma/(4L)}} \, \Bigl(\frac{3\gamma}{4L} \Bigr)^{1/2} \sum_{i=0}^{n}  \Bigl(\frac{3\gamma}{4L} \Bigr)^{(n-i)/2}  \Bigl\langle \big\vvvert \Delta X^{i}(\cdot) \big\vvvert_{[0,S],w,p} \Bigr\rangle_{8}.
\end{split}
\end{equation*}
Assuming that $3\gamma/(4L) \leq 1/16$
and choosing $S$ small enough, we may assume that
\begin{equation}
\label{eq:condition:a}
a := \frac{1+\delta(S)}{1-\sqrt{3\gamma/(4L)}} \Bigl( \frac{3\gamma}{4L} \Bigr)^{1/2} < 1,
\end{equation}
we can find a positive constant $C$ such that 
\begin{equation*}
\begin{split}
&\sum_{k=0}^{n} 
\Bigl(\frac{3\gamma}{4L} \Bigr)^{(n-k)/2} \Bigl\langle \big\vvvert \Delta X^{k}(\cdot) \big\vvvert_{[0,S],w,p} \Bigr\rangle_{8}   \\
&\le C
\Bigl( \frac{3\gamma}{4L} \Bigr)^{n/2} \Bigl\langle \big\vvvert X^{1}(\cdot) \big\vvvert_{[0,S],w,p} \Bigr\rangle_{{16}}
+ a \, \sum_{i=0}^{n} \Bigl( \frac{3\gamma}{4L} \Bigr)^{(n-i)/2} \, \Bigl\langle \big\vvvert \Delta X^{i}(\cdot) \big\vvvert_{[0,S],w,p} \Bigr\rangle_{8}.
\end{split}
\end{equation*}
Changing the value of $C$ if necessary, we obtain
\begin{equation*}
\begin{split}
&\sum_{k=0}^{n} \Bigl(\frac{3\gamma}{4L} \Bigr)^{(n-k)/2} \, \Bigl\langle \vvvert \Delta X^{k}(\cdot) \vvvert_{[0,S],w,p}\Bigr\rangle_{8}
\le C\,\Bigl(\frac{3\gamma}{4L} \Bigr)^{n/2} \,\Bigl\langle \big\vvvert X^{1}(\cdot)\big\vvvert_{[0,S],w,p} \Bigr\rangle_{{16}},
\end{split}
\end{equation*}
Using \eqref{eq:contraction:proof:10}, we eventually have, for a new value of $C$,
\begin{equation}
\label{eq:conclusion:preuve:contraction}
\begin{split}
&\bigl\vvvert \Delta X^{n}(\omega) \bigr\vvvert_{[0,S],w,p} 
\leq C  \bigl(3 \gamma^2 \zeta(\omega)\bigr)^{2N([0,T],\omega,1/(4L))}
\\
&\hspace{15pt}
\times 
\Bigl[ \Bigl( \frac{3 \gamma}{4 L} \Bigr)^{n} \, \bigl\vvvert X^{1}(\omega) \bigr\vvvert_{[0,T],w,p}  
+   \Bigl( \frac{3 \gamma}{4 L} \Bigr)^{n/2}  \textcolor{black}{\bigl\langle \vvvert X^{1}(\cdot)\vvvert_{[0,S],w,p} \bigr\rangle_{{16}}}\Bigr] .
\end{split}
\end{equation}
In order to conclude, we notice the following two facts. First, the above estimate remains true if we replace $\bigl\vvvert \Delta X^{n}(\omega) \bigr\vvvert_{[0,S],w,p}$ by $\bigl\vvvert \Delta X^{n}(\omega) \bigr\vvvert_{\star,[0,S],w,p}$ in the left-hand side. Second, Proposition \ref{thm:main:1} guarantees that $\big\langle \vvvert X^{1}(\cdot)\vvvert_{[0,S],w,p}\big\rangle_{{16}}< \infty$. Using a Cauchy like argument, we deduce that, for any $\omega \in \Omega$, the sequence $\big(X^n(\omega),\partial_{x} X^n(\omega),R^{X^n}(\omega)\big)_{n\geq 0}$ is convergent for the norm $\vvvert \cdot \vvvert_{\star,[0,S],w,p}$. Using Proposition \ref{thm:fixed:1}, the limit is a fixed point of $\Gamma$.
\vspace{4pt}

\noindent \textbf{\textbf{Uniqueness --}} Let $\big(X'(\cdot);\delta_{x} X'(\cdot);0\big)$ stand for another fixed point of $\Gamma$, with
$\delta_{x} X'(\omega) = \textrm{F}\bigl(X'(\omega),X'(\cdot)\bigr)$, {for almost every} $\omega \in \Omega$, together with $\langle \vvvert X'(\cdot) \vvvert_{[0,T],w,p} \bigr\rangle_{8} < \infty$. {
In particular, 
we have $\big\langle    \delta_{x} X'(\cdot)   \big\rangle_{\infty} \le \Lambda$.
Allowing} the value of the constant $L_{0}$ to increase, we can assume that
$\bigl\langle \vvvert X'(\cdot) \vvvert_{[0,T],w,p} \bigr\rangle_{8}^2 \leq L_{0}$.
We can also assume that, {for 
${\mathbb P}$-a.e. $\omega$,} 
$\big\vvvert X'(\omega) \big\vvvert_{[t_{i}^0,t_{i+1}^0],w,p}^2 \leq {L_{0}}$, {
with 
$\bigl(t_{i}^0\bigr)_{i=0,\cdots,N^0+1}$
as in 
the statement of Proposition \ref{thm:fixed:1}. 
The proof of
the latter claim is as follows: For a given 
$\omega$ such that $\vert \delta_{x} X'(\omega) \vert \leq \Lambda$ and for 
a given $i \in \{0,\cdots,N^0\}$, 
call $t_{i+1}'$ the first time when $\vvvert X(\omega) \vvvert_{[t_{i}^0,t_{i+1}'],w,p}^2 = L_{0}$. 
If $t_{i+1}' < t_{i+1}^0$, then 
\eqref{eq:starstarstar}
gives
$L_{0} \leq \vvvert X(\omega) \vvvert_{[t_{i}^0,t_{i+1}'],w,p}^2 \leq \gamma + C_{\Lambda,\Lambda} (2 L_{0}+1)/(4L_{0})$, 
which is indeed impossible if $L_{0}$ is large enough.  }

Therefore, we can 
{apply
Proposition  
\ref{thm:fixed:1}
in order to compare $X$ and $X'$ and then}
duplicate the analysis of the convergence sequence, replacing $\Delta X^{n}$ by $\Delta X:=X-X'$. \textcolor{black}{Similar} to \eqref{eq:contraction:proof:10} ({recalling that $X^1$ therein is understood as $\Delta X^0$}), $\bigl\vvvert \Delta X(\omega) \bigr\vvvert_{[0,T],w,p}$ is bounded above by
\begin{equation*}
\begin{split}
&\Bigl(3 \gamma^2 \zeta(\omega)\Bigr)^{2N(\omega,1/(4L))}
\biggl[ \Bigl( \frac{3 \gamma}{4 L} \Bigr)^{n} \bigl\vvvert  \Delta X(\omega) \bigr\vvvert_{[0,T],w,p}   + 
  \sum_{i=1}^{n} \Bigl( \frac{3\gamma}{4L} \Bigr)^{n+1-i} \bigl\langle  \vvvert  \Delta X(\cdot) \vvvert_{[0,T],w,p} \bigr\rangle_{8} \biggr].
\end{split}
\end{equation*}
Letting $n$ tend to $\infty$, this yields
\begin{equation*}
\begin{split}
&\bigl\vvvert \Delta X(\omega) \bigr\vvvert_{[0,T],w,p}
\hspace{15pt}\leq  \bigl(3 \gamma^2 \zeta(\omega)\bigr)^{2N(\omega,1/(4L))} \frac{3\gamma/(4L)}{1 - 3\gamma/(4L)}\, \Bigl\langle \vvvert \Delta X(\cdot) \vvvert_{[0,T],w,p} \Bigr\rangle_{8}.
\end{split}
\end{equation*}
Taking the $\LL^8$ norm, {replacing $T$ by $S$ as in the third step and recalling from 
\eqref{eq:condition:a}
that $\frac{\sqrt{3\gamma/(4L)}}{1-\sqrt{3\gamma/(4L)}} 
\bigl\langle \big(3 {\gamma^2} \zeta_{T}\big)^{2N([0,S],\textcolor{black}{\cdot},1/(4L))} \bigr\rangle_{16}
<1$}, we get uniqueness in small time.
\end{proof}

\noindent \textit{Application to the proof of Theorem \ref{ThmMain}.} Applying iteratively Theorem \ref{main:thm:existence:small:time} along a sequence $(S_{0}=0,\cdots,S_{\ell}=T)$ (shifting in an obvious way $[0,S_{1}]$ into 
$[S_{1},S_{2}]$, $\cdots$) satisfying 
\begin{equation*}
\begin{split}
&\Bigl\langle N([S_{j-1},S_{j}],\cdot,1/(4L_{0}))   \Bigr\rangle_{8}^{2(p-1)/p} \leq {1}, 
\\
\textrm{\rm and}
\qquad &\Bigl\langle \Bigl[ \gamma (1+ w(0,T,\cdot)^{1/p}) \Bigr]^{N([S_{j-1},S_{j}],\cdot,1/(4L))} \Bigr\rangle_{32} \textcolor{black}{\leq \eta},
\end{split}
\end{equation*}
we get {existence and} uniqueness on the whole interval $[0,T]$. 
{We notice that, at each node
$(S_{j})_{j=1,\cdots,\ell}$
 of the subdivision, $\langle X_{S_{j}}(\cdot) \rangle_{2} \leq 
 \langle X_{S_{j-1}}(\cdot) \rangle_{2}
 +
2 \langle \vvvert X \vvvert_{[S_{j-1},S_{j}],w,p} \rangle_{4}
\langle w(0,T,\cdot) \rangle_{4}$, which is finite by a straightforward induction.
By sticking the paths constructed on each subinterval of the subdivision, we indeed 
obtain a random controlled path on the entire $[0,T]$. }
This is Theorem \ref{ThmMain}.
{Importantly, uniqueness holds whatever the choice of $w$ in 
\eqref{eq:w:s:t:omega:ineq}
and
\eqref{eq:useful:inequality:wT}: If $X$ and $X'$ are two solutions, driven by different $w$ and $w'$, 
then we may easily work with $w+w'$, which also satisfies 
\eqref{eq:w:s:t:omega:ineq}
and
\eqref{eq:useful:inequality:wT}.
The control $(w+w')^{1/p}$ also satisfies 
\eqref{EqTailAssumptions}, see for instance 
\eqref{eq:N1:N2} for a simple bound on the local accumulation associated to the sum of 
two different controls $w$ and $w'$. 
}

\section{Uniqueness and Convergence in Law}
\label{weak}

\subsection{Uniqueness in Law on Strong Rough Set-Ups}
\label{subse:geom}

Since the solution given by Theorem \ref{main:thm:existence:small:time} is constructed by Picard iteration on each interval $[S_{j-1},S_{j}]$, for $j=1,\cdots, \ell$, we should expect its law to be somehow independent of the probability space used to build the rough set-up ${\boldsymbol W}$.  Recall indeed from \eqref{eq:remainder:integral} the following expansion, which 
 holds true for 
{any rank $n$ in the Picard iteration 
 \eqref{eq:Picard:sequence}}
 and
for any subdivision $0=t_{0}<\cdots<t_{K}=T$, 
\begin{equation}
\label{exp:measurability}
\begin{split}
&X_{t_{i}}^{n+1}(\omega) 
= X_{0}(\omega) + \sum_{j=1}^i \textrm{\rm F}\bigl(X_{t_{j-1}}^n(\omega),X_{t_{j-1}}^n(\cdot)\bigr) W_{t_{j-1},t_{j}}
(\omega)
\\
&\hspace{15pt} + \sum_{j=1}^i 
\partial_{x} \textrm{\rm F}\bigl(X_{t_{j-1}}^n(\omega),X_{t_{j-1}}^n(\cdot)\bigr)
\Bigl( 
\textrm{\rm F}\bigl(X_{t_{j-1}}^n(\omega),X_{t_{j-1}}^n(\cdot)\bigr)
 \WW_{t_{j-1},t_{j}}(\omega)\Bigr)
\\
&\hspace{15pt} + \sum_{j=1}^i \Bigl\langle D_{\mu} \textrm{\rm F}\bigl(X_{t_{j-1}}^n(\omega),X_{t_{j-1}}^n(\cdot)\bigr)\bigl(
X_{t_{j-1}}^n(\cdot)\bigr)
\Bigl(
\textrm{\rm F}\bigl(X_{t_{j-1}}^n(\cdot),X_{t_{j-1}}^n(\cdot)\bigr)
\WW^{\indep}_{t_{j-1},t_{j}}(\cdot,\omega) \Bigr) \Bigr\rangle
\\
&\hspace{15pt} +\sum_{j=1}^i S_{t_{j-1},t_{j}}^{n+1}(\omega);
\end{split}
\end{equation}
the last term converging to $0$ as the step size of the subdivision tends to $0$. 
{In the second line, the matrix product 
$\partial_{x} \textrm{\rm F}\bigl(X^n_{s}(\omega),X_{s}^n(\cdot)\bigr)
\bigl( \textrm{\rm F}\bigl( X^n_{s}(\omega),X^n_{s}(\cdot)\bigr) 
{\mathbb W}_{s,t}(\omega) \bigr)$
should be understood 
as $\bigl(\sum_{\ell=1}^d \sum_{j,k=1}^m \partial_{x_{\ell}}
{\rm F}^{i,j}\bigl(X^n_{s}(\omega),X^n_{s}(\cdot) \bigr) 
\bigl( \textrm{\rm F}^{\ell,k}
\bigl(X^n_{s}(\omega),X^n_{s}(\cdot) \bigr) {\mathbb W}_{s,t}^{k,j}(\omega)
\bigr)\bigr)_{i=1,\cdots,d}$ and similarly for the term on the third line.
Our guess is that the above expansion should permit to identify the law of $X^{n+1}$
and, passing to the limit, to express in a somewhat canonical manner the law of the solution of the mean field rough equation in terms of the law of the rough set-up.} 

However, although it seems to be a relevant concept in our context, uniqueness in law requires some care as the rough set-up explicitly depends upon the underlying probability space $(\Omega,{\mathcal F},\PP)$; recall indeed that the random variables $\Omega \ni \omega \mapsto {\mathbb W}^{\indep}(\omega,\cdot)$ and $\Omega \ni \omega \mapsto {\mathbb W}^{\indep}(\cdot,\omega)$ are not only defined on $(\Omega,{\mathcal F},\PP)$ but also take values in $\LL^q(\Omega,{\mathcal F},\PP;\RR^m)$. The fact that the arrival spaces of both random variables  explicitly depend upon the probability space is a serious drawback to get a form of weak uniqueness. It is thus relevant to identify the canonical information in the rough set-up that is needed to determine the law of the solution. Somehow, we encountered a similar problem in the example of a rough set-up given by Proposition \ref{prop:example:construction}. The difficulty therein is indeed to reconstruct the iterated integral ${\mathbb W}^{\indep}(\omega',\omega)$ from the observation of $W(\omega)$, $W(\omega')$ and ${\mathbb W}(\omega)$; in the proof of Proposition \ref{prop:example:construction}, this is made at the price of an extra source of randomness. Interestingly, things become trivial when ${\mathbb W}^{\indep}(\omega',\omega)$ can be (almost surely) written as the image of $\big(W(\omega),W(\omega')\big)$ by a measurable function. 
Fortunately, all the examples we {may} have {in mind in practice} enter in fact this simpler setting.
For instance, both Examples \ref{ex:brownian} and \ref{example:3} fall within this case. More generally, in the framework of Proposition \ref{prop:example:construction}, we can write $W^{2,1}$ as the almost sure image of $\big(W^1,W^2\big)$ by a measurable function from ${\mathcal C}\big([0,T];\RR^m\big)^2$ into ${\mathcal C}\big({\mathcal S}_{2}^T;\RR^m \otimes \RR^m\big)$, when, for a.e. $\xi \in \Xi$, the quantity $W^{2,1}(\xi)$ can be approximated by the iterated integral of mollified versions of $W^1(\xi)$ and $W^2(\xi)$, provided the mollification procedure defines a measurable map from ${\mathcal C}([0,T];\RR^m)$ into itself. 
The following proposition makes it clear.

\begin{prop}
\label{prop:strong}
Within the framework of \textrm{\rm Proposition} \ref{prop:example:construction},  define, for $1\leq i\leq 2$ and $n\ge 0$, the linear interpolation $W^{i,n}$ of $W^i$ at dyadic points $\big(t_{n}^k = kT/2^n\big)_{k=0,\cdots,2^n-1}$ of $[0,T]$:
\begin{equation*}
W^{i,n}_t(\xi)
:= 
\sum_{k=0}^{2^n-1} 
\left(
W^i_{t_{n}^k}(\xi)
+
W^i_{t_{n}^{k},t_{n}^{k+1}}(\xi)
\frac{2^{n} (t-t_{n}^k)}{T}
\right)
{\mathbf 1}_{[t_{n}^k,t_{n}^{k+1})}(t).
\end{equation*}
If for ${\mathbb Q}$-a.e. $\xi \in \Xi$, for all $(s,t) \in {\mathcal S}_{2}^T$, 
\begin{equation*}
W^{2,1}_{s,t}(\xi) = \lim_{n \rightarrow \infty}
\int_{s,t} 
\Bigl( W^{2,n}_{r}(\xi) - W^{2,n}_{s}(\xi) \Bigr) \otimes d W^{1,n}_{r}(\xi),
\end{equation*}
then there exists a measurable function ${\mathcal I}$ from ${\mathcal C}([0,T];\RR^m)^2$ into ${\mathcal C}\big({\mathcal S}_{2}^T;\RR^m \otimes \RR^m\big)$ such that 
\begin{equation*}
{\mathbb Q} \Bigl( \Bigl\{ \xi \in \Xi : W^{2,1}(\xi) = {\mathcal I}\bigl(W^2(\xi),W^1(\xi)\bigr) \Bigr\} \Bigr) = 1.
\end{equation*}
\end{prop}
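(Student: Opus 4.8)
The plan is to show that the almost-sure limit displayed in the statement exhibits $W^{2,1}$ as a measurable function of the pair $\bigl(W^2,W^1\bigr)$, exploiting the fact that linear interpolation at dyadic points is a measurable (indeed continuous) operation on path space. First I would fix, for each $n\ge 0$, the map $L_n : {\mathcal C}([0,T];\RR^m) \to {\mathcal C}([0,T];\RR^m)$ sending a path $w$ to its dyadic piecewise-linear interpolant $L_n(w)$ with nodes $(t_n^k)_{k}$. Since $L_n(w)$ is determined by the finitely many values $w(t_n^0),\dots,w(t_n^{2^n})$ and depends continuously on them in the sup-norm, $L_n$ is continuous, hence Borel measurable. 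Next I would define, for each $n$, the bilinear map
\begin{equation*}
{\mathcal J}_n(v,w) := \left( \int_{s}^{t} \bigl( L_n(v)_r - L_n(v)_s \bigr) \otimes d L_n(w)_r \right)_{(s,t)\in{\mathcal S}_2^T},
\end{equation*}
which takes values in ${\mathcal C}\bigl({\mathcal S}_2^T;\RR^m\otimes\RR^m\bigr)$. Because $L_n(v)$ and $L_n(w)$ are piecewise linear, this Riemann--Stieltjes integral is given by an explicit finite sum of products of increments over dyadic subintervals, so $(v,w)\mapsto {\mathcal J}_n(v,w)$ is continuous, hence Borel measurable, from ${\mathcal C}([0,T];\RR^m)^2$ into ${\mathcal C}({\mathcal S}_2^T;\RR^m\otimes\RR^m)$.

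Then I would introduce the set $A$ of pairs $(v,w)$ for which the sequence $\bigl({\mathcal J}_n(v,w)\bigr)_n$ converges in ${\mathcal C}({\mathcal S}_2^T;\RR^m\otimes\RR^m)$ (equipped with the topology of uniform convergence on compacts, which is Polish). Since pointwise limits of measurable functions into a metric space are measurable on the set where they converge, and the set $A$ itself is Borel (it can be written in terms of Cauchy conditions $\sup_{(s,t)}|{\mathcal J}_n - {\mathcal J}_m| \le 1/k$ through countable intersections and unions), the map
\begin{equation*}
{\mathcal I}(v,w) := \begin{cases} \lim_{n\to\infty} {\mathcal J}_n(v,w), & (v,w)\in A,\\ 0, & (v,w)\notin A, \end{cases}
\end{equation*}
is Borel measurable from ${\mathcal C}([0,T];\RR^m)^2$ into ${\mathcal C}({\mathcal S}_2^T;\RR^m\otimes\RR^m)$. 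Note the order of the arguments: in the statement the interpolant of $W^2$ appears in the ``outer'' factor and the interpolant of $W^1$ in the ``differential'', so I set ${\mathcal I}$ accordingly, i.e. ${\mathcal J}_n$ is evaluated at $(v,w)=(W^2(\xi),W^1(\xi))$.

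Finally I would conclude by combining measurability with the hypothesis. By assumption, for ${\mathbb Q}$-a.e. $\xi\in\Xi$ and all $(s,t)\in{\mathcal S}_2^T$ one has $W^{2,1}_{s,t}(\xi) = \lim_{n\to\infty}\int_s^t \bigl(W^{2,n}_r(\xi)-W^{2,n}_s(\xi)\bigr)\otimes dW^{1,n}_r(\xi) = \lim_{n\to\infty}\bigl({\mathcal J}_n(W^2(\xi),W^1(\xi))\bigr)_{s,t}$, so for such $\xi$ the pair $(W^2(\xi),W^1(\xi))$ lies in $A$ and ${\mathcal I}\bigl(W^2(\xi),W^1(\xi)\bigr) = W^{2,1}(\xi)$ as elements of ${\mathcal C}({\mathcal S}_2^T;\RR^m\otimes\RR^m)$; hence ${\mathbb Q}\bigl(\{\xi : W^{2,1}(\xi) = {\mathcal I}(W^2(\xi),W^1(\xi))\}\bigr)=1$. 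The only mildly delicate point — the ``main obstacle'' such as it is — is checking that the set $A$ of convergence is genuinely Borel and that the pointwise limit construction yields a measurable map into the infinite-dimensional target ${\mathcal C}({\mathcal S}_2^T;\RR^m\otimes\RR^m)$; this is handled by recalling that this space is separable and completely metrizable, so that measurability is stable under pointwise limits and the Cauchy sets defining $A$ are Borel. Everything else is the routine observation that dyadic interpolation and integration of piecewise-linear paths are continuous operations.
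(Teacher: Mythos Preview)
The paper gives no explicit proof of this proposition; it is stated and immediately followed by commentary, so the argument is treated as routine. Your approach---dyadic interpolation is continuous on path space, the Riemann--Stieltjes integral of piecewise-linear paths is a continuous (hence measurable) functional, and a pointwise limit of measurable maps into a Polish space is measurable on the Borel set where the limit exists---is exactly the intended one.

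There is one small technical slip worth flagging. You define $A$ as the set of pairs $(v,w)$ for which $\mathcal{J}_n(v,w)$ converges \emph{in} $\mathcal{C}(\mathcal{S}_2^T;\RR^m\otimes\RR^m)$, i.e.\ uniformly in $(s,t)$, and then claim that the hypothesis places $(W^2(\xi),W^1(\xi))$ in $A$ for a.e.\ $\xi$. But the hypothesis only asserts convergence of $\mathcal{J}_n(W^2(\xi),W^1(\xi))_{s,t}$ for each fixed $(s,t)\in\mathcal{S}_2^T$; pointwise convergence of continuous functions on a compact set to a continuous limit does not in general imply uniform convergence, so membership in your $A$ is not immediate. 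The fix is painless: replace $A$ by the Borel set of pairs $(v,w)$ for which $\mathcal{J}_n(v,w)_{s,t}$ converges for every $(s,t)$ in a fixed countable dense subset $D\subset\mathcal{S}_2^T$ and the resulting limit function on $D$ is uniformly continuous (both conditions are countable intersections/unions of Borel sets). On this set define $\mathcal{I}(v,w)$ as the unique continuous extension to $\mathcal{S}_2^T$; measurability into $\mathcal{C}(\mathcal{S}_2^T;\RR^m\otimes\RR^m)$ follows because the Borel $\sigma$-field on that space is generated by evaluations at points of $D$. The hypothesis (pointwise convergence for \emph{all} $(s,t)$ to the continuous function $W^{2,1}(\xi)$) then clearly forces $(W^2(\xi),W^1(\xi))$ into this modified $A$ with $\mathcal{I}(W^2(\xi),W^1(\xi))=W^{2,1}(\xi)$, and the conclusion follows.
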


The scope of Proposition \ref{prop:strong} is limited to so-called \textit{geometric rough paths}, but the underlying principle is actually more general. This prompts us to introduce the following definition.

\begin{defn}
\label{def:strong}
A \textbf{\textbf{rough set-up}}, as defined in Section \ref{SectionRoughStructure}, is called \textbf{\textbf{strong}} if there exists a measurable mapping ${\mathcal I}$ from ${\mathcal C}\big([0,T];\RR^m\big)^{2}$ into ${\mathcal C}\big({\mathcal S}_{2}^T;\RR^m \otimes \RR^m\big)$ such that
\begin{equation}
\label{eq:representation}
\PP^{\otimes 2} \Bigl( \bigl\{ (\omega,\omega') \in \Omega^2 : {\mathbb W}^{\indep}(\omega,\omega')={\mathcal I}\bigl(W(\omega),W(\omega')\bigr) \bigr\} \Bigr) = 1.
\end{equation}
\end{defn}

So, Proposition \ref{prop:strong} provides a typical instance of strong set-up, which covers in particular  Examples \ref{ex:brownian} and \ref{example:3}. However, it is worth mentioning that strong set-ups may not fall within the scope of Proposition \ref{prop:strong}, since the latter is limited to geometric rough paths, see footnote\footnote{\label{foo:weak} A trivial example of rough set-up is given by the collection of real-valued rough paths
$W^1(\xi) = W^2(\xi) \equiv 0$,
$W^{1,1}(\xi) \equiv 0$, 
$W^{2,1}_{s,t}(\xi) = a(\xi) (t-s)$, 
$(s,t) \in {\mathcal S}_{2}^T$, 
for $\xi$ in a probability space $(\Xi,{\mathcal G},{\mathbb Q})$, where $a$ is a real-valued random variable on $(\Xi,{\mathcal G},{\mathbb Q})$. 
If $a$ is deterministic and non-zero, the set-up is strong but is not geometric. 
If the support of $a$ does not reduce to one point, then the set-up induced by $\big(W^1(\cdot),W^2(\cdot),W^{1,1}(\cdot),W^{2,1}(\cdot)\big)$ is {not strong}.}.

Proposition \ref{prop:example:construction} sheds a light on the rationale for the word \textit{strong} in Definition \ref{def:strong}. Here \textit{strong} has the same meaning as in the theory of strong solutions to stochastic differential equations: The second level $W^{2,1}$ of the rough-path is a measurable function of $(W^2,W^1)$. In contrast, the general set-up considered in the statement of Proposition  \ref{prop:example:construction} may not be strong as $W^{2,1}$ may carry, in addition to $(W^1,W^2)$, an additional external independent randomization. If this additional randomization is not trivial, the set-up should be called \textit{weak}, see again
footnote${}^{\ref{foo:weak}}$ {for a typical instance}. 
 Also, we refer the reader to Deuschel and al. \cite{FrizMaurelli} for a related use of the notion of strong set-up, although the terminology \textit{strong} does not appear therein.

We now have all the ingredients to formulate a weak uniqueness property.


\begin{thm}
Let 
$X_{0}(\cdot):= \bigl(X_{0}(\omega)\bigr)_{\omega \in \Omega}$, 
$X_{0}'(\cdot):= \bigl(X_{0}'(\omega)\bigr)_{\omega \in \Omega'}$
and
\begin{equation*}
\begin{split}
&{\boldsymbol W}(\cdot):=\bigl(W(\omega),{\mathbb W}(\omega),{\mathbb W}^{\indep}(\omega,\omega')\bigr)_{\omega \in \Omega,\omega' \in \Omega},   \\
&{\boldsymbol W}'(\cdot):=\bigl(W'(\omega),{\mathbb W}'(\omega),{\mathbb W}^{\indep,\prime}(\omega,\omega')\bigr)_{\omega \in \Omega',\omega' \in \Omega'},
\end{split}
\end{equation*} 
be two square integrable initial conditions and two strong rough set-ups with the same parameters $m$, $p$ and $q$, defined on two probability spaces $(\Omega,{\mathcal F},\PP)$ and $(\Omega',{\mathcal F}',{\mathbb P}')$, such that the random variables
\begin{equation*}
\begin{split}
&\Omega^2 \ni (\omega,\omega') \mapsto \bigl(X_{0}(\omega), W(\omega),{\mathbb W}(\omega),W^{\indep}(\omega,\omega')\bigr),   \\
&(\Omega')^2 \ni (\omega,\omega') \mapsto \bigl(X_{0}'(\omega), W'(\omega),{\mathbb W}'(\omega),W^{\indep,\prime}(\omega,\omega')\bigr),
\end{split}
\end{equation*}
have the same law on $\RR^d \times {\mathcal C}([0,T];\RR^m) \times {\mathcal C}({\mathcal S}_{2}^T;\RR^m \otimes \RR^m) \times {\mathcal C}({\mathcal S}_{2}^T;\RR^m \otimes \RR^m)$. Then, the corresponding two solutions $\big(X(\omega)\big)_{\omega \in \Omega}$ and $\big(X'(\omega)\big)_{\omega \in \Omega'}$ to \eqref{EqRDE} have the same law on ${\mathcal C}([0,T];\RR^m)$. 
\end{thm}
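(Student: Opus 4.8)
The plan is to transfer the Picard iteration from one probability space to the other and exploit the strong structure of the rough set-ups. Since each set-up is strong, we have measurable maps $\mathcal{I}$ and $\mathcal{I}'$ from $\mathcal{C}([0,T];\RR^m)^2$ into $\mathcal{C}(\mathcal{S}_2^T;\RR^m\otimes\RR^m)$ with $\WW^{\indep}(\omega,\omega')=\mathcal{I}(W(\omega),W(\omega'))$ $\PP^{\otimes 2}$-a.s.\ and similarly for the primed objects. The assumed equality in law of $(X_0,W,\WW,\WW^{\indep})$ and $(X_0',W',\WW',\WW^{\indep,\prime})$, combined with the almost sure representations, forces $\mathcal{I}$ and $\mathcal{I}'$ to agree on a set of full measure under the common law of $(W(\omega),W(\omega'))$; hence, after replacing $\mathcal{I}'$ by $\mathcal{I}$ if necessary, we may assume both set-ups use \emph{the same} measurable map $\mathcal{I}$. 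The point is then that, for a fixed subdivision, the compensated Riemann sum in \eqref{exp:measurability} expresses $X^{n+1}_{t_i}(\omega)$ as a fixed measurable functional of $X_0(\omega)$, the path $W(\omega)$, the lift $\WW(\omega)$, the law $\cL(X^n(\cdot))$, and the section $\WW^{\indep}(\cdot,\omega)=\mathcal{I}(W(\cdot),W(\omega))$; the same functional applies verbatim on the primed space.

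The core step is an induction on the Picard rank $n$. First I would show that the law of $\big(X_0(\omega),W(\omega),\WW(\omega),X^n(\omega)\big)$ on $\RR^d\times\mathcal{C}([0,T];\RR^m)\times\mathcal{C}(\mathcal{S}_2^T;\RR^m\otimes\RR^m)\times\mathcal{C}([0,T];\RR^d)$ coincides with its primed counterpart. The base case $n=0$ is immediate since $X^0(\omega)\equiv X_0(\omega)$ and $X^{0,\prime}(\omega)\equiv X_0'(\omega)$, and the pairs $(X_0,W,\WW)$ share the same law by hypothesis. For the inductive step, fix a dyadic subdivision of mesh $2^{-K}$ and consider the compensated Riemann sum $\Sigma^{n+1}_K(\omega)$ obtained by dropping the remainder term $\sum_j S^{n+1}_{t_{j-1},t_j}(\omega)$ in \eqref{exp:measurability}. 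This sum is a bona fide measurable function $\Phi_K$ of $\big(X_0(\omega),W(\omega),\WW(\omega)\big)$ together with the law $\cL\big(X_0(\cdot),W(\cdot),\WW(\cdot),X^n(\cdot)\big)$ of the whole frozen family — the dependence on the section $\WW^{\indep}(\cdot,\omega)$ enters precisely through $\mathcal{I}(W(\cdot),W(\omega))$, so that the integration over the "$\cdot$" variable is an integration against this common law. By the induction hypothesis this law is the same on both spaces, hence $\Phi_K$ is literally the same functional; therefore $\big(X_0(\omega),W(\omega),\WW(\omega),\Sigma^{n+1}_K(\omega)\big)$ and its primed analogue have the same law for every $K$. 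Letting $K\to\infty$, Theorem \ref{thm:integral} (more precisely the estimate \eqref{eq:remainder:integral} controlling the remainder uniformly in $\omega$ by $w(0,T,\omega)^{3/p}$, which is $q$-integrable) guarantees that $\Sigma^{n+1}_K(\omega)\to X^{n+1}(\omega)$ in probability, uniformly in time; passing to the limit in law completes the induction.

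Once the laws of $(X_0(\omega),W(\omega),\WW(\omega),X^n(\omega))$ agree for every $n$, it remains to pass to the limit $n\to\infty$. The convergence estimate \eqref{eq:conclusion:preuve:contraction} from the proof of Theorem \ref{main:thm:existence:small:time} shows that $X^n(\omega)\to X(\omega)$ in $\mathcal{C}([0,S];\RR^d)$ for a.e.\ $\omega$, and the same on the primed side; hence $\cL(X(\omega))=\lim_n\cL(X^n(\omega))=\lim_n\cL(X^{n,\prime}(\omega))=\cL(X'(\omega))$ on each short interval $[S_{j-1},S_j]$. To globalize, one repeats the argument on each subinterval of the subdivision $(S_0=0,\dots,S_\ell=T)$ used in the proof of Theorem \ref{ThmMain}, noting that the terminal value $X_{S_j}(\omega)$ of one step becomes the initial condition of the next, and that the joint law of $(X_{S_j}(\omega),W(\omega)|_{[S_j,T]},\WW(\omega)|_{[S_j,T]},\WW^{\indep}(\cdot,\omega)|_{[S_j,T]})$ inherits the required equality from the previous step together with the strong representation. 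Concatenating yields equality of the laws of $(X(\omega))_{0\le t\le T}$ and $(X'(\omega))_{0\le t\le T}$ on $\mathcal{C}([0,T];\RR^m)$.

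\textbf{Main obstacle.}
The delicate point is making rigorous the claim that the Riemann sum is "the same measurable functional" on both spaces: one must carefully package the dependence of $\Phi_K$ on the infinite-dimensional section $\WW^{\indep}(\cdot,\omega)$ through the strong map $\mathcal{I}$, and verify that integrating $D_\mu\textrm{F}$ against $X^n_{t_{j-1}}(\cdot)$ and against $\mathcal{I}(W(\cdot),W(\omega))$ really is a continuous (or at least measurable) function of the common law $\cL(X_0(\cdot),W(\cdot),\WW(\cdot),X^n(\cdot))$ jointly with the frozen data $(X_0(\omega),W(\omega),\WW(\omega))$. This requires uniform integrability bounds (furnished by \eqref{eq:Lq:vvvert}, the $q\ge 8$ moment assumptions, and the tail hypotheses \eqref{EqTailAssumptions}) to justify passing limits under the expectation over "$\cdot$", and some care with the fact that $\mathcal{I}$ need only be defined up to a null set — which is why one first harmonizes $\mathcal{I}$ and $\mathcal{I}'$ using the equality in law of the two set-ups before running the induction.
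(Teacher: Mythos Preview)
Your proposal is correct and follows essentially the same approach as the paper: harmonize the strong maps $\mathcal{I}$ and $\mathcal{I}'$ using the equality in law, then run an induction on the Picard rank $n$ via the compensated Riemann sum expansion \eqref{exp:measurability} to show that the law of $(W,\WW,X^n)$ is uniquely determined, and pass to the limit. The paper's own proof is in fact just a two-sentence sketch of exactly this argument, so your write-up is considerably more detailed than what appears there.
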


As the two set-ups have the same law, we can use the same mapping ${\mathcal I}$ in the representations \eqref{eq:representation} of ${\mathbb W}^{\indep}$ and of ${\mathbb W}^{\indep,\prime}$. Iterating {on $n$} in 
\eqref{exp:measurability}, the result easily follows {by proving, at each rank, that the law of $(W,{\mathbb W},X^n)$ is uniquely determined}.

\subsection{Continuity of the It\^o-Lyons Map}
\label{SubsectionContinuity}

As expected from a robust solution theory of differential equations, we have continuity of the solution with respect to the parameters in the equation, most notably the rough set-up itself. The next statement quantifies that fact.

\begin{thm}
\label{ThmContinuity}
Let \emph{F} satisfy the same assumptions as in \emph{Theorem \ref{main:thm:existence:small:time}}. 
Given a time interval $[0,T]$ and a sequence of probability spaces $(\Omega_{n},{\mathcal F}_{n},\PP_{n})$, indexed by $n \in {\mathbb N}$, 
let, for any $n$, 
$X^n_{0}(\cdot) := (X^n_{0}(\omega_{n}))_{\omega_{n} \in \Omega_{n}}$
be an $\RR^d$-valued square-integrable initial condition
and
$$
{\boldsymbol W}^n(\cdot) := \Big(W^n(\omega_{n}),\WW^n(\omega_{n}),\WW^{n,\indep}(\omega_{n},\omega_{n}')\Big)_{\omega_{n},\omega_{n}' \in \Omega_{n}}
$$
be an $m$-dimensional rough set-up with corresponding control $w^n$, {as given by 
\eqref{eq:w:s:t:omega},} and local accumulated variation $N^n$, for fixed values of $p \in [2,3)$ and $q > 8$. Assume that
\begin{itemize}
\item[\textcolor{gray}{$\bullet$}] the collection 
$\bigl(\PP_{n} \circ (\vert X^n_{0}(\cdot) \vert^2)^{-1}\bigr)_{n \geq 0}$ is uniformly integrable;

   \item[\textcolor{gray}{$\bullet$}] for positive constants $\varepsilon_{1},c_{1}$ and $(\varepsilon_{2}(\alpha),c_{2}(\alpha))_{\alpha >0}$,  the tail assumption \eqref{EqTailAssumptions} holds for $w^n$ and $N^n$, for all $n\geq 0$;

   \item[\textcolor{gray}{$\bullet$}] associating a control $v^n$ with each ${\boldsymbol W}^n(\cdot)$ as in \eqref{eq:v}, the functions 
   $
   \bigl({\mathcal S}_{2}^T \ni (s,t) \mapsto \langle v^n(s,t,\cdot) \rangle_{2q}\bigr)_{n \geq 0}$ 
   are uniformly Lipschitz continuous. 

\end{itemize}
Assume also that there exist, on another probability space
$(\Omega,{\mathcal F},\PP)$, 
a square integrable initial condition $X_{0}(\cdot)$
with values in $\RR^d$ and 
 a strong rough set-up
$$
{\boldsymbol W}(\cdot) := \Big(W(\omega),\WW(\omega),\WW^{\indep}(\omega,\omega')\Big)_{\omega,\omega' \in \Omega}
$$
with values in $\RR^m$, such that
the law under the probability measure $\PP_{n}^{\otimes 2}$ 
of the random variable
$\Omega_{n}^2 \ni (\omega_{n},\omega_{n}')
\mapsto \bigl(X_{0}^n(\omega_{n}), W^n(\omega_{n}),{\mathbb W}_{n}(\omega_{n}),{\mathbb W}^{\indep}_{n}(\omega_{n},\omega_{n}')\bigr)$, 
seen as a random variable with values in the space ${\RR^d \times} {\mathcal C}([0,T];\RR^m) \times 
\bigl\{ {\mathcal C}({\mathcal S}_{2}^T;\RR^m \otimes \RR^m)
\bigr\}^2$, 
converges in the weak sense to the law of $
\Omega^2 \ni (\omega,\omega')
\mapsto \bigl(X_{0}(\omega), W(\omega),{\mathbb W}(\omega_{n}),{\mathbb W}^{\indep}(\omega,\omega')\bigr)$.

Then, ${\boldsymbol W}(\cdot)$ satisfies the requirements of \emph{Theorem \ref{main:thm:existence:small:time}}
for some $p' \in (p,3)$ and $q' \in [8,q)$, {with 
$w$ therein
being given by 
\eqref{eq:w:s:t:omega}}. Moreover, if $X^n(\cdot)$, resp. $X(\cdot)$, is the solution of the mean field rough differential equation driven by ${\boldsymbol W}^n(\cdot)$, resp. ${\boldsymbol W}(\cdot)$, then $X^n(\cdot)$ converges in law to $X(\cdot)$ on ${\mathcal C}([0,T];\RR^d)$.
\end{thm}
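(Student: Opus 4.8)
The plan is to combine a compactness/tightness argument with the quantitative stability of the Picard iteration already established in Proposition \ref{thm:fixed:1} and in the well-posedness analysis of Theorem \ref{main:thm:existence:small:time}, together with the representation formula \eqref{exp:measurability} and the notion of strong rough set-up from Definition \ref{def:strong}. First I would fix, via a standard interpolation argument, exponents $p' \in (p,3)$ and $q' \in [8,q)$: the uniform Lipschitz bound on $(s,t) \mapsto \langle v^n(s,t,\cdot)\rangle_{2q}$ together with the tail assumptions \eqref{EqTailAssumptions} and the weak convergence yields, by lower semicontinuity of $p'$-variation seminorms under weak convergence and a uniform-integrability/Fatou argument, that the limiting set-up ${\boldsymbol W}(\cdot)$ satisfies $\langle v(0,T,\cdot)\rangle_{q'} < \infty$, that $(s,t)\mapsto \langle v(s,t,\cdot)\rangle_{q'}$ is of finite $1$-variation, and that the corresponding $w$ and $N$ obey \eqref{EqTailAssumptions} for $p'$ and $q'$ — so Theorem \ref{main:thm:existence:small:time} (iterated, as in the proof of Theorem \ref{ThmMain}) applies to ${\boldsymbol W}(\cdot)$ and $X(\cdot)$ is well defined.

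The second and central step is to prove tightness of the laws of $X^n(\cdot)$ on $\mathcal{C}([0,T];\RR^d)$ and, more precisely, joint tightness of the family $\bigl(X^n_0(\cdot), W^n(\cdot), \WW^n(\cdot), \WW^{n,\indep}(\cdot,\cdot), X^n(\cdot)\bigr)$ on the product space $\RR^d\times\mathcal{C}([0,T];\RR^m)\times\{\mathcal{C}(\mathcal{S}_2^T;\RR^{m\times m})\}^2\times\mathcal{C}([0,T];\RR^d)$, where the $X^n$-components are understood through the pair $(X^n(\omega_n), X^n(\cdot))$ viewed on $\Omega_n^2$. Tightness of the driver components is the hypothesis; tightness of $X^n(\cdot)$ follows from the a priori bound \eqref{eq:invariant:3} of Proposition \ref{thm:main:1}, which controls $\vvvert X^n(\omega_n)\vvvert_{[0,T],w^n,p}$ by a power of $N^n([0,T],\omega_n,1/(4L))$ whose moments are uniformly bounded thanks to \eqref{EqTailAssumptions}; this gives a uniform modulus-of-continuity estimate in $p'$-variation (hence in the sup norm after interpolation with $p<p'$) that is tight in $\mathcal{C}([0,T];\RR^d)$ by the Arzelà–Ascoli/Kolmogorov criterion. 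One must be a little careful that the object of interest lives on $\Omega_n^2$: the clean way is to work throughout with the joint law under $\PP_n^{\otimes 2}$ of $(X^n_0(\omega_n), W^n(\omega_n), \WW^n(\omega_n), \WW^{n,\indep}(\omega_n,\omega_n'), X^n(\omega_n), X^n(\omega_n'))$ and exploit the independence structure, exactly as in Section \ref{subse:geom}. Having tightness, extract a weakly convergent subsequence and, by the Skorokhod representation theorem (in the version recalled in the proof of Proposition \ref{prop:example:construction}), realize the limit and all the $X^n$ on a common probability space with almost sure convergence.

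The third step is to identify every subsequential limit with $X(\cdot)$, i.e. with the unique solution associated with the strong limiting set-up ${\boldsymbol W}(\cdot)$; this is where uniqueness in law, Theorem in Section \ref{subse:geom}, is used. The key point is that the limiting set-up is \emph{strong}: by \eqref{eq:representation} the iterated integral $\WW^{\indep}(\omega,\omega')$ is an almost sure measurable image $\mathcal{I}(W(\omega),W(\omega'))$ of the first-level paths, so the limit of the $\WW^{n,\indep}$'s is forced to coincide with $\mathcal{I}$ applied to the limiting first levels — this removes the pathology that the arrival space of $\WW^{n,\indep}$ depends on $\Omega_n$. Then one passes to the limit in the finite-dimensional compensated Riemann–sum expansion \eqref{exp:measurability}: along the subsequence (and on the Skorokhod space) the sums $\sum_j \textrm{F}(\cdots)W^n_{t_{j-1},t_j}$, $\sum_j \partial_x\textrm{F}(\cdots)(\textrm{F}(\cdots)\WW^n_{t_{j-1},t_j})$ and $\sum_j \langle D_\mu\textrm{F}(\cdots)(\cdots)(\textrm{F}(\cdots)\WW^{n,\indep}_{t_{j-1},t_j}(\cdot,\omega))\rangle$ converge, using the continuity and boundedness of $\textrm{F},\partial_x\textrm{F},D_\mu\textrm{F}$ from \textbf{Regularity assumptions 1} and \textbf{2}, the uniform $p$-variation bounds, uniform integrability (from the uniform moment bounds on $N^n$ and $w^n(0,T,\cdot)$ and on $\vert X^n_0\vert^2$), and the uniform smallness of the remainder $\sum_j S^{n}_{t_{j-1},t_j}$ provided by \eqref{eq:remainder:integral}; a standard diagonal argument over a sequence of subdivisions whose mesh tends to zero then shows the limit $X(\cdot)$ satisfies the fixed-point relation $X(\omega)=\Gamma(\omega,X(\omega),X(\cdot))$ for ${\boldsymbol W}(\cdot)$. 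By the uniqueness statement of Theorem \ref{main:thm:existence:small:time} (iterated over the subintervals $[S_{j-1},S_j]$) the limit is the solution, independent of the subsequence, so the whole sequence $X^n(\cdot)$ converges in law to $X(\cdot)$.

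**Main obstacle.** The delicate part is not the tightness but the passage to the limit in the mean-field, measure-dependent terms of \eqref{exp:measurability}: the term $\langle D_\mu\textrm{F}(X^n_{t_{j-1}}(\omega),X^n_{t_{j-1}}(\cdot))(X^n_{t_{j-1}}(\cdot))\,(\textrm{F}(X^n_{t_{j-1}}(\cdot),X^n_{t_{j-1}}(\cdot))\,\WW^{n,\indep}_{t_{j-1},t_j}(\cdot,\omega))\rangle$ couples the ``$\omega$'' variable and the ``$\cdot$'' variable through $\WW^{n,\indep}$, whose state space varies with $n$. Controlling this requires working with the joint law on $\Omega_n^2$, exploiting the strong representation $\mathcal{I}$ to pin down the joint limit of $(W^n(\omega),W^n(\omega'),\WW^{n,\indep}(\omega',\omega))$, and establishing enough uniform integrability (in $\LL^{q'}$ with $q'>8$, which is precisely why the margin $q>8$ is imposed) to upgrade the weak/a.s.\ convergence of these coupled bilinear expressions to convergence of their expectations. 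A secondary technical annoyance is checking that the limiting control $w$ obtained from ${\boldsymbol W}(\cdot)$ still satisfies \eqref{eq:w:s:t:omega:ineq}–\eqref{eq:useful:inequality:wT} and the tail bounds \eqref{EqTailAssumptions}, which is handled by the lower-semicontinuity of $p'$-variation and the uniform Lipschitz hypothesis on $\langle v^n\rangle_{2q}$, at the cost of slightly enlarging $p$ to $p'$ and shrinking $q$ to $q'$.
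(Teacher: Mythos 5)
Your proposal follows essentially the same route as the paper's proof: pass to exponents $p'\in(p,3)$ and $q'\in[8,q)$; establish joint tightness on $\Omega_n^2$; apply Skorokhod's representation theorem; reconstruct a strong rough set-up on the Skorokhod space via the map $\mathcal{I}$; pass to the limit in the compensated Riemann-sum expansion \eqref{exp:measurability}; and conclude by uniqueness. Two technical points you gloss over but that the paper treats carefully are that (i) tightness of $\omega_n\mapsto\langle\WW^{n,\indep}(\omega_n,\cdot)\rangle_{q'}$ cannot be read off directly from weak convergence because the range space $\LL^{q'}(\Omega_n;\cdot)$ varies with $n$, and is obtained instead by a Markov/Fubini reduction to scalar moduli of continuity, and (ii) the random controls $v^{n,\prime}(\cdot)$ must themselves be included in the Skorokhod coupling so that the controlled-path structure and the tail/accumulation bounds are genuinely inherited by the limit.
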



The rationale for the framework and the assumptions used in the statement of Theorem \ref{ThmContinuity} is two-fold. First, it allows for a 
proof based on compactness arguments; in particular, the proof completely bypasses any lengthy stability estimate of the paths with respect to the rough structure, which, in our extended framework, would be especially cumbersome. Also, this compactness argument is pretty interesting in itself and complements quite well Subsection \ref{subse:geom} on weak uniqueness; noticeably, it allows the 
set-ups to be supported by different probability spaces.  Second, our formulation of the continuity of the It\^o-Lyons map turns out to be well-fitted to the applications  addressed in our {forthcoming} companion paper 
\cite{BCDParticleSystem}, {see also Section 4 in the earlier version \cite{BCDArxiv}}. 

The assumption that the limiting rough set-up is strong is tailored-made to the compactness arguments we use below as it permits to 
pass quite simply to the weak limit along the laws of the rough set-ups $({\boldsymbol W}^n(\cdot))_{n \geq 0}$
and to identify the limiting law.

\begin{proof} Throughout the proof, we call $p \in [2,3)$ 
and $q >8$
the fixed indices used to define the set-ups and, in particular, to control the variations in 
the definition 
\eqref{EqTailAssumptions} of each $w^n$, $n \geq 0$, 
{$w^n$ being associated with $v^n$ through 
\eqref{eq:w:s:t:omega}}. This is important because, at some points of the proof, we will use other values $p'>p$ and $q'<q$.

\textbf{\textbf{Step 1.}} 
We prove key properties on the tightness of the sequence 
$({\boldsymbol W}^{n}(\cdot))_{n \geq 0}$. 
\smallskip

\textbf{\textbf{1a.}}
For any $n \geq 0$, we introduce the modulus of continuity of $(W^n(\cdot),{\mathbb W}^n(\cdot),{\mathbb W}^{n,\indep}(\cdot))$, namely we let, for any $\delta >0$,
\begin{equation*}
\begin{split}
&\varsigma^{n}\bigl(\delta,\omega_{n},\omega_{n}' \bigr) 
 := 
\sup_{\vert s-t \vert \leq \delta}
\vert 
W_{t}^n(\omega_{n}) - 
W_{s}^n(\omega_{n}) \vert 
\\
&\hspace{15pt} + 
\sup_{\vert s-{s'} \vert + \vert {t}-t' \vert \leq \delta}
 \bigl\vert 
{\mathbb W}_{s',t'}^n(\omega_{n}) 
-
{\mathbb W}_{s,t}^n(\omega_{n}) 
\bigr\vert 
+
\sup_{\vert s-{s'} \vert + \vert {t}-t' \vert \leq \delta}
\bigl\vert 
{\mathbb W}_{s',t'}^{n,\indep}(\omega_{n},\omega_{n}') 
- 
{\mathbb W}_{s',t'}^{n,\indep}(\omega_{n},\omega_{n}') 
\bigr\vert,
\end{split}
\end{equation*}
where $(\omega_{n},\omega_{n}') \in \Omega_{n}^2$. 
Since 
the laws of the processes 
$(W^n(\cdot),{\mathbb W}^n(\cdot),
{\mathbb W}^{n,\indep}(\cdot,\cdot))_{n \geq 0}$
are tight in the space 
${\mathcal C}([0,T];\RR^m) \times 
\bigl\{
{\mathcal C}({\mathcal S}_{2}^T;\RR^m \otimes \RR^m) \bigr\}^2$, we deduce that 
\begin{equation*}
\forall \varepsilon >0,
\quad \lim_{\delta \searrow 0}
\sup_{n \geq 0}
{\mathbb P}_{n}^{\otimes 2}
\Bigl( \bigl\{ (\omega_{n},\omega_{n}') 
\in \Omega_{n}^2 : 
\varsigma_{n}\bigl(\delta,\omega_{n},\omega_{n}' \bigr) \geq \varepsilon 
\bigr\}
\Bigr) = 0.
\end{equation*}

\textbf{\textbf{1b.}}
We now prove that, for any $q'\in [8,q)$, the laws of the processes $\bigl(\Omega_{n} \ni \omega_{n}
\mapsto \langle {\mathbb W}^{n,\indep}(\omega_{n},\cdot) \rangle_{q'} \bigr)_{n \geq 0}$
are tight\footnote{{In the notation $\langle \cdot \rangle_{q'}$, the expectation is implicitly taken under 
${\mathbb P}_{n}$}.}, and similarly for the laws of the processes 
$\bigl(\Omega_{n} \ni \omega_{n}
\mapsto \langle {\mathbb W}^{n,\indep}(\cdot,\omega_{n}) \rangle_{q'} \bigr)_{n \geq 0}$. 
By \eqref{eq:w:s:t:omega}, we have, for any $\omega_{n}
\in \Omega_{n}$,
\begin{equation*}
\sup_{(s,t) \in {\mathcal S}_{2}^T}
  \bigl\langle {\mathbb W}_{s,t}^{n,\indep}(\omega_{n},\cdot) \bigr\rangle_{q}
\leq \bigl( w^{n}(0,T,\omega_{n}) \bigr)^{{2/p}}.
\end{equation*}
By the {second} bullet point in the assumption, the tails of the right-hand side are uniformly dominated. So,
\begin{equation}
\label{eq:tightness:1}
\lim_{A \rightarrow \infty}
\sup_{n \geq 0}
\PP_{{n}} \Bigl( \bigl\{ \omega_{n} \in \Omega_{n}
: 
\sup_{(s,t) \in {\mathcal S}_{2}^T} \bigl\langle {\mathbb W}_{s,t}^{n,\indep}(\omega_{n},\cdot) \bigr\rangle_{q}
\geq A \bigr\}
\Bigr) =0,
\end{equation}
which is one first step in the proof of tightness. 
For any $a>0$, we now consider the  
event
\begin{equation*}
E_{n}(\delta,a) := \Bigl\{ \omega_{n}
\in \Omega_{n} : 
\PP_{n}
\Bigl( \bigl\{ 
\omega_{n}'\in \Omega_{n} : 
\varsigma_{n}(\delta,\omega_{n},\omega_{n}')
\geq \varepsilon 
\bigr\} 
\Bigr) \geq a\Bigr\}.
\end{equation*}
By Markov's inequality and then Fubini's theorem, 
\begin{equation*}
\PP_{n}\bigl(E_{n}(\delta,a)\bigr) 
\leq a^{-1}
\PP_{n}^{\otimes 2}
\Bigl( 
\bigl\{
(\omega_{n},\omega_{n}') \in 
\Omega_{n}^2 : 
\varsigma_{n}(\delta,\omega_{n},\omega_{n}')
\geq \varepsilon 
\bigr\}
\Bigr),
\end{equation*}
{the right-hand side converging to $0$ as $n$ tends to $\infty$.}
Clearly, for any $\varepsilon >0$, we can
find a collection of positive reals $(a_{\varepsilon}(\delta))_{\delta >0}$ such that 
\begin{equation*}
\lim_{\delta \searrow 0} a_{\varepsilon}(\delta) = 0,
\quad \textrm{\rm and}
\quad
\lim_{\delta \searrow 0}
\PP_{n} \Bigl(E_{n}\bigl(\delta,a_{\varepsilon}(\delta)\bigr) \Bigr) =0.
\end{equation*}
Take now $\omega_{n} \in E_{n}(\delta,a_{\varepsilon}(\delta))^{\complement}$ such that 
$
\sup_{(s,t) \in {\mathcal S}_{2}^T} \bigl\langle {\mathbb W}_{s,t}^{n,\indep}(\omega_{n},\cdot) \bigr\rangle_{q} \leq A$, 
for a given $A>0$. Then, for any $q' \in [8,q)$ and $(s,t),(s',t') \in {\mathcal S}_{2}^T$ {with 
$\vert s-s' \vert + \vert t-t' \vert \leq \delta$},
\begin{equation*}
\begin{split}
&\Bigl\vert 
\bigl\langle {\mathbb W}_{s',t'}^{n,\indep}(\omega_{n},\cdot) \bigr\rangle_{q'}
-
\bigl\langle {\mathbb W}_{s,t}^{n,\indep}(\omega_{n},\cdot) \bigr\rangle_{q'}
\Bigr\vert
\\
&\leq 
\Bigl\langle {\mathbb W}_{s',t'}^{n,\indep}(\omega_{n},\cdot) 
-
 {\mathbb W}_{s,t}^{n,\indep}(\omega_{n},\cdot) \Bigr\rangle_{q'}
\leq \varepsilon + A a_{\varepsilon}(\delta)^{1-q'/q}. 
\end{split}
\end{equation*}
For $A$ fixed and $\delta$ small enough, the right-hand side is less than $2 \varepsilon$. We easily deduce that, 
for any $\varepsilon >0$,  
\begin{equation*}
\lim_{\delta \searrow 0}
\sup_{n \geq 0}
\PP_{n} 
\biggl(
\Bigl\{ 
\omega_{n} \in 
\Omega_{n} : 
{\sup_{\vert s-s' \vert + \vert t-t' \vert \leq \delta}} \,
\Bigl\vert 
\bigl\langle {\mathbb W}_{s',t'}^{n,\indep}(\omega_{n},\cdot) \bigr\rangle_{q'}
-
\bigl\langle {\mathbb W}_{s,t}^{n,\indep}(\omega_{n},\cdot) \bigr\rangle_{q'}
\Bigr\vert
\geq \varepsilon
\Bigr\}
\biggr)=0,
\end{equation*}
{which, together with 
\eqref{eq:tightness:1}, proves tightness}.
Clearly, the same holds for the family $\bigl(\Omega_{n} \ni \omega_{n} \mapsto \langle {\mathbb W}^{n,\indep}(\cdot,\omega_{n}) \rangle_{q'}\bigr)_{n \geq 0}$. {Similarly}, the two deterministic functions 
$\bigl(\langle W^n(\cdot) \rangle_{q'}\bigr)_{n \geq 0}$
and 
$\bigl(\llangle W^{n,\perp}(\cdot,\cdot) \rrangle_{q'}\bigr)_{n \geq 0}$ are relatively compact in ${\mathcal C}([0,T];\RR)$ and 
${\mathcal C}({\mathcal S}_{2}^T;\RR)$. 
\smallskip

\textbf{\textbf{1c.}}
For each coordinate of the family of processes
$$\Bigl(
   \Omega_{n} \ni \omega_{n}
   \mapsto
   \big( \vert W^n_{s,t}(\omega_{n})
   \vert , \vert {\mathbb W}^n_{s,t}(\omega_{n})
   \vert,
\bigl\langle {\mathbb W}^{n,\indep}_{s,t}(\omega_{n},\cdot)
\bigr\rangle_{q'}, 
\bigl\langle {\mathbb W}^{n,\indep}_{s,t}(\cdot,\omega_{n})
\bigr\rangle_{q'}
\bigr)_{(s,t) \in {\mathcal S}^2_{T}}\Big)_{n \geq 0},$$
we know that the corresponding family of laws is tight in
${\mathcal C}({\mathcal S}_{2}^T;\RR)$
and that the associated family of $p$-variations over $[0,T]$ has tight laws in $\RR$ (because of the second item in the assumption).
Hence, we can apply Lemma \ref{lem:aux:1} below, with any $p' \in (p,3)$ instead of $p$ itself, and with $Z^n_{s,t}(\omega)$ equal to one of the coordinate of the above process.

We proceed in the same way with the coordinates of the deterministic sequence   
$\bigl(z^n_{s,t}= \bigl(\bigl\langle W^n_{s,t}(\cdot)
\bigr\rangle_{q'}, \llangle {\mathbb W}^{n,\indep}_{s,t}(\cdot,\cdot)
\rrangle_{q'} \bigr)_{(s,t) \in {\mathcal S}_{2}^T}\bigr)_{n \geq 0}$.
We deduce that, for any $p' \in (p,3)$, the sequence of probability measures $\Big(\PP \circ ({\mathcal S}_{2}^T \ni (s,t) \mapsto v^{n,\prime}(s,t,\cdot))^{-1}\Big)_{n \geq 0}$ is tight in ${\mathcal C}({\mathcal S}^2_{T};\RR)$ and hence that
\begin{equation*}
\forall \varepsilon >0, \quad \lim_{\delta \searrow 0} \sup_{n \geq 0} \, \PP_{n}\biggl( \sup_{(s,t) \in {\mathcal S}_{2}^T : 
t-s \le \delta} v^{n,\prime}(s,t,\cdot) > \varepsilon \biggr) = 0,
\end{equation*}
where $v^{n,\prime}$ is associated with ${\boldsymbol W}^n(\cdot)$ through \eqref{eq:v} {using} the pair of parameters $(p',q')$ instead of $(p,q)$.
\smallskip

\textbf{\textbf{1d.}}
 Obviously, $v^{n,\prime}(s,t,\cdot) \leq (v^n(s,t,\cdot))^{p'/p}$. Since $p'/p \leq 2$ and   
  the function ${\mathcal S}_{2}^T \ni (s,t) \mapsto \langle v^n(s,t,\cdot) \rangle_{2q}$ is Lipschitz continuous, uniformly in $n \geq 0$,
we deduce that $(s,t) \mapsto \langle v^{n,\prime}(s,t,\cdot) \rangle_{q}$ is Lipschitz continuous, uniformly in $n \geq 0$. Hence, 
\begin{equation*}
\forall \varepsilon >0, \quad \lim_{\delta \rightarrow 0} \sup_{n \geq 0} \, \PP_{n} \biggl( \sup_{(s,t) \in {\mathcal S}_{2}^T : 
t-s \le \delta}
w^{n,\prime}(s,t,\cdot) > \varepsilon \biggr) = 0,
\end{equation*}
where, as above, {$w^{n,\prime}$ 
is associated with 
$v^{n,\prime}$
and $(p',q')$ through
\eqref{eq:w:s:t:omega}}. Importantly, we deduce from the bound $(v^{n,\prime}(0,T,\cdot))^{1/p'} \leq (v^n(0,T,\cdot))^{1/p}$ that, similar to $w^n$ and $N^n$ (the latter is associated with $w^n$ through 
\eqref{eq:N:s:t:omega}), the function $w^{n,\prime}$ and the corresponding local accumulated variation $N^{n,\prime}$
(given by 
\eqref{eq:N:s:t:omega}
with $\varpi = w^{n,\prime}$)
 satisfy the tail assumption \eqref{EqTailAssumptions}, uniformly in $n \geq 0$. The bound on the tails of $N^{n,\prime}$ is easily obtained by comparison with the tails of 
 $N^{n}$. 
 \medskip

\textbf{\textbf{Step 2.}}

\textbf{\textbf{2a.}} 
The next step is to observe, as a corollary of the proof of Theorem \ref{main:thm:existence:small:time}, see \eqref{eq:conclusion:preuve:contraction}, that there exist a constant $C$ and a real $S>0$ such that, for all $n \geq 0$,
\begin{equation*}
\Bigl\langle \vvvert X^n(\cdot) \vvvert_{[0,S],w^{n,\prime},p'} \Bigr\rangle_{8} \leq C. 
\end{equation*}
The fact that $C$ and $S$ can be chosen independently of $n$ is a consequence of the fact that the tails of $N^n$ and $w^{n}$ are controlled uniformly in $n \geq 0$. Here $S$ is chosen small enough so that  \eqref{eq:constraint:1:E!:small:time} and \eqref{eq:constraint:2:E!:small:time} in the statement
{of Theorem 
\ref{main:thm:existence:small:time}}
 are satisfied, uniformly in $n \geq 0$. 
\smallskip

\textbf{\textbf{2b.}} Arguing as in the derivation of Theorem \ref{ThmMain} from the statement of Theorem \ref{main:thm:existence:small:time}, we can iterate the argument and construct a sequence of deterministic times $0=S_{0}<S=S_{1}<\ldots<S_{K}=T$, for some deterministic $K \geq 1$, such that, for all $n \geq 0$ and all $j \in \{0,\cdots,K-1\}$, 
$\bigl\langle \vvvert X^n(\cdot) \vvvert_{[S_{j},S_{j+1}],w^{n,\prime},p'} \bigr\rangle_{8} \leq C$. Up to a modification of the constant $C$, we deduce that, for all $n \geq 1$, $
\bigl\langle \vvvert X^n(\cdot) \vvvert_{[0,T],w^{n,\prime},p'} \bigr\rangle_{8} \leq C$. 
Recalling that $\bigl( \PP_{n} \circ (\vert X_{0}^n(\cdot) \vert^2)^{-1} \bigr)_{n \geq 0}$
is uniformly integrable, it is easily checked that $\bigl(\PP_{n} \circ (\sup_{0 \leq t \leq T}
\vert X_{t}^n(\cdot) \vert^2)^{-1}\bigr)_{n \geq 0}$ is also uniformly integrable. 
\smallskip

\textbf{\textbf{2c.}} As another result of the previous step, for any $\epsilon >0$, we can find $a>0$ such that 
\begin{equation*}
\sup_{n \geq 0}\,
{\mathbb P}_{n}
\Bigl( 
\vvvert X^n(\cdot) \vvvert_{[0,T],w^{n,\prime},p'}
> a \Bigr) \leq \varepsilon,
\end{equation*}
from which,  we deduce that
 \begin{equation*}
\forall a>0, \quad \exists \varepsilon >0 \ : \ \sup_{n \geq 0} \, {\mathbb P}_{n} \Bigl( \forall (s,t) \in {\mathcal S}_{2}^T, \vert X^n_{s,t} \vert^{p'} > a w^{n,\prime}(s,t) \Bigr) \leq \varepsilon.
\end{equation*}
Combining with 
\textbf{\textbf{1d}}, this yields
\begin{equation*}
\forall \varepsilon >0, \quad \lim_{\delta \rightarrow 0} \sup_{n \geq 0}\, \PP_{n} \biggl( \sup_{(s,t) \in {\mathcal S}_{2}^T :
t-s \le \delta} \vert X^n_{s,t} \vert > \varepsilon \biggr) = 0.
\end{equation*}
From the conclusion of \textbf{\textbf{2b}}, the sequence $\big(\PP_{n} \circ (X^n(\cdot))^{-1}\big)_{n \geq 0}$ is tight in ${\mathcal C}\big([0,T];\RR^d\big)$. 
\medskip

\textbf{\textbf{Step 3.}}

\textbf{\textbf{3a.}} As a consequence of the assumptions of Theorem \ref{ThmContinuity} and of Step 2, we have the following tightness properties:    

\begin{enumerate}
   \item[\textcolor{gray}{$\bullet$}] $\bigl(\PP_{n} \circ (W^n(\cdot))^{-1}\bigr)_{n \geq 0}$ and $\bigl(\PP_{n} \circ (X^n(\cdot))^{-1}\bigr)_{n \geq 0}$ are tight in the spaces ${\mathcal C}\big([0,T];\RR^m\big)$ and ${\mathcal C}\big([0,T];\RR^d\big)$;    
   \item[\textcolor{gray}{$\bullet$}] $\bigl(\PP_{n} \circ ({\mathbb W}^n)^{-1}(\cdot)\bigr)_{n \geq 0}$ is tight in ${\mathcal C}\big({\mathcal S}_{2}^T;\RR^m \otimes \RR^m\big)$;    
   \item[\textcolor{gray}{$\bullet$}] 
   $
   \Bigl(\PP_{n}^{\otimes 2} \circ \Big(\Omega_{n}^2 \ni (\omega_{n},\omega_{n}') \mapsto {\mathbb W}^{n,\indep}(\omega_{n},\omega_{n}') \in {\mathcal C}({\mathcal S}_{2}^T; \RR^m \otimes \RR^m)\Big)^{-1}\Bigr)_{n \geq 0}
   $ 
   is tight in ${\mathcal C}\big({\mathcal S}_{2}^T;\RR^m \otimes \RR^m\big)$;

   \item[\textcolor{gray}{$\bullet$}]    
   $\Bigl(\PP_{n} \circ \Big(v^{n,\prime} (\omega_{n}) : \Omega_{n} \ni
   \omega_{n}
   \mapsto 
   \bigl( {\mathcal S}_{2}^T \ni (s,t)
    \mapsto v^{n,\prime}(s,t,\omega_{n})
    \bigr)  \in {\mathcal C}({\mathcal S}_{2}^T; \RR )\Big)^{-1}\Bigr)_{n \geq 0}
   $
   is tight in ${\mathcal C}\big({\mathcal S}_{2}^T;\RR \big)$;

\end{enumerate}

\textbf{\textbf{3b.}} By Skorokhod's representation theorem, we can find  an auxiliary Polish probability space $\big(\widehat{\Omega},\widehat{\mathcal F},\widehat{\mathbb P}\big)$, such that, up to a subsequence, for $\widehat{\PP}$-a.e. $\widehat \omega \in \widehat{\Omega}$,
\begin{flalign}
&\lim_{n \rightarrow \infty}\, \Bigl( \widehat W^{n,1}(\widehat \omega),  \widehat W^{n,2}(\widehat \omega), \widehat W^{n,1,1}(\widehat \omega),\widehat {W}^{n,2,1}(\widehat \omega), \widehat{v}^{n,1,\prime}(\widehat \omega),
\widehat{v}^{n,2,\prime}(\widehat \omega), \widehat X^{n,1}(\widehat \omega),
\widehat X^{n,2}(\widehat \omega)
\Bigr)  \nonumber
 \\
&= \Bigl( \widehat W^1(\widehat \omega), \widehat W^2(\widehat \omega), \widehat {W}^{1,1}(\widehat \omega),\widehat {W}^{2,1}(\widehat \omega), \widehat{v}^{1,\prime}(\widehat \omega),\widehat{v}^{2,\prime}(\widehat \omega) 
, \widehat{X}^1(\widehat \omega), \widehat{X}^2(\widehat \omega)
\Bigr),
\label{eq:cv:n}
\end{flalign} 
where $\big(\widehat{W}^{n,1},\widehat{W}^{n,2},\widehat{W}^{n,1,1},\widehat{W}^{n,2,1},  \widehat{v}^{n,1,\prime}(\widehat \omega),
\widehat{v}^{n,2,\prime}(\widehat \omega),
\widehat X^{n,1}(\widehat \omega),
\widehat X^{n,2}(\widehat \omega)
\big)$ has the same law as the random variable
\begin{equation*}
\begin{split}
&\Omega_{n}^2 \ni (\omega_{n},\omega_{n}')
\\
&\hspace{5pt} \mapsto \Bigl(W^n(\omega_{n}),W^n(\omega_{n}'),{\mathbb W}^n(\omega_{n}),{\mathbb W}^{n,\indep}(\omega_{n}',\omega_{n}), v^{n,\prime}(\omega_{n}),
v^{n,\prime}(\omega_{n}'),
X^n(\omega_{n}),X^n(\omega_{n}') \Bigr),
\end{split}
\end{equation*}
which takes values in the space $
\bigl\{
{\mathcal C}([0,T];\RR^m) \bigr\}^2 \times \bigl\{ {\mathcal C}({\mathcal S}_{2}^T;\RR^m \otimes \RR^m)
\bigr\}^2  
\times \bigl\{ {\mathcal C}({\mathcal S}_{2}^T;\RR)\bigr\}^2 \times \bigl\{ {\mathcal C}([0,T];\RR^d)  \bigr\}^2$, 
and where $\bigl(\widehat{W}^1(\cdot),\widehat{W}^2(\cdot),\widehat{W}^{1,1}(\cdot),\widehat{W}^{2,1}(\cdot),X_{0}^1(\cdot)\bigr)$ has the same law as the random variable
\begin{equation}
\label{eq:limit:law}
\Omega^2 \ni (\omega,\omega') \mapsto \Bigl(W(\omega),W(\omega'),{\mathbb W}(\omega),{\mathbb W}^{\indep}(\omega',\omega),X_{0}(\omega)\Bigr).
\end{equation}

%

\textbf{\textbf{3c.}} At this point of the proof, the difficulty is that $\big(\widehat{W}^1(\cdot),\widehat{W}^2(\cdot),\widehat{W}^{1,1}(\cdot),\widehat{W}^{2,1}(\cdot)\big)$ does not form a rough set-up. Still, we have the following two properties. First, using the fact that the limiting set-up is strong, we have
\begin{equation*}
\widehat{\PP}
\Bigl( \Bigl\{ \widehat{\omega} \in \widehat{\Omega} : {\widehat{W}}^{2,1}(\widehat{\omega}) = {\mathcal I}\bigl({W}^{2}(\widehat{\omega}),
W^1(\widehat{\omega}) \bigr) \Bigr\} \Bigr) = 1,
\end{equation*}
for a measurable mapping ${\mathcal I} : {\mathcal C}([0,T];\RR^m)^2 \rightarrow {\mathcal C}({\mathcal S}_{2}^T;\RR^m \otimes \RR^m)$, which follows from the identification with the law of 
\eqref{eq:limit:law}. Also, passing to the limit in Chen's relations satisfied by each ${\boldsymbol W}^n$, we have, for $\widehat{\mathbb P}$-a.e. $\widehat{\omega} \in \widehat{\Omega}$, and all $0 \leq r \leq s \leq t \leq T$,
\begin{equation*}
\begin{split}
&\widehat W^{1,1}_{r,t}(\widehat \omega) = \widehat W^{1,1}_{r,s}(\widehat \omega) + \widehat W^{1,1}_{s,t}(\widehat \omega) + \widehat W^1_{r,s}(\widehat \omega) \otimes \widehat W_{s,t}^1(\widehat \omega),   
\\
&\widehat W_{r,t}^{2,1}(\widehat\omega) = \widehat W^{2,1}_{r,s}(\widehat \omega) + \widehat W_{s,t}^{2,1}(\widehat \omega) + \widehat W_{r,s}^{2}(\widehat \omega) \otimes \widehat W_{s,t}^1(\widehat \omega).
\end{split}
\end{equation*}
Obviously, $(\widehat W^2,\widehat X^2)$ is independent of $\big(\widehat W^1,\widehat W^{1,1},\widehat{X}^1,\widehat{v}^{1,\prime}\big)$. Following the proof of Proposition \ref{prop:example:construction}, {but in} a simpler setting here since the limiting rough set-up is strong, we can find  

\begin{itemize}
   \item[\textcolor{gray}{$\bullet$}] four random variables $\widehat{W}(\cdot)$, $\widehat{\mathbb W}(\cdot)$,
 $\widehat{v}^{\prime}(\cdot)$     
 and
 $\widehat{X}(\cdot)$
    from $\big(\widehat{\Omega},\widehat{\mathcal F},\widehat{\PP}\big)$ into  ${\mathcal C}([0,T];\RR^m)$, ${\mathcal C}\big({\mathcal S}_{2}^T;\RR^m \otimes \RR^m\big)$,
    ${\mathcal C}\big({\mathcal S}_{2}^T;\RR \big)$
    and ${\mathcal C}([0,T];\RR^d)$
      such that 
\begin{equation*}
\begin{split}
&\widehat{\PP}\Bigl( \Bigl\{ \widehat{\omega} \in \widehat{\Omega} :\bigl( \widehat{W},
\widehat{\mathbb W},\widehat{v}^{\prime},\widehat X\bigr)(\widehat \omega)
 = \bigl( W^1 , W^{1,1},\widehat{v}^{1,\prime},
 \widehat X^1\bigr)(\widehat \omega)
\Bigr\} \Bigr) = 1;
\end{split}
\end{equation*}

   \item[\textcolor{gray}{$\bullet$}] a random variable $\widehat{\mathbb W}^{\indep}(\cdot,\cdot)$ from $\big(\widehat{\Omega}^2,\widehat{\mathcal F}^{\otimes 2},\widehat{\PP}^{\otimes 2}\big)$ into ${\mathcal C}\big({\mathcal S}_{2}^T;\RR^m \otimes \RR^m\big)$ such that
\begin{equation}
\label{eq:cv:strong}
\begin{split}
&\widehat{\PP}^{\otimes 2}\Bigl( \Bigl\{ (\widehat{\omega},\widehat{\omega}') \in \widehat{\Omega}^2 : \widehat{\mathbb W}^{\indep} (\widehat{\omega},\widehat{\omega}') = {\mathcal I}\bigl( \widehat{W}(\widehat{\omega}), \widehat{W}(\widehat{\omega}') \bigr) \Bigr\} \Bigr) = 1;
\end{split}
\end{equation}
\end{itemize}
the rough set-up $\widehat{\boldsymbol W}(\cdot) := \big(\widehat{W}(\cdot),\widehat{\mathbb W}(\cdot),\widehat{\mathbb W}^{\indep}(\cdot,\cdot)\big)$ satisfying \eqref{eq:chen} with probability 1 and  
$\widehat \Omega^2 \ni (\widehat \omega,\widehat \omega') \mapsto \big(
\widehat{W}(\widehat \omega),
\widehat{W}(\widehat \omega'),
\widehat{\mathbb W}(\widehat \omega),\widehat{\mathbb W}^{\indep}(\widehat \omega',\widehat \omega),
\widehat{v}^{\prime}(\widehat \omega),
\widehat{v}^{\prime}(\widehat \omega'),
\widehat{X}(\widehat \omega),\widehat X(\widehat \omega')\big)$ having the same law as $\big(
\widehat W^1(\cdot),
\widehat W^2(\cdot),
\widehat W^{1,1}(\cdot),\widehat W^{2,1}(\cdot),
\widehat{v}^{1,\prime}(\cdot),
\widehat{v}^{2,\prime}(\cdot),
\widehat{X}^1(\cdot),\widehat{X}^2(\cdot)\big)$ on the product space 
$$\bigl\{
{\mathcal C}\big([0,T];\RR^m\big)\bigr\}^2 \times \bigl\{ {\mathcal C}\big({\mathcal S}_{2}^T;\RR^m \otimes \RR^m\big)
\bigr\}^2
 \times \bigl\{ {\mathcal C}\big({\mathcal S}_{2}^T;\RR\big)
 \bigr\}^2
 \times \bigl\{{\mathcal C}\big([0,T];\RR^d\big) \bigr\}^2.
$$ 
{Pay attention that, at this stage, we do not whether $\widehat{X}$ solves the mean field rough equation.}
\smallskip

\textbf{\textbf{3d.}}
{We know from the previous step that the limiting set-up satisfies (at least outside an exceptional event) the required algebraic conditions.} We now check that $\widehat{\boldsymbol W}(\cdot)$ satisfies the required regularity properties.

We start with the variations of $\widehat{W}(\widehat{\omega})$, $\langle \widehat{W}(\cdot) \rangle_{q'}$, $\widehat{\mathbb W}(\widehat{\omega})$, 
$\langle \widehat{\mathbb W}^{\indep}(\widehat{\omega},\cdot) \rangle_{q'}$, 
$\langle \widehat{\mathbb W}^{\indep}(\cdot,\widehat{\omega}) \rangle_{q'}$
and 
$\llangle \widehat{\mathbb W}^{\indep}(\cdot,\cdot) \rrangle_{q'}$. To do so,  we recall that, 
for a.e. $\widehat{\omega} \in \widehat{\Omega}$, 
$\hat{v}^{\prime}(\widehat{\omega})$
is the limit of 
$\hat{v}^{n,\prime}(\widehat{\omega})$. 
By passage to the limit, $\hat{v}'$ inherits the super-additive 
property of the $(v^{n,\prime})_{n \geq 0}$'s, {see
step
  \textbf{\textbf{1d}}}, and its tails satisfy
(uniformly in $n \geq 0$) a bound similar to that 
satisfied by 
the $(v^n)_{n \geq 0}$'s 
 in the first item of the assumption.
Also, ${\mathcal S}_{2}^T \ni (s,t) \mapsto 
\langle v'(s,t,\cdot)\rangle_{q'}$ is Lipschitz.

Passing once more to the limit, we get that, for a.e. $\hat{\omega} \in \widehat{\Omega}$, for any $(s,t) \in {\mathcal S}_{2}^T$, $\vert \widehat{W}_{s,t}(\widehat{\omega}) \vert^{p'} \leq v'(s,t,\omega)$, from which we deduce that 
the $p'$-variation of $\widehat{W}(\widehat \omega)$ is dominated (in an obvious sense) by 
$\widehat{v}^{\prime}$. A similar augment applies for $\langle \widehat{W}(\cdot) \rangle_{q'}$, $\widehat{\mathbb W}(\widehat{\omega})$ and 
$\llangle \widehat{\mathbb W}^{\indep}(\cdot,\cdot) \rrangle_{q'}$.

It thus remains to handle $\bigl\langle \widehat{\mathbb W}^{\indep}(\widehat{\omega},\cdot) \bigr\rangle_{q'}$
and
$\bigl\langle \widehat{\mathbb W}^{\indep}(\cdot,\widehat{\omega}) \bigr\rangle_{q'}$. 
In order to control their variations, 
we proceed as follows. For any non-negative valued bounded continuous function
$g$
on ${\mathcal C}([0,T];\RR^m) \times 
{\mathcal C}({\mathcal S}_{2}^T;\RR)$
and for every $(s,t) \in {\mathcal S}_{2}^T$, we have
\begin{equation*}
\begin{split}
&\int_{\widehat{\Omega}}
\Bigl[ g \bigl( \widehat{W}(\widehat{\omega}),
\widehat{v}^{\prime}(\widehat{\omega}) 
\bigr) \bigl\langle 
\widehat{ {\mathbb W}}_{s,t}^{\indep}(\widehat{\omega},\cdot) \bigr\rangle_{q'}^{q'}
\Bigr] d \widehat{\mathbb P}(\widehat{\omega})
\\
&=
\int_{\widehat{\Omega}^2}
\Bigl[ g \bigl( \widehat{W}(\widehat{\omega}'),
\widehat{v}^{\prime}(\widehat{\omega}') 
\bigr) \bigl\vert 
\widehat{  {\mathbb W}}_{s,t}^{\indep}(\widehat{\omega}',\widehat{\omega}) 
\bigr\vert^{q'}
\Bigr] d \widehat{\mathbb P}^{\otimes 2}(\widehat{\omega},\widehat{\omega}')
\\
&= \lim_{n \rightarrow \infty}
\int_{{\Omega}_{n}^2}
\Bigl[ g \bigl( {W}^n({\omega}_{n}'),
{v}^{n,\prime}({\omega}_{n}') 
\bigr) \bigl\vert 
{  {\mathbb W}}_{s,t}^{n,\indep}({\omega}_{n},{\omega}_{n}') 
\bigr\vert^{q'}
\Bigr] d {\mathbb P}_{n}^{\otimes 2}({\omega}_{n}',{\omega}_{n}),
\end{split}
\end{equation*}
where we used Fubini's theorem
to pass from the first to the second term
together with 
\eqref{eq:cv:n} to pass from the first to the second line. Now, we use the very definition of 
$v^{n,\prime}$ and 
the second item in the assumption
to deduce that 
\begin{equation*}
\begin{split}
&\int_{\widehat{\Omega}}
\Bigl[ g \bigl( \widehat{{W}}(\widehat{\omega}),
\widehat{v}^{\prime}(\widehat{\omega}) 
\bigr) \langle 
\widehat{ {\mathbb W}}_{s,t}^{\indep}(\widehat{\omega},\cdot) \bigr\rangle_{q'}^{q'}
\Bigr] d \widehat{\mathbb P}(\widehat{\omega})
\\
&\leq \lim_{n \rightarrow \infty}
\int_{{\Omega}_{n}}
\Bigl[ g \bigl( {W}^n({\omega}_{n}),
{v}^{n,\prime}({\omega}_{n}) 
\bigr) 
\bigl(
{v}^{n,\prime}(s,t,{\omega}_{n}) 
\bigr)^{q'/p'}
\Bigr] d {\mathbb P}_{n}({\omega}_{n})
\\
&= \int_{\widehat{\Omega}}
\Bigl[ g \bigl( \widehat{W}(\widehat{\omega}),
\widehat{v}^{\prime}(\widehat{\omega}) 
\bigr) 
\bigl(
\widehat{v}^{\prime}(s,t,\widehat{\omega}) 
\bigr)^{q'/p'}
\Bigr] d \widehat{\mathbb P}(\widehat{\omega}).
\end{split}
\end{equation*}
Recalling from
\eqref{eq:cv:strong}
that
$\widehat{\Omega}\ni \widehat{\omega}
\mapsto \langle 
\widehat{ {\mathbb W}}_{s,t}^{\indep}(\widehat{\omega},\cdot) \bigr\rangle_{q'}$
is $\sigma\{\widehat{W}(\cdot)\}$-measurable, 
we get, 
for 
any $(s,t) \in {\mathcal S}_{2}^T$
and for
a.e. $\widehat{\omega} \in
\widehat{\Omega}$, 
$\langle 
\widehat{ {\mathbb W}}_{s,t}^{\indep}(\widehat{\omega},\cdot) \bigr\rangle_{q'}^{p'}
\leq \widehat v^{\prime}(s,t,\widehat{\omega})$. 
{Obviously, the latter is true for a.e. $\hat{\omega}$, for any $(s,t) \in {\mathcal S}_{2}^T \cap {\mathbb Q}^2$.
By almost sure (in $(\widehat \omega,\widehat \omega')$) continuity of the paths ${\mathcal S}_{2}^T \ni (s,t)  \mapsto \widehat{\mathbb W}_{s,t}^{\indep}(\widehat{\omega},\widehat{\omega}')$ and by Fatou's lemma, 
we deduce that it holds true for a.e. $\widehat \omega$, for any $(s,t) \in {\mathcal S}_{2}^T$.} 
The same holds for 
$\langle 
\widehat{ {\mathbb W}}_{s,t}^{\indep}(\cdot,\widehat{\omega}) \bigr\rangle_{q'}$.

Associating 
with the rough set-up $\widehat{\boldsymbol W}$  a (random) control function $\widebar{v}'$ through the definition \eqref{eq:v} with $(p,q)$ replaced by $(p',q')$, we deduce that, for $\widehat{\PP}$-a.e. $\widehat{\omega} \in \widehat{\Omega}$,
for all $(s,t) \in {\mathcal S}_{2}^T$, 
$\widebar{v}'(s,t,\widehat{\omega})$ is less than 
$\widehat{v}^{\prime}(s,t,\widehat{\omega})$.

Modifying the definition of the set-up on the possibly non-empty null event where 
one of the aforementioned properties fails (see the proof of Proposition \ref{prop:example:construction} for details), we can assume without 
any loss of generality that, for any $\widehat{\omega} \in \widehat{\Omega}$, 
the variation of $\widehat{\boldsymbol W}(\widehat{\omega})$ is 
dominated by
 $\widehat{v}^{\prime}(\widehat{\omega})$ 
 and that the latter 
 is finite for all $\widehat{\omega} \in \widehat{\Omega}$. Also, 
 we can assume that Chen's relationship, see
 \eqref{eq:chen}, is satisfied for every $\widehat{\omega} \in \widehat{\Omega}$.
\smallskip

\textbf{\textbf{3e.}}
We let $\widehat{w}^{\prime}(s,t,\widehat{\omega}) :=
\widehat{v}^{\prime}(s,t,\widehat{\omega}) 
 + C (t-s)$, where $C$ is the Lipschitz constant in 
 the second item of the assumption. 
 Clearly, 
$\widehat{w}^{\prime}$ satisfies the first tail estimate in \eqref{EqTailAssumptions}. Moreover, if we associate with $\widehat{w}^{\prime}$ the (random) local accumulation $\widehat{N}'(\cdot,\alpha):=N_{(\widehat{w}^{\prime})^{1/p}}([0,T],\alpha)$ as in \eqref{eq:N:s:t:omega}, then {we must have{\footnote{  The proof is as follows.
Call $N'=\widehat{N}'(\cdot,\alpha)$.
 Without any loss of generality, we may assume $N' \geq 2$.
Define $(t_{i}:=\tau_{i}^{\varpi}(0,\alpha))_{i=0,\cdots,N'-1}$
 as in 
 \eqref{eq:stopping:times}, 
 with $\varpi = (\widehat{w}^{\prime})^{1/p}$,  
 and let $t_{N'} := T$. 
 We also let $K:=\lfloor N'/2\rfloor \geq 1$. 
 By super-additivity, we have, for any $k \in \{0,\cdots,K-1\}$, 
 $w(t_{2k},t_{2k+2}) \geq 2 \alpha^p$. 
Recall now that, almost surely, 
$\widehat{w}^{n,\prime}$ converges uniformly to $\widehat{w}^{\prime}$ on ${\mathcal S}_{2}^T$. 
Hence, almost surely, 
for $n$ large enough, we must have 
 $\widehat{w}^{n,\prime}(t_{2k},t_{2k+2}) > \alpha^p$, which implies that 
 $N_{(\widehat{w}^{n,\prime})^{1/p}}([0,T],\alpha) \geq K$.}}} 
$\widehat{N}'([0,T],\alpha) \leq 2 \liminf_{n \rightarrow \infty}
N_{(\widehat{w}^{n,\prime})^{1/p}}([0,T],\alpha)+1$, where 
$\widehat w^{n,\prime}(s,t,\widehat \omega) = \widehat v^{n,\prime}(s,t,\widehat \omega) + C (t-s)$. In particular,
$\widehat{N}'(\cdot,\alpha)$ satisfies the second tail estimate in \eqref{EqTailAssumptions} {(for possible new constants 
$c_{2}(\alpha)$ and $\epsilon_{2}(\alpha)$)}. Obviousy,
the same holds for the counter $\widebar{N}'(\cdot,\alpha)$ associated with $\widebar{v}'(\cdot)$. In the end, $\widehat{\boldsymbol W}(\cdot)$ satisfies all the requirements of {Theorems \ref{main:thm:existence:small:time} and \ref{ThmMain}}.
\medskip

\textbf{\textbf{Step 4.}}

\textbf{\textbf{4a.}} For each $n \geq 0$, we define $\delta_{x} \widehat X^n(\cdot)$ and $R^{\widehat X^n}(\cdot)$ as
\begin{equation*}
\begin{split}
&\delta_{x} \widehat X^n_{t}(\widehat \omega) := \textrm{\rm F}\bigl(\widehat X^n_{t}(\widehat \omega),{\mathcal L}(X^n_{t})\bigr),
 \ \widehat R^{\widehat X^n}_{s,t}(\widehat \omega) := \widehat X^n_{t}(\widehat \omega) - \widehat X^n_{s}(\widehat \omega) - \delta_{x} \widehat X^n_{s}(\widehat \omega) \widehat W^n_{s,t}(\widehat \omega), \end{split}
\end{equation*} 
$(s,t) \in {\mathcal S}_{2}^T$,
$\widehat{\omega}
\in \widehat{\Omega}$,
from which we easily deduce that $\big(\delta_{x} \widehat X^n(\cdot),\widehat{R}^{\widehat X^n}(\cdot)\big)_{n \geq 0}$ converges with probability to $1$ to $\big(\delta_{x} \widehat X(\cdot),\widehat{R}^{\widehat{X}}(\cdot)\big)$ defined as 
\begin{equation*}
\begin{split}
&\delta_{x} \widehat X_{t}(\widehat \omega) := \textrm{\rm F}\bigl(\widehat X_{t}(\widehat \omega),{\mathcal L}(\widehat X_{t})\bigr),
\quad \widehat R^{\widehat X}_{s,t}(\widehat \omega) := \widehat X_{t}(\widehat \omega) - \widehat X_{s}(\widehat \omega) - \delta_{x} \widehat X_{s}(\widehat \omega) \widehat W_{s,t}(\widehat \omega), 
\end{split} 
\end{equation*} 
$(s,t) \in {\mathcal S}_{2}^T$,
$\widehat{\omega}
\in \widehat{\Omega}$.
In order to pass to the limit in the measure argument of $\textrm{\rm F}$, we use the fact 
that, for any $t \in [0,T]$, $({\mathcal L}(X^n_{t}))_{n \geq 0}$ converges in the weak sense to
${\mathcal L}(\widehat X_{t})$. By 
the uniform integrability property 
{\textbf{2b}}, the convergence also holds in $2$-Wasserstein distance $d_{2}$.
By continuity of $\textrm{\rm F}$ with respect to $d_{2}$, we easily conclude.
\smallskip

\textbf{\textbf{4b.}}
By the second step,   $\big(\PP_{n} \circ (\vvvert X^n(\cdot) \vvvert_{[0,T],w^{n,\prime},p'})^{-1}\big)_{n \geq 0}$ is tight in $\RR$, where  we take 
 $w^{n,\prime}(s,t,\omega_{n}) = 
 v^{n,\prime}(s,t,\omega_{n}) + C(t-s)$, 
 for the same $C$ as in
 \textbf{\textbf{3e}}. {Hence, we can add a new coordinate
 to the almost surely converging subsequence 
 \eqref{eq:cv:n}
 inherited from 
 Skorokhod theorem.
 This new coordinate represents 
$(\vvvert X^n(\cdot) \vvvert_{[0,T],w^{n,\prime},p'})_{n \geq 0}$. In fact, 
since $\PP_{n} \circ \bigl(X^n(\cdot), \delta_{x} X^n(\cdot),R^{X^n}(\cdot),v^{n,\prime}(\cdot)\bigr)^{-1}$ coincides with $\widehat{\PP} \circ \bigl(\widehat{X}^n(\cdot),
\delta_{x} \widehat{X}^n(\cdot),\widehat{R}^{\widehat{X}^n}(\cdot),\widehat{v}^{n,\prime}(\cdot)
\bigr)^{-1}$ for each $n \geq 0$, the new coordinate in the Skorokhod subsequence may be chosen as $\big(\vvvert  \widehat X^n(\cdot) \vvvert_{[0,T],\widehat w^{n,\prime},p'}\big)_{n \geq 0}$ itself, where, as before,
$\widehat w^{n,\prime}(s,t,\widehat \omega) = \widehat v^{n,\prime}(s,t,\widehat \omega) + C (t-s)$. We thus assume that the latter sequence is almost surely convergent.}
Moreover, identity in law of $\big(W^n(\cdot),X^n(\cdot)\big)$ under $\PP_{n}$ and of $\big(\widehat{W}^n(\cdot),\widehat{X}^n(\cdot)\big)$ under $\widehat{\PP}$ {also says that}, for $\widehat{\PP}$-a.e. $\widehat{\omega} \in \widehat{\Omega}$ and any $(s,t) \in {\mathcal S}_{2}^T$, 
$\vert \widehat X^n_{s,t}(\widehat \omega) \vert \leq \bigl\vvvert \widehat X^n(\widehat \omega) \bigr\vvvert_{[0,T],\widehat w^{n,\prime},p'} \, \bigl( \widehat w^{n,\prime}(s,t,\widehat \omega) \bigr)^{1/p'}$. 
By  {\eqref{eq:cv:n}} and \textbf{\textbf{3c}}, we get, for $\widehat{\mathbb P}$-a.e. $\widehat{\omega} \in \widehat{\Omega}$, for all $(s,t) \in {\mathcal S}_{2}^T$,
$$\vert \widehat X_{s,t}(\widehat \omega) \vert \leq
\bigl( \lim_{n \rightarrow \infty} \,
 \bigl\vvvert \widehat X^n(\widehat \omega) \bigr\vvvert_{[0,T],\widehat w^{n,\prime},p'}
 \bigr) \bigl( \widehat w'(s,t,\widehat \omega) \bigr)^{1/p'}.$$ 
Proceeding similarly for $\delta_{x}\widehat X^n(\cdot)$ and $R^{\widehat X^n}(\cdot)$, we deduce that, for $\widehat \PP$-a.e. $\widehat \omega \in \widehat \Omega$, 
\begin{equation*}
\vvvert \widehat X(\widehat \omega) \vvvert_{[0,T],\widehat w',p'} \leq \lim_{n \rightarrow \infty}
\vvvert \widehat X^n(\widehat \omega) \vvvert_{[0,T],\widehat w^{n,\prime},p'},
\end{equation*}
which shows in particular by Fatou's lemma, see step
\textbf{\textbf{2b}}, that
$\bigl\langle \vvvert \widehat X(\cdot) \vvvert_{[0,T],\widehat w',p'}
\bigr\rangle_{8} < \infty$. Although $\widehat{v}^{\prime}(\widehat{\omega})$
(and thus
$\widehat{w}^{\prime}(\widehat{\omega})$) 
 is not associated with $\widehat{\boldsymbol W}(\widehat{\omega})$
 through \eqref{eq:v}, we shall say that, for a.e. $\widehat{\omega} \in \widehat{\Omega}$, 
$\widehat{X}(\widehat{\omega})$ is an  
$\widehat \omega$-controlled trajectory for the rough set-up {$\widehat{\boldsymbol W}(\cdot)$}. 
{(We come back to this point right below.)}
\medskip

\textbf{\textbf{Step 5.}}   

\textbf{\textbf{5a.}} So far, we have constructed $\bigl(\widehat X(\widehat \omega);\textrm{\rm F}\bigl(\widehat X(\widehat \omega),\widehat X(\cdot)\bigr);0\bigr)$ as an $\widehat \omega$-controlled trajectory for the limit rough set-up 
{$\widehat{\boldsymbol W}(\cdot)$}, but for $\widehat \omega$ in a full event $\widehat \Omega' \subset \widehat\Omega$. For free, we can modify the definition of $\widehat X(\widehat \omega)$ for $\widehat \omega \in \widehat \Omega \setminus \widehat \Omega'$ and define $\delta_{x} \widehat X(\widehat \omega)$ accordingly so that $\bigl(\widehat X(\widehat \omega);\delta_{x} \widehat X(\widehat \omega);0\bigr)$ is an $\widehat \omega$-controlled trajectory for any $\widehat \omega$. Then,  $\big(\widehat X(\widehat \omega)\big)_{\widehat \omega \in \widehat \Omega}$ forms a random controlled trajectory. 
\smallskip

\textbf{\textbf{5b.}} In order to conclude, it remains to identify $\bigl(\widehat X(\widehat \omega);\textrm{\rm F}\bigl(\widehat X(\widehat \omega),\widehat X(\cdot)\bigr);0\bigr)$, for $\widehat \PP$-a.e. $\widehat \omega \in \widehat \Omega$, with 
$
\Gamma_{\widehat{\boldsymbol W}}\Bigl(\widehat X(\widehat \omega);\textrm{\rm F}\bigl(\widehat X(\widehat \omega),\widehat X(\cdot)\bigr);0\Bigr),
$ 
where the index $\widehat{\boldsymbol W}$ in $\Gamma_{\widehat{\boldsymbol W}}$ is to emphasize the rough set-up upon which the map $\Gamma$ in Definition \ref{DefnGamma} is constructed. To do so, we recall from \eqref{eq:remainder:integral} the 
expansion {(see also \eqref{exp:measurability})}
\begin{flalign}
&X_{t_{i}}^{n}(\omega_{n}) \nonumber
= X_{0}^n(\omega_{n}) + \sum_{j=1}^i \textrm{F}\bigl(X_{t_{j-1}}^n(\omega_{n}), {\mathcal L}( X_{t_{j-1}}^n)\bigr) W^n_{t_{j-1},t_{j}}
(\omega_{n}) \nonumber
\\
&\hspace{15pt} + \sum_{j=1}^i 
\partial_{x} \textrm{F}\bigl(X_{t_{j-1}}^n(\omega_{n}),{\mathcal L}( X_{t_{j-1}}^n)\bigr)
\Bigl( 
\textrm{F}\bigl(X_{t_{j-1}}^n(\omega_{n}),{\mathcal L}( X_{t_{j-1}}^n)\bigr)
 \WW^n_{t_{j-1},t_{j}}(\omega_{n})\Bigr)
 \label{eq:Xn:3/p}
\\
&\hspace{15pt} + \sum_{j=1}^i \Bigl\langle D_{\mu} \textrm{F}\bigl(X_{t_{j-1}}^n(\omega_{n}),{\mathcal L}( X_{t_{j-1}}^n)\bigr)\bigl(
X_{t_{j-1}}^n(\cdot)\bigr) 
\Bigl(
\textrm{F}\bigl(X_{t_{j-1}}^n(\cdot),{\mathcal L}( X_{t_{j-1}}^n)\bigr)
\WW^{n,\indep}_{t_{j-1},t_{j}}(\cdot,\omega_{n}) \Bigr) \Bigr\rangle
\nonumber
\\
&\hspace{15pt} +\sum_{j=1}^i S_{t_{j-1},t_{j}}^{n}(\omega_{n}), 
\nonumber
\end{flalign}
that holds true for any $\omega_{n} \in \Omega_{n}$, any $n \geq 0$ and any subdivision
$0=t_{0} < t_{1} < \cdots <t_{K}=T$, with $K \geq 1$, and with
(see 
Theorem 
\ref{thm:integral},
Proposition 
\ref{prop:chaining}
and 
\textbf{\textbf{2b}})
\begin{equation*}
\bigl\vert
S_{t_{j-1},t_{j}}^{n}(\omega_{n})
\bigr\vert 
\leq 
C 
\Bigl( 1 + 
\vvvert X^n(\omega_{n}) \vvvert_{\textcolor{black}{[0,T],w^{n,\prime},p'}}^2 
\Bigr)
\, w^{n,\prime}(t_{j-1},t_{j},\omega_{n})^{3/p'}.
\end{equation*}
In order to pass to the limit in 
\eqref{eq:Xn:3/p}, we consider a 
non-negative valued bounded continuous function
$g$
on ${\mathcal C}([0,T];\RR^m) \times
 {\mathcal C}({\mathcal S}_{2}^T;\RR^m \otimes \RR^m)
 \times 
 {\mathcal C}({\mathcal S}_{2}^T;\RR)
\times 
 {\mathcal C}([0,T];\RR^d)$. We then multiply both sides of 
\eqref{eq:Xn:3/p} by $g\bigl(W^n(\omega_{n}),{\mathbb W}^n(\omega_{n}),v^{n,\prime}(\omega_{n}),X^n(\omega_{n})\bigr)$
and integrate $\omega_{n}$ with respect to $\PP_{n}$. 
It is absolutely obvious that
\begin{equation*}
\begin{split}
&\lim_{n \rightarrow \infty}
\EE_{n}
\Bigl[ g\bigl(W^n(\cdot),{\mathbb W}^n(\cdot),v^{n,\prime}(\cdot),X^n(\cdot)\bigr)
X^n_{t_{i}}(\cdot) 
\Bigr] 
= \widehat{\mathbb E}
\Bigl[ g \bigl( \widehat W(\cdot), \widehat{\mathbb W}(\cdot),\widehat v^{\prime}(\cdot), \widehat X(\cdot) \bigr) 
\widehat{X}_{t_{i}}(\cdot)
\Bigr],
\end{split}
\end{equation*}
and similarly with $t_{i}$ replaced by $0$. In the same way,
\begin{equation*}
\begin{split}
&\lim_{n \rightarrow \infty}
\EE_{n}
\Bigl[ g\bigl(W^n(\cdot),{\mathbb W}^n(\cdot),v^{n,\prime}(\cdot),X^n(\cdot)\bigr)
\textrm{F}\bigl(X_{t_{j-1}}^n(\cdot), {\mathcal L}( X_{t_{j-1}}^n)\bigr) W^n_{t_{j-1},t_{j}}(\cdot)
\Bigr] 
\\
&= \widehat{\mathbb E}
\Bigl[ g \bigl( \widehat W(\cdot), \widehat{\mathbb W}(\cdot),\widehat{v}^{\prime}(\cdot), \widehat X(\cdot) \bigr) 
\textrm{F}\bigl(\widehat X_{t_{j-1}}(\cdot), {\mathcal L}(\widehat X_{t_{j-1}})\bigr) \widehat W_{t_{j-1},t_{j}}(\cdot)
\Bigr],
\end{split}
\end{equation*}
and similarly for the terms on the second line. As for the fifth term in the right-hand side, we have
\begin{equation*}
\begin{split}
&\limsup_{n \rightarrow \infty}
\EE_{n}
\Bigl[ g\bigl(W^n(\cdot),{\mathbb W}^n(\cdot),v^{n,\prime}(\cdot),X^n(\cdot)\bigr)
\,  {\bigl\vert 
S^n_{t_{j-1},t_{j}}(\cdot)
\bigr\vert}
\Bigr] 
\\
&\leq C \limsup_{n \rightarrow \infty}
\EE_{n}
\Bigl[ g\bigl(W^n(\cdot),{\mathbb W}^n(\cdot),v^{n,\prime}(\cdot),X^n(\cdot) \bigr)
\\
&\hspace{150pt} \times \Bigl( 1 + 
\vvvert X^n(\cdot) \vvvert_{\textcolor{black}{[0,T],w^{n,\prime},p'}}^2 
\Bigr)
 w^{n,\prime}(t_{j-1},t_{j},\cdot)^{3/p'}
\Bigr].
\end{split}
\end{equation*}
Transferring the right-hand side into an expectation on 
$(\widehat{\Omega},\widehat{\mathcal F},\widehat{\mathbb P})$
and using obvious uniform integrability properties, 
see \textbf{\textbf{2b}}, we deduce from 
\textbf{\textbf{4b}} that
\begin{equation*}
\begin{split}
&\limsup_{n \rightarrow \infty}
\EE_{n}
\Bigl[ g\bigl(W^n(\cdot),{\mathbb W}^n(\cdot),v^{n,\prime}(\cdot),X^n(\cdot)\bigr)
\vert 
S^n_{t_{j-1},t_{j}}(\cdot)
\vert
\Bigr] 
\\
&\leq C 
\widehat \EE
\Bigl[ g\bigl(\widehat W(\cdot),\widehat{\mathbb W}(\cdot),\widehat v'(\cdot),\widehat X(\cdot)\bigr)
\Bigl( 1 + 
\lim_{n \rightarrow \infty}\vvvert \widehat X^n(\cdot) \vvvert_{\textcolor{black}{[0,T],\widehat w^{n,\prime},p'}}^2 
\Bigr)
\, \widehat w'(t_{j-1},t_{j},\cdot)^{3/p'}
\Bigr].
\end{split}
\end{equation*} 
Of course, the most difficult term to treat in 
\eqref{eq:Xn:3/p} is the fourth one in the right-hand side. This can be done by using Fubini's theorem:
\begin{equation*}
\begin{split}
&\int_{\Omega_{n}} d\PP_{n}(\omega_{n}) \Bigl[ 
g\bigl(W^n(\omega_{n}),{\mathbb W}^n(\omega_{n}),v^{n,\prime}(\omega_{n}), X^n(\omega_{n})\bigr)
\\
&\hspace{3pt} \cdot \Bigl\langle D_{\mu} \textrm{F}\bigl(X_{t_{j-1}}^n(\omega_{n}),{\mathcal L}(X_{t_{j-1}}^n)\bigr)\bigl(
X_{t_{j-1}}^n(\cdot)\bigr) 
\Bigl(
\textrm{F}\bigl(X_{t_{j-1}}^n(\cdot),{\mathcal L}(X_{t_{j-1}}^n)\bigr)
\WW^{n,\indep}_{t_{j-1},t_{j}}(\cdot,\omega_{n}) \Bigr) \Bigr\rangle
\Bigr] 
\\
&=\int_{\Omega_{n}^2} d\PP_{n}^{\otimes 2}(\omega_{n},\omega_{n}') \Bigl[ 
g\bigl(W^n(\omega_{n}),{\mathbb W}^n(\omega_{n}),v^{n,\prime}(\omega_{n}),X^n(\omega_{n})\bigr)
\\
&\hspace{3pt} \cdot  D_{\mu} \textrm{F}\bigl(X_{t_{j-1}}^n(\omega_{n}),{\mathcal L}(X_{t_{j-1}}^n)\bigr)\bigl(
X_{t_{j-1}}^n(\omega_{n}')\bigr)   \Bigl(
\textrm{F}\bigl(X_{t_{j-1}}^n(\omega_{n}'),{\mathcal L}(X_{t_{j-1}}^n)\bigr)
\WW^{n,\indep}_{t_{j-1},t_{j}}(\omega_{n}',\omega_{n}) \Bigr) 
\Bigr]
\\
&= \widehat{\EE}\Bigl[ 
g\bigl(\widehat W^{n,1}(\cdot),{\widehat {W}^{n,1,1}}(\cdot),\widehat v^{1,n,\prime}(\cdot),\widehat X^{n,1}(\cdot)\bigr)
\\
&\hspace{3pt} \cdot  D_{\mu} \textrm{F}\bigl(\widehat X_{t_{j-1}}^{n,1}(\cdot),{\mathcal L}(X_{t_{j-1}}^n)\bigr)\bigl(
\widehat X_{t_{j-1}}^{n,2}(\cdot)\bigr)   \Bigl(
\textrm{F}\bigl(\widehat X_{t_{j-1}}^{n,2}(\cdot),{\mathcal L}(X_{t_{j-1}}^n)\bigr)
{\widehat  W^{n,2,1}_{t_{j-1},t_{j}}}(\cdot) \Bigr) 
\Bigr].
\end{split}
\end{equation*}
We now use 
\eqref{eq:cv:n} in order to pass to the limit. The only slight difficulty is that we must ensure that the 
regularity conditions satisfied by $D_{\mu} \textrm{\rm F}$ are compatible with the 
almost sure convergence property \eqref{eq:cv:n}. Recall indeed 
that the continuity property \textbf{\textbf{Regularity assumptions  1}} 
is formulated in $\LL_{2}$. By \cite[Proposition 5.36]{CarmonaDelarue_book_I}, this implies that the mapping $v \mapsto D_{\mu} \textrm{\rm F}(x,\mu)(v)$
is Lipschitz continuous, uniformly in $x$ and $\mu$. The latter guarantees that, for a.e. 
$\widehat \omega \in \widehat \Omega$, 
\begin{equation*}
\lim_{n \rightarrow \infty}
D_{\mu} \textrm{F}\bigl(\widehat X_{t_{j-1}}^{n,1}(\widehat \omega),{\mathcal L}(X_{t_{j-1}}^n)\bigr)\bigl(
\widehat X_{t_{j-1}}^{n,2}(\widehat \omega)\bigr)
=
D_{\mu}\textrm{F}\bigl(\widehat X_{t_{j-1}}^{1}(\widehat \omega),{\mathcal L}(\widehat X_{t_{j-1}})\bigr)\bigl(
\widehat X_{t_{j-1}}^{2}(\widehat \omega)\bigr).
\end{equation*}
So, the limit of the summand on the 
fourth line of 
\eqref{eq:Xn:3/p} is
\begin{equation*}
\begin{split}
&\widehat{\EE}\Bigl[ 
g\bigl(\widehat W^{1}(\cdot),\widehat W^{1,1}(\cdot),\widehat v^{1,\prime}(\cdot),\widehat X^{1}(\cdot)\bigr)
\\
&\hspace{3pt} \cdot  D_{\mu} \textrm{F}\bigl(\widehat X_{t_{j-1}}^{1}(\cdot),{\mathcal L}(\widehat X_{t_{j-1}}^1)\bigr)\bigl(
\widehat X_{t_{j-1}}^{2}(\cdot)\bigr)   \Bigl(
\textrm{F}\bigl(\widehat X_{t_{j-1}}^{2}(\cdot),{\mathcal L}(\widehat X_{t_{j-1}}^1)\bigr)
\widehat \WW^{2,1}_{t_{j-1},t_{j}}(\cdot) \Bigr) 
\Bigr],
\end{split}
\end{equation*}
and our reconstruction of the limiting set-up permits to rewrite it in the form
\begin{equation*}
\begin{split}
&\int_{\widehat \Omega} d\widehat \PP (\widehat \omega) \Bigl[ 
g\bigl(\widehat W(\widehat \omega),\widehat{\mathbb W}(\widehat \omega),\widehat{v}^{\prime}(\widehat \omega),\widehat X(\widehat \omega)\bigr)
\\
&\hspace{3pt} \cdot \Bigl\langle D_{\mu} \textrm{F}\bigl(\widehat X_{t_{j-1}}(\widehat \omega),
{\mathcal L}(\widehat X_{t_{j-1}})\bigr)\bigl(
\widehat X_{t_{j-1}}(\cdot)\bigr) 
\Bigl(
\textrm{F}\bigl(\widehat X_{t_{j-1}}(\cdot),{\mathcal L}(\widehat X_{t_{j-1}})\bigr)
\widehat \WW^{\indep}_{t_{j-1},t_{j}}(\cdot,\widehat \omega) \Bigr) \Bigr\rangle
\Bigr].
\end{split}
\end{equation*} 
Importantly, since the limiting set-up is strong, the term in bracket in the last line is 
$\sigma\{ \widehat W,\widehat X\}$-measurable. 
\smallskip

\textbf{\textbf{5c.}}
Let now 
\begin{equation*}
\begin{split}
&{\mathcal J}(\widehat{\omega}) 
:= 
\widehat X_{t_{i}}(\widehat \omega) 
- \widehat X_{0}(\widehat \omega) 
- \sum_{j=1}^i \textrm{F}\bigl(\widehat X_{t_{j-1}}(\widehat \omega), {\mathcal L}(\widehat X_{t_{j-1}})\bigr) 
\widehat W_{t_{j-1},t_{j}}
(\widehat \omega) \nonumber
\\
&\hspace{5pt} - \sum_{j=1}^i 
\partial_{x} \textrm{F}\bigl(\widehat X_{t_{j-1}}(\widehat \omega),{\mathcal L}(\widehat X_{t_{j-1}})\bigr)
\Bigl( 
\textrm{F}\bigl(\widehat X_{t_{j-1}}(\widehat \omega),{\mathcal L}( \widehat X_{t_{j-1}})\bigr)
 \widehat \WW_{t_{j-1},t_{j}}(\widehat \omega)\Bigr)
\\
&\hspace{5pt} - \sum_{j=1}^i \Bigl\langle D_{\mu} \textrm{F}\bigl(\widehat X_{t_{j-1}}(\widehat \omega),
{\mathcal L}(\widehat  X_{t_{j-1}})\bigr)\bigl(
\widehat X_{t_{j-1}}(\cdot)\bigr) 
\Bigl(
\textrm{F}\bigl(\widehat X_{t_{j-1}}(\cdot),{\mathcal L}(\widehat  X_{t_{j-1}})\bigr)
\widehat \WW^{\indep}_{t_{j-1},t_{j}}(\cdot,\widehat \omega) \Bigr) \Bigr\rangle.
\end{split}
\end{equation*}
By the conclusion of \textbf{\textbf{5b}}, it is $\sigma\{ \widehat W,\widehat{\mathbb W},\widehat X\}$-measurable and it satisfies, for 
any 
$g$
 as in the previous step,
 \begin{equation*}
 \begin{split}
& \widehat{\mathbb E} \Bigl[  g\bigl(\widehat W(\cdot),\widehat {\mathbb W}(\cdot),
\widehat{v}^{\prime}(\cdot),
\widehat X(\cdot)\bigr) \, {\bigl\vert 
 \widehat{\mathcal J}(\cdot)
 \bigr\vert }
 \Bigr] 
 \\
&\hspace{5pt} \leq 
 \widehat \EE
\Bigl[ g\bigl(\widehat W(\cdot),\widehat{\mathbb W}(\cdot),\widehat{v}^{\prime}(\cdot),\widehat X(\cdot)\bigr)
\Bigl( 1 + 
\lim_{n \rightarrow \infty}\vvvert \widehat X^n(\cdot) \vvvert_{\textcolor{black}{[0,T],\widehat w^{n,\prime},p'}}^2 
\Bigr)
\, \sum_{j=1}^i \widehat w'(t_{j-1},t_{j},\cdot)^{3/p'}
\Bigr].
\end{split}
 \end{equation*}
 Therefore, for $\widehat{\PP}$-a.e. $\widehat{\omega}$,
 \begin{equation*}
 \bigl\vert {\mathcal J}(\widehat \omega) \bigr\vert \leq C
 \Bigl( \sum_{j=1}^i
  \widehat w'(t_{j-1},t_{j},\widehat{\omega})^{3/p'}
  \Bigr)
 \widehat{\EE} \Bigl[ 
\lim_{n \rightarrow \infty}\vvvert \widehat X^n(\cdot) \vvvert_{\textcolor{black}{[0,T],\widehat w^{n,\prime},p'}}^2 
\, 
\vert \, \sigma\bigl\{\widehat W,\widehat {\mathbb W},\widehat{v}^{\prime},\widehat X\bigr\}
\Bigr](\widehat{\omega}). 
 \end{equation*}
By super-additivity   of $\widehat{w}^{\prime}$, $\widehat X_{t}(\widehat \omega)$ and 
$\widehat{X}_{0}(\widehat \omega) + \int_{0}^t \textrm{\rm F}(\widehat X_{s}(\omega),\widehat X_{s}(\cdot)) d \widehat{\boldsymbol W}_{s}(\omega)$ coincide. Note that this is true 
although the functionals 
$\widehat{v}^{\prime}(\widehat{\omega})$
and
$\widehat{w}^{\prime}(\widehat{\omega})$
that control the 
variations of $\widehat X$ are not associated with $\widehat{\boldsymbol W}(\widehat{\omega})$
 through \eqref{eq:v}; 
 the sole fact that $\widehat{v}^{\prime}(\widehat{\omega})$ dominates 
 $\widebar{v}^{\prime}(\widehat{\omega})$ (which is associated with 
 $\widehat{\boldsymbol W}(\widehat{\omega})$
 through \eqref{eq:v})
 and {that 
 $\widehat{w}^{\prime}(\widehat{\omega})$
 satisfy 
 \eqref{eq:w:s:t:omega:ineq}
 and 
\eqref{eq:useful:inequality:wT}}
 suffices.

The domination of 
$\widebar{v}^{\prime}(\widehat{\omega})$
 by $\widehat{v}^{\prime}(\widehat{\omega})$, the latter satisfying the tail properties in Theorem 
 \ref{main:thm:existence:small:time},
 suffices to duplicate the uniqueness argument. 
 In words, $\widehat X(\cdot)$ is the solution to the  mean field rough equation 
 driven by $\widehat{\boldsymbol W}$ and, by uniqueness in law, $\widehat X(\cdot)$ has the same law as $X(\cdot)$. 
\end{proof}

%
%

We used the following lemma in the proof of Theorem \ref{ThmContinuity}. 

\begin{lem}
\label{lem:aux:1}
For a separable Banach space $(E,| \cdot|)$, call ${\mathcal C}_{0}^{p-\textrm{\rm var}}({\mathcal S}_{2}^T;E)$ the space of continuous paths $G$ from ${\mathcal S}_{2}^T$ into $E$ that are null on the diagonal, i.e. $G_{t,t}=0$ for all $t \in [0,T]$, and have a finite $p$-variation, i.e.
\begin{equation*}
\|G \|_{[0,T],p-\textrm{\rm var}}^p = \sup_{0 \leq t_{1} < \cdots < t_{N}=T} 
\sum_{i=0}^{N-1}
\vert G_{t_{i},t_{i+1}} \vert^p < \infty.
\end{equation*} 
For each $n \geq 0$, let 
$Z^{n}=(Z^{n}_{s,t})_{s,t \in {\mathcal S}_{2}^T}$ be a process defined on $(\Omega_{n},{\mathcal F}_{n},{\mathbb P}_{n})$ with trajectories in ${\mathcal C}_{0}^{p-\textrm{\rm var}}\big({\mathcal S}_{2}^T;E\big)$. Assume that the family of distributions $\big(\PP_{n} \circ (Z^n)^{-1}\bigr)_{n \geq 0}$
   is tight in ${\mathcal C}({\mathcal S}_{2}^T;E)$, and that 
   the family of distributions  $\big(\PP_{{n}} \circ (\| Z^n \|_{[0,T],p-\textrm{\rm var}})^{-1}\big)_{n \geq 0}$ is tight in $\RR$.

Then, for $p'>p$, the family of distributions $\big(\PP_{{n}} \circ ({\mathcal S}_{2}^T \ni (s,t) \mapsto \| Z^n \|_{[s,t],p'-\textrm{\rm var}} \in \RR)^{-1}\big)_{n \geq 0}$ is tight in ${\mathcal C}({\mathcal S}_{2}^T;\RR)$. In particular, for any $\epsilon >0$, there exists $\delta >0$, such that
\begin{equation*}
\PP_{{n}} \biggl( \sup_{ (s,t) \in {\mathcal S}_{2}^T : t-s \leq \delta} \| Z^n \|_{[s,t],p'-\textrm{\rm var}}  > \varepsilon
\biggr) < \varepsilon.  
\end{equation*}
\end{lem}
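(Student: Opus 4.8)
\textbf{Proof plan for Lemma \ref{lem:aux:1}.}

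The plan is to use a classical interpolation inequality to trade an improved variation exponent against the modulus of continuity, then run a standard tightness argument in $\mathcal C(\mathcal S_2^T;\mathbb R)$. First I would recall the interpolation bound: for any continuous $G \in \mathcal C_0^{p-\mathrm{var}}(\mathcal S_2^T;E)$ and any $p'>p$, one has, for all $(s,t)\in\mathcal S_2^T$,
\begin{equation*}
\|G\|_{[s,t],p'-\mathrm{var}}
\leq \Bigl( \sup_{s\le u\le v\le t} |G_{u,v}| \Bigr)^{1-p/p'}
\,\|G\|_{[s,t],p-\mathrm{var}}^{p/p'}.
\end{equation*}
This is immediate from $\sum_i |G_{t_i,t_{i+1}}|^{p'} \le (\sup |G_{u,v}|)^{p'-p}\sum_i|G_{t_i,t_{i+1}}|^{p}$ applied to any dissection of $[s,t]$. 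Applying it with $Z^n$ in place of $G$ gives, for $t-s\le\delta$,
\begin{equation*}
\|Z^n\|_{[s,t],p'-\mathrm{var}}
\leq \varpi_{Z^n}(\delta)^{1-p/p'}\,\|Z^n\|_{[0,T],p-\mathrm{var}}^{p/p'},
\end{equation*}
where $\varpi_{Z^n}(\delta) := \sup\{ |Z^n_{u,v}| : (u,v)\in\mathcal S_2^T,\ v-u\le\delta\}$ is the oscillation of $Z^n$ near the diagonal (note $Z^n_{u,u}=0$, so this is comparable to a genuine modulus of continuity).

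Next I would turn the two tightness hypotheses into quantitative controls. Tightness of $(\PP_n\circ(\|Z^n\|_{[0,T],p-\mathrm{var}})^{-1})_{n\ge0}$ in $\mathbb R$ gives, for any $\eta>0$, a constant $M=M(\eta)$ with $\PP_n(\|Z^n\|_{[0,T],p-\mathrm{var}}>M)\le\eta$ for all $n$. Tightness of $(\PP_n\circ(Z^n)^{-1})_{n\ge0}$ in $\mathcal C(\mathcal S_2^T;E)$ gives, via the Arzel\`a--Ascoli characterisation of relative compactness there, that $\lim_{\delta\searrow0}\sup_{n\ge0}\PP_n(\varpi_{Z^n}(\delta)>\epsilon')=0$ for every $\epsilon'>0$. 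Combining the displayed interpolation bound with these two facts: given $\epsilon>0$ and $\eta>0$, first fix $M=M(\eta)$; then choose $\epsilon'$ so that $(\epsilon')^{1-p/p'}M^{p/p'}\le\epsilon$; then choose $\delta$ so that $\sup_n\PP_n(\varpi_{Z^n}(\delta)>\epsilon')\le\eta$. On the complement of the two bad events (total probability $\le2\eta$) one has $\sup_{t-s\le\delta}\|Z^n\|_{[s,t],p'-\mathrm{var}}\le\epsilon$, which yields the final displayed inequality of the lemma after relabelling $2\eta$ as $\epsilon$ (and shrinking $\epsilon$, $\delta$ as needed).

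Finally, to get the stated tightness of $(\PP_n\circ((s,t)\mapsto\|Z^n\|_{[s,t],p'-\mathrm{var}})^{-1})_{n\ge0}$ in $\mathcal C(\mathcal S_2^T;\mathbb R)$, I would check the two Arzel\`a--Ascoli conditions for the paths $(s,t)\mapsto\|Z^n\|_{[s,t],p'-\mathrm{var}}$: (i) uniform control at a point, which holds since $\|Z^n\|_{[0,T],p'-\mathrm{var}}\le\|Z^n\|_{[0,T],p-\mathrm{var}}$ has tight law by hypothesis; and (ii) uniform control of the modulus of continuity, which follows from superadditivity of $[s,t]\mapsto\|Z^n\|_{[s,t],p'-\mathrm{var}}^{p'}$ together with the near-diagonal estimate just established --- indeed for $(s,t),(s',t')$ close one dominates $|\,\|Z^n\|_{[s,t],p'-\mathrm{var}}-\|Z^n\|_{[s',t'],p'-\mathrm{var}}\,|$ by the $p'$-variation over the short intervals $[s\wedge s',s\vee s']$ and $[t\wedge t',t\vee t']$, which is small with high probability uniformly in $n$ by the previous paragraph. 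Also one should note $(s,t)\mapsto\|Z^n\|_{[s,t],p'-\mathrm{var}}$ is indeed continuous (this uses finiteness of the $p$-variation, hence of the $p'$-variation, and is a standard fact, cf.\ \cite[Proposition 5.8]{FrizVictoirBook} already invoked in the text).

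The main obstacle is the verification of the near-diagonal oscillation control in step (ii), i.e.\ passing from "the $p'$-variation restricted to short intervals is uniformly small in probability" (which the interpolation argument delivers) to a genuine uniform modulus-of-continuity bound for the two-parameter map $(s,t)\mapsto\|Z^n\|_{[s,t],p'-\mathrm{var}}$; this requires carefully splitting a general small increment of $(s,t)$ into at most two one-sided pieces and invoking superadditivity, but it is bookkeeping rather than a genuine difficulty. Everything else is a routine combination of the interpolation inequality with Arzel\`a--Ascoli.
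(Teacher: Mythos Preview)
Your proposal is correct and follows essentially the same approach as the paper. The paper's proof is a two-line sketch: it cites Proposition~5.28 and Corollary~5.29 of \cite{FrizVictoirBook} (which are precisely the interpolation inequality $\|G\|_{p'-\mathrm{var}}\le(\sup|G|)^{1-p/p'}\|G\|_{p-\mathrm{var}}^{p/p'}$ and the associated compactness statement) for the tightness, and notes that the ``in particular'' follows because $\|z\|_{[t,t],p'-\mathrm{var}}=0$; you have simply written out the details of that same argument.
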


\begin{proof}
The first part is an adaptation of Proposition 5.28 and Corollary 5.29 in \cite{FrizVictoirBook}. 
The second part is a  consequence of the fact that 
$\| z \|_{[t,t],p'-\textrm{\rm var}} = 0$, for $z \in {\mathcal C}_{0}^{p-\textrm{\rm var}}({\mathcal S}_{2}^T;E)$.
\end{proof}

\appendix

\section{Proof of Theorem \ref{thm:cass:litterer:lyons}}
\label{SectionIntegrability}

\label{SubsectionAppendixCLL}

We provide here the proof of Theorem \ref{thm:cass:litterer:lyons}. We follow the proof of Theorem 11.13 in \cite{FrizHairer}, see also the proof of Proposition 6.2 in \cite{CassLittererLyons}. Throughout the proof, we use the same notations as in the statement of Theorem \ref{thm:cass:litterer:lyons}.

{Notice first that 
handling the local accumulation 
of $w^{1/p}$ is the same as handling the local accumulation 
of $w$. This amounts to change the argument $\alpha$ into $\alpha^p$ in 
\eqref{eq:N:s:t:omega}. Recall now that 
$w(s,t,\omega)$ is given by 
\eqref{eq:w:s:t:omega}
and 
$v(s,t,\omega)$ therein consists in six different terms, see \eqref{eq:v}.} It is an easy exercice to check that it suffices to control the local accumulation associated with each of these six terms. To make it clear, we have the following property. For a given threshold $\alpha >0$ and for any two {nondecreasing} continuous functions $v_{1} : {\mathcal S}_{2}^T \rightarrow \RR_{+}$ and $v_{2}: {\mathcal S}^T_{2} \rightarrow \RR_{+}$, set
$N_{i}(\alpha) := N_{v_{i}}\bigl([0,T],\alpha\bigr)$, for $1\leq i\leq 2$, 
{and
$N(\alpha) := N_{v_{1}+v_{2}}\bigl([0,T],\alpha\bigr)$}; see \eqref{eq:N:s:t:omega} for the original definition. Then 
\begin{equation}
\label{eq:N1:N2}
\max \Bigl( N_{1}\left(\frac{\alpha}{2}\Bigr),N_{2}\Bigl(\frac{\alpha}{2}\Bigr) \right) \geq {N(\alpha)}. 
\end{equation}
For sure, the result is true with the first and third terms in \eqref{eq:v} as this fits the original property established in \cite{CassLittererLyons}. Also, it is obviously true for the second and sixth terms since they are completely deterministic. Hence, the only difficulty is to control the local accumulation associated with the fourth and fifth terms.

The strategy is as follows. As we work with Gaussian rough paths, the set-up, as defined in Section \ref{SectionRoughStructure}, is strong. So, we can transfer it to any arbitrarily fixed probability space (provided that the letter is rich enough). Hence, we can choose $\Omega$ as the path space ${\mathcal W}$, see the notation in the statement of Theorem \ref{thm:cass:litterer:lyons}.

We denote by ${\boldsymbol W}(\omega,\omega')$ the enhanced Gaussian rough path associated to 
$
\big(W(\omega),W'(\omega')\big)$ 
along the lines of Example \ref{example:3}, for $\PP^{\otimes 2}$-a.e. $(\omega,\omega') \in \Omega^2$. The second level of ${\boldsymbol W}(\omega,\omega')$ reads
\begin{equation*}
{\boldsymbol W}^{[2]}(\omega,\omega')
:=
\left(
\begin{array}{cc}
\WW(\omega) & {\mathcal I}\bigl(W(\omega),W'(\omega')\big)
\\
{\mathcal I}\bigl(W'(\omega'),W(\omega)\bigr) &\WW(\omega')
\end{array}
\right),
\end{equation*}
where ${\mathcal I}$ is as in Definition \ref{def:strong}, and where we used the same symbol ${\boldsymbol W}$ as in Section \ref{SectionRoughStructure} for the enhanced path although the meaning here is not exactly the same. Here, ${\boldsymbol W}(\omega,\omega')$ is a function of both $\omega$ and $\omega'$ and takes values in $\RR^{2m} \oplus (\RR^{2m})^{\otimes 2}$. Following Section 3 in \cite{CassLittererLyons}, see also (11.5) in \cite{FrizHairer}, we define, for $h \oplus k \in {\mathcal H} \oplus {\mathcal H}$ the translated rough path $(T_{h \oplus k} {\boldsymbol W})(\omega,\omega')$, 
{where, as in Example 
\ref{example:3}, 
${\mathcal H}$ is the underlying Cameron-Martin space}. We then recall that, with probability 1 under $\PP^{\otimes 2}$, 
\begin{equation*}
T_{h \oplus k} {\boldsymbol W}(\omega,\omega') = {\boldsymbol W}(\omega+h,\omega'+k).
\end{equation*}
Following the argument given in Proposition 6.2 in \cite{CassLittererLyons}, see also Lemma 11.4 in \cite{FrizHairer}, we have, for any $h \in {\mathcal H}$ and any $(s,t) \in {\mathcal S}_{2}^T$, 
\begin{equation*}
\begin{split}
\talloblong {\boldsymbol W}(\omega,\omega') \talloblong_{[s,t],p-\textrm{\rm var}}^p &\leq c \, \Bigl( \talloblong T_{h \oplus 0} {\boldsymbol W}(\omega,\omega')
\talloblong^p_{[s,t],p-\textrm{\rm var}} + \| h \|_{[s,t],\varrho-\textrm{\rm var}}^p \Bigr),
\end{split}
\end{equation*}
where we recall that $1/p+1/\varrho >1$ and $c$ only depends on $p$ and $\varrho$, and where
\begin{equation*}
\talloblong {\boldsymbol W}(\omega,\omega') \talloblong_{[s,t],p-\textrm{\rm var}}
:= \| (W,W')(\omega,\omega') \|_{[s,t],p-\textrm{\rm var}}
+ \sqrt{
\| {\boldsymbol W}^{[2]}(\omega,\omega') \|_{[s,t],(p/2)-\textrm{\rm var}}},
\end{equation*}
and similarly for $\talloblong T_{h \oplus 0}{\boldsymbol W}(\omega,\omega') \talloblong_{[s,t],p-\textrm{\rm var}}$. Taking the power $q$, allowing the constant $c$ to depend on $q$ and integrating with respect to $\omega'$, we get
\begin{equation*}
\Bigl\langle \| {\mathbb W}^{\indep}(\omega,\cdot) \|_{[s,t],(p/2)-\textrm{\rm var}}^{p/2} \Bigr\rangle_{q} \leq c \Bigl( 
\Bigl\langle
\talloblong T_{h \oplus 0} {\boldsymbol W}(\omega,\cdot)
\talloblong^p_{[s,t],p-\textrm{\rm var}}
\Bigr\rangle_{q}
 +  \| h \|_{[s,t],\varrho-\textrm{\rm var}}^p \Bigr).
\end{equation*}
We now let
\begin{equation*}
\begin{split}
&\talloblong {\boldsymbol W}(\omega,\omega') \talloblong_{[s,t],(1/p)-\textrm{\rm H\"ol}}
\\
&\hspace{15pt} := \| (W,W')(\omega,\omega') \|_{[s,t],(1/p)-\textrm{\rm H\"ol}}
 + \sqrt{
\| {\boldsymbol W}^{[2]}(\omega,\omega') \|_{[s,t],(2/p)-\textrm{\rm H\"ol}}},
\end{split}
\end{equation*}
for the standard H\"older semi-norm of the rough path, see Theorem 11.9 in \cite{FrizHairer}. Then,
\begin{equation*}
\begin{split}
&\Bigl\langle \| {\mathbb W}^{\indep}(\omega,\cdot) \|_{[s,t],(p/2)-\textrm{\rm var}} \Bigr\rangle_{q}^{p/2} 
 \leq c \Bigl( 
\Bigl\langle
\talloblong T_{h \oplus 0} {\boldsymbol W}(\omega,\cdot)
\talloblong^p_{[0,T],(1/p)-\textrm{\rm H\"ol}}
\Bigr\rangle_{q}
(t-s)
 +  \| h \|_{[s,t],\varrho-\textrm{\rm var}}^p \Bigr).
\end{split}
\end{equation*}
Therefore, if 
$ \| h \|_{[s,t],\varrho-\textrm{\rm var}}  \leq 1$, then
\begin{equation*}
\begin{split}
&\Bigl\langle {\mathbb W}^{\indep}(\omega,\cdot) \Bigr\rangle_{q;[s,t],(p/2)-\textrm{\rm var}}^{p/2}
 \leq c \Bigl( 
\Bigl\langle
\talloblong T_{h \oplus 0} {\boldsymbol W}(\omega,\cdot)
\talloblong^p_{[0,T],(1/p)-\textrm{\rm H\"ol}}
\Bigr\rangle_{q}
(t-s)
 +  \| h \|_{[s,t],\varrho-\textrm{\rm var}}^{\varrho} \Bigr).
\end{split}
\end{equation*}
Observe that if the left-hand side is equal to or less than {$\alpha$}, the above statement remains true even if $ \| h \|_{[s,t],\varrho-\textrm{\rm var}} > 1$; it suffices to change the constant $c$ accordingly. Define now $N([0,T],\omega,\alpha) := N_{\varpi}([0,T],\alpha)$, 
when 
${\varpi(s,t)} = \bigl\langle {\mathbb W}^{\indep}(\omega,\cdot) \bigr\rangle_{q;[s,t],(p/2)-\textrm{\rm var}}^{p/2}$. 
Then, {by super-additivity of $ \| \cdot \|_{\varrho-\textrm{\rm var}}^{\varrho}$},
\begin{equation*}
N([0,T],\omega,\alpha) {\alpha} \leq  c \Bigl( 
\Bigl\langle
\talloblong T_{h \oplus 0} {\boldsymbol W}(\omega,\cdot)
\talloblong^p_{[0,T],(1/p)-\textrm{\rm H\"ol}}
\Bigr\rangle_{q}
T
 +  \| h \|_{[0,T],\varrho-\textrm{\rm var}}^{\varrho} \Bigr).
 \end{equation*}
By Proposition 11.2 in \cite{FrizHairer}, we get  (for a new value of $c$)
\begin{equation*}
N([0,T],\omega,\alpha) {\alpha} \leq  c \Bigl( 
\Bigl\langle
\talloblong T_{h \oplus 0} {\boldsymbol W}(\omega,\cdot)
\talloblong^p_{[0,T],(1/p)-\textrm{\rm H\"ol}}
\Bigr\rangle_{q}
T
 +  \| h \|_{\mathcal H}^{\varrho} \sqrt{T} \Bigr),
 \end{equation*} 
where $\| \cdot \|_{{\mathcal H}}$ is the standard norm on the reproducing Hilbert space ${\mathcal H}$, see again for instance Appendix D in \cite{FrizVictoirBook}.  We conclude by recalling that the quantity $\big\llangle \talloblong \hspace{-1pt}{\boldsymbol W}(\cdot,\cdot) \hspace{-1pt} \talloblong^p_{[0,T],(1/p)-\textrm{\rm H\"ol}} \big\rrangle_{q}$ is finite, by observing that 
\begin{equation*}
E := \Bigl\{ (\omega,\omega') \in \Omega^2 : T_{h \oplus 0} {\boldsymbol W}(\omega,\omega') = {\boldsymbol W}(\omega+h,\omega'), \quad h \in 
{\mathcal H} \Bigr\},
\end{equation*}
is of full $\PP^{\otimes 2}$-probability measure, see Theorems 11.5 and 11.9 in \cite{FrizHairer}, and then by invoking Theorem 11.7 in \cite{FrizHairer}. 

As for the sub exponential integrability of $w(0,T,\cdot)$, we just proceed with the tails 
of $\Omega \ni \omega \mapsto \big\langle \WW^{\indep}(\omega,\cdot) \big\rangle_{q ; [0,T],p/2-\textrm{\rm var}}^{p/2}$. 
To do so, it suffices to prove that the integral $\int_{\Omega}  \exp\bigl( \bigl\langle 
 \big\| \WW^{\indep}(\omega,\cdot) \big\|_{[0,T],(2/p)-\textrm{\rm H\"ol}}^q \bigr\rangle^{\varepsilon/q}
\bigr) d {\mathbb P}(\omega)$ is finite, for some $\varepsilon >0$. 
We then notice that 
the function $(0,+\infty) \ni x \mapsto \exp \bigl(  x^{\varepsilon/q} \bigr)$,
is convex on $[A_{\varepsilon},\infty)$, for some $A_{\varepsilon} >0$. Therefore, 
Jensen's inequality says that 
it suffices to prove that 
\begin{equation*}
\begin{split}
\int_{\Omega^2}
\exp\bigl(  
A_{\varepsilon}^{\varepsilon/q}
\vee
 \big\| \WW^{\indep}(\omega,\omega') \big\|_{[0,T],(2/p)-\textrm{\rm H\"ol}}^{\varepsilon}
\bigr) d {\mathbb P}(\omega)d {\mathbb P}(\omega') < \infty,
\end{split}
\end{equation*} 
which follows from 
Proposition 6.2 in \cite{CassLittererLyons}
and 
Theorem 11.13 in \cite{FrizHairer}, provided we choose $\varepsilon$ small enough.

\section*{Acknowledgement}
{
I. Bailleul thanks the Centre Henri Lebesgue ANR-11-LABX-0020-01 for its stimulating mathematical research programs, and the U.B.O. for their hospitality, part of this work was written there. I. Bailleul also thanks ANR-16-CE40-0020-01. 
F. Delarue thanks the Institut Universitaire de France.}  

{The authors thank William Salkeld (University of Edinburgh) who found several mistakes in the first version \cite{BCDArxiv} of our work.}


\begin{thebibliography}{AAA}



\bibitem{AlfonsiJourdain}
Alfonsi A., and Jourdain, B. 
\newblock 
Lifted and geometric differentiability of the squared quadratic Wasserstein distance. 
\newblock arxiv.org/abs/1811.07787, 2018. 

\bibitem{BailleulRiedel}
Bailleul, I. and Riedel, S.,
\newblock Rough flows.
\newblock arXiv:1505.01692, 2015.

\bibitem{BailleulRMI}
Bailleul, I.,
\newblock Flows driven by rough paths.
\newblock {\em Revista Mat. Iberoamericana}, {31}(3):901--934, 2015.

\bibitem{BCDParticleSystem}
Bailleul, I., Catellier, R., Delarue, F. 
\newblock Propagation of chaos for mean field rough differential equations.
\newblock {\em In preparation}, 2019.

\bibitem{BCDArxiv}
Bailleul, I., Catellier, R., Delarue, F. 
\newblock Mean field rough differential equations.
\newblock arXiv:1802.05882, 2018.


\bibitem{BlackwellDubins}
Blackwell, D. and Dubins, L.
\newblock An extension of Skorohod's almost sure representation theorem.
\newblock {\em Proc. Am. Nat. Soc.}, {\bf 89}(4):691--692, 1983.

\bibitem{BudhirajaDupuisFischer}
Budhiraja, A. and Dupuis, P. and Fischer, M.,
\newblock Large deviation properties of weakly interacting processes via weak convergence methods.
\newblock {\em Ann. Probab.}, {\bf 40}(1):74--102, 2012.

\bibitem{BudhirajaWu}
Budhiraja, A. and Wu, 
\newblock Moderate Deviation Principles for Weakly Interacting Particle Systems.
\newblock {\em Probab. Th. Rel. Fields}, {\bf 168}(2-4):721--771, 2017.

\bibitem{LionsCardialiaguet}
Cardaliaguet, P.,
\newblock Notes on mean field games.
\newblock https://www.ceremade.dauphine.fr/ cardaliaguet/MFG20130420.pdf, 2013.

\bibitem{CarmonaDelarue_book_I}
Carmona, R. and Delarue, F.,
\newblock {\em Probabilistic Theory of Mean Field Games: vol. I, Mean Field FBSDEs, Control, and Games.}
\newblock Probability Theory and Stochastic
Modelling, Springer Verlag, 2018.	

\bibitem{CarmonaDelarue_book_II}
Carmona, R. and Delarue, F.,
\newblock {\em Probabilistic Theory of Mean Field Games: vol. II, Mean Field Games with Common Noise and Master Equations.}
\newblock Probability Theory and Stochastic
Modelling, Springer Verlag, 2018.	

\bibitem{CassLittererLyons}
Cass, T. and Litterer, C. and Lyons, T.,
\newblock Integrability and tail estimates for Gaussian rough differential equations.
\newblock {\em Ann. Probab.}, {\bf 41}(4):3026--3050, 2013.

\bibitem{CassLyons}
Cass, T. and Lyons, T.,
\newblock Evolving communities with individual preferences.
\newblock {\em Proc. Lond. Math. Soc. (3)}, {\bf 110}(1):83--107, 2015.

\bibitem{CassOgrodnik}
Cass, T. and Ogrodnik, M.,
\newblock Tail estimates for Markovian rough paths.
\newblock {\em Ann. Probab.}, {\bf 45}(4):2477--2504, 2017.

\bibitem{CoutinLejay}
Coutin, L. and Lejay, A.,
\newblock Perturbed linear rough differential equations.
\newblock {\em Ann. Math. Blaise Pascal}, {\bf 21}(1):103--150, 2014.

\bibitem{CoutinLejay2}
Coutin, L. and Lejay, A.,
\newblock Sensitivity of rough differential equations: an approach through the Omega lemma.
\newblock {\em J. Diff. Eq.},
\textbf{264}(6):3899--3917, 
2018.

\bibitem{DawsonGartner}
Dawson, D. and G\"artner, J.,
\newblock Large deviations from the McKean-Vtasov limit for weakly interacting diffusions.
\newblock {\em Stochastics}, {\bf 20}:247--308, 1987.

\bibitem{DawsonVaillancourt}
Dawson, D. and Vaillancourt, J.,
\newblock Stochastic McKean-Vlasov equations.
\newblock {\em Nonlinear Diff. Eq. Appl.}, {\bf 2},199--229, 1995.

\bibitem{Dereich}
Dereich, S.,
\newblock Rough paths analysis of general Banach space-valued Wiener process.
\newblock {\em J. Funct. Analysis}, {\bf 258}:2910--2936, 2010.

\bibitem{DereichDimitroff}
Dereich, S. and Dimitroff, G.,
\newblock A support theorem and a large deviation principle for Kunita flows.
\newblock {\em Stoch. Dyn.},{\bf 12}(03):115022, 2012.

\bibitem{FrizMaurelli}
Deuschel, J.-D. and Friz, P. and Maurelli, M. and Slowik, M.
\newblock The enhanced Sanov theorem and propagation of chaos.
\newblock 
{\em Stoc. Proc. App.}, 
{\bf 128}(7):2228-2269, 2018.




\bibitem{FeyeldelaPradelle}
Feyel, D. and de la Pradelle, A.,
\newblock Curvilinear Integrals Along Enriched Paths.
\newblock {\em Elec. J. Probab.}, {\bf 11}(34):860--892, 2006.


\bibitem{FrizHairer}
Friz, P. and Hairer, M.,
\newblock A course on rough paths, with an introduction to regularity structures.
\newblock {\em Universitext, Springer}, 2014.


\bibitem{FrizVictoirBook}
Friz, P. and Victoir, N.,
\newblock Multidimensional stochastic processes as rough paths.
\newblock {\em Cambridge studies in advanced Mathematics}, {\bf 120}, 2010.

\bibitem{Gartner}
G\"artner, J.,
\newblock On the McKean-Vlasov limit for interacting diffusions.
\newblock {\em Math. Nachr.}, {\bf 137}:197--248, 1988.

\bibitem{Gubinelli}
Gubinelli, M.,
\newblock Controlling rough paths.
\newblock {\em J. Funct. Anal.}, {\bf 216}(1):86--140, 2004.



\bibitem{jourdain-meleard}
B.~Jourdain and S.~M\'el\'eard, \emph{Propagation of chaos and fluctuations for
  a moderate model with smooth initial data},  \textbf{34} (1998), 727--766.

\bibitem{Kac}
Kac, M.,
\newblock Foundations of kinetic theory.
\newblock {\em Third Berkeley Symp. on Math. Stat. and Probab.}, {\bf 3}:171--197, 1956. 

\bibitem{KellyMelbourne}
Kelly, D. and Melbourne, I.,
\newblock Deterministic homogenization for fast-slow systems with chaotic noise.
\newblock {\em J. Funct. Anal.},
{\bf 272}(10):4063--4102,
2017.

\bibitem{LedouxLyonsQian}
Ledoux, M. and Lyons, T. and Qian, Z.,
\newblock L\'evy area of Wiener processes in Banach spaces.
\newblock {\em Ann. Probab.}, {\bf 30}(2):546--578, 2002.

\bibitem{Lions}
Lions, P.-L.,
\newblock Th\'eorie des jeux \`a champs moyen et applications,
\newblock {\em Lectures at the {C}oll\`ege de {F}rance}. http://www.college-de-france.fr/default/EN/all/equ\_der/cours\_et\_seminaires.htm", 2007-2008

\bibitem{Lyons98}
Lyons, T.,
\newblock Differential equations driven by rough paths.
\newblock {\em Rev. Mat. Iberoamericana}, {\bf 14}(2):215--310, 1998.

\bibitem{LyonsQianDiffeo}
Lyons, T. and Qian, Z.,
\newblock Flows of diffeomorphisms induced by geometric multiplicative functionals.
\newblock {\em Probab. Th. and Rel. Fields}, {\bf 112}:91--119, 1998.

\bibitem{McKean}
McKean, H. P.,
\newblock A class of markov processes associated with nonlinear parabolic equations.
\newblock {\em Prov. Nat. Acad. Sci.}, {\bf 56}:1907--1911, 1966.

\bibitem{meleard}
S.~M{\'e}l{\'e}ard, \emph{Asymptotic behaviour of some interacting particle
  systems; {M}c{K}ean-{V}lasov and {B}oltzmann models}, Probabilistic models
  for nonlinear partial differential equations ({M}ontecatini {T}erme, 1995),
  Lecture Notes in Math., vol. 1627, Springer, 1996, pp.~42--95.



\bibitem{Sznitman}
Sznitman, A.-S.,
\newblock Topics in propagation of chaos.
\newblock {\em Lect. Notes Math.}, {\bf 1464}, 1991.

\bibitem{Tanaka}
Tanaka, H.,
\newblock Probabilistic treatment of the Boltzman equation of Maxwellian molecules.
\newblock {\em Probab. Th. Rel. Fields}, {\bf 46}:67--105, 1978.

\bibitem{WuZhang}
Wu, C. and Zhang, J.,
\newblock An elementary proof for the structure of derivatives in probability measures.
\newblock arXiv:1705.08046, 2017.

\end{thebibliography}
\end{document}